\renewcommand\theenumi{\roman{enumi}}
\definecolor{Maroon}{cmyk}{.4,1,.3,.2}
\definecolor{LightMaroon}{cmyk}{.04,0.1,.03,.02}
\definecolor{Gray}{cmyk}{0,0,0,.5}
\definecolor{Green}{cmyk}{1,0,1,0}
\definecolor{Red}{cmyk}{0,1,.8,0}
\definecolor{Orange}{cmyk}{0,.55,1,0}
\definecolor{LightOrange}{cmyk}{0,.12,0.3,0.01}
\definecolor{Blue}{cmyk}{.6,.1,.1,.1}
\newcommand{\dt}{\partial_t}
\newcommand{\dx}{\partial_x}
\newcommand{\dy}{\partial_y}
\newcommand{\dz}{\partial_z}
\newcommand{\dv}{\partial_v}
\newcommand{\inner}[3]{\big(#2  ,  #3 \big)\hspace{-0.3mm}_{#1}}
\newcommand{\norm}[2]{ \big \| #2 \big \|_{#1}}
\newcommand{\pair}[3]{\big \langle #2, #3 \big\rangle \hspace{-0.3mm}_{#1}}
\newcommand{\set}[2]{\left\lbrace #1 : #2  \right\rbrace}
\newcommand{\seq}[1]{\big\{ #1 \big\} }
\DeclareMathOperator*{\esssup}{ess\,sup}
\DeclareMathOperator*{\essinf}{ess\,inf}
\DeclareMathOperator{\dist}{dist}
\DeclareMathOperator{\diag}{diag}
\DeclareMathOperator{\trace}{tr}
\DeclareMathOperator{\range}{Range}
\DeclareMathOperator{\kernel}{Kernel}
\def\Xint#1{\mathchoice
	{\XXint\displaystyle\textstyle{#1}}%
	{\XXint\textstyle\scriptstyle{#1}}%
	{\XXint\scriptstyle\scriptscriptstyle{#1}}%
	{\XXint\scriptscriptstyle\scriptscriptstyle{#1}}%
	\!\int}
\def\XXint#1#2#3{{\setbox0=\hbox{$#1{#2#3}{\int}$}
		\vcenter{\hbox{$#2#3$}}\kern-.5\wd0}}
\def\dashint{\Xint-}
\def\YYint#1#2#3{{\setbox0=\hbox{$#1{#2#3}{\int}$}
		\lower1ex\hbox{$#2#3$}\kern-.46\wd0}}
\def\YYYint#1#2#3{{\setbox0=\hbox{$#1{#2#3}{\int}$}
		\lower0.35ex\hbox{$#2#3$}\kern-.48\wd0}}
\def\ZZint#1#2#3{{\setbox0=\hbox{$#1{#2#3}{\int}$}
		\raise1.15ex\hbox{$#2#3$}\kern-.57\wd0}}
\def\ZZZint#1#2#3{{\setbox0=\hbox{$#1{#2#3}{\int}$}
		\raise0.85ex\hbox{$#2#3$}\kern-.53\wd0}}
\numberwithin{equation}{section}
\newcommand{\rc}{\mathrm{c}}
\newcommand{\rd}{\mathrm{d}}
\newcommand{\re}{\mathrm{e}}
\newcommand{\rw}{\mathrm{w}}
\newcommand{\rr}{\mathrm{r}}
\newcommand{\rs}{\mathrm{s}}
\newcommand{\rF}{\mathrm{F}}
\newcommand{\rE}{\textsc{e}}
\newcommand{\rI}{\textsc{i}}
\newcommand{\rM}{\mathrm{M}}
\newcommand{\rN}{\mathrm{N}}
\newcommand{\rT}{\mathrm{T}}
\newcommand{\rV}{\mathrm{V}}
\newcommand{\rX}{\textsc{x}}
\newcommand{\rY}{\textsc{y}}
\newcommand{\bbR}{\mathbb{R}}
\newcommand{\cB}{\mathcal{B}}
\newcommand{\cE}{\mathcal{E}}
\newcommand{\cD}{\mathcal{D}}
\newcommand{\cH}{\mathcal{H}}
\newcommand{\cL}{\mathcal{L}}
\newcommand{\cU}{\mathcal{U}}
\newcommand{\cW}{\mathcal{W}}
\newcommand{\cX}{\mathcal{X}}
\newcommand{\cY}{\mathcal{Y}}
\newcommand{\sA}{\mathscr{A}}
\newcommand{\sB}{\mathscr{B}}
\newcommand{\sG}{\mathscr{G}}
\newcommand{\sH}{\mathscr{H}}
\newcommand{\sK}{\mathscr{K}}
\newcommand{\sM}{\mathscr{M}}
\newcommand{\sP}{\mathscr{P}}
\newcommand{\sX}{\mathscr{X}}
\newcommand{\sY}{\mathscr{Y}}
\newtheorem{theorem}      {Theorem}      [section]
\newtheorem{definition} [theorem] {Definition} %  [section]
\newtheorem{proposition} [theorem] {Proposition}  %[section]
\newtheorem{lemma}   [theorem]     {Lemma}       % [section]
\newtheorem{corollary}   [theorem]      {Corollary}  %  [section]
\newtheorem{remark}  [theorem]     {Remark}      % [section]
\newenvironment{proof}{ 
	\par{
		\color{black}{
			\mbox{{\bf Proof.}}\color{black}
		}
	}
	\ignorespaces\color{black}
}
\begin{document}
		\title{On the Global Dynamics of an Electroencephalographic Mean Field Model of the Neocortex} 
		\author{Farshad Shirani\thanks{School of Aerospace Engineering, Georgia Institute of Technology, Atlanta, GA 30332} \and Wassim M. Haddad\thanks{School of Aerospace Engineering, Georgia Institute of Technology, Atlanta, GA 30332} \and Rafael de la Llave\thanks{School of Mathematics, Georgia Institute of Technology, Atlanta, GA 30332}
		}
	\maketitle
	\date{}

\begin{abstract}
	This paper investigates the global dynamics of a mean field model of the electroencephalogram developed by Liley \emph{et al.}, 2002. 
	The model is presented as a system of coupled ordinary and partial differential equations with periodic boundary conditions.
	Existence, uniqueness, and regularity of weak and strong solutions of the model are established in appropriate function spaces, and the associated initial-boundary value problems are proved to be well-posed. 
	Sufficient conditions are developed for the phase spaces of the model to ensure nonnegativity of certain quantities in the model, as required by their biophysical interpretation. 
	It is shown that the semigroups of weak and strong solution operators possess bounded absorbing sets for the entire range of biophysical values of the parameters of the model. 
	Challenges towards establishing a global attractor for the model are discussed and it is shown that there exist parameter values for which the constructed semidynamical systems do not possess a compact global attractor due to the lack of the asymptotic compactness property. 
	Finally, using the theoretical results of the paper, instructive insights are provided into the complexity of the behavior of the model and computational analysis of the model.
\end{abstract}

%\begin{keywords}
%Neocortical dynamics, mean field model, evolution equations, dissipativity,  attractors,  mathematical neuroscience.
%\end{keywords}
%
%\begin{AMS}
%35Q92, 37L30, 35B40, 92C20.%, 35L53.
%\end{AMS}

%\pagestyle{myheadings}
%\thispagestyle{plain}
%\markboth{F. Shirani}{Attractors of a Mean Field Model of Neocortex}

\section{Introduction}
Inspired by the seminal work of Alan Hodgkin and Andrew Huxley on modeling the flow of ionic currents through
the membrane of a giant nerve fiber, numerous biophysical and mathematical models have been developed towards
understanding the neurophysiology of the central nervous system and the underlying mechanism of the various phenomena 
that emerge during its vital operation in the body; many of which still remain a mystery to researchers 
\cite{HodgkinHuxley:JPhysiology:1952, Fitzhugh:Biophysical:1961, Nagumo:IRE:1962, Wilson:Biophysical:1972}.
In particular, exploring the core component of the central nervous system---the brain---substantial effort has been devoted to develop models at different levels of scope;  
from the \emph{molecular and intercellular} level dealing with the transportation of ions and the enzymatic kinetics of neurotransmitter-receptor binding at ion channels; 
to the \emph{single cell and intracellular} level dealing with the creation and transmission of action potential;
to the \emph{population and neuronal network} level dealing with the average behavior and synchronized activity of neuronal ensembles;
to the \emph{system level} dealing with the systematic operation and interaction between cortical and subcortical components of the brain; 
and eventually, to the \emph{behavioral and cognitive} level dealing with the integrated mental activity and the creation of mind \cite{Keener:MathematicalPhysiology:2009, Bear:Neuroscience:2016, Ermentrout:MathematicalNeuroscience:2010, Koch:BiophysicsComputation:2004, Gerstner:SpikingNeuron:2002, Wilson:PLOS:2012, SanzLeon:NeuroImage:2015, Sporns:Nature:2014}.

%Special caution should, however, be exercised in bringing the analytical and computational results of these models to actual physiological interpretations since, like all the models available in science and engineering, they are not secured against limitations and inaccuracies. 
%In fact, having targeted the most sophisticated piece of nature---the brain---deficiencies are expectedly manifold in these models, so that the entire achievements so far should fairly be regarded as a rudimentary preamble \cite{Krioukov:Frontiers:2014, Gerstner:Science:2009}.

As an effective methodology for developing models at the population and network level, mean field theory has been employed to construct approximate models for interconnected populations of neurons by averaging the effect of all other neurons on a given individual neuron inside a population. 
The resulting \emph{averaged neuron} can be used to analyze the overall temporal behavior of a single population of neurons---leading to a \emph{neural mass} model---or can be considered as a locally averaged component of a  continuum of neural populations---leading to  a spatio-temporal \emph{mean field} model.  
These models are particularly useful in analyzing the electrophysiological activity of neuronal ensembles using local field potentials and electroencephalograms 
\cite{Pinotsis:Frontiers:2014, Moran:Frontiers:2013, Robinson:Neuropsychopharmacology:2003, Coombes:NeuralFields:2014}.

The evolution equations that describe a mean field model of neural activity in the cortex are in the form of a system of partial differential equations, or a system of coupled ordinary and partial differential equations. 
The theory of infinite-dimensional dynamical systems is hence used to analyze the global dynamics and long-term behavior of these systems. 
The classical approach to this problem follows several steps. 
\emph{First}, existence, uniqueness, and regularity of solutions are established for all positive time in appropriately chosen problem-dependent function spaces, and the well-posedness of the problem is confirmed.
\emph{Second}, a semidynamical framework is constructed over a positively invariant complete normed space---the phase space for the evolution of the solutions---and is shown to possess bounded absorbing sets. 
Asymptotic compactness of the semigroup of solution operators is then ensured to guarantee the existence of a global attractor, which is a compact strictly invariant attracting set and contains all the information regarding the asymptotic behavior of the model.  
\emph{Third}, the Hausdorff or fractal dimension of the global attractor is estimated to show that the attractor is finite dimensional, so that the asymptotic dynamics of the system is determined by a finite number of degrees of freedom.
\emph{Fourth}, the existence of an inertial manifold is established, which is a smooth finite-dimensional invariant manifold containing the global attractor. 
Consequently, the dynamics on the attractor can be presented by a finite set of ordinary differential equations and further characterized to give the overall picture of the long-term behavior of the system
\cite{Guckenheimer:AMS:1989, Temam:InfiniteDimensional:1997, Chepyzhov:Attractors:2002, Robinson:InfiniteDimensional:2001, Hale:AsymptoticBehavior:1988}.

In this paper, we investigate the mean field model proposed in \cite{Liley:Network:2002} for understanding the electrical activity in the neocortex as observed in the electroencephalogram (EEG). 
This model, which is comprised of a system of coupled ordinary and partial differential equations in a two-dimensional space, has been widely used in the literature to study the
alpha- and gamma-band rhythmic activity in the cortex \cite{Bojak:Neurocomputing:2004, Bojak:Neurocomputing:2007},
phase transition and burst suppression in cortical neurons during general anesthesia \cite{Liley:Frontiers:2013, Bojak:Frontiers:2015, SteynRoss:ProBiophysics:2004}, 
the effect of anesthetic drugs on the EEG \cite{Bojak:Frontiers:2013, Foster:Frontiers:2008},
and epileptic seizures \cite{Liley:ClinicNeurophys:2005,Kramer:PNAS:2012,  Kramer:JRCInterface:2005, Kramer:JCN:2006}.
Open-source tools for numerical implementation of the model and computation of equilibria and time-periodic solutions are developed in \cite{Green:JCS:2014}.
Complexity of the dynamics of the model, including periodic and pseudo-periodic solutions, chaotic behavior, multistability, and bifurcation are studied in \cite{Frascoli:ProcSPIE:2008, Frascoli:PhysicaD:2011, Dafilis:Chaos:2001, Dafilis:Chaos:2013, Dafilis:JMN:2015, VanVeen:EPJST:2014, VanVeen:PhysRevLett:2006}.

The above results, however, are mainly computational or use approximate versions of the model. 
A rigorous analysis of the dynamics of the model in an infinite-dimensional dynamical system framework as outlined above is not available in the literature. 
In particular, the basic problems of well-posedness of the initial-boundary value problem associated with the model and regularity of the solutions remain uninvestigated. 
It is not known under what conditions, if any, the components of the solutions of the model that are associated with nonnegative biophysical quantities remain nonnegative for all time. 
The solutions that take negative values for such quantities---even for a small interval of time in distant future---cannot represent a biophysically plausible dynamics of the electrical activity in the neocortex. 

The aim of this paper is to study the global dynamics of the mean field model discussed above, to ensure its biophysical plausibility, and to provide the basic analytical results required for characterization of the long-term dynamics of the model. 
Specifically, we follow the first two steps of the classical analysis approach to investigate the problem of existence or nonexistence of a global attractor.   

This paper is organized as follows. 
In Section \ref{sec:Notation}, we introduce notation and recall key definitions that are necessary for developing the results in this paper. 
In Section \ref{sec:ModelDiscription}, we give a description of the anatomical structure of the neocortex and the physiological interactions that underly the construction of the model. 
Moreover, we present the mathematical structure of the model as a system of coupled ordinary-partial differential equations with initial values and periodic boundary conditions. 
In Section \ref{sec:ExistenceUniqueness}, following the first step of the classical analysis approach,
we prove the existence and uniqueness of weak and strong solutions for the proposed initial value problem and analyze the regularity of these solutions. 

As in the second step of the classical analysis approach, in Section \ref{sec:SemidynamicalSystems} we define semigroups of weak and strong solution operators and show their continuity properties. 
Moreover, we establish sufficient conditions on the phase spaces that ensure biophysical plausibility of the evolution of the solutions under the associated semidynamical systems. 
In Section \ref{sec:AbsorbingSets}, we show that the semigroups of solution operators possess bounded absorbing sets for all possible values of the biophysical parameters of the model. 
%Finally, In Section \ref{sec:ExistanceAttractor} we prove that under similar constraints on the parameter set, there exists a connected global attractor for the semigroup of weak solutions operators that lies in some higher regular spaces, as well. 
In Section \ref{sec:Attractor}, we discuss challenges towards establishing a global attractor for the model, and in particular, we show that there exist sets of values for the biophysical parameters of the model such that the associated semigroups of solution operators do not possess a compact global attractor. 
We conclude the paper in Section \ref{sec:Conclusion} with a discussion on the results developed in the paper and their application to computational analysis of the model.          

\section{Notation and Preliminaries} \label{sec:Notation}
The notation used in this paper is fairly standard. Specifically, $\bbR^n$ denotes the $n$-dimensional real Euclidean space and $\bbR^{m \times n}$ denotes the space of real $m \times n$ matrices. 
A point $x \in \bbR^n$ is presented by the $n$-tuple $x=(x_1, \dots, x_n)$ or, 
when it appears in matrix operations, by the column vector 
$x= \left[ 
\begin{array}{ccc}
x_1 & \cdots & x_n
\end{array}
\right]^{\rT}$,
where $(\cdot)^{\rT}$ denotes transpose.
The nonnegative cone $\lbrace x\in \bbR^n : x_j \geq 0 \text{ for } j=1,\dots,n  \rbrace$ is denoted by $\bbR_+^n$.
A sequence of points in $\bbR^n$ is denoted by $\seq{x^{(l)}}_{l=1}^{\infty}$,
with the $j$th component of $x^{(l)}$ denoted by $x_j^{(l)}$.
Moreover, the trace of a square matrix $A \in \bbR^{n \times n}$ is denoted by $\trace A$ 
and a block-diagonal matrix $D$ with $k$ blocks $D_1, \dots, D_k$ is denoted by 
$\diag (D_1, \dots, D_k)$.
For $x, y \in \bbR^n$, we write $x \geq y$ to denote component-wise inequality, that is,  $x_j \geq y_j$, $j=1,\dots, n$. 
For $A, B \in \bbR^{n \times n}$ we write $A\geq B$ to denote $A-B$ is positive semidefinite. 
Finally, we denote by $0_{n \times n}$ and $I_{n \times n}$ the zero and identity matrices in $\bbR^{n \times n}$, respectively. 
We write $I$ for the identity operator in other vector spaces. 

For an inner product space $\cU$, we denote the associated inner product by $\inner{\cU}{\cdot}{\cdot}$ and the norm generated by the inner product by $\norm{\cU}{\cdot}$. 
For a Hilbert space $\cU$, we denote the pairing of $\cU$ with its dual space $\cU^{\ast}$ by $\pair{\cU}{\cdot}{\cdot}$.
In particular, for $\cU = \bbR^n$ we write $\inner{\bbR^n}{\cdot}{\cdot}$ and  $\norm{\bbR^n}{\cdot}$ for the standard inner product and the Euclidean norm, respectively.
Similarly, for $\cU = \bbR^{m \times n}$ we write $\inner{\bbR^{m \times n}}{\cdot}{\cdot}$ for the standard inner product and $\norm{\bbR^{m \times n}}{\cdot}$ 
for the associated inner product norm.  
Moreover, we denote the vector 1-, 2-, and $\infty$-norms in $\bbR^n$ by $\norm{1}{\cdot}$, $\norm{2}{\cdot}=\norm{\bbR^n}{\cdot}$, and $\norm{\infty}{\cdot}$, respectively. 
The matrix 1-, 2-, and $\infty$-norms in $\bbR^{m \times n}$ induced, respectively, by the vector 1-, 2-, and $\infty$-norms in $\bbR^n$ are denoted by $\norm{1}{\cdot}$, $\norm{2}{\cdot}$ and $\norm{\infty}{\cdot}$. 

Let $\Omega$ be an open subset of $\bbR^n$ denoting the \emph{space} domain
of a given dynamical system, with $x \in \Omega$ denoting a \emph{spatial} point in $\Omega$. 
The \emph{time} domain of the system  is given
by the closed interval $[0,T] \subset \bbR$, $T>0$, with the \emph{temporal} point $t$. 
For a function $u: [0,T] \rightarrow \bbR $, the $k$th-order total derivative with respect to $t$ at $t_0$ is denoted by $\rd_t^k u(t_0)$.
%, that means$\rd_t^k u(t) = \frac{d^k}{dt^k}u(t)$. 
For $k=1$, we write $\rd_t u(t_0)$.
For a function $u(x, t): \Omega \times [0,T] \rightarrow \bbR$, the $k$th-order partial derivative with respect to $t$ at $(x_0,t_0)$ is denoted by $\dt^k u(x_0,t_0)$ 
and the $k$th-order partial derivative with respect to $x_j$ at $(x_0,t_0)$ is denoted by $\partial_{x_j}^k u(x_0,t_0)$, $j=1,\dots,n$.
%,which respectively mean $\dt^k u(x,t) = \frac{\partial^k}{\partial t^k}u(x,t)$ 
%and $\partial_{x_i}^k u(x,t) = \frac{\partial^k}{\partial x_i^k}u(x,t)$.
For $k=1$, we write $\dt u(x_0,t_0)$ and $\partial_{x_j} u(x_0,t_0)$. 
The gradient of $u$ in $\Omega$ is denoted by $\dx u$ and is given by $\dx u := (\partial_{x_1} u, \dots, \partial_{x_n} u)$.
The Laplacian of  $u$ in $\Omega$ is denoted by $\Delta u$ and is given by $\Delta u := (\partial_{x_1}^2 + \dots + \partial_{x_n}^2)$.
For a vector-valued function $u(x, t): \Omega \times [0,T] \rightarrow \bbR^m$ we
interpret $u(x, t)$ as the $m$-tuple $u(x, t) = (u_1(x, t), \dots, u_m(x, t))$,
where each component $u_j(x,t)$, $j=1,\dots,m$, is a scalar-valued function on $\Omega \times [0,T]$.
In this case, $\dx u(x,t) \in \bbR^{m \times n}$ is the gradient of $u$ and the vector Laplacian $\Delta u$ is given by $\Delta u(x,t) := (\Delta u_1(x,t), \dots, \Delta u_m(x,t)) \in \bbR^m$, assuming
Cartesian coordinates. 

For every integer $k \geq 0$, the space of $k$-times continuously differentiable real-valued functions on $\Omega$ is denoted by $C^k(\Omega)$. 
The space $C^k(\overline{\Omega})$ consists of all functions in $C^k(\Omega)$ that, together with all of their  partial derivatives up to the order $k$, are uniformly continuous in bounded subsets of $\Omega$. 
Moreover, for $0 <\lambda \leq 1$, the H\"{o}lder space $C^{k,\lambda}(\overline{\Omega})$ is a subspace of $C^k(\overline{\Omega})$ consisting of functions whose partial derivatives of order $k$ are  H\"{o}lder continuous with exponent $\lambda$; 
see \cite[Sec. 1.18]{Ciarlet:FunctionalAnalysis:2013} for details.
We use $C_{\rc}^{\infty} (\Omega)$ to denote the space of infinitely differentiable real-valued functions with compact support in $\Omega$.  
Moreover, we denote by $L_{\rm loc}^1 (\Omega)$ the space of locally integrable real-valued functions on $\Omega$. 
Then, for every function $u \in L_{\rm loc}^1 (\Omega)$ and any multi index $\alpha$ with $|\alpha| \geq 1$,
the \emph{weak partial derivative} of $u$ in $L_{\rm loc}^1 (\Omega)$, of order $|\alpha|$, is defined by the 
%function $u^{\alpha} \in L_{\rm loc}^1 (\Omega)$ 
distribution $u^{\alpha}$
that stisfies
\begin{equation*}
	\int_{\Omega} u^{\alpha} \phi \, \rd x = (-1)^{|\alpha|} \int_{\Omega} u \partial^{\alpha} \phi \, \rd x \quad 
	\text{for all } \phi \in C_{\rc}^{\infty} (\Omega),  
\end{equation*} 
where $\rd x = \rd x_1 \cdots \rd x_n$ is the Lebesgue measure on $\bbR^n$; see \cite[Sec. 6.3]{Ciarlet:FunctionalAnalysis:2013} for details. 
With a minor abuse of notation, we use $\dt^k$ and $\dx^k$ to denote the $k$th-order weak---as well as classical---partial derivatives with respect to $t$ and $x$, respectively. 
The distinction will be clear from the context, or will otherwise  be explicitly specified.  

The Hilbert space of vector-valued Lebesgue measurable functions $u: \Omega \rightarrow \bbR^m$ with
finite $L^2$-norm is denoted by $L^2(\Omega; \bbR^m)$, with associated inner product and norm given by
\begin{align*}
	\inner{L^2(\Omega; \bbR^m)}{u}{v} := \int_{\Omega} \inner{\bbR^m}{u(x)}{v(x)} \rd x, \quad
	\norm{L^2(\Omega; \bbR^m)}{u} := \left[ \int_{\Omega} \norm{\bbR^m}{u(x)}^2 \rd x \right]^{\frac{1}{2}}.
\end{align*}
The Banach space of vector-valued Lebesgue measurable functions $u: \Omega \rightarrow \bbR^m$ 	with
finite $L^{\infty}$-norm is denoted by $L^{\infty}(\Omega; \bbR^m)$, with the norm
\begin{align*}
	\norm{L^{\infty}(\Omega; \bbR^m)}{u} := \esssup_{x \in \Omega} \norm{\infty}{u(x)}.
\end{align*}
The Sobolev space of vector-valued functions $u \in L^p(\Omega; \bbR^m)$ whose all $l$th-order weak derivatives $\dx^l u$, $l\leq k$,
exist and belong to $L^p(\Omega; \bbR^{m\times n^l})$ is denoted by $W^{k,p}(\Omega; \bbR^m)$.
When $p=2$, the Sobolev spaces $W^{k,2}(\Omega; \bbR^m)$ are Hilbert spaces for all $k \in [0, \infty)$, and are denoted by $H^k(\Omega; \bbR^m):=W^{k,2}(\Omega; \bbR^m)$.
Specifically, $H^0(\Omega; \bbR^m) = L^2(\Omega; \bbR^m)$, and $H^1(\Omega; \bbR^m)$ is a Hilbert space with the inner product %and norm
\begin{align*}
	\inner{H^1(\Omega; \bbR^m)}{u}{v} &:= \inner{L^2(\Omega; \bbR^m)}{u}{v} + \inner{L^2(\Omega; \bbR^{m \times n})}{\dx u}{\dx v}. %,\\
	%\norm{H^1(\Omega; \bbR^m)}{u} &	= \left[ \norm{L^2(\Omega; \bbR^m)}{u}^2 + \norm{L^2(\Omega; \bbR^{m \times n})}{ \dx u}^2 \right]^{\frac{1}{2}}.
\end{align*}
Moreover, $H^2(\Omega; \bbR^m)$ is a Hilbert space with the  inner product %and norm
\begin{align*}
	\inner{H^2(\Omega; \bbR^m)}{u}{v} &:= \inner{L^2(\Omega; \bbR^m)}{u}{v} + \inner{L^2(\Omega; \bbR^{m \times n})}{\dx u}{\dx v} + \inner{L^2(\Omega; \bbR^{m \times n^2})}{\dx^2 u}{\dx^2 v}.%, \\
	%\norm{H^2(\Omega; \bbR^m)}{u} &= \left[ \norm{L^2(\Omega; \bbR^m)}{u}^2 + \norm{L^2(\Omega; \bbR^{m \times n})}{ \dx u}^2 + \norm{L^2(\Omega; \bbR^{m \times n^2})}{ \dx^2 u}^2 \right]^{\frac{1}{2}}.
\end{align*}

Let $\Omega = (0,\omega_1) \times \cdots \times (0,\omega_n)$, where $\omega_j >0$, $j=1, \dots, n$, be an open rectangle in $\bbR^n$. A function $u: \bbR^n \rightarrow \bbR$ is called \emph{$\Omega$-periodic} if it is periodic in each direction, that is,
\begin{equation*}
	u(x + \omega_j e_j) = u(x), \quad j=1, \dots, n, \quad x \in \bbR^n,
\end{equation*}
where $e_j$ is the unit vector in the $j$th direction. 
Define the space $C_{\rm per}^{\infty}(\Omega)$ as the restriction to $\Omega$ of the space of infinitely differentiable $\Omega$-periodic functions. 
Then, the Sobolev space $H_{\rm per}^k(\Omega)$, $k \geq 0$, is defined by the completion of $C_{\rm per}^{\infty}(\Omega)$ in $H^k(\Omega)$; see \cite[Definition 5.37]{Robinson:InfiniteDimensional:2001} or, for an equivalent definition, \cite[p. 50]{Temam:InfiniteDimensional:1997} .   
A vector-valued function  $u: \bbR^n \rightarrow \bbR^m$ is $\Omega$-periodic if each of its components $u_j: \bbR^n \rightarrow \bbR$, $j=1, \dots, m$, is $\Omega$-periodic. The spaces $C_{\rm per}^{\infty}(\Omega; \bbR^m)$ and $H_{\rm per}^k(\Omega; \bbR^m)$ are then defined accordingly. It follows from Green's formula that
\begin{align} \label{eq:DeltaFormulas}
	\inner{L_{\rm per}^2(\Omega;\bbR^m)}{-\Delta u}{v} &= \inner{L_{\rm per}^2(\Omega; \bbR^{m \times n})}{\dx u}{\dx v},\\  
	\inner{L_{\rm per}^2(\Omega;\bbR^m)}{(-\Delta+I) u}{v} &= \inner{H_{\rm per}^1(\Omega;\bbR^m)}{u}{v}, \nonumber \\
	\inner{L_{\rm per}^2(\Omega;\bbR^m)}{-\Delta u}{(-\Delta+I) u} &= \norm{H_{\rm per}^2(\Omega;\bbR^m)}{u}^2
	-\norm{L_{\rm per}^2(\Omega;\bbR^m)}{u}^2, \nonumber \\
	\norm{L_{\rm per}^2(\Omega;\bbR^m)}{(-\Delta+I)u}^2 &= \norm{H_{\rm per}^2(\Omega;\bbR^m)}{u}^2 + \norm{L_{\rm per}^2(\Omega; \bbR^{m \times n})}{\dx u}^2  \nonumber\\
	&=\norm{H_{\rm per}^1(\Omega;\bbR^m)}{u}^2 + \norm{H_{\rm per}^1(\Omega; \bbR^{m \times n})}{\dx u}^2. \nonumber
\end{align}
%Finally, the space $\dot{H}_{\rm per}^k(\Omega; \bbR^m)$ is defined as the subspace of $H_{\rm per}^k(\Omega; \bbR^m)$ composed of all functions $u \in H_{\rm per}^k(\Omega; \bbR^m)$ with zero mean over $\Omega$, that means,
%$ \int_{\Omega} u(x) \rd x = 0$.

In this paper, we interchangeably view the function $u(x,t)$, $x \in \Omega$, $t \in [0,T]$, as a composite function of $x$ and $t$, as well as a mapping $u$ of $t$ to a function of $x$, that is,
\begin{equation*}
	[u(t)](x) := u(x,t), \quad x \in \Omega, \quad t \in [0,T].
\end{equation*}    
With a minor abuse of notation, the same symbol is used to denote both the original form of the function and the mapping.
%, e.g., we used $u$ for the both forms. 
The distinction becomes evident in the way we define the space of such mappings or, equivalently, Banach space-valued functions; see for example \cite[Appx. E.5]{Evans:PDE:2010}. 
For a Banach space $\cU$, the space $L^2(0,T;\cU)$ is composed of all strongly measurable Banach space-valued functions $u: [0,T] \rightarrow \cU$ 	with
the finite $L^2$-norm defined by
\begin{equation*}
	\norm{L^2(0,T;\cU)}{u} := \left[  \int_0^T \norm{\cU}{u(t)}^2 \rd t \right]^{\frac{1}{2}}.
\end{equation*}  
The space $C^0([0,T];\cU)$ is composed of all continuous Banach space-valued functions $u: [0,T] \rightarrow \cU$ 	with the finite uniform norm defined by
\begin{equation*}
	\norm{C^0([0,T];\cU)}{u} := \max_{t\in[0,T]} \norm{\cU}{u(t)}.
\end{equation*}  
Accordingly, the spaces $C^k([0,T];\cU)$ and $C^{k,\lambda}([0,T];\cU)$, $k\geq 0$, $0 < \lambda \leq 1$, are defined as the space of $k$-times continuously differentiable Banach space-valued functions and its H\"{o}lder continuous subspace. 
The Sobolev spaces $H^k(0,T;\cU)$, $k \geq 0$, are composed of all functions $u \in L^2(0,T;\cU)$ whose $l$th-order weak derivatives $\rd_t^l u$ exist for $l \leq k$ and belong to $L^2(0,T;\cU)$. 
In particular, for $k=1$ we have
\begin{equation*}
	\norm{H^1(0,T;\cU)}{u} := \left[ \int_0^T \left( \norm{\cU}{u(t)}^2 + \norm{\cU}{\rd_t u(t)}^2 \right) \, \rd t \right]^{\frac{1}{2}}.
\end{equation*}
For further details on these spaces; see \cite[Sec. 5.9.2]{Evans:PDE:2010} and \cite[Sec. 7.1]{Robinson:InfiniteDimensional:2001}. 

When $P:\cU \rightarrow \cY$ is a mapping between the Banach spaces $\cU$ and $\cY$, we denote the $k$th order Fr\'{e}chet derivative of $P$ at $u_0$ by $\rd_u P(u_0)$. 
The space $C^k(\cU;\cY)$ is then composed of all $k$-times continuously differentiable mappings from $\cU$ into $\cY$.
For a mapping $P:\cU_1 \times \dots \times \cU_m \rightarrow \cY$, where $\cY$ and $\cU_j$, $j=1,\dots m$, are Banach spaces, $\partial_{u_j} P(u_0)$ is the $j$th partial Fr\'{e}chet derivative of $P$ at  $u_0 = ({u_0}_1,\dots,{u_0}_m)$. 
The gradient of $P$ at $u_0$ is then written as $\partial_u P(u_0)$;   
see \cite[Sec. 7.1]{Ciarlet:FunctionalAnalysis:2013} for details.

Finally, we denote the symmetric difference of two sets $\sX$ and $\sY$ by $\sX \bigtriangleup \sY$. In a topological space $\cX$, we denote the closure of a set $\sX \subset \cX$ by $\overline{\sX}$, its interior by $\sX^\circ$, and its boundary by $\partial \sX$. The characteristic function of $\sX$ is denoted by $\chi(\sX)$.
When $\cX$ is a measure space, $|\sX|$ denotes the measure of the set $\sX \subset \cX$. 
For normed vector spaces $\cX$ and $\cY$, we write $\cX \hookrightarrow \cY$ for continuous embedding of $\cX$ in $\cY$, and $\cX \Subset \cY$ for compact embedding of $\cX$ in $\cY$; see \cite[Sec. 6.6]{Ciarlet:FunctionalAnalysis:2013} for details.
When $\cX$ is a metric space and the topology on $\cX$ is induced by the given metric, $B(x,R)$ denotes the open ball centered at $x \in \cX$ with radius $R>0$, which is a basis element for the topology.
For every bounded measurable set in $\cX$ and, in particular for $B(x,R)$, we denote by $\dashint_{B(x,R)}$ the averaging operator over $B(x,R)$, that is, $\dashint_{B(x,R)} := \frac{1}{|B(x,R)|}\int_{B(x,R)}$.

\section{Model Description} \label{sec:ModelDiscription}

\begin{figure}
	\centering
	\includegraphics[width=1\linewidth]{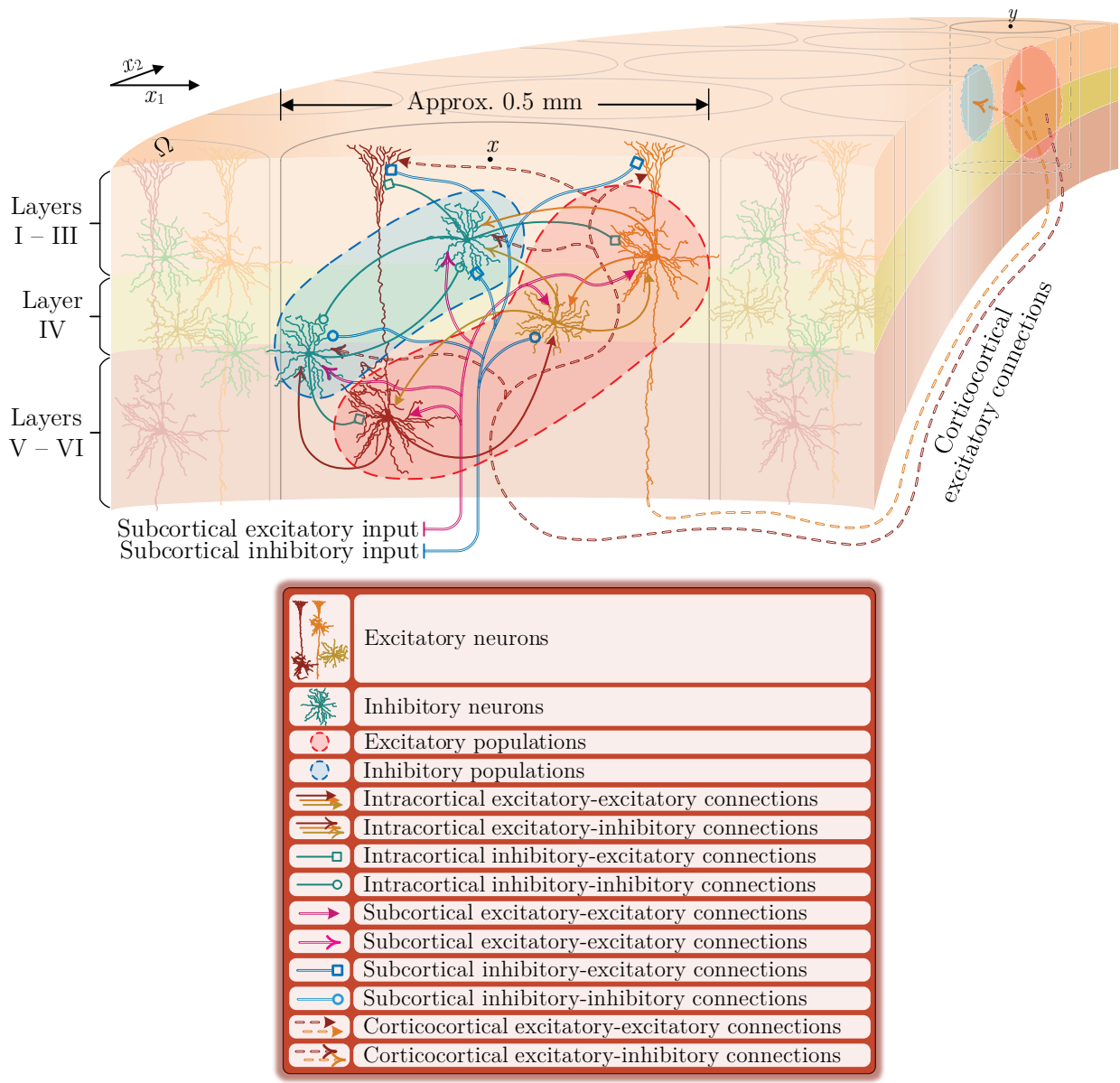}
	\caption[Cortex Structure]{Schematic of the structure of the neocortex with intracortical and corticocortical connections.}
	\label{fig:CortexStructure}
\end{figure}

The neocortex has a layered columnar structure consisting mostly of six distinctive layers. 
%Each layer receives different inputs from the thalamus and other cortical regions, and generates different outputs to other regions of the neocortex, other parts of the brain, and the spinal cord. 
Neurons in the neocortex are organized in vertical columns, usually referred to as \emph{cortical columns} or \emph{macrocolumns}, %. 
%Cortical columns, 
which are a fraction of a millimeter wide and traverse all the layers of the neocortex from the white matter to the pial surface \cite{Kandel:NeuralScience:2013, Mountcastle:Brain:1997, Horton:PTRSL:2005}. 
Depending on their type of action, neurons are mainly classified as \emph{excitatory} or \emph{inhibitory}, 
wherein this distinction depends on whether they increase the firing rate in the destination neurons they are communicating with, or they essentially suppress them. 
Inhibitory neurons are located in all layers and usually have axons that remain within the same area where their cell body resides, and hence, they have a local range of action. 
%They are located within all layers and constitute approximately 20\% of the neocortex. 
Layers III, V, and VI contain pyramidal excitatory neurons whose axons can provide long-range communication (projection) throughout the neocortex. 
Layer IV contains primarily star-shaped excitatory interneurons that receive sensory inputs from the thalamus. 
%Layer I mainly contains  the apical dendrites of the cells located in deeper layers. 
Figure \ref{fig:CortexStructure} shows a schematic of the structure of the neocortex, including the intracortical and corticocortical neuronal connections; 
see \cite[Ch. 15]{Kandel:NeuralScience:2013} for further details.         

On a local scale, within a cortical column, neurons are densely interconnected and involve all types of feedforward  and feedback intracortical connections. 
%For instance, studies on a local population of inhibitory interneurons within layers II and III in mouse frontal cortex have revealed a highly interconnected local network, forming approximately a completely connected graph without any distinct subnetworks \cite{Fino:Neuron:2011, Lee:Neuron:2011}. 
Such a dense and relatively homogeneous local structure of the neocortex suggests modeling a local population of functionally similar neurons by a single \emph{space-averaged neuron}, which preserves enough physiological information to understand the temporal patterns observed in spatially smoothed (averaged) EEG signals without creating excessive theoretical complicacies in the mathematical analysis of the model. 
On a global scale, in the exclusively excitatory corticocortical communication throughout the neocortex, two major patterns of connectivity are observed. Namely, a homogeneous, symmetrical, and translation invariant pattern of connections, versus a heterogeneous, patchy, and asymmetrical distribution of connections. 
For modeling simplicity and due to unavailability of detailed anatomical data, in the model that we investigate in this paper the corticocortical connectivity is assumed to be isotropic, homogeneous, symmetric, and translation invariant \cite{Liley:Network:2002}.

To establish the mathematical framework of the model, let $\Omega = (0, \omega) \times (0, \omega)$, $\omega>0 $, be an open rectangle in $\bbR^2$ that defines 
the domain of the neocortex. Each point $x=(x_1,x_2) \in \Omega$ indicates the location of a local network---possibly representing a cortical column---modeled by a space-averaged excitatory neuron and a space-averaged inhibitory neuron.
%, representing the population of excitatory neurons and the population of inhibitory neurons located at $x$, respectively.    
Let $\rE$ denote a population of excitatory neurons and $\rI$ denote a population of inhibitory neurons. 
For  $x \in \Omega$, $t \in [0, T]$, $T>0$, and $\rX, \rY \in \{ \rE, \rI \}$, we denote by 
$v_{\rX}(x,t)$, measured in $\text{mV}$, the spatially mean soma membrane potential of a population of type $\rX$ centered at $x$. 
Moreover, we denote by $i_{\rX \rY}(x,t)$, measured in $\text{mV}$, the spatially mean postsynaptic activation of synapses of a population of type $\rX$ centered at $x$, onto a population of type $\rY$ centered at the same point $x$. 
In addition, we denote by $w_{\rE \rX}(x,t)$, measured in $\rs^{-1}$, the mean rate of corticocortical excitatory input pulses from the entire domain of the neocortex to a population of type $\rX$ centered at $x$. 
Finally, we denote by  $g_{\rX \rY}(x,t)$, measured in $\rs^{-1}$, the mean rate of subcortical input pulses of type $\rX$ to a population of type $\rY$ centered at $x$. 
Note that, by definition, $ i_{\rX \rY}(x,t)$, $w_{\rE \rX}(x,t)$, and $g_{\rX \rY}(x,t)$ are nonnegative quantities.

Then, as developed in \cite{Liley:Network:2002}, the system of coupled ordinary and partial differential equations %defined in $\Omega \times (0,T]$ given by
\begin{align}     \label{eq:Model}
	(\tau_{\rE} \dt + 1) v_{\rE}(x,t) &= \frac{\rV_{\rE \rE}-v_{\rE}(x,t)}{|\rV_{\rE \rE}|} i_{\rE \rE}(x,t)
	+ \frac{\rV_{\rI \rE}-v_{\rE}(x,t)}{|\rV_{\rI \rE}|} i_{\rI \rE}(x,t), \\
	(\tau_{\rI} \dt + 1) v_{\rI}(x,t) &= \frac{\rV_{\rE \rI}-v_{\rI}(x,t)}{|\rV_{\rE \rI}|} i_{\rE \rI}(x,t)
	+ \frac{\rV_{\rI \rI}-v_{\rI}(x,t)}{|\rV_{\rI \rI}|} i_{\rI \rI}(x,t),	\nonumber\\
	(\dt +\gamma_{\rE \rE} )^2 i_{\rE \rE}(x,t) &= e \Upsilon_{\rE \rE} \gamma_{\rE \rE}
	\left[ \rN_{\rE \rE} f_{\rE}\big( v_{\rE}(x,t) \big) + w_{\rE \rE}(x,t) + g_{\rE \rE}(x,t) \right],	\nonumber\\
	(\dt +\gamma_{\rE \rI} )^2 i_{\rE \rI}(x,t) &= e \Upsilon_{\rE \rI} \gamma_{\rE \rI}
	\left[ \rN_{\rE \rI} f_{\rE}\big( v_{\rE}(x,t) \big) + w_{\rE \rI}(x,t) + g_{\rE \rI}(x,t) \right],	\nonumber\\
	(\dt +\gamma_{\rI \rE} )^2 i_{\rI \rE}(x,t) &= e \Upsilon_{\rI \rE} \gamma_{\rI \rE}
	\left[ \rN_{\rI \rE} f_{\rI}\big( v_{\rI}(x,t) \big) + g_{\rI \rE}(x,t) \right],	\nonumber\\
	(\dt + \gamma_{\rI \rI} )^2 i_{\rI \rI}(x,t) &= e \Upsilon_{\rI \rI} \gamma_{\rI \rI}
	\left[ \rN_{\rI \rI} f_{\rI}\big( v_{\rI}(x,t) \big) + g_{\rI \rI}(x,t) \right],	 \nonumber  \nonumber\\
	\big[ ( \dt + \nu \Lambda_{\rE \rE} )^2 - \tfrac{3}{2} \nu^2 \Delta  \big]  w_{\rE \rE}(x,t) &=
	\nu^2 \Lambda_{\rE \rE}^2 \rM_{\rE \rE} f_{\rE}\big( v_{\rE}(x,t) \big),	\nonumber\\
	\big[ ( \dt + \nu \Lambda_{\rE \rI} )^2  - \tfrac{3}{2} \nu^2 \Delta  \big]  w_{\rE \rI}(x,t) &=
	\nu^2 \Lambda_{\rE \rI}^2 \rM_{\rE \rI} f_{\rE}\big( v_{\rE}(x,t) \big), 
	\hspace{1.65cm}(x,t) \in \Omega \times (0,T],	\nonumber
\end{align}
with periodic boundary conditions provides a mean field model for the electrocortical activity in the neocortex. 
Here, $e$ is the Napier constant and    
$f_{\rX}(\cdot)$ is the mean firing rate function of a population of type $\rX$ and is given by 
\begin{equation} \label{eq:FiringRateFunction}
	f_{\rX}\big( v_{\rX}(x,t) \big) := \frac{\rF_{{\rX}}}
	{1+\exp\left( -\sqrt{2}\, \dfrac{v_{\rX}(x,t) - \mu_{\rX} }{\sigma_{\rX}} \right)},
	\quad \rX \in \{ \rE, \rI \}.
\end{equation}
The definition of the biophysical parameters of the model and the ranges of the values they may take are given in Table \ref{tb:Parameters}. 
For the range of values given in Table \ref{tb:Parameters}, we have 
$|\rV_{\rE \rE}| = \rV_{\rE \rE}$, $|\rV_{\rE \rI}| = \rV_{\rE \rI}$, $|\rV_{\rI \rE}|= -\rV_{\rI \rE}$ , and $|\rV_{\rI \rI}| = -\rV_{\rI \rI}$, 
which we use to simplify \eqref{eq:Model}.  
Note that other than notational changes to the original equations given in \cite{Liley:Network:2002}, we have changed the reference of electric potential to the \emph{resting potential} to avoid the constant terms that would otherwise appear in \eqref{eq:Model}.
Figure \ref{fig:CorticalInputs} shows a schematic of intracortical, corticocortical, and subcortical inputs to two local networks located at points $x$ and $y$, along with their contribution to the global corticocortical activation as modeled by \eqref{eq:Model}. 
The particular coupling between the equations of the model is depicted by the block diagram shown in Figure \ref{fig:BlockDiagram}.

\begin{table}[t]
	\vspace{0.4cm}
	\caption{ Definition and range of values for the biophysical parameters of the mean field model \eqref{eq:Model}. All electric potentials are given with respect to the mean resting soma membrane potential $v_{\rm rest} = -70 
		\text{ mV}$ \cite{Bojak:PhysRev:2005}. } \label{tb:Parameters}
	\begin{center} \small %\footnotesize
		\renewcommand{\arraystretch}{1.3}
		\begin{tabular}{|llll|}\hline
			\rowcolor{LightMaroon}
			\textbf{Parameter} & \textbf{Definition} & \textbf{Range} & \textbf{Unit} \\ \hline
			$\tau_{\rE}$ & Passive excitatory membrane decay time constant & $[0.005,0.15]$ & s\\ 
			$\tau_{\rI}$ & Passive inhibitory membrane decay time constant & $[0.005,0.15]$ & s\\ 
			$\rV_{\rE \rE}$, $\rV_{\rE \rI}$ & Mean excitatory Nernst potentials & $[50, 80]$ &  mV\\ 
			%$\rV_{\rE \rI}$ & Mean excitatory Nernst potential (with respect to resting potential) & $[50, 80]$ &  mV\\ 
			$\rV_{\rI \rE}$, $\rV_{\rI \rI}$ & Mean inhibitory Nernst potentials & $[-20, -5]$ &  mV\\ 
			%$\rV_{\rI \rI}$ & Mean inhibitory Nernst potential (with respect to resting potential) & $[-20, -5]$ &  mV\\ 
			$\gamma_{\rE \rE}$, $\gamma_{\rE \rI}$ & Excitatory postsynaptic potential rate constants & $[100, 1000]$ &  s$^{-1}$\\ 
			%$\gamma_{\rE \rI}$ & Excitatory post synaptic potential rate constant & $[100, 1000]$ &  s$^{-1}$\\ 
			$\gamma_{\rI \rE}$, $\gamma_{\rI \rI}$ & Inhibitory postsynaptic potential rate constants & $[10, 500]$ &  s$^{-1}$\\ 
			%$\gamma_{\rI \rI}$ & Inhibitory post synaptic potential rate constant & $[10, 500]$ &  s$^{-1}$\\  
			$\Upsilon_{\rE \rE}$, $\Upsilon_{\rE \rI}$ & Amplitude of excitatory postsynaptic potentials & $[0.1, 2.0]$ &  mV\\ 
			%$\Upsilon_{\rE \rI}$ & Amplitude of excitatory post synaptic potential & $[0.1, 2.0]$ &  mV\\ 
			$\Upsilon_{\rI \rE}$, $\Upsilon_{\rI \rI}$ & Amplitude of inhibitory postsynaptic potentials & $[0.1, 2.0]$ &  mV\\ 
			%$\Upsilon_{\rI \rI}$ & Amplitude of inhibitory post synaptic potential & $[0.1, 2.0]$ &  mV\\  
			$\rN_{\rE \rE}$, $\rN_{\rE \rI}$ & Number of intracortical excitatory connections & $[2000, 5000]$ &  ---\\ 
			%$\rN_{\rE \rI}$ & Number of intracortical excitatory-inhibitory connections & $[2000, 5000]$ &  ---\\ 
			$\rN_{\rI \rE}$, $\rN_{\rI \rI}$ & Number of intracortical inhibitory connections & $[100, 1000]$ &  ---\\ 
			%$\rN_{\rI \rI}$ & Number of intracortical inhibitory-inhibitory connections & $[100, 1000]$ &  ---\\   
			$\nu$ & Corticocortical conduction velocity & $[100,1000]$ & cm/s\\
			$\Lambda_{\rE \rE}$, $\Lambda_{\rE \rI}$ & Decay scale of corticocortical excitatory connectivities  & $[0.1,1.0]$ & cm$^{-1}$\\
			%$\Lambda_{\rE \rI}$ & Decay scale of corticocortical excitatory-inhibitory connectivity  & $[0.1,1.0]$ & cm$^{-1}$\\
			$\rM_{\rE \rE}$, $\rM_{\rE \rI}$ & Number of corticocortical excitatory connections & $[2000, 5000]$ &  ---\\ 
			%$\rM_{\rE \rI}$ & Number of corticocortical excitatory-inhibitory connections & $[1000, 3000]$ &  ---\\ 
			$\rF_{{\rE}}$ & Maximum mean excitatory firing rate & $[50, 500]$ & s$^{-1}$\\
			$\rF_{{\rI}}$ & Maximum mean inhibitory firing rate & $[50, 500]$ & s$^{-1}$\\
			%$\epsilon$ & Absolute refractory period & $0$ & ms\\
			$\mu_{\rE}$ & Excitatory firing threshold potential & $[15, 30]$ & mV\\
			$\mu_{\rI}$ & Inhibitory firing threshold potential & $[15, 30]$ & mV\\
			$\sigma_{\rE}$ & Standard deviation of excitatory firing threshold potential & $[2, 7]$ & mV\\
			$\sigma_{\rI}$ & Standard deviation of inhibitory firing threshold potential & $[2, 7]$ & mV\\
			\hline
		\end{tabular}
	\end{center}
\end{table}

\begin{figure}[t]
	\centering
	\includegraphics[width=0.9\linewidth]{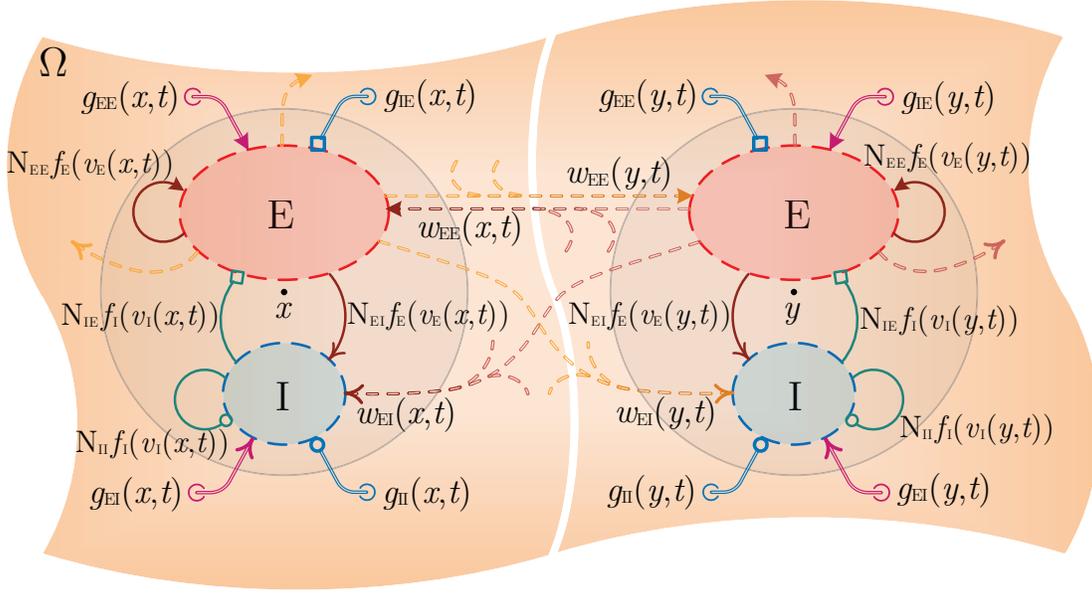}
	\caption[Cortical Inputs]{Cortical inputs to two local networks located at points $x$ and $y$ as modeled by \eqref{eq:Model}.}
	\label{fig:CorticalInputs}
\end{figure}

\begin{figure}[t]
	\centering \vspace{0.5cm}
	\includegraphics[width=1\linewidth]{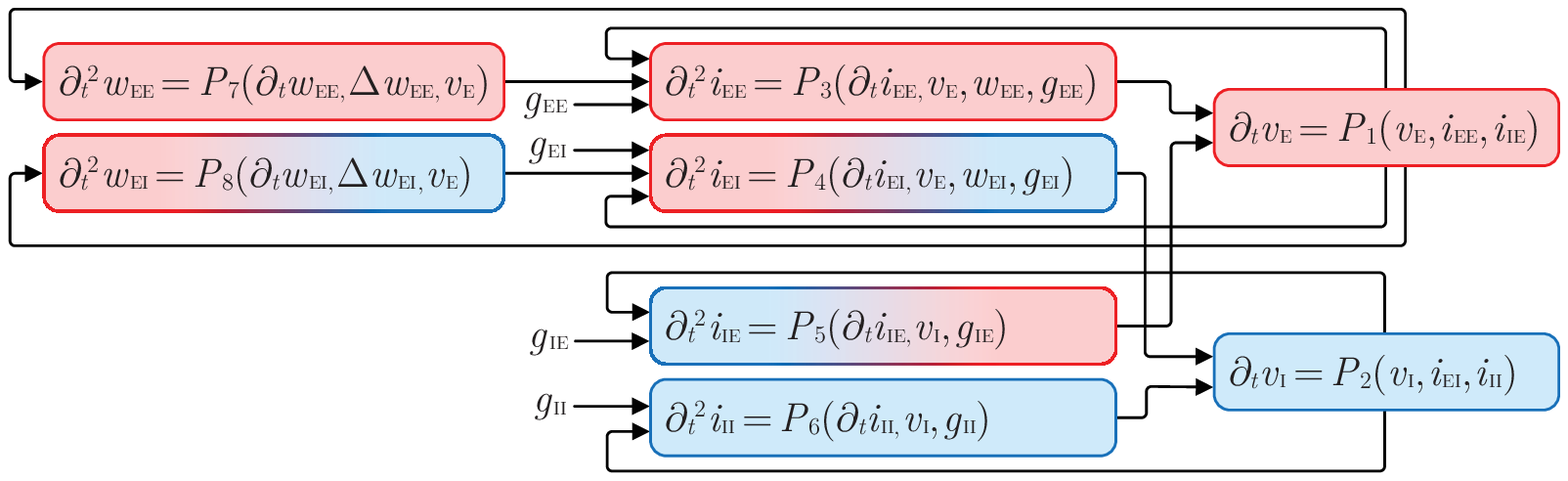}
	\caption[Block Diagram]{Block diagram of the mean field model \eqref{eq:Model}. The operators $P_1, \dots,P_8$ represent the eight equations in \eqref{eq:Model}, respectively.
		Similar to Figures \ref{fig:CortexStructure} and \ref{fig:CorticalInputs}, the blocks associated with excitatory populations are shown in red,
		and the blocks associated with inhibitory populations are shown in blue.}
	\label{fig:BlockDiagram}
\end{figure}

The first two equations in \eqref{eq:Model}, that is, the $v$-equations, model the dynamics of the resistive-capacitive membrane of the space-averaged neurons located at $x$.  
In the absence of postsynaptic $i$-inputs, the mean membrane potential decays exponentially to the resting potential.
The fractions appearing in the equations weight the postsynaptic inputs 
to incorporate the effect of transmembrane diffusive ion flows into the model.
Specifically, the depolarizing effect of excitatory inputs on the membrane is linearly decreased by the weights as
the membrane potential rises to the Nernst (reversal) potential.
When the membrane potential exceeds the Nernst potential, the effect is reversed and further excitation tends to
hyperpolarize the membrane.
The weights associated with the inhibitory postsynaptic inputs have opposite signs at the resting potential, and hence, they have an opposite reversal effect.    

The critically damped second order dynamics of the four $i$-equations in \eqref{eq:Model} generates
a synaptic $\alpha$-function---as in the classical dendritic cable theory---in response to an impulse.  
As shown in Figure \ref{fig:CorticalInputs}, these second order dynamical systems are driven by three different
sources of presynaptic spikes, namely, the inputs $\rN_{\rX \rY} f_{\rX}(v_{\rX})$ from local neuronal populations, the excitatory inputs $w_{\rE \rX}$ form corticocortical fibers, and the inputs $g_{\rX \rY}$ from 
subcortical regions.
As a result, these four equations generate the postsynaptic responses that modulate the polarization of the cell membranes according to the $v$-equations discussed before.	 

Unlike the conduction through short-range intracortical fibers, the conduction through long-range corticocortical fibers cannot be assumed to be instantaneous. 
The $w$-equations in \eqref{eq:Model} form a system of telegraph equations that effectively models the propagation of the excitatory axonal pulses through corticocortical fibers.
To derive these equations, it is assumed in \cite{Liley:Network:2002} that the strength of corticocortical connections onto a local population decays exponentially with distance, with the characteristic scale $\Lambda_{\rE \rX}$.
Moreover, it is assumed that the spatial distribution of connections is isotropic and homogeneous all over the neocortex.

In practical applications, the key variable in the model presented by \eqref{eq:Model} is the mean membrane potential of excitatory populations $v_{\rE}(x,t)$ 
that is presumed to be linearly proportional to EEG recordings from the scalp
\cite{Liley:Network:2002, Liley:Frontiers:2013}.
For further details of the model see \cite{Liley:Network:2002}, or the introductory sections of 
\cite{Bojak:Frontiers:2015, Liley:Frontiers:2013, Frascoli:PhysicaD:2011}.

Now, let 
\begin{align*}
	v(x,t) &:=\big(v_{\rE}(x,t), v_{\rI}(x,t) \big) \in \bbR^2,  \\
	i(x,t) &:=\big( i_{\rE \rE}(x,t), i_{\rE \rI}(x,t), i_{\rI \rE}(x,t), i_{\rI \rI}(x,t) \big) \in \bbR^4 , \\
	w(x,t) &:= \big( w_{\rE \rE}(x,t), w_{\rE \rI}(x,t) \big) \in \bbR^2, \\
	g(x,t) &:= \big( g_{\rE \rE}(x,t), g_{\rE \rI}(x,t), g_{\rI \rE}(x,t), g_{\rI \rI}(x,t) \big) \in \bbR^4,
\end{align*}
and note that \eqref{eq:Model} can be represented in vector form in $\Omega \times (0,T]$ as
\begin{align}
	\Phi \dt v + v -  J_1 i +  J_2 v i^{\rT} \Psi J_4 +  J_3 v i^{\rT} \Psi J_5 &= 0,   \label{eq:Voltage}\\
	\dt^2 i + 2 \Gamma \dt i + \Gamma^2 i 
	- e \Upsilon \Gamma J_6 w - e \Upsilon \Gamma \rN J_7 f(v) &= e \Upsilon\Gamma g, \label{eq:Current}\\
	\dt^2 w + 2 \nu \Lambda \dt w	- \tfrac{3}{2} \nu^2 \Delta w + \nu^2 \Lambda^2 w - \nu^2 \Lambda^2 \rM J_8 f\big(v \big) &=0, \label{eq:Wave}
\end{align}
where $v$, $i$, and $w$ are $\Omega$-periodic vector-valued functions with the initial values 
\begin{equation}
	v \big|_{t=0} = v_0, \quad i \big|_{t=0} = i_0, \quad (\dt i) \big|_{t=0} = i_0', \quad w \big|_{t=0} = w_0, \quad (\dt w) \big|_{t=0} = w_0', \label{eq:InitialValues}
\end{equation}
and
\begin{align} \label{eq:Parameters}
	\Phi &= \mathrm{diag}\big( \tau_{\rE}, \tau_{\rI} \big),	 \quad
	&\Psi &= \mathrm{diag}\big( \tfrac{1}{|\rV_{\rE \rE}|}, \tfrac{1}{|\rV_{\rE \rI}|},  \tfrac{1}{|\rV_{\rI \rE}|}, \tfrac{1}{|\rV_{\rI \rI}|} \big),  \\
	\Gamma &= \mathrm{diag} (\gamma_{\rE \rE}, \gamma_{\rE \rI}, \gamma_{\rI \rE}, \gamma_{\rI \rI}),  \quad
	&\Upsilon &= \mathrm{diag} (	\Upsilon_{\rE \rE}, \Upsilon_{\rE \rI}, \Upsilon_{\rI \rE}, \Upsilon_{\rI \rI}),	\nonumber \\		
	\rN &= \mathrm{diag} (\rN_{\rE \rE}, \rN_{\rE \rI}, \rN_{\rI \rE}, \rN_{\rI \rI}),		\quad
	&\rM &= \mathrm{diag} (\rM_{\rE \rE}, \rM_{\rE \rI}),	 	\nonumber \\
	\Lambda &= \mathrm{diag} ( \Lambda_{\rE \rE}, \Lambda_{\rE \rI} ),		\quad
	&J_1 &= \left[ \begin{array}{cc}
		I_{2 \times 2} & -I_{2\times 2}
	\end{array} \right], \nonumber \\	
	J_2 &= \mathrm{diag} (1, 0), \quad
	&J_3 &= \mathrm{diag} (0, 1), \nonumber \\
	J_4 &= \left[ \begin{array}{cccc}
		1 &	0 & 1 & 0 
	\end{array} \right]^{\rT},  	\quad
	&J_5 &= \left[ \begin{array}{cccc}
		0 &	1 & 0 & 1 
	\end{array} \right]^{\rT},  	 \nonumber \\
	J_6 &= \left[ \begin{array}{cccc}
		1 &	0 & 0 & 0 \\
		0 &	1 & 0 & 0
	\end{array} \right]^{\rT},  	\quad
	&J_7 &= \left[ \begin{array}{cccc}
		1 &	1 & 0 & 0 \\
		0 &	0 & 1 & 1
	\end{array} \right]^{\rT},  \nonumber \\
	J_8 &= \left[ \begin{array}{cc}
		1 &	0 \\
		1 &	0 
	\end{array} \right],			\quad
	&f (v) &= \left[ \begin{array}{c}
		f_{\rE} \big(\left[ \begin{array}{cc}
			1 & 0 \\
		\end{array} \right] v \big) \\ 
		f_{\rI} \big(\left[ \begin{array}{cc}
			0 & 1 \\
		\end{array} \right] v \big)
	\end{array} \right].			 \nonumber	
\end{align}

For simplicity of exposition, the dependence of the functions $v$, $i$, $w$, and $g$ on the arguments $(x,t)$ 
is not explicitly shown in \eqref{eq:Voltage}--\eqref{eq:Wave}. % and is implicitly implied in the rest of this paper. 
Note that \eqref{eq:Voltage} and \eqref{eq:Current}, which model the local dynamics of the neocortex, are essentially systems of ordinary differential equations. 
These equations do not possess any spatial smoothing component, and hence, their solutions are expected to evolve in less regular function spaces \cite{Sell:EvolutionaryEquations:2002, Marion:SIMA:1989}. 
The system of partial differential equations \eqref{eq:Wave} consists of two telegraph equations coupled indirectly through \eqref{eq:Voltage} and \eqref{eq:Current}; see Figure \ref{fig:BlockDiagram}. 

\begin{remark}[Variations in the parameters]: In the analysis that follows in the rest of the paper, we assume that all the parameters of the model are constant.
However, in practical applications, certain parameters may be considered to vary in time or space 
to model specific physiological situations in the brain. 
The variations can occur independently, or can be modeled using additional ODE's or PDE's coupled with the existing equations.
We give all the details of the results---even though some of which may be found fairly standard---along with a careful inclusion of all parameters. 
Therefore, in applications, it should be possible to easily observe where the parameters of interest appear in the analysis, and whether or not their particular variations can affect the validity of the results. 
\end{remark}

\section{Existence and Uniqueness of Solutions} \label{sec:ExistenceUniqueness}
In this section, we investigate the problem of existence, uniqueness, and regularity of solutions for \eqref{eq:Voltage}--\eqref{eq:Wave} with the initial values \eqref{eq:InitialValues} and periodic boundary conditions.
We set appropriate spaces of $\Omega$-periodic functions as the functional framework of the problem by which we include the boundary conditions in the solution spaces. 
We view $v(x,t)$, $i(x,t)$, and $w(x,t)$ as Banach space-valued functions and follow the standard technique of Galerkin approximations \cite{Robinson:InfiniteDimensional:2001, Evans:PDE:2010, Temam:InfiniteDimensional:1997} to construct weak and strong solutions in Theorems \ref{th:WeakExistenceUniqueness} and \ref{th:StrongExistenceUniqueness}.
The details of the proof of these results can be skipped in the case the reader is proficient in the analysis of Galerkin method. 
%However, these details should be investigated in particular applications of the model as stated in Remark $(\ast)$. 

First, define the function spaces 
%\begin{equation*}
%\cH^1 := \cL^2_v \times \cL^2_i \times \cH_w^1, \quad
%\cH^2 := \cL^2_v \times \cL^2_i \times \cH_w^2, \quad
%%\cL^2 = \cL^2_v \times \cL^2_i \times \cL^2_w, \quad
%{\cH'}^1:= {\cL^2_v}^{\ast} \times {\cL^2_i}^{\ast} \times {\cH^1_w}^{\!\!\ast}, \quad
%\end{equation*}
\begin{alignat}{3} \label{eq:FunctionSpaces}
	\cL^2_v &:= L_{\rm per}^2(\Omega; \bbR^2), &\quad
	\cL^2_i &:= L_{\rm per}^2(\Omega; \bbR^4), &\quad
	\cL^2_w &:= L_{\rm per}^2(\Omega; \bbR^2), \\
	\cL^{\infty}_v &:= L_{\rm per}^{\infty}(\Omega; \bbR^2), &\quad
	\cL^{\infty}_i &:= L_{\rm per}^{\infty}(\Omega; \bbR^4), &\quad 
	\cL^{\infty}_w &:= L_{\rm per}^{\infty}(\Omega; \bbR^2), \nonumber \\
	\cH_w^1 &:= H_{\rm per}^1(\Omega; \bbR^2), &\quad
	\cH_w^2 &:= H_{\rm per}^2(\Omega; \bbR^2), &&\nonumber \\
	\cL_{\partial w}^2 &:= L_{\rm per}^2(\Omega; \bbR^{2 \times 2}), &\quad
	\cH_{\partial w}^1 &:= H_{\rm per}^1(\Omega; \bbR^{2 \times 2}), &&\nonumber\\
	\cW_w^{1,\infty} &:= W_{\rm per}^{1, \infty}(\Omega; \bbR^2), &&&& \nonumber
\end{alignat} 
and denote by ${\cL^2_v}^{\ast}$, ${\cL^2_i}^{\ast}$, and ${\cH^1_w}^{\!\!\ast}$ the dual spaces of $\cL^2_v$, $\cL^2_i$, and $\cH_w^1$, respectively. 
Note that $\cL^2_v$ and $\cL^2_i$ are, respectively, isometrically isomorphic to ${\cL^2_v}^{\ast}$ and ${\cL^2_i}^{\ast}$ \cite[Th. 6.15]{Folland:RealAnalysis:1999}, which we denote by ${\cL^2_v}^{\ast} = \cL^2_v$ and ${\cL^2_i}^{\ast} = \cL^2_i$. 
By the Rellich-Kondrachov compact embedding theorems we have $\cH_w^1 \Subset \cL^2_w \hookrightarrow {\cH^1_w}^{\!\!\ast}$; see \cite[Th. 6.6-3]{Ciarlet:FunctionalAnalysis:2013} and \cite[Th. A.4]{Robinson:InfiniteDimensional:2001}. 
Moreover, there exists a dual orthogonal basis of $\cH_w^1$ and $\cL_w^2$ given by the following lemma. 
The proof of this lemma is fairly standard and follows the general results given in \cite[Sec. II.2.1]{Temam:InfiniteDimensional:1997}.

\begin{lemma}[Dual orthogonal basis] \label{lem:DualOrthogonalBasis}
	There exists an orthonormal basis of $\cL^2_w$ that is also an orthogonal basis of $\cH_w^1$, and can be constructed by the eigenfunctions of the linear operator $A:=-\Delta + I$.%):\cH_w^1 \rightarrow {\cH_w^1}^{\!\! \ast}$.  	
\end{lemma}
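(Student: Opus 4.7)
The plan is to obtain the basis via the spectral theorem for compact, positive, self-adjoint operators, applied to $A^{-1}$ where $A:=-\Delta+I$. First, I would view $A$ as an unbounded operator on $\cL_w^2$ with domain $\cH_w^2$. The Green's identity
\[
\inner{\cL_w^2}{Au}{v} = \inner{\cH_w^1}{u}{v}, \qquad u \in \cH_w^2,\; v \in \cH_w^1,
\]
already recorded in (1.1) shows that $A$ is symmetric and strictly positive; in the periodic setting there are no boundary contributions, and a standard density argument upgrades symmetry to self-adjointness (equivalently, on the flat torus one can use the explicit Fourier basis $\{e^{2\pi\bi k\cdot x/\omega}\}_{k\in\bbZ^2}$ to check essential self-adjointness directly).

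Second, I would show that $A^{-1}:\cL_w^2\to\cL_w^2$ is well-defined, bounded, self-adjoint, and compact. Given $f\in\cL_w^2 \hookrightarrow {\cH_w^1}^{\!\!\ast}$, the Lax-Milgram theorem applied to the continuous coercive form $a(u,v)=\inner{\cH_w^1}{u}{v}$ on $\cH_w^1\times\cH_w^1$ produces a unique weak solution $u\in\cH_w^1$ of $Au=f$; elliptic regularity on the torus (alternatively, explicit Fourier-series summation) promotes this to $u\in\cH_w^2$ with $\norm{\cH_w^2}{u}\le C\norm{\cL_w^2}{f}$. Composing with the compact embedding $\cH_w^1\Subset\cL_w^2$ quoted from Rellich-Kondrachov yields compactness of $A^{-1}$ on $\cL_w^2$, and self-adjointness of $A^{-1}$ is inherited from that of $A$.

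Third, the Hilbert-Schmidt spectral theorem applied to the compact positive self-adjoint operator $A^{-1}$ yields a countable orthonormal basis $\seq{e_k}_{k=1}^{\infty}$ of $\cL_w^2$ with $A^{-1}e_k=\mu_k e_k$, $\mu_k>0$, $\mu_k\downarrow 0$. Equivalently, $Ae_k=\lambda_k e_k$ with $\lambda_k=1/\mu_k\ge 1$ and $\lambda_k\to\infty$, and each $e_k\in\cH_w^2\subset\cH_w^1$. Applying the Green's identity once more gives
\[
\inner{\cH_w^1}{e_j}{e_k} = \inner{\cL_w^2}{Ae_j}{e_k} = \lambda_j\,\delta_{jk},
\]
so $\seq{e_k}$ is $\cH_w^1$-orthogonal with $\norm{\cH_w^1}{e_k}=\sqrt{\lambda_k}$.

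Finally, to show $\seq{e_k}$ spans $\cH_w^1$, I would take any $u\in\cH_w^1$ with $\inner{\cH_w^1}{u}{e_k}=0$ for every $k$; since $\inner{\cH_w^1}{u}{e_k}=\lambda_k\inner{\cL_w^2}{u}{e_k}$ and $\lambda_k>0$, this forces $u\perp e_k$ in $\cL_w^2$ for all $k$, whence $u=0$ by completeness of $\seq{e_k}$ in $\cL_w^2$. No step is genuinely difficult; the only point requiring care is the $\cH_w^2$-regularity of weak solutions on the flat torus, and the vector-valued setting $u:\Omega\to\bbR^2$ introduces nothing new because $A$ acts componentwise and the construction can be carried out on each scalar component and then concatenated.
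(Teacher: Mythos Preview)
Your proposal is correct and follows precisely the standard route the paper alludes to by citing Temam, Sec.~II.2.1: Lax--Milgram to invert $A$, Rellich--Kondrachov to get compactness of $A^{-1}$ on $\cL_w^2$, the spectral theorem for compact self-adjoint operators to produce the $\cL_w^2$-orthonormal eigenbasis, and then the identity $\inner{\cH_w^1}{e_j}{e_k}=\inner{\cL_w^2}{Ae_j}{e_k}=\lambda_j\delta_{jk}$ to verify $\cH_w^1$-orthogonality and completeness. There is nothing to add; your remark that the vector-valued case is handled componentwise is also the right observation.
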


Now, before proceeding to the main results of this section, we define the notions of \emph{weak} and \emph{strong} solutions of \eqref{eq:Voltage}--\eqref{eq:InitialValues} as used in this paper. 

\begin{definition}[Weak solution] \label{def:WeakSolution}
	A solution $(v,i,w)$ is called an $\Omega$-periodic \emph{weak solution} of the initial value problem \eqref{eq:Voltage}--\eqref{eq:InitialValues} if it solves the weak version of the problem wherein the partial differential equations are understood as equalities in the space of duals $L^2(0,T;{\cL^2_v}^{\ast} \times {\cL^2_i}^{\ast} \times {\cH^1_w}^{\!\!\ast})$. 
	That is, the functions 
	\begin{equation*}
		v \in L^2(0,T;\cL_v^2), \quad i \in L^2(0,T;\cL_i^2), \quad w \in L^2(0,T;\cH_w^1),
	\end{equation*}
	with 
	\begin{align*}
		\rd_t v &\in L^2(0,T;{\cL^2_v}^{\ast}), \quad 
		&\rd_t i &\in L^2(0,T;\cL_i^2), \quad 
		&\rd_t^2 i \in L^2(0,T;{\cL^2_i}^{\ast}), \\
		\rd_t w &\in L^2(0,T;\cL_w^2), \quad
		&\rd_t^2 w &\in L^2(0,T;{\cH^1_w}^{\!\!\ast}), &
	\end{align*}
	construct an $\Omega$-periodic weak solution for \eqref{eq:Voltage}--\eqref{eq:InitialValues} if for every $\ell_v \in \cL_v^2$,  $\ell_i \in \cL_i^2$, $h_w \in \cH_w^1$, and almost every $t \in [0,T]$, $T>0$,
	\begin{align}
		\pair{\cL_v^2}{\Phi \rd_t v}{\ell_v} + \inner{\cL_v^2}{v}{\ell_v} - \inner{\cL_v^2}{J_1 i}{\ell_v} + \inner{\cL_v^2}{ J_2 v i^{\rT} \Psi J_4 + J_3 v i^{\rT} \Psi J_5}{\ell_v} &= 0,  \label{eq:WeakVoltage}\\
		\pair{\cL_i^2}{\rd_t^2i}{\ell_i} + 2 \inner{\cL_i^2}{\Gamma \rd_t i}{\ell_i} + \inner{\cL_i^2}{\Gamma^2 i}{\ell_i} - e \inner{\cL_i^2}{\Upsilon \Gamma J_6 w}{\ell_i} \label{eq:WeakCurrent} \hspace{2.18cm} \\	 
		-	e \inner{\cL_i^2}{\Upsilon \Gamma \rN J_7 f( v)}{\ell_i}	&= e \inner{\cL_i^2}{\Upsilon\Gamma g}{\ell_i},  \nonumber \\
		\pair{\cH_w^1}{\rd_t^2 w}{h_w} + 2 \nu \inner{\cL_w^2}{\Lambda \rd_t w}{h_w} + \tfrac{3}{2} \nu^2  \inner{\cL_{\partial w}^2}{\dx w}{\dx h_w} 
		\hspace{2.53cm}\label{eq:WeakWave} \\
		+ \nu^2 \inner{\cL_w^2}{\Lambda^2 w}{h_w}  
		- \nu^2 \inner{\cL_w^2}{\Lambda^2 \rM J_8 f(v)}{h_w} &=0, \nonumber 
	\end{align}
	with the initial values
	\begin{equation}
		%	v \big|_{t=0} = v_0, \quad i \big|_{t=0} = i_0, \quad (\rd_t i) \big|_{t=0} = i_0', \quad w \big|_{t=0} = w_0, \quad (\rd_t w) \big|_{t=0} = w_0'. \label{eq:WeakInitial}
		v (0) = v_0, \quad i (0) = i_0, \quad \rd_t i (0) = i_0', \quad w (0) = w_0, \quad \rd_t w(0) = w_0'. \label{eq:WeakInitial}
	\end{equation}
\end{definition}

\begin{definition}[Strong solution] \label{def:StrongSolution}
	A solution $(v,i,w)$ is called an $\Omega$-periodic \emph{strong solution} of the initial value problem \eqref{eq:Voltage}--\eqref{eq:InitialValues} if it solves the strong version of the problem wherein the partial differential equations are understood as equalities in $L^2(0,T;\cL^2_v \times \cL^2_i\times \cL^2_w)$. 
	That is, the functions 
	\begin{equation*}
		v \in H^1(0,T;\cL_v^2), \quad i \in H^2(0,T;\cL_i^2), \quad w \in L^2(0,T;\cH_w^2),
	\end{equation*}
	with 
	\begin{alignat*}{3}
		\rd_t v &\in L^2(0,T;\cL_v^2), &\quad 
		\rd_t i &\in H^1(0,T;\cL_i^2), &\quad 
		\rd_t^2 i &\in L^2(0,T;\cL_i^2), \\
		\rd_t w &\in L^2(0,T;\cH_w^1), &\quad
		\rd_t^2 w &\in L^2(0,T;\cL_w^2), &&
	\end{alignat*}
	construct an $\Omega$-periodic strong solution for \eqref{eq:Voltage}--\eqref{eq:InitialValues} wherein they solve the equations for almost every $x \in \Omega$ and almost every $t \in [0,T]$, $T>0$.
\end{definition}

Now, let  $\cB_v = \seq{\ell_v^{(l)}}_{l=1}^{\infty}$ be a basis of $\cL_v^2$ such that  $\seq{\Phi^{\frac{1}{2}} \ell_v^{(l)}}_{l=1}^{\infty}$ is orthonormal in $\cL_v^2$.
Note that \eqref{eq:Parameters}, with the range of values given in Table \ref{tb:Parameters}, implies that 
$\Phi$ is a positive-definite diagonal matrix, and hence, such a basis exists.
Moreover, let $\cB_i =\seq{ \ell_i^{(l)}}_{l=1}^{\infty}$ be an orthonormal basis of $\cL_i^2$,
and 
$\cB_w =\seq{ h_w^{(l)}}_{l=1}^{\infty}$ be an orthogonal basis of $\cH_w^1$ that is orthonormal in $\cL_w^2$;
see Lemma \ref{lem:DualOrthogonalBasis} for the existence and structure of $\cB_w$.  
Finally, construct the set $\cB = \seq{b^{(k)}}_{k=1}^{\infty} \subset \cL^2_v \times \cL^2_i \times \cH_w^1$ as 
\begin{align} \label{eq:Basis}
	\cB := \cB_v \times \cB_i \times \cB_w = \left\{ b^{(k)} = ( \ell_v^{(k)}, \ell_i^{(k)}, h_w^{(k)} ) : 
	\ell_v^{(k)} \in \cB_v,  \ell_i^{(k)} \in \cB_i, h_w^{(k)} \in \cB_w \right \}_{k=1}^{\infty}.
\end{align} 
For each positive integer $m$, we seek approximations 
$v^{(m)}:[0,T]\rightarrow \cL_v^2$, 
$i^{(m)}:[0,T]\rightarrow \cL_i^2$, and
$w^{(m)}:[0,T]\rightarrow \cH_w^1$ 
of the form
\begin{align}
	v^{(m)} (t) &:= \sum \nolimits_{k=1}^m  c_{v_k}^{(m)}(t) \ell_v^{(k)},    \label{eq:VoltageExpansion} \\
	i^{(m)} (t) &:= \sum \nolimits_{k=1}^m  c_{i_k}^{(m)}(t) \ell_i^{(k)},    \label{eq:CurrentExpansion}  \\
	w^{(m)} (t) &:= \sum \nolimits_{k=1}^m  c_{w_k}^{(m)}(t) h_w^{(k)},    \label{eq:WaveExpansion}
\end{align}
constructed by the first $m$ components of $\cB$ and sufficiently smooth  scalar-valued functions $c_{v_k}^{(m)}$, $c_{i_k}^{(m)}$, and $c_{w_k}^{(m)}$ on $[0,T]$, such that these approximations satisfy 
\begin{align}
	\inner{\cL_v^2}{\Phi \rd_t v^{(m)}}{\ell_v^{(k)}} + \inner{\cL_v^2}{v^{(m)}}{\ell_v^{(k)}} - \inner{\cL_v^2}{J_1 i^{(m)}}{\ell_v^{(k)}} \hspace{2.58cm} & \label{eq:VoltageApprx}\\
	+ \inner{\cL_v^2}{ J_2 v^{(m)} i^{{(m)}^{\rT}} \Psi J_4 + J_3 v^{(m)} i^{{(m)}^{\rT}} \Psi J_5}{\ell_v^{(k)}} &= 0,  \nonumber \\
	\inner{\cL_i^2}{\rd_t^2i^{(m)}}{\ell_i^{(k)}} + 2 \inner{\cL_i^2}{\Gamma \rd_t i^{(m)}}{\ell_i^{(k)}} + \inner{\cL_i^2}{\Gamma^2 i^{(m)}}{\ell_i^{(k)}} \hspace{2.21cm} \label{eq:CurrentApprx}&\\
	- e \inner{\cL_i^2}{\Upsilon \Gamma J_6 w^{(m)}}{\ell_i^{(k)}} - e \inner{\cL_i^2}{\Upsilon \Gamma \rN J_7 f( v^{(m)})}{\ell_i^{(k)}} \nonumber
	&= e \inner{\cL_i^2}{\Upsilon\Gamma g}{\ell_i^{(k)}},  \nonumber \\
	\inner{\cL_w^2}{\rd_t^2 w^{(m)}}{h_w^{(k)}} + 2 \nu \inner{\cL_w^2}{\Lambda \rd_t w^{(m)}}{h_w^{(k)}} + \tfrac{3}{2} \nu^2  \inner{\cL_{\partial w}^2}{\dx w^{(m)}}{\dx h_w^{(k)}} \hspace{0.0cm}&\label{eq:WaveApprx}\\
	+ \nu^2 \inner{\cL_w^2}{\Lambda^2 w^{(m)}}{h_w^{(k)}} 	- \nu^2 \inner{\cL_w^2}{\Lambda^2 \rM J_8 f(v^{(m)} )}{h_w^{(k)}} &=0, \nonumber
\end{align}
for all $t \in [0,T]$ and $k=1,\dots,m$,  
subject to the initial conditions 
\begin{alignat}{3} \label{eq:ApprxInitial}
	%c_{v_k}^{(m)} \big|_{t=0} &= \inner{\cL_v^2}{v_0}{\ell_v^{(k)}}, &\quad 
	%c_{i_k}^{(m)} \big|_{t=0} &= \inner{\cL_i^2}{i_0}{\ell_i^{(k)}}, &\quad
	%\big(\rd_t c_{i_k}^{(m)}\big) \big|_{t=0} &= \inner{\cL_i^2}{i_0'}{\ell_i^{(k)}}, \\
	%c_{w_k}^{(m)} \big|_{t=0} &= \inner{\cL_w^2}{w_0}{h_w^{(k)}}, &\quad
	%\big(\rd_t c_{w_k}^{(m)}\big) \big|_{t=0} &= \inner{\cL_w^2}{w_0'}{h_w^{(k)}}, &&\nonumber
	c_{v_k}^{(m)} (0) &= \inner{\cL_v^2}{v_0}{\ell_v^{(k)}}, &\quad 
	c_{i_k}^{(m)} (0) &= \inner{\cL_i^2}{i_0}{\ell_i^{(k)}}, &\quad
	\rd_t c_{i_k}^{(m)}(0) &= \inner{\cL_i^2}{i_0'}{\ell_i^{(k)}}, \\
	c_{w_k}^{(m)} (0) &= \inner{\cL_w^2}{w_0}{h_w^{(k)}}, &\quad
	\rd_t c_{w_k}^{(m)}(0) &= \inner{\cL_w^2}{w_0'}{h_w^{(k)}}, &&\nonumber
\end{alignat}
on the coefficients
$c_k^{(m)}(t) := (c_{v_k}^{(m)}(t), c_{i_k}^{(m)}(t), c_{w_k}^{(m)}(t)) \in \bbR^3$. 

Equations \eqref{eq:VoltageApprx}--\eqref{eq:ApprxInitial} are equivalent to a system of nonlinear $3m$-dimensional ordinary differential equations
on coefficients $c^{(m)}(t)=(c_1^{(m)}(t), \dots, c_m^{(m)}(t)) \in \bbR^{3m}$.
Therefore, by the standard theory of ordinary differential equations \cite[Th. 2.1]{Taylor:PDEI:2011}, 
there exists a unique function $c^{(m)}(t)$ that solves \eqref{eq:VoltageApprx}--\eqref{eq:ApprxInitial} 
for $t \in [0,T_m)$, $T_m >0$, with the approximations \eqref{eq:VoltageExpansion}--\eqref{eq:WaveExpansion}.  Moreover, $T_m = T$ for all positive integers $m$, which follows from Proposition \ref{prp:EnergyEstimates}.

The standard Galerkin approximation method involves providing energy estimates that are uniform in $m$ for all the approximations
$(v^{(m)}, i^{(m)}, w^{(m)})$. 
Such \textit{a priori} energy estimates then allow construction of solutions by passing to the limits as $m \rightarrow \infty$.
The following proposition gives the desired estimates for the approximations \eqref{eq:VoltageExpansion}--\eqref{eq:WaveExpansion}.

\begin{proposition} [Energy estimates] \label{prp:EnergyEstimates}
	Suppose $g \in L^2(0,T; \cL_i^2)$ and for every positive integer $m$ let $v^{(m)}$, $i^{(m)}$, and $w^{(m)}$ be 
	%the approximate solutions of the form 
	functions of the form \eqref{eq:VoltageExpansion}--\eqref{eq:WaveExpansion}, respectively,
	satisfying \eqref{eq:VoltageApprx}--\eqref{eq:WaveApprx} with the initial conditions \eqref{eq:ApprxInitial}. 
	Then there exist positive constants $\alpha_v$, $\beta_v$, $\alpha_i$, and $\alpha_w$, 
	dependent only on the parameters of the model, such that for every positive integer $m$,
	%Then there exist positive constants $\alpha_v$, $\beta_v$, $\alpha_i$, and $\alpha_w$, 
	%depending only on $T$, $\Omega$, and the parameters of the model, such that
	%for every positive integer $m$,
	\begin{align} 
		\sup_{t\in [0,T]}\left( \norm{\cL_v^2}{v^{(m)}(t)}^2 \right) + \norm{L^2(0,T;{\cL^2_v}^{\ast})}{\rd_t v^{(m)}}^2 &\leq \kappa_v, \label{eq:BoundVoltage} \\
		\sup_{t\in [0,T]}\left(\norm{\cL_i^2}{\rd_t i^{(m)}(t)}^2 + \norm{\cL_i^2}{i^{(m)}(t)}^2 \right) 
		+\norm{L^2(0,T; {\cL^2_i}^{\ast})}{\rd_t^2 i^{(m)}}^2 &\leq \kappa_i,  \label{eq:BoundCurrent} \\
		\sup_{t\in[0,T]} \left(\norm{\cL_w^2}{\rd_t w^{(m)}(t)}^2  + \norm{\cH_w^1}{w^{(m)}(t)}^2 \right) 
		+ \norm{L^2(0,T;{\cH^1_w}^{\!\!\ast})}{\rd_t^2 w^{(m)}}^2 &\leq \kappa_w, 	 \label{eq:BoundWave}
	\end{align}
	where $\kappa_v$, $\kappa_i$, and $\kappa_w$ are positive constants given, independently of $m$, by 
	\begin{align} 
		\kappa_v &:= \alpha_v \left( \big(1+(1+\sqrt{\kappa_i})^2T \big) \exp \left(\beta_v \sqrt{\kappa_i} T \right) \left[ \norm{\cL_v^2}{v_0}^2 
		+ \sqrt{\kappa_i} \right] + \kappa_i T \right),  \label{eq:KappaV}\\
		\kappa_i &:= \alpha_i \left( (1+T)\left[ \norm{\cL_i^2}{i'_0}^2 + \norm{\cL_i^2}{i_0}^2 \right]
		+(2+T)\Big[ T \left( \kappa_w + |\Omega| \right) + \norm{L^2(0,T;\cL_i^2)}{g}^2 \Big]\right), \label{eq:KappaI}\\
		\kappa_w &:= \alpha_w \left(
		(1+T) \left[ \norm{\cL_w^2}{w_0'}^2  + \norm{\cH_w^1}{w_0}^2 \right] + (2+T) T |\Omega|  \right). \label{eq:KappaW}
	\end{align}
	%are constants that are independent of the integer $m$.
\end{proposition}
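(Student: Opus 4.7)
The Galerkin system \eqref{eq:VoltageApprx}--\eqref{eq:WaveApprx} is triangularly coupled in the order $(w,i,v)$: the wave block is forced only by the sigmoid $f(v^{(m)})$, which is bounded pointwise by $\max(\rF_{\rE},\rF_{\rI})$; the current block is forced by $w^{(m)}$, $f(v^{(m)})$, and $g$; and the voltage block is forced by $i^{(m)}$ linearly through $J_1 i^{(m)}$ and quadratically through $J_2 v^{(m)} {i^{(m)}}^{\rT}\Psi J_4 + J_3 v^{(m)} {i^{(m)}}^{\rT}\Psi J_5$. My plan is to establish the three bounds in the order $\kappa_w$, $\kappa_i$, $\kappa_v$, feeding each into the next. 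The strategy in every step is the standard Galerkin energy method: multiply the scalar equation indexed by $k$ by an appropriate coefficient function, sum over $k=1,\dots,m$, identify a time derivative of an energy on the left-hand side, bound the source on the right via the preceding estimates together with the Cauchy--Schwarz and Young inequalities, and close using Gronwall's inequality; this structure explains why $\kappa_i$ depends on $\kappa_w$, and $\kappa_v$ depends on $\kappa_i$, but not vice versa.

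\textbf{Wave and current estimates.} For \eqref{eq:WaveApprx}, multiply equation $k$ by $\rd_t c_{w_k}^{(m)}$ and sum in $k$; the pairings $\inner{\cL_w^2}{\rd_t^2 w^{(m)}}{\rd_t w^{(m)}}$, $\inner{\cL_w^2}{\Lambda^2 w^{(m)}}{\rd_t w^{(m)}}$, and $\inner{\cL_{\partial w}^2}{\dx w^{(m)}}{\dx \rd_t w^{(m)}}$ combine to give $\tfrac{1}{2}\rd_t\bigl[\norm{\cL_w^2}{\rd_t w^{(m)}}^2 + \tfrac{3}{2}\nu^2\norm{\cL_{\partial w}^2}{\dx w^{(m)}}^2 + \nu^2\norm{\cL_w^2}{\Lambda w^{(m)}}^2\bigr]$, plus the nonnegative dissipation $2\nu\norm{\cL_w^2}{\Lambda^{1/2}\rd_t w^{(m)}}^2$; the right-hand side is controlled by $C|\Omega|^{1/2}\norm{\cL_w^2}{\rd_t w^{(m)}}$ using the uniform bound on $f$, so Young's and Gronwall's inequalities produce \eqref{eq:BoundWave}. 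The bound on $\rd_t^2 w^{(m)}$ in $L^2(0,T;{\cH^1_w}^{\!\!\ast})$ is extracted by pairing \eqref{eq:WaveApprx} with an arbitrary $h_w\in\cH_w^1$ of unit norm and invoking Lemma \ref{lem:DualOrthogonalBasis} to control the $\cH_w^1$-norm of its projection onto $\mathrm{span}\{h_w^{(1)},\dots,h_w^{(m)}\}$. An essentially identical argument applied to \eqref{eq:CurrentApprx} with multiplier $\rd_t i^{(m)}$ yields \eqref{eq:BoundCurrent}, using the just-obtained bound $\norm{\cL_w^2}{w^{(m)}}\leq\sqrt{\kappa_w}$, the sigmoid bound on $f(v^{(m)})$, and $g\in L^2(0,T;\cL_i^2)$.

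\textbf{Voltage estimate.} Test \eqref{eq:VoltageApprx} with $v^{(m)}$; the orthonormality of $\{\Phi^{1/2}\ell_v^{(k)}\}$ in $\cL_v^2$ makes the left side equal to $\tfrac{1}{2}\rd_t\norm{\cL_v^2}{\Phi^{1/2}v^{(m)}}^2 + \norm{\cL_v^2}{v^{(m)}}^2$. The linear coupling is dominated by $C\sqrt{\kappa_i}\norm{\cL_v^2}{v^{(m)}}$, while the quadratic coupling reduces, in components, to $\int_\Omega\bigl[(v^{(m)}_\rE)^2 s_\rE(i^{(m)}) + (v^{(m)}_\rI)^2 s_\rI(i^{(m)})\bigr]\rd x$ with $s_\rE,s_\rI$ linear combinations of entries of $i^{(m)}$ scaled by the Nernst potentials. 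This is the \emph{main technical difficulty}: closing this cubic-type coupling uniformly in $m$ with only $L^2$-based control of $v^{(m)}$ and $i^{(m)}$. I would dominate it by $C(1+\sqrt{\kappa_i})\norm{\cL_v^2}{v^{(m)}}^2 + C'\kappa_i$ (absorbing one factor of $v^{(m)}$ against itself via Young's inequality and using the preceding bound $\sup_t\norm{\cL_i^2}{i^{(m)}(t)}\leq\sqrt{\kappa_i}$), producing a differential inequality of the form $\rd_t E_v\leq\beta_v\sqrt{\kappa_i}E_v + \gamma(t)$ with $E_v:=\norm{\cL_v^2}{\Phi^{1/2}v^{(m)}}^2$ and $\gamma\in L^1(0,T)$. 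Gronwall then yields \eqref{eq:BoundVoltage} and the structural form \eqref{eq:KappaV}, namely the factor $\exp(\beta_v\sqrt{\kappa_i}T)$ and the additive forcing contribution. The $L^2(0,T;{\cL_v^2}^{\ast})$ bound on $\rd_t v^{(m)}$ is then read off directly from \eqref{eq:VoltageApprx} by solving for $\Phi\rd_t v^{(m)}$ and estimating each term using the bounds already obtained.

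\textbf{Global-in-time existence.} Finally, since all of the a priori estimates just derived are uniform on any interval $[0,T_m)$ on which the Galerkin ODE admits a solution, the standard continuation principle for finite-dimensional ODEs rules out finite-time blow-up of the coefficients and gives $T_m = T$ for every positive integer $m$.
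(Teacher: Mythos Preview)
Your overall strategy and the order of estimates ($w \to i \to v$) match the paper's proof exactly, as do the specific multipliers ($\rd_t w^{(m)}$, $\rd_t i^{(m)}$, $v^{(m)}$), the use of the uniform sigmoid bound on $f$, and the extraction of the dual-norm bounds on $\rd_t^2 w^{(m)}$ and $\rd_t^2 i^{(m)}$ by pairing with arbitrary test functions and using the orthogonality of the bases.

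The one step that deserves scrutiny is precisely the one you flag as the ``main technical difficulty.'' You propose to dominate the cubic term $\int_\Omega |v^{(m)}|^2\,|i^{(m)}|\,\rd x$ by $C(1+\sqrt{\kappa_i})\norm{\cL_v^2}{v^{(m)}}^2 + C'\kappa_i$, ``absorbing one factor of $v^{(m)}$ against itself via Young's inequality and using $\sup_t\norm{\cL_i^2}{i^{(m)}(t)}\leq\sqrt{\kappa_i}$.'' As written this cannot close: with only $L^2$ control on both factors, H\"older gives either $\norm{L^4}{v^{(m)}}^2\norm{L^2}{i^{(m)}}$ or $\norm{L^2}{v^{(m)}}^2\norm{L^\infty}{i^{(m)}}$, and since $v^{(m)}$ and $i^{(m)}$ are finite sums of arbitrary $L^2$ basis elements with no additional spatial regularity, neither bound is available uniformly in $m$. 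The paper's own proof makes exactly the same move at this point --- it passes in one line from $\int_\Omega \norm{\bbR^2}{v^{(m)}}^2\norm{\bbR^4}{i^{(m)}}\,\rd x$ directly to $\sqrt{2\kappa_i}\,\norm{2}{\Psi}\,\norm{\cL_v^2}{v^{(m)}}^2$ --- so you are faithfully reproducing the paper's argument, and the exponential factor $\exp(\beta_v\sqrt{\kappa_i}T)$ in \eqref{eq:KappaV} is precisely the Gr\"onwall output of that step. In short, your proposal coincides with the paper's proof; the step you correctly identified as delicate is asserted rather than justified in both.
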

\begin{proof}
	Multiplying \eqref{eq:WaveApprx} by $\rd_t c_{w_k}^{(m)}$ and summing over $k=1, \dots, m$ yields
	\begin{multline*}
		\inner{\cL_w^2}{\rd_t^2 w^{(m)}}{\rd_t w^{(m)}} + 2 \nu \inner{\cL_w^2}{\Lambda \rd_t w^{(m)}}{\rd_t w^{(m)}} + \tfrac{3}{2} \nu^2  \inner{\cL_{\partial w}^2}{\dx w^{(m)}}{\rd_t \dx w^{(m)}}  \\
		+ \nu^2 \inner{\cL_w^2}{\Lambda^2 w^{(m)}}{\rd_t w^{(m)}} 	- \nu^2 \inner{\cL_w^2}{\Lambda^2 \rM J_8 f(v^{(m)} )}{\rd_t w^{(m)}} =0, 
	\end{multline*} 
	or, equivalently,
	%\begin{align*}
	%\tfrac{1}{2} \rd_t \norm{\cL_w^2}{\rd_t w^{(m)}}^2 &+ 2 \nu \norm{\cL_w^2}{\rd_t w^{(m)}}^2  + \tfrac{3}{4} \nu^2 \rd_t \inner{\cL_w^2}{\nabla w^{(m)}}{\nabla w^{(m)}}  \\
	%	&+ \tfrac{1}{2} \nu^2 \rd_t \inner{\cL_w^2}{\Lambda^2 w^{(m)}}{w^{(m)}} 	+ \nu^2 \inner{\cL_w^2}{\Lambda^2 \rM J_8 f(v^{(m)} )}{\rd_t w^{(m)}} =0, \nonumber
	%\end{align*} 
	%which yields
	\begin{multline*}
		\tfrac{1}{2} \rd_t \left[ \norm{\cL_w^2}{\rd_t w^{(m)}}^2  + \tfrac{3}{2} \nu^2  \norm{\cL^2_{\partial w}}{\dx w^{(m)}}^2 
		+ \nu^2 \norm{\cL_w^2}{\Lambda w^{(m)}}^2 \right]	+ 2 \nu \norm{\cL_w^2}{\Lambda ^{\frac{1}{2}}\rd_t w^{(m)}}^2  \\
		- \nu^2 \inner{\cL_w^2}{\Lambda^2 \rM J_8 f(v^{(m)} )}{\rd_t w^{(m)}} =0. 
	\end{multline*}
	Now, Young's inequality implies that, for every $\varepsilon_1>0$,
	\begin{align*}
		\nu^2 \inner{\cL_w^2}{\Lambda^2 \rM J_8 f(v^{(m)} )}{\rd_t w^{(m)}} &\leq
		\varepsilon_1 \nu^2 \norm{\cL_w^2}{\rd_t w^{(m)}}^2 + \frac{\nu^2}{4\varepsilon_1}\norm{\cL_w^2}{\Lambda^2 \rM J_8 f(v^{(m)} )}^2  \\
		&= \varepsilon_1 \nu^2 \norm{\cL_w^2}{\rd_t w^{(m)}}^2 +\frac{\nu^2}{4\varepsilon_1}\trace (\Lambda^4 \rM^2)  \int_{\Omega} \big |f_{\rE}(v_{\rE}^{(m)}) \big |^2 \rd x \\
		& \leq  \varepsilon_1 \nu^2 \norm{\cL_w^2}{\rd_t w^{(m)}}^2  +\frac{\nu^2}{4\varepsilon_1} |\Omega| \rF_{\rE}^2 \trace (\Lambda^4 \rM^2).
	\end{align*}
	Therefore,
	\begin{multline*}
		\rd_t \left[ \norm{\cL_w^2}{\rd_t w^{(m)}}^2  + \tfrac{3}{2} \nu^2  \norm{\cL^2_{\partial w}}{\dx w^{(m)}}^2 + \nu^2 \norm{\cL_w^2}{\Lambda w^{(m)}}^2 \right]	
		+ 2\nu(2\Lambda_{\min} - \varepsilon_1 \nu) \norm{\cL_w^2}{\rd_t w^{(m)}}^2   \\ 
		\leq \frac{\nu^2}{2\varepsilon_1} |\Omega| \rF_{\rE}^2 \trace (\Lambda^4 \rM^2), 
	\end{multline*}
	where $\Lambda_{\min}:= \min \{\Lambda_{\rE \rE}, \Lambda_{\rE \rI} \}$ is the smallest eigenvalue of $\Lambda$. 
	
	Next, setting $\varepsilon_1 = \frac{2}{\nu}\Lambda_{\min}$ and integrating with respect to time over $[0,t]$
	%using Gr\"{o}nwall's inequality \cite[Sec. III.1.1.3.]{Temam:InfiniteDimensional:1997}, 
	yields
	\begin{multline*}
		\norm{\cL_w^2}{\rd_t w^{(m)}(t)}^2  + \tfrac{3}{2} \nu^2  \norm{\cL_{\partial w}^2}{\dx w^{(m)}(t)}^2 + \nu^2 \norm{\cL_w^2}{\Lambda w^{(m)}(t)}^2 \\
		\leq \left( \norm{\cL_w^2}{\rd_t w^{(m)}}^2  + \tfrac{3}{2} \nu^2  \norm{\cL_{\partial w}^2}{\dx w^{(m)}}^2 + \nu^2 \norm{\cL_w^2}{\Lambda w^{(m)}}^2 \right)  \hspace{-1mm}\Big|_{t=0}
		+ \tfrac{1}{4} \frac{\nu^3}{\Lambda_{\min}} |\Omega| \rF_{\rE}^2 \trace (\Lambda^4 \rM^2) t,
	\end{multline*}
	which, using \eqref{eq:ApprxInitial}, implies 
	\begin{align*}
		\norm{\cL_w^2}{\rd_t w^{(m)}(t)}^2  + \norm{\cH_w^1}{w^{(m)}(t)}^2 
		\leq \hat{\alpha}_w \left( \norm{\cL_w^2}{w_0'}^2  + \norm{\cH_w^1}{w_0}^2 + 
		\tfrac{1}{4}\frac{\nu^3}{\Lambda_{\min}} |\Omega| \rF_{\rE}^2 \trace (\Lambda^4 \rM^2) t \right)
	\end{align*}
	for all $t\in[0,T]$ and some $\hat{\alpha}_w > 0$.
	Since this inequality holds for all $t\in[0,T]$, it follows that
	\begin{align} \label{eq:BoundWaveHat}
		\sup_{t\in[0,T]} \left(\norm{\cL_w^2}{\rd_t w^{(m)}(t)}^2  + \norm{\cH_w^1}{w^{(m)}(t)}^2 \right) \leq \hat{\kappa}_w,
	\end{align}
	where
	\begin{align*}
		\hat{\kappa}_w := \hat{\alpha}_w \left(
		\norm{\cL_w^2}{w_0'}^2  + \norm{\cH_w^1}{w_0}^2 + \tfrac{1}{4}\frac{\nu^3}{\Lambda_{\min}} |\Omega| \rF_{\rE}^2 \trace (\Lambda^4 \rM^2) T \right).
	\end{align*}
	
	Now, fix $\bar{h} \in \cH_w^1$ such that $\norm{\cH_w^1}{\bar{h}} \leq 1$ and decompose $\bar{h}$ as $\bar{h} = h + h^{\perp}$, where 
	$h \in \mathrm{span} \seq{ h_w^{(k)}}_{k=1}^m$ and
	$\inner{\cL_w^2}{h_w^{(k)}}{h^{\perp}} = 0$, $k=1,\dots, m$.
	Since the basis $\cB_w$ used to construct $\cB$ in \eqref{eq:Basis} is orthonormal in $\cL_w^2$, it follows from  \eqref{eq:WaveExpansion} that
	\begin{equation*}
		\pair{\cH_w^1}{\rd_t^2  w^{(m)}}{\bar{h}} = \inner{\cL_w^2}{\rd_t^2 w^{(m)}}{\bar{h}} = \inner{\cL_w^2}{\rd_t^2 w^{(m)}}{h},
	\end{equation*}
	where the first equality holds since $\rd_t^2 w^{(m)} \in \cH_w^1$; see the proof of \cite[Th. 5.9-1]{Evans:PDE:2010}.
	Therefore, \eqref{eq:WaveApprx} gives
	\begin{multline*} 
		\pair{\cH_w^1}{\rd_t^2  w^{(m)}}{\bar{h}} = 
		-2 \nu \inner{\cL_w^2}{\Lambda \rd_t w^{(m)}}{h} - \tfrac{3}{2} \nu^2  \inner{\cL_{\partial w}^2}{\dx w^{(m)}}{\dx h} \\
		- \nu^2 \inner{\cL_w^2}{\Lambda^2 w^{(m)}}{h} + \nu^2 \inner{\cL_w^2}{\Lambda^2 \rM J_8 f(v^{(m)} )}{h}. 
	\end{multline*}
	Since $\cB_w$ is orthogonal in $\cH_w^1$ we have 
	$\norm{\cH_w^1}{h} \leq \norm{\cH_w^1}{\bar{h}} \leq 1$, and hence, the Cauchy-Schwarz inequality gives
	\begin{align*}
		\big| \pair{\cH_w^1}{\rd_t^2 w^{(m)}&}{\bar{h}} \big | \\&\leq
		2 \nu \norm{\cL_w^2}{\rd_t w^{(m)}} \! \! + \tfrac{3}{2} \nu^2  \norm{\cL_{\partial w}^2}{\dx w^{(m)}}
		\!+ \nu^2 \norm{\cL_w^2}{\Lambda^2 w^{(m)}} \! \! \!+ \nu^2 \norm{\cL_w^2}{\Lambda^2 \rM J_8 f(v^{(m)} )}  \\
		&\leq \alpha_1 \left( \norm{\cL_w^2}{\rd_t w^{(m)}} + \norm{\cH_w^1}{w^{(m)}}
		+ \nu^2 \big(|\Omega| \rF_{\rE}^2 \trace (\Lambda^4 \rM^2) \big)^{\frac{1}{2}}  \right)
	\end{align*}
	for some $\alpha_1>0$. 
	%Note that the second inequality follows directly from the inequalities used to derive \eqref{eq:BoundWaveHat}.
	Therefore, there exists $\alpha_2>0$ such that
	\begin{align*}
		\int_0^T \norm{{\cH^1_w}^{\!\!\ast}}{\rd_t^2 w^{(m)}}^2 \rd t &\leq
		\alpha_2 \int_0^T \left( \norm{\cL_w^2}{\rd_t w^{(m)}}^2 + \norm{\cH_w^1}{w^{(m)}}^2
		+ \nu^4 |\Omega| \rF_{\rE}^2 \trace (\Lambda^4 \rM^2) \right) \rd t, 
	\end{align*} 
	which, using \eqref{eq:BoundWaveHat}, yields
	\begin{align*}
		\norm{L^2(0,T;{\cH^1_w}^{\!\!\ast})}{\rd_t^2 w^{(m)}}^2 \leq \alpha_2 \left( \hat{\kappa}_w + \nu^4 |\Omega| \rF_{\rE}^2 \trace (\Lambda^4 \rM^2)  \right)T.
	\end{align*}
	This inequality, together with \eqref{eq:BoundWaveHat}, establishes the bound \eqref{eq:BoundWave} 
	with \eqref{eq:KappaW} for some $\alpha_w >0$. 
	%Note that $\alpha_w >0$ depends only on the parameters of the model since $\hat{\alpha}_w$, $\alpha_1$, and $\alpha_2$ 
	%depend only on the parameters of the model.
	
	Next, multiplying \eqref{eq:CurrentApprx} by $\rd_t c_{i_k}^{(m)}$ and summing over $k=1, \dots, m$ yields
	\begin{multline} \label{eq:CurrentApprx2}
		\inner{\cL_i^2}{\rd_t^2i^{(m)}}{\rd_t i^{(m)}} + 2 \inner{\cL_i^2}{\Gamma \rd_t i^{(m)}}{\rd_t i^{(m)}} + \inner{\cL_i^2}{\Gamma^2 i^{(m)}}{\rd_t i^{(m)}} \\
		\hspace{1.1cm} - e \inner{\cL_i^2}{\Upsilon \Gamma J_6 w^{(m)}}{\rd_t i^{(m)}} - e \inner{\cL_i^2}{\Upsilon \Gamma \rN J_7 f( v^{(m)})}{\rd_t i^{(m)}} 
		= e \inner{\cL_i^2}{\Upsilon\Gamma g}{\rd_t i^{(m)}}.
	\end{multline}
	For the second term we have
	\begin{align*}
		\inner{\cL_i^2}{\Gamma \rd_t i^{(m)}}{\rd_t i^{(m)}} \geq \gamma_{\min} \norm{\cL_i^2}{\rd_t i^{(m)}}^2, \quad
	\end{align*}
	where $\gamma_{\min}:= \min\{ \gamma_{\rE \rE}, \gamma_{\rE \rI}, \gamma_{\rI \rE}, \gamma_{\rI \rI} \}$
	is the smallest eigenvalue of $\Gamma$.
	%, that is 
	%$\gamma_{\min} =  \min\{ \gamma_{\rE \rE}, \gamma_{\rE \rI}, \gamma_{\rI \rE}, \gamma_{\rI \rI} \}$.
	Now, using Young's inequality and recalling \eqref{eq:BoundWave} we obtain, for every $\varepsilon_2, \dots, \varepsilon_4 > 0$,  
	\begin{align*}
		e \inner{\cL_i^2}{\Upsilon \Gamma J_6 w^{(m)}}{\rd_t i^{(m)}} 
		&  \leq \varepsilon_2 \norm{\cL_i^2}{\rd_t i^{(m)}}^2 + \frac{e^2}{4 \varepsilon_2} \norm{\cL_i^2}{\Upsilon \Gamma J_6 w^{(m)}}^2 \\
		& \leq \varepsilon_2 \norm{\cL_i^2}{\rd_t i^{(m)}}^2 
		+ \frac{e^2}{4 \varepsilon_2} \norm{2}{\Upsilon \Gamma J_6}^2 \norm{\cL_w^2}{ w^{(m)}}^2 \\
		&  \leq 	\varepsilon_2 \norm{\cL_i^2}{\rd_t i^{(m)}}^2 
		+ \frac{e^2 \kappa_w}{4 \varepsilon_2} \norm{2}{\Upsilon \Gamma J_6}^2, \\
		e \inner{\cL_i^2}{\Upsilon \Gamma \rN J_7 f( v^{(m)})}{\rd_t i^{(m)}}
		& \leq \varepsilon_3 \norm{\cL_i^2}{\rd_t i^{(m)}}^2 + \frac{e^2}{4 \varepsilon_3} \norm{\cL_i^2}{\Upsilon \Gamma \rN J_7 f( v^{(m)})}^2 \\
		& \leq \varepsilon_3 \norm{\cL_i^2}{\rd_t i^{(m)}}^2 
		+ \frac{e^2}{4 \varepsilon_3} \norm{2}{\Upsilon \Gamma \rN J_7}^2 \norm{\cL_v^2} {f( v^{(m)})}^2  \\
		& \leq  \varepsilon_3 \norm{\cL_i^2}{\rd_t i^{(m)}}^2
		+ \frac{e^2 |\Omega|}{4 \varepsilon_3}  (\rF_{\rE}^2 + \rF_{\rI}^2 ) \norm{2}{\Upsilon \Gamma \rN J_7}^2,\\
		e \inner{\cL_i^2}{\Upsilon\Gamma g}{\rd_t i^{(m)}} 
		& \leq \varepsilon_4 \norm{\cL_i^2}{\rd_t i^{(m)}}^2 + \frac{e^2}{4 \varepsilon_4} \norm{\cL_i^2}{\Upsilon\Gamma g}^2 \\
		& \leq \varepsilon_4 \norm{\cL_i^2}{\rd_t i^{(m)}}^2 + \frac{e^2}{4 \varepsilon_4} \norm{2}{\Upsilon\Gamma}^2 \norm{\cL_i^2}{g}^2.
	\end{align*} 
	Hence, with the above inequalities, \eqref{eq:CurrentApprx2} implies
	\begin{multline*} 
		\rd_t\left[ \norm{\cL_i^2}{\rd_t i^{(m)}}^2 + \norm{\cL_i^2}{\Gamma i^{(m)}}^2 \right] 
		+ 2(2 \gamma_{\min} - \varepsilon_2 - \varepsilon_3 -\varepsilon_4) \norm{\cL_i^2}{\rd_t i^{(m)}}^2\\
		\leq \frac{e^2 \kappa_w}{2 \varepsilon_2}  \norm{2}{\Upsilon \Gamma J_6}^2
		+ \frac{e^2 |\Omega|}{2 \varepsilon_3}  (\rF_{\rE}^2 + \rF_{\rI}^2 ) \norm{2}{\Upsilon \Gamma \rN J_7}^2
		+  \frac{e^2}{2 \varepsilon_4} \norm{2}{\Upsilon\Gamma}^2 \norm{\cL_i^2}{g}^2.
	\end{multline*}
	
	Now, setting $\varepsilon_2 = \varepsilon_3 = \frac{1}{2}\gamma_{\min}$ and $\varepsilon_4 = \gamma_{\min}$,
	%using Gr\"{o}nwall's inequality 
	integrating with respect to time over $[0,t]$, and taking the supremum over $t \in [0,T]$  we have
	\begin{align} \label{eq:BoundCurrentHat}
		\sup_{t\in [0,T]}\left(\norm{\cL_i^2}{\rd_t i^{(m)}(t)}^2 + \norm{\cL_i^2}{i^{(m)}(t)}^2 \right) \leq \hat{\kappa}_i ,
	\end{align}
	where, for some $\hat{\alpha}_i >0$,
	\begin{multline*}
		\hat{\kappa}_i = \hat{\alpha}_i \left( \norm{\cL_i^2}{i'_0}^2 + \norm{\cL_i^2}{i_0}^2
		+\left[ \frac{e^2 \kappa_w}{\gamma_{\min}} \norm{2}{\Upsilon\Gamma J_6}^2 
		+ \frac{e^2 |\Omega|}{\gamma_{\min}}  (\rF_{\rE}^2 + \rF_{\rI}^2 ) \norm{2}{\Upsilon \Gamma \rN J_7}^2 \right]T \right. \\ \left.
		+ \frac{e^2}{2 \gamma_{\min}} \norm{2}{\Upsilon\Gamma}^2 \norm{L^2(0,T;\cL_i^2)}{g}^2 \right).
	\end{multline*}
	Fix $\bar{\ell} \in \cL_i^2$ such that $\norm{\cL_i^2}{\bar{\ell}} \leq 1$ and decompose $\bar{\ell}$ as $\bar{\ell} = \ell + \ell^{\perp}$, where 
	$\ell \in \mathrm{span} \seq{ \ell_i^{(k)}}_{k=1}^m$ and
	$\inner{\cL_i^2}{\ell_i^{(k)}}{\ell^{\perp}} = 0$, $k=1,\dots, m$.
	Using \eqref{eq:CurrentExpansion} and \eqref{eq:CurrentApprx} we obtain
	\begin{align*}
		\pair{\cL_i^2}{\rd_t^2i^{(m)}}{\bar{\ell}\,} &= \,\inner{\cL_i^2}{\rd_t^2i^{(m)}}{\bar{\ell}} = \inner{\cL_i^2}{\rd_t^2i^{(m)}}{\ell} \\
		&= -2 \inner{\cL_i^2}{\Gamma \rd_t i^{(m)}}{\ell} - \inner{\cL_i^2}{\Gamma^2 i^{(m)}}{\ell} 
		+ e \inner{\cL_i^2}{\Upsilon \Gamma J_6 w^{(m)}}{\ell} + e \inner{\cL_i^2}{\Upsilon \Gamma \rN J_7 f( v^{(m)})}{\ell} \\
		&\quad + e \inner{\cL_i^2}{\Upsilon\Gamma g}{\ell}.
	\end{align*}
	The orthogonality of the basis $\cB_i$ in \eqref{eq:Basis} implies $\norm{\cL_i^2}{\ell} \leq 1$, and hence,
	\begin{align*}
		\big |\pair{\cL_i^2}{\rd_t^2 i^{(m)}}{\bar{\ell}} \big | &\leq  
		2 \norm{2}{\Gamma} \norm{\cL_i^2}{\rd_t i^{(m)}} +\; \norm{2}{\Gamma^2} \norm{\cL_i^2}{i^{(m)}} \\
		&\quad
		+ e \norm{\cL_i^2}{\Upsilon \Gamma J_6 w^{(m)}} + e \norm{\cL_i^2}{\Upsilon \Gamma \rN J_7 f( v^{(m)})}
		+ e \norm{\cL_i^2}{\Upsilon\Gamma g}.
	\end{align*}
	Therefore, it follows from the same inequalities used to derive \eqref{eq:BoundCurrentHat} that, for some $\alpha_3 >0$,
	\begin{multline*}
		\norm{L^2(0,T; {\cL^2_i}^{\ast})}{\rd_t^2 i^{(m)}}^2 \leq \alpha_3 \left( \left[\hat{\kappa}_i
		+ e^2 \kappa_w  \norm{2}{\Upsilon\Gamma J_6}^2 
		+ e^2 |\Omega|  (\rF_{\rE}^2 + \rF_{\rI}^2 ) \norm{2}{\Upsilon \Gamma \rN J_7}^2 \right] T \right.\\ \left.
		+ e^2 \norm{2}{\Upsilon\Gamma}^2 \norm{L^2(0,T;\cL_i^2)}{g}^2 
		\right).
	\end{multline*}
	This, together with \eqref{eq:BoundCurrentHat}, establishes the bound \eqref{eq:BoundCurrent} with
	\eqref{eq:KappaI} for some $\alpha_i >0$.
	
	Finally, multiplying \eqref{eq:VoltageApprx} by $c_{v_k}^{(m)}$ and summing over $k=1, \dots, m$ yields
	\begin{multline} \label{eq:VoltageApprx2}
		\inner{\cL_v^2}{\Phi \rd_t v^{(m)}}{v^{(m)}} + \inner{\cL_v^2}{v^{(m)}}{v^{(m)}} - \inner{\cL_v^2}{J_1 i^{(m)}}{v^{(m)}} \\
		+ \inner{\cL_v^2}{ J_2 v^{(m)} i^{{(m)}^{\rT}} \Psi J_4 + J_3 v^{(m)} i^{{(m)}^{\rT}} \Psi J_5}{v^{(m)}} = 0. 
	\end{multline}
	Now, using Young's inequality and recalling \eqref{eq:BoundCurrent} we obtain, for every $\varepsilon_5 >0$,
	\begin{align*}
		\inner{\cL_v^2}{J_1 i^{(m)}}{v^{(m)}}  &\leq \varepsilon_5 \norm{\cL_v^2}{v^{(m)}}^2 +\frac{1}{4 \varepsilon_5} \norm{\cL_v^2}{J_1 i^{(m)}}^2 \\
		&\leq \varepsilon_5 \norm{\cL_v^2}{v^{(m)}}^2 +\frac{1}{2 \varepsilon_5} \norm{\cL_v^2}{i^{(m)}}^2 \\
		& \leq \varepsilon_5 \norm{\cL_v^2}{v^{(m)}}^2 +\frac{\kappa_i}{2 \varepsilon_5}.
	\end{align*}
	Moreover, using H\"{o}lder's inequality in $\bbR^2$ and the Cauchy-Schwarz inequality in $\bbR^4$ we obtain
	\begin{align*}
		-\inner{\cL_v^2}{ J_2 v^{(m)} i^{{(m)}^{\rT}} \Psi J_4 + J_3 v^{(m)}& i^{{(m)}^{\rT}} \Psi J_5 }{v^{(m)}}\\
		&= -\int_{\Omega} \left( (v_1^{(m)})^2 i^{{(m)}^{\rT}} \Psi J_4 + (v_2^{(m)})^2 i^{{(m)}^{\rT}} \Psi J_5 \right)\rd x\\
		&\leq \int_{\Omega} \norm{\bbR^2}{v^{(m)}}^2 \max \left \{|i^{{(m)}^{\rT}} \Psi J_4|, |i^{{(m)}^{\rT}} \Psi J_5| \right\} \rd x\\
		&\leq \int_{\Omega} \norm{\bbR^2}{v^{(m)}}^2 \norm{\bbR^4}{i^{(m)}} 
		\max \left \{\norm{\bbR^4}{\Psi J_4}, \norm{\bbR^4}{\Psi J_5} \right\} \rd x  \\
		&\leq \sqrt{2\kappa_i} \norm{2}{\Psi} \norm{\cL_v^2}{v^{(m)}}^2.
	\end{align*}
	Therefore, \eqref{eq:VoltageApprx2} implies
	\begin{align*} 
		\rd_t \norm{\cL_v^2}{\Phi^{\frac{1}{2}} v^{(m)}}^2 + 2 \left( 1-\varepsilon_5 - \sqrt{2\kappa_i} \norm{2}{\Psi} \right)
		\norm{\cL_v^2}{v^{(m)}}^2 \leq \frac{\kappa_i}{\varepsilon_5}.
	\end{align*}
	Next, setting $\varepsilon_5 = 1$ and using Gr\"{o}nwall's inequality \cite[Sec. III.1.1.3.]{Temam:InfiniteDimensional:1997} yields
	\begin{align} \label{eq:BoundVoltageHat}
		\sup_{t\in [0,T]}\left( \norm{\cL_v^2}{v^{(m)}(t)}^2 \right) \leq \hat{\kappa}_v,
	\end{align}
	where, for some $\hat{\alpha}_v >0$ and $\hat{\beta}_v >0$,
	\begin{align*}
		\hat{\kappa}_v = \hat{\alpha}_v \exp \left( \hat{\beta}_v \sqrt{2\kappa_i} \norm{2}{\Psi} T \right) \left( \norm{\cL_v^2}{v_0}^2 + \frac{\kappa_i}{\sqrt{2 \kappa_i} \norm{2}{\Psi}}  \right).
	\end{align*}
	
	Now, fix $\bar{\ell} \in \cL_v^2$ such that $\norm{\cL_v^2}{\bar{\ell}} \leq 1$ and decompose $\bar{\ell}$ as  $\bar{\ell} = \ell + \ell^{\perp}$, where 
	$\ell \in \mathrm{span} \seq{ \ell_v^{(k)}}_{k=1}^m$ and
	$\inner{\cL_v^2}{\Phi \ell_v^{(k)}}{\ell^{\perp}} = 0$, $k=1,\dots, m$.
	Note that this decomposition exists due to the way we construct the basis $\cB_v$ in \eqref{eq:Basis}, wherein the elements, weighted by 
	$\Phi^{\frac{1}{2}}$, are orthonormal in $\cL_v^2$.
	Then, it follows from \eqref{eq:VoltageExpansion} and \eqref{eq:VoltageApprx} that
	\begin{align*} 
		\pair{\cL_v^2}{\Phi \rd_t v^{(m)}}{\bar{\ell}} &= \, \inner{\cL_v^2}{\Phi \rd_t v^{(m)}}{\bar{\ell}} = \inner{\cL_v^2}{\Phi \rd_t v^{(m)}}{\ell}\\
		&= -\inner{\cL_v^2}{v^{(m)}}{\ell} + \inner{\cL_v^2}{J_1 i^{(m)}}{\ell} 
		- \inner{\cL_v^2}{ J_2 v^{(m)} i^{{(m)}^{\rT}} \Psi J_4 + J_3 v^{(m)} i^{{(m)}^{\rT}} \Psi J_5}{\ell}. 
	\end{align*}
	Since $\cB_v$  is a $\Phi^{\frac{1}{2}}$-weighted orthonormal set in $\cL_v^2$, 
	it follows that 
	\begin{equation*}
		\norm{\cL_v^2}{\ell} \leq \norm{2}{\Phi^{-\frac{1}{2}}} \norm{\cL_v^2}{\Phi^{\frac{1}{2}}\ell}
		\leq \norm{2}{\Phi^{-\frac{1}{2}}} \norm{\cL_v^2}{\Phi^{\frac{1}{2}}\bar{\ell}} 
		\leq \norm{2}{\Phi^{-\frac{1}{2}}} \norm{2}{\Phi^{\frac{1}{2}}} \norm{\cL_v^2}{\bar{\ell}}
		\leq \norm{2}{\Phi^{-\frac{1}{2}}} \norm{2}{\Phi^{\frac{1}{2}}}
	\end{equation*}
	and hence, letting $\alpha_4:=\norm{2}{\Phi^{-\frac{1}{2}}} \norm{2}{\Phi^{\frac{1}{2}}}$ and using Cauchy-Schwarz inequality we have
	\begin{align*} 
		\big| \pair{\cL_v^2}{\Phi \rd_t v^{(m)}}{\bar{\ell}} \big| &\leq \alpha_4 \left(
		\norm{\cL_v^2}{v^{(m)}} + \norm{\cL_v^2}{J_1 i^{(m)}} 
		+ \norm{\cL_v^2}{ J_2 v^{(m)} i^{{(m)}^{\rT}} \Psi J_4 + J_3 v^{(m)} i^{{(m)}^{\rT}} \Psi J_5}  \right)\\
		& \leq \alpha_4 \left( \norm{\cL_v^2}{v^{(m)}} + \sqrt{2} \norm{\cL_i^2}{i^{(m)}}
		+ 2\sqrt{2} \norm{\cL_v^2}{v^{(m)}} \norm{\cL_i^2}{i^{(m)}} \norm{2}{\Psi} \right) \\
		& \leq \alpha_4 \left(\left( 1+ 2\sqrt{2 \kappa_i} \norm{2}{\Psi} \right)\norm{\cL_v^2}{v^{(m)}} + \sqrt{2 \kappa_i} \right),
	\end{align*}
	which, along with \eqref{eq:BoundVoltageHat} implies that, for some $\alpha_5 >0$,
	\begin{align*}
		\norm{L^2(0,T;{\cL^2_v}^{\ast})}{\rd_t v^{(m)}}^2 \leq \alpha_5 
		\left(\left( 1+ 2\sqrt{2 \kappa_i} \norm{2}{\Psi} \right)^2 \hat{\kappa}_v + 2 \kappa_i \right) T.
	\end{align*}
	This, together with \eqref{eq:BoundVoltageHat}, establishes the bound \eqref{eq:BoundVoltage}
	with \eqref{eq:KappaV} for some $\alpha_v >0$. 
	Note that constants $\alpha_1, \dots, \alpha_5$, $\hat{\alpha}_v$, $\hat{\beta}_v$, $\hat{\alpha}_i$, and $\hat{\alpha}_w$ depend only on the parameters of the model, which further implies that the constants $\alpha_v$, $\beta_v$, $\alpha_i$, and $\alpha_w$ also depend only on the parameters of the model and completes the proof.
	\qquad
\end{proof}

\begin{theorem} [Existence and uniqueness of weak solutions] \label{th:WeakExistenceUniqueness}
	Suppose that $g \in L^2(0,T;\cL_i^2)$, $v_0 \in \cL_v^2$, $i_0 \in \cL_i^2$, $i'_0 \in \cL_i^2$, $w_0 \in \cH_w^1$, and $w'_0 \in \cL_w^2$. Then there exists a unique $\Omega$-periodic weak solution $(v,i,w)$ of the initial value problem \eqref{eq:Voltage}--\eqref{eq:InitialValues}.
\end{theorem}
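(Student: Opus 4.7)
The plan is to apply the Faedo--Galerkin method: pass to the limit $m \to \infty$ in the finite-dimensional approximations for which Proposition \ref{prp:EnergyEstimates} supplies uniform bounds, and close with a Gronwall argument for uniqueness. I would first invoke the Banach--Alaoglu theorem in the Bochner spaces prescribed by Definition \ref{def:WeakSolution} to extract, along a subsequence still indexed by $m$, limits $v, i, w$ satisfying the weak and weak-$\ast$ convergences
$v^{(m)} \rightharpoonup^\ast v$, $\rd_t v^{(m)} \rightharpoonup \rd_t v$;
$i^{(m)}, \rd_t i^{(m)} \rightharpoonup^\ast i, \rd_t i$ with $\rd_t^2 i^{(m)} \rightharpoonup \rd_t^2 i$;
and $w^{(m)}, \rd_t w^{(m)} \rightharpoonup^\ast w, \rd_t w$ with $\rd_t^2 w^{(m)} \rightharpoonup \rd_t^2 w$, each in the corresponding Bochner space. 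Applying the Lions--Aubin compactness lemma with the Rellich--Kondrachov compact embedding $\cH_w^1 \Subset \cL_w^2$ together with the bound on $\rd_t w^{(m)}$ then upgrades the $w$-convergence to $w^{(m)} \to w$ strongly in $C^0([0,T]; \cL_w^2)$ and, along a further subsequence, almost everywhere on $\Omega \times (0,T)$.

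The decisive step is the passage to the limit in the bilinear term $v^{(m)} i^{(m)\rT}$ and in the semilinear composition $f(v^{(m)})$. Since the $v$- and $i$-equations contain no spatial derivatives of $v$ or $i$, Aubin--Lions cannot be applied to those variables in $\cL_v^2$ or $\cL_i^2$ directly. I would sidestep this by exploiting the pointwise-in-$x$ ODE structure of the $(v,i)$-subsystem: given the already-identified $w$, for almost every $x \in \Omega$ the triple $(v(x,\cdot), i(x,\cdot), \rd_t i(x,\cdot))$ is governed by a six-dimensional ODE in $t$ whose right-hand side is Lipschitz in its unknowns, since $f$ is bounded and $C^1$ and the $v i^\rT$ coupling is at worst quadratic and therefore locally Lipschitz under the a priori bounds of Proposition \ref{prp:EnergyEstimates}. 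A Gronwall estimate comparing $(v^{(m)}, i^{(m)})$ to the unique ODE solution $(\tilde v, \tilde i)$ associated with $w$, in which the perturbation is controlled by the strong convergence of $w^{(m)}$ and by the vanishing of the Galerkin projection residuals on the dense basis $\cB$, yields strong convergence of $v^{(m)}, i^{(m)}$ in $L^2(0,T; \cL_v^2)$ and $L^2(0,T; \cL_i^2)$. The Lipschitz bound on $f$ and dominated convergence then give $f(v^{(m)}) \to f(v)$ and $v^{(m)} i^{(m)\rT} \to v i^\rT$ in the appropriate $L^2$-in-time norms, which is enough to pass to the limit in the Galerkin equations tested against any finite combination of elements of $\cB$. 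A density argument extends the resulting identities to every test triple in $\cL_v^2 \times \cL_i^2 \times \cH_w^1$, and the initial conditions are recovered via integration by parts in $t$ using the Bochner embeddings $v \in C^0([0,T]; \cL_v^2)$, $i \in C^1([0,T]; \cL_i^2)$, and $w \in C^0([0,T]; \cL_w^2)$.

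For uniqueness I would take two weak solutions with identical data, subtract their weak formulations, and test the system for the differences $\delta v, \delta i, \delta w$ against $\delta v, \rd_t \delta i, \rd_t \delta w$. Using the Lipschitz continuity of $f$, the a priori bounds from Proposition \ref{prp:EnergyEstimates}, and Young's inequality, the resulting identities produce an energy inequality
\[
\cE(t) \leq C \int_0^t \cE(s) \, \rd s, \qquad t \in [0,T],
\]
with $\cE$ the natural combined energy of the differences; Gronwall's lemma then forces $\cE \equiv 0$. The principal obstacle throughout is the nonlinear passage to the limit: because the $v$- and $i$-equations carry no spatial smoothing, strong compactness in $\cL_v^2, \cL_i^2$ is not available from Aubin--Lions, and one must first compactify $w$ via the only equation with a spatial derivative before bootstrapping to strong convergence of $v^{(m)}, i^{(m)}$ via the pointwise ODE structure of the $(v,i)$-subsystem.
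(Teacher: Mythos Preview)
Your uniqueness argument has a genuine gap: you propose to test the $w$-equation for the difference against $\rd_t \delta w$, but at the weak-solution regularity one has only $\rd_t w \in L^2(0,T;\cL_w^2)$, not $L^2(0,T;\cH_w^1)$, so $\rd_t \delta w$ is not an admissible test function in \eqref{eq:WeakWave}---both the duality pairing $\pair{\cH_w^1}{\rd_t^2 w}{h_w}$ and the gradient term $\inner{\cL^2_{\partial w}}{\dx w}{\dx h_w}$ require $h_w \in \cH_w^1$. The paper avoids this by the standard device for second-order hyperbolic equations: it fixes $s\in[0,T]$, tests against the primitive $q(t) := \int_t^s w(r)\,\rd r$ (which does lie in $\cH_w^1$ since $w \in L^2(0,T;\cH_w^1)$), integrates by parts in $t$, and only then obtains a usable energy inequality leading to Gronwall on a short interval $[0,T_1]$ that is subsequently iterated. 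Without this trick the energy inequality you write is purely formal at the available regularity.

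On existence your route also diverges from the paper's. The paper extracts strongly convergent subsequences of $v^{(m)}$, $i^{(m)}$, $\rd_t i^{(m)}$ directly from the time-regularity bounds (it invokes $H^1(0,T;\cL_v^2) \Subset L^2(0,T;\cL_v^2)$ and $H^1(0,T;\cL_i^2) \Subset L^2(0,T;\cL_i^2)$), after which the bilinear term $v^{(m)} i^{(m)\rT}$ and the nonlinearity $f(v^{(m)})$ pass to the limit by elementary estimates. Your alternative---compactify $w^{(m)}$ first via Aubin--Lions using $\cH_w^1 \Subset \cL_w^2$, then bootstrap strong convergence of $(v^{(m)}, i^{(m)})$ through a pointwise-in-$x$ ODE comparison---is plausible in spirit, but the step invoking ``vanishing Galerkin projection residuals'' is not carried out: the approximations $(v^{(m)}, i^{(m)})$ satisfy only the projected equations \eqref{eq:VoltageApprx}--\eqref{eq:CurrentApprx}, not the pointwise ODE, so a Gronwall comparison with the exact ODE solution $(\tilde v, \tilde i)$ requires a quantitative bound on the projection error in $\cL_v^2 \times \cL_i^2$, which you have not supplied.
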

\begin{proof}
	The energy estimate \eqref{eq:BoundVoltage} implies that the sequence $\seq{v^{(m)}}_{m=1}^{\infty}$ is bounded in $L^2(0,T; \cL_v^2)$ and the sequence $\seq{\rd_t v^{(m)}}_{m=1}^{\infty}$ is bounded in $L^2(0,T;{\cL^2_v}^{\ast})$. 
	Since ${\cL^2_v}^{\ast} = \cL_v^2$, it follows that $\seq{v^{(m)}}_{m=1}^{\infty}$ is bounded in $H^1(0,T;\cL_v^2)$ and $\seq{\rd_t v^{(m)}}_{m=1}^{\infty}$ is bounded in $L_2(0,T;\cL_v^2)$. 
	Similarly, since ${\cL^2_i}^{\ast} = \cL_i^2$, the energy estimate \eqref{eq:BoundCurrent} implies that the sequence $\seq{i^{(m)}}_{m=1}^{\infty}$ is bounded in $H^2(0,T;\cL_i^2)$, the sequence $\seq{\rd_t i^{(m)}}_{m=1}^{\infty}$ is bounded in $H^1(0,T;\cL_i^2)$, and the sequence $\seq{\rd_t^2 i^{(m)}}_{m=1}^{\infty}$ is bounded in $L^2(0,T;\cL_i^2)$. 
	Finally, the energy estimate \eqref{eq:BoundWave} implies that the sequence $\seq{w^{(m)}}_{m=1}^{\infty}$ is bounded in $L^2(0,T;\cH_w^1)$, the sequence  $\seq{\rd_t w^{(m)}}_{m=1}^{\infty}$ is bounded in $L^2(0,T;\cL_w^2)$, and the sequence $\seq{\rd_t^2 w^{(m)}}_{m=1}^{\infty}$ is bounded in $L^2(0,T;{\cH^1_w}^{\!\!\ast})$. 
	Now, it follows from the Rellich-Kondrachov compact embedding theorems \cite[Th. 6.6-3]{Ciarlet:FunctionalAnalysis:2013} that  $H^1(0,T;\cL_v^2) \Subset L^2(0,T;\cL_v^2)$ and $H^1(0,T;\cL_i^2) \Subset L^2(0,T;\cL_i^2)$. 
	Therefore, by \cite[Th. 2.10-1b]{Ciarlet:FunctionalAnalysis:2013}, there exist subsequences $\seq{v^{(m_k)}}_{k=1}^{\infty}$, $\seq{i^{(m_k)}}_{k=1}^{\infty}$, and $\seq{\rd_t i^{(m_k)}}_{k=1}^{\infty}$ such that 
	\begin{alignat}{2} \label{eq:StrongConvergence}
		v^{(m_k)} &\rightarrow v &\quad  \text{strongly in } & L^2(0,T;\cL_v^2),  \\
		i^{(m_k)} &\rightarrow i &\quad  \text{strongly in } & L^2(0,T;\cL_i^2),  \nonumber\\
		\rd_t i^{(m_k)} &\rightarrow i' &\quad  \text{strongly in } & L^2(0,T;\cL_i^2).\nonumber
	\end{alignat}
	Moreover, by the Banach-Eberlein-\v Smulian theorem \cite[Th. 5.14-4]{Ciarlet:FunctionalAnalysis:2013}, there exist subsequences  $\seq{\rd_t v^{(m_k)}}_{k=1}^{\infty}$, $\rd_t^2 \seq{i^{(m_k)}}_{k=1}^{\infty}$, $\seq{w^{(m_k)}}_{k=1}^{\infty}$, $\seq{\rd_t w^{(m_k)}}_{k=1}^{\infty}$, and $\seq{\rd_t^2 w^{(m_k)}}_{k=1}^{\infty}$ such that
	\begin{alignat}{2} \label{eq:WeakConvergence}
		\rd_t v^{(m_k)} &\rightharpoonup v' &\quad  \text{weakly in } & L^2(0,T;\cL_v^2),  \\
		\rd_t^2 i^{(m_k)} &\rightharpoonup i'' &\quad  \text{weakly in } & L^2(0,T;\cL_i^2),  \nonumber\\
		w^{(m_k)} &\rightharpoonup w &\quad  \text{weakly in } & L^2(0,T;\cH_w^1),\nonumber\\
		\rd_t w^{(m_k)} &\rightharpoonup w' &\quad  \text{weakly in } & L^2(0,T;\cL_w^2),\nonumber\\
		\rd_t^2 w^{(m_k)} &\rightharpoonup w'' &\quad  \text{weakly in } & L^2(0,T;{\cH^1_w}^{\!\!\ast}),\nonumber
	\end{alignat}
	where the time derivatives in the above analysis  are derivatives in the weak sense. 
	
	Next, we show that 
	\begin{equation*} %\label{eq:DerivativeLimits}
		v'= \rd_t v, \quad i'= \rd_t i, \quad i''= \rd_t^2 i, \quad w'= \rd_t w, \quad w''= \rd_t^2 w.
	\end{equation*}
	Since $L^2(0,T; \cH_w^1)$ is reflexive, the weak and weak* convergence coincide. Recalling the definition of weak* convergence and weak derivatives, it follows that for every $h \in \cH_w^1$ and $\phi \in C_{\rc}^{\infty}([0,T])$, 
	\begin{align*}
		\pair{\cH_w^1}{\int_0^T w'' \phi  \rd t}{h} &= \int_0^T \pair{\cH_w^1}{w''\phi}{h}  \rd t  
		= \lim\limits_{k \rightarrow \infty} \int_0^T \pair{\cH_w^1}{\rd_t^2 w^{(m_k)} \phi}{ h}  \rd t \\
		&= \lim\limits_{k \rightarrow \infty} \pair{\cH_w^1}{\int_0^T \rd_t^2 w^{(m_k)} \phi \rd t}{h}  
		=   \lim\limits_{k \rightarrow \infty} \pair{\cH_w^1}{(-1)^2\int_0^T w^{(m_k)} \rd_t^2 \phi  \rd t }{h} \\
		&= \lim\limits_{k \rightarrow \infty} (-1)^2\int_0^T \pair{\cH_w^1}{w^{(m_k)} \rd_t^2 \phi}{h}  \rd t 
		= (-1)^2 \int_0^T \pair{\cH_w^1}{w \rd_t^2 \phi}{h} \rd t  \\
		&= \pair{\cH_w^1}{(-1)^2\int_0^T w \rd_t^2 \phi  \rd t}{h}, 
		%\quad \text{for all } h \in \cH_w^1 \text{ and } \phi \in C_{\rc}^{\infty}([0,T]), 
	\end{align*}
	which implies $w''=\rd_t^2 w$ in the weak sense. The other identities %in \eqref{eq:DerivativeLimits} 
	are proved similarly.
	
	Now, recall \eqref{eq:FiringRateFunction} and \eqref{eq:Parameters} and note that the nonlinear map $f:\bbR^2 \rightarrow \bbR^2$ is bounded and smooth, and in particular, is Lipschitz continuous. Therefore, it follows from the strong convergence of $\seq{v^{(m_k)}}_{k=1}^{\infty}$ in \eqref{eq:StrongConvergence} that
	\begin{equation} \label{eq:NonlinearityLimit}
		f(v^{(m_k)}) \rightarrow f(v) \quad \text{strongly in } L^2(0,T;\cL_v^2).
	\end{equation} 
	For the bilinear term $J_2 v i^{\rT} \Psi J_4$, use \eqref{eq:BoundVoltage} and \eqref{eq:BoundCurrent} to write
	\begin{align*}
		\norm{L^2(0,T;\cL_v^2)}{  J_2  \big(& v i^{\rT} - v^{(m_k)} {i^{(m_k)}}^{\rT} \big) \Psi J_4} \\
		& \leq \norm{L^2(0,T;\cL_v^2)}{J_2 (v-v^{(m_k)}) i^{\rT} \Psi J_4}
		+\norm{L^2(0,T;\cL_v^2)}{J_2 v^{(m_k)} (i-i^{(m_k)})^{\rT}  \Psi J_4} \\
		& \leq \sqrt{2} \norm{2}{\Psi} \left[ 
		\norm{L^2(0,T;\cL_v^2)}{v-v^{(m_k)}} \norm{L^2(0,T;\cL_i^2)}{i}  
		+ \norm{L^2(0,T;\cL_v^2)}{v^{(m_k)}} \norm{L^2(0,T;\cL_i^2)}{i-i^{(m_k)}}   \right]\\
		& \leq \sqrt{2} \norm{2}{\Psi} \left[ \sqrt{\kappa_i}\, \norm{L^2(0,T;\cL_v^2)}{v-v^{(m_k)}} 
		+ \sqrt{\kappa_v}\, \norm{L^2(0,T;\cL_i^2)}{i-i^{(m_k)}}   \right].
	\end{align*}
	The same inequality holds for the bilinear term $J_3 v i^{\rT} \Psi J_5$ as well. Therefore,  \eqref{eq:StrongConvergence} gives
	\begin{alignat}{2} \label{eq:QuadraticLimit}
		J_2 v^{(m_k)} {i^{(m_k)}}^{\rT} \Psi J_4 &\rightarrow J_2 v i^{\rT} \Psi J_4 &\quad  \text{strongly in } & L^2(0,T;\cL_v^2),  \\
		J_3 v^{(m_k)} {i^{(m_k)}}^{\rT} \Psi J_5 &\rightarrow J_3 v i^{\rT} \Psi J_5 &\quad  \text{strongly in } & L^2(0,T;\cL_v^2).\nonumber
	\end{alignat} 
	
	Next, fix a positive integer $K$ and choose the functions
	\begin{alignat*}{2}
		\hat{v} &= \sum \nolimits_{k=1}^K  c_{v_k}(t) \ell_v^{(k)} & &\in C^1([0,T];\cL_v^2),  \\
		\hat{i} &= \sum \nolimits_{k=1}^K  c_{i_k}(t) \ell_i^{(k)} & &\in C^1([0,T];\cL_i^2),   \\
		\hat{w} &= \sum \nolimits_{k=1}^K  c_{w_k}(t) h_w^{(k)} & &\in C^1([0,T];\cH_w^1),   
	\end{alignat*}
	where $c_{v_k}$, $c_{i_k}$, and $c_{w_k}$ are sufficiently smooth functions on $[0,T]$,  and $(\ell_v^{(k)},\ell_i^{(k)},h_w^{(k)})$, $k =1, \dots, K$, are the first $K$ components of $\cB$ given by \eqref{eq:Basis}. 
	Set $m = m_k$ in \eqref{eq:VoltageApprx}--\eqref{eq:WaveApprx} and choose $m_k \geq K$. 
	Then, multiplying \eqref{eq:VoltageApprx}--\eqref{eq:WaveApprx} by $c_{v_k}$, $c_{i_k}$, and $c_{w_k}$, respectively, summing over $k=1,\dots, K$, and integrating over $t \in [0,T]$ yields
	\begin{align} \label{eq:ApproximateEquations}
		\int_0^T \left[\pair{\cL_v^2}{\Phi \rd_t v^{(m_k)}}{\hat{v}} + \inner{\cL_v^2}{v^{(m_k)}}{\hat{v}} - \inner{\cL_v^2}{J_1 i^{(m_k)}}{\hat{v}} \right. \hspace{4 cm} \\ \left.
		+ \inner{\cL_v^2}{ J_2 v^{(m_k)} i^{{(m_k)}^{\rT}} \Psi J_4 + J_3 v^{(m_k)} i^{{(m_k)}^{\rT}} \Psi J_5}{\hat{v}} \right] \rd t &= 0, \nonumber \\
		\int_0^T \left[\pair{\cL_i^2}{\rd_t^2i^{(m_k)}}{\hat{i}} + 2 \inner{\cL_i^2}{\Gamma \rd_t i^{(m_k)}}{\hat{i}} + \inner{\cL_i^2}{\Gamma^2 i^{(m_k)}}{\hat{i}} \right. \hspace{3.83 cm} \nonumber \\ \left.
		- e \inner{\cL_i^2}{\Upsilon \Gamma J_6 w^{(m_k)}}{\hat{i}} - e \inner{\cL_i^2}{\Upsilon \Gamma \rN J_7 f( v^{(m_k)})}{\hat{i}} - e \inner{\cL_i^2}{\Upsilon\Gamma g}{\hat{i}} \right]\rd t &=0, \nonumber \\
		\int_0^T \left[\pair{\cL_w^2}{\rd_t^2 w^{(m_k)}}{\hat{w}} + 2 \nu \inner{\cL_w^2}{\Lambda \rd_t w^{(m_k)}}{\hat{w}} + \tfrac{3}{2} \nu^2  \inner{\cL_{\partial w}^2}{\dx w^{(m_k)}}{\dx \hat{w}} \right. \hspace{1.37 cm} \nonumber\\ \left.
		+ \nu^2 \inner{\cL_w^2}{\Lambda^2 w^{(m_k)}}{\hat{w}} - \nu^2 \inner{\cL_w^2}{\Lambda^2 \rM J_8 f(v^{(m)} )}{\hat{w}}	\right] \rd t &=0. \nonumber
	\end{align}
	Note that the families of functions $\hat{v}$, $\hat{i}$, and $\hat{w}$ chosen above are dense in the spaces 
	$L^2(0,T;\cL_v^2)$, $L^2(0,T;\cL_i^2)$, and $L^2(0,T;\cH_w^1)$, respectively. Therefore, \eqref{eq:ApproximateEquations} holds for all functions  $\hat{v} \in L^2(0,T;\cL_v^2)$, $\hat{i} \in L^2(0,T;\cL_i^2)$, and $\hat{w} \in L^2(0,T;\cH_w^1)$. 
	Now, use \eqref{eq:StrongConvergence}--\eqref{eq:QuadraticLimit} to pass to the limits in \eqref{eq:ApproximateEquations}, which implies that \eqref{eq:WeakVoltage}--\eqref{eq:WeakWave} hold for all $\ell_v \in \cL_v^2$, $\ell_i \in \cL_i^2$, $h_w \in \cH_w^1$, and almost every $t \in [0,T]$.
	
	It remains to verify the initial conditions \eqref{eq:WeakInitial}. Choose the functions 
	\begin{equation*}
		\hat{v} \in C^1([0,T];\cL_v^2),  \quad
		\hat{i} \in C^2([0,T];\cL_i^2),  \quad 
		\hat{w} \in C^2([0,T];\cH_w^1),   
	\end{equation*}
	such that these functions vanish at the end point $t=T$. Integrating by parts in \eqref{eq:ApproximateEquations} yields
	\begin{align} \label{eq:Z1}
		\int_0^T \left[-\inner{\cL_v^2}{\Phi v^{(m_k)}}{\rd_t \hat{v}} + \cdots \right] \rd t 
		&= \inner{\cL_v^2}{\Phi v^{(m_k)}(0)}{\hat{v}(0)},  \\
		\int_0^T \left[\inner{\cL_i^2}{i^{(m_k)}}{\rd_t^2 \hat{i}} + \cdots \right] \rd t
		&= \cdots 
		+ \inner{\cL_i^2}{\rd_t i^{(m_k)}(0)}{\hat{i}(0)} 
		- \inner{\cL_i^2}{i^{(m_k)}(0)}{\rd_t\hat{i}(0)}, \nonumber  \\
		\int_0^T \left[\inner{\cH_w^1}{w^{(m_k)}}{\rd_t^2 \hat{w}} + \cdots \right] \rd t 
		&=	\inner{\cL_w^2}{\rd_t w^{(m_k)}(0)}{\hat{w}(0)} 
		- \inner{\cL_w^2}{w^{(m_k)}(0)}{\rd_t\hat{w}(0)}, \nonumber
	\end{align}
	where ``$\cdots$'' denotes terms that are not pertinent to the analysis. Similarly, integrating by parts in the limit of \eqref{eq:ApproximateEquations} yields
	\begin{align} \label{eq:Z2}
		\int_0^T \left[-\inner{\cL_v^2}{\Phi v}{\rd_t \hat{v}} + \cdots \right] \rd t 
		&= \inner{\cL_v^2}{\Phi v(0)}{\hat{v}(0)},  \\
		\int_0^T \left[\inner{\cL_i^2}{i}{\rd_t^2 \hat{i}} + \cdots \right] \rd t
		&= \cdots 
		+ \inner{\cL_i^2}{\rd_t i(0)}{\hat{i}(0)} 
		- \inner{\cL_i^2}{i(0)}{\rd_t\hat{i}(0)} ,  \nonumber \\
		\int_0^T \left[\inner{\cH_w^1}{w}{\rd_t^2 \hat{w}} + \cdots \right] \rd t 
		&=	\inner{\cL_w^2}{\rd_t w(0)}{\hat{w}(0)} 
		- \inner{\cL_w^2}{w(0)}{\rd_t\hat{w}(0)}. \nonumber
	\end{align}
	Now, consider the initial conditions \eqref{eq:ApprxInitial}, pass to the limits in \eqref{eq:Z1} through \eqref{eq:StrongConvergence}--\eqref{eq:QuadraticLimit}, and compare the results with \eqref{eq:Z2}.
	Since $\hat{v}$, $\hat{i}$, and $\hat{w}$ are arbitrary, the initial condition \eqref{eq:WeakInitial} holds and this completes the proof of existence.
	
	To prove uniqueness, assume, by contradiction, that there exist two weak solutions $(\tilde{v},\tilde{i},\tilde{w})$ and $(\hat{v},\hat{i},\hat{w})$ for \eqref{eq:Model}, initiating from the same initial values, such that
	$(\tilde{v},\tilde{i},\tilde{w}) \neq (\hat{v},\hat{i},\hat{w})$. 
	Then,  $(v,i,w):=(\tilde{v},\tilde{i},\tilde{w})-(\hat{v},\hat{i},\hat{w})$ is a weak solution initiating from the zero initial condition $(v_0,i_0,i'_0,w_0,w'_0)=0$. 
	Now, fix $s \in [0,T]$ and define, for $0 \leq t \leq T$, the functions
	\begin{equation} \label{eq:pqDefinition}
		p(t) := \int_{0}^{t} w(r) \rd r, \quad \quad  
		q(t) := \begin{cases}
			\int_{t}^{s} w(r) \rd r, & \text{if } 0\leq t \leq s,\\
			0, & \text{if } s< t \leq T.
		\end{cases}
	\end{equation}
	Note that $p(t)\in \cH_w^1$ and $q(t)\in \cH_w^1$ for all $t\in [0,T]$, and hence, $p$ and $q$ are regular enough to be used as the test function $h_w$ in \eqref{eq:WeakWave}. 
	Moreover, $q(s) = 0$, $q(0) = p(s)$, and $p(0)=0$. 
	Let $\tilde{u}$ and $\hat{u}$ satisfy \eqref{eq:WeakVoltage}--\eqref{eq:WeakWave} with the same test functions $\ell_v = v(t)$, $\ell_i = \rd_t i(t)$, and $h_w = q(t)$. 
	Subtracting the two sets of equations and integrating over $t\in [0,s]$ yields
	\begin{align}
		\int_0^s \Big[ \pair{\cL_v^2}{\Phi \rd_t v}{v} + \inner{\cL_v^2}{v}{v} - \inner{\cL_v^2}{J_1 i}{v} \label{eq:VoltageWithInt} \hspace{6.62 cm} \\
		+ \inner{\cL_v^2}{ J_2 (\tilde{v} \tilde{i}^{\rT} - \hat{v} \hat{i}^{\rT}) \Psi J_4 + J_3 (\tilde{v} \tilde{i}^{\rT} -\hat{v} \hat{i}^{\rT}) \Psi J_5}{v} \Big] \rd t &= 0,\nonumber  			\\
		\int_0^s \Big[ \pair{\cL_i^2}{\rd_t^2i}{\rd_t i} + 2 \inner{\cL_i^2}{\Gamma \rd_t i}{\rd_t i} + \inner{\cL_i^2}{\Gamma^2 i}{\rd_t i} - e \inner{\cL_i^2}{\Upsilon \Gamma J_6 w}{\rd_t i} \label{eq:CurrentWithInt} \hspace{2.23cm} \\	 
		-	e \inner{\cL_i^2}{\Upsilon \Gamma \rN J_7 (f( \tilde{v}) - f( \hat{v}) )}{\rd_t i} \Big] \rd t &= 0,  \nonumber \\
		\int_0^s \Big[  \pair{\cH_w^1}{\rd_t^2 w}{q} + 2 \nu \inner{\cL_w^2}{\Lambda \rd_t w}{q} + \tfrac{3}{2} \nu^2  \inner{\cL_{\partial w}^2}{\dx w}{\dx q} + \nu^2 \inner{\cL_w^2}{\Lambda^2 w}{q} \label{eq:WaveWithInt}\hspace{1.5cm} \\ 
		- \nu^2 \inner{\cL_w^2}{\Lambda^2 \rM J_8 ( f(\tilde{v}) - f(\hat{v}) )}{q} \Big] \rd t &= 0. \nonumber
	\end{align}
	
	Next, integrating by parts in the first and second terms in \eqref{eq:WaveWithInt} yields
	\begin{multline*}
		\int_0^s \Big[ -\inner{\cL_w^2}{\rd_t w}{\rd_t q} - 2 \nu \inner{\cL_w^2}{\Lambda w}{\rd_t q} + \tfrac{3}{2} \nu^2  \inner{\cL_{\partial w}^2}{\dx w}{\dx q} + \nu^2 \inner{\cL_w^2}{\Lambda^2 w}{q}\Big] \rd t\\  
		=\int_0^s \nu^2 \inner{\cL_w^2}{\Lambda^2 \rM J_8 ( f(\tilde{v}) - f(\hat{v}) )}{q} \rd t. 
	\end{multline*}	
	Note that $\pair{\cH_w^1}{\rd_t w}{\rd_t q} = \inner{\cL_w^2}{\rd_t w}{\rd_t q}$ since $\rd_t w \in \cL_w^2$ for almost every $t \in [0,T]$; see the proof of \cite[Th. 5.9-1]{Evans:PDE:2010}.	
	Now, it follows from the definition of $q(t)$ that $\rd_t q = -w$ for all $t \in [0, s]$. Therefore,
	\begin{multline} \label{eq:52816754}
		\int_0^s \left[\tfrac{1}{2}\rd_t \left( \norm{\cL_w^2}{w}^2 - \tfrac{3}{2} \nu^2  \norm{\cL_{\partial w}^2}{\partial_x q}^2 \right) 
		+ 2 \nu \norm{\cL_w^2}{\Lambda^{\frac{1}{2}} w}^2  + \nu^2 \inner{\cL_w^2}{\Lambda^2 w}{q} \right] \rd t \\
		=\int_0^s \nu^2 \inner{\cL_w^2}{\Lambda^2 \rM J_8 ( f(\tilde{v}) - f(\hat{v}) )}{q}  \rd t. 
	\end{multline}	 
	Using Young's inequality, %\eqref{eq:FireingFuncDerivative} that
	\begin{align*}
		\nu^2 \inner{\cL_w^2}{\Lambda^2 \rM J_8 ( f(\tilde{v}) - f(\hat{v}) )}{q}
		&\leq  \tfrac{1}{4} \nu^2 \norm{\cL_w^2}{q}^2 +\nu^2 \trace(\Lambda^4 \rM^2 ) \left[\sup_{v_{\rE}(x,t) \in \bbR} |\partial_{v_{\rE}} f_{\rE}(v_{\rE})| \right]^2 \norm{\cL_v^2}{v}^2\\
		&\leq \tfrac{1}{4} \nu^2 \norm{\cL_w^2}{q}^2 + \tfrac{1}{8}\nu^2 \frac{\rF_{\rE}^2}{\sigma_{\rE}^2} \trace(\Lambda^4 \rM^2 ) \norm{\cL_v^2}{v}^2, \\
		-\nu^2 \inner{\cL_w^2}{\Lambda^2 w}{q} &\leq \tfrac{1}{4} \nu^2 \norm{\cL_w^2}{q}^2 + \nu^2 \norm{2}{\Lambda }^4 \norm{\cL_w^2}{w}^2,
	\end{align*}
	where the second inequality follows, for $\rX  = \rE$, from differentiating \eqref{eq:FiringRateFunction} as 
	\begin{equation} \label{eq:FireingFuncDerivative}
		\partial_{v_{\rX}} f_{\rX}(v_{\rX}) =  
		\dfrac{\sqrt{2}}{\sigma_{\rX}}\, \rF_{{\rX}}\exp\left(\! -\sqrt{2}\, \dfrac{v_{\rX} - \mu_{\rX} }{\sigma_{\rX}} \right)\! \left[ 1+\exp\left(\! -\sqrt{2}\, \dfrac{v_{\rX} - \mu_{\rX} }{\sigma_{\rX}} \right) \right]^{-2}\!\!, 
		\quad \rX \in \{\rE,\rI \},
	\end{equation}
	which implies $\sup_{v_{\rm X}(x,t)\in \bbR}|\partial_{v_{\rX}} f_{\rX}(v_{\rX})| \leq \frac{\rF_{\rX}}{2\sqrt{2} \sigma_{\rX}}$.
	
	Now, \eqref{eq:52816754} implies
	\begin{align*}
		\tfrac{1}{2} \norm{\cL_w^2}{w(s)}^2 + \tfrac{3}{4} \nu^2  \norm{\cH_w^1}{q(0)}^2 
		&\leq	\int_0^s \left[\Big(\!-2\nu \Lambda_{\min} + \nu^2 \norm{2}{\Lambda}^4 \Big)  \norm{\cL_w^2}{w}^2  
		+\tfrac{1}{2}\nu^2 \norm{\cL_w^2}{q}^2 \right.\\&\left.
		\quad + \tfrac{1}{8}\nu^2 \frac{\rF_{\rE}^2}{\sigma_{\rE}^2} \trace(\Lambda^4 \rM^2 ) \norm{\cL_v^2}{v}^2
		\right] \rd t
		+\tfrac{3}{4} \nu^2 \norm{\cL_w^2}{q(0)}^2. 
	\end{align*}
	where $\Lambda_{\min}:= \min\{ \Lambda_{\rE \rE}, \Lambda_{\rE \rI} \}$ is the smallest eigenvalue of $\Lambda$.
	Noting from \eqref{eq:pqDefinition} that $q(t) = p(s) - p(t)$ for all $t \in [0,s]$, it follows that the above inequality can be written as
	\begin{align*}
		\tfrac{1}{2} \norm{\cL_w^2}{w(s)}^2 + \tfrac{3}{4} \nu^2  \norm{\cH_w^1}{p(s)}^2
		&\leq	\int_0^s \left[\Big(\!-2\nu \Lambda_{\min} + \nu^2 \norm{2}{\Lambda}^4 \Big)  \norm{\cL_w^2}{w(t)}^2  
		+\tfrac{1}{2}\nu^2 \norm{\cL_w^2}{p(s)-p(t)}^2  \right. \\& \left.
		\quad + \tfrac{1}{8}\nu^2 \frac{\rF_{\rE}^2}{\sigma_{\rE}^2} \trace(\Lambda^4 \rM^2 ) \norm{\cL_v^2}{v(t)}^2
		\right] \rd t 
		+\tfrac{3}{4} \nu^2 \norm{\cL_w^2}{p(s)}^2. 
	\end{align*}	
	Using the Cauchy-Schwarz inequality, it follows from the definition of $p(t)$ given by \eqref{eq:pqDefinition} that
	$\norm{\cL_w^2}{p(s)}^2 \leq s \int_0^s \norm{\cL_w^2}{w(t)}^2 \rd t$.
	Moreover, 
	\begin{equation*}
		\norm{\cL_w^2}{p(s)-p(t)}^2  \leq 2 \norm{\cL_w^2}{p(s)}^2  + 2\norm{\cL_w^2}{p(t)}^2
		\leq 2 \norm{\cH_w^1}{p(s)}^2  + 2\norm{\cH_w^1}{p(t)}^2.
	\end{equation*} 
	Therefore,
	\begin{align} \label{eq:WaveWithIntBound}
		\tfrac{1}{2} \norm{\cL_w^2}{w(s)}^2 + \nu^2  (\tfrac{3}{4} - s)\norm{\cH_w^1}{p(s)}^2 
		&\leq	\int_0^s \left[\Big(\!-2\nu \Lambda_{\min} + \nu^2 \norm{2}{\Lambda}^4 +\tfrac{3}{4} \nu^2 s\Big)  \norm{\cL_w^2}{w(t)}^2  \right. \\& \left.
		\quad +\nu^2 \norm{\cH_w^1}{p(t)}^2 
		+ \tfrac{1}{8}\nu^2 \frac{\rF_{\rE}^2}{\sigma_{\rE}^2} \trace(\Lambda^4 \rM^2 ) \norm{\cL_v^2}{v(t)}^2
		\right] \rd t. \nonumber
	\end{align}	 
	
	Next, recalling \eqref{eq:BoundVoltage} and \eqref{eq:BoundCurrent} and using the Cauchy-Schwarz and Young inequalities, it follows that the fourth term in \eqref{eq:VoltageWithInt} satisfies, for every $\varepsilon_1 >0$, 
	\begin{align*}
		\inner{\cL_v^2}{ J_2 (\tilde{v} \tilde{i}^{\rT} - \hat{v} \hat{i}^{\rT}) \Psi J_4}{v} &= \inner{\cL_v^2}{ J_2 v \tilde{i}^{\rT} \Psi J_4}{v} + \inner{\cL_v^2}{ J_2 \hat{v} i^{\rT} \Psi J_4}{v} \\
		& \geq -\sqrt{2 \kappa_{\tilde{i}}} \; \norm{2}{\Psi} \norm{\cL_v^2}{v}^2 -\varepsilon_1 \norm{\cL_v^2}{v}^2 - \frac{2 \kappa_{\hat{v}}}{4 \varepsilon_1} \norm{2}{\Psi}^2 \norm{\cL_i^2}{i}^2,
	\end{align*} 
	where $\kappa_{\hat{v}}$ and $\kappa_{\tilde{i}}$ are in the form of \eqref{eq:KappaV} and \eqref{eq:KappaI}, respectively. The same inequality holds for $\inner{\cL_v^2}{ J_3 (\tilde{v} \tilde{i}^{\rT} - \hat{v} \hat{i}^{\rT}) \Psi J_5}{v}$. 
	Similarly, using Young's inequality and \eqref{eq:FireingFuncDerivative}, 
	\begin{align*}
		e \inner{\cL_i^2}{\Upsilon \Gamma \rN J_7 (f( \tilde{v}) - f( \hat{v}) )}{\rd_t i}
		&\leq  \varepsilon_2 \norm{\cL_i^2}{\rd_t i}^2 + \frac{e^2}{4 \varepsilon_2} \norm{2}{\Upsilon \Gamma \rN J_7}^2 \sup_{v(x,t) \in \bbR^2} \norm{2}{\partial_v f(v)}^2 \norm{\cL_v^2}{v}^2\\
		&\leq \varepsilon_2 \norm{\cL_i^2}{\rd_t i}^2 + \frac{e^2}{32 \varepsilon_2} \norm{2}{\Upsilon \Gamma \rN J_7}^2  \max \left \{
		\frac{\rF_{\rE}^2}{\sigma_{\rE}^2} , \frac{\rF_{\rI}^2}{\sigma_{\rI}^2} \right \} \norm{\cL_v^2}{v}^2,
	\end{align*}
	for every $\varepsilon_2>0$. 
	Moreover, for every $\varepsilon_3 >0$ and $\varepsilon_4 >0$,
	\begin{align*}
		\inner{\cL_v^2}{J_1 i}{v} &\leq \varepsilon_4 \norm{\cL_v^2}{v}^2 + \frac{1}{2\varepsilon_4} \norm{\cL_i^2}{i}^2, \\
		e \inner{\cL_i^2}{\Upsilon \Gamma J_6 w}{\rd_t i} &\leq \varepsilon_4 \norm{\cL_i^2}{\rd_t i}^2 +  \frac{e^2}{4\varepsilon_4} \norm{2}{\Upsilon \Gamma J_6}^2 \norm{\cL_w^2}{w}^2.
	\end{align*}
	
	Substituting the above inequalities into \eqref{eq:VoltageWithInt} and \eqref{eq:CurrentWithInt}, and adding the resulting inequalities to \eqref{eq:WaveWithIntBound} yields, for some $\alpha >0$,
	\begin{multline*}
		\norm{\cL_v^2}{\Phi^{\frac{1}{2}} v(s)}^2 + \norm{\cL_i^2}{\rd_t i(s)}^2 + \norm{\cL_i^2}{\Gamma i(s)}^2 
		+\norm{\cL_w^2}{w(s)}^2 + \nu^2  (\tfrac{3}{2} - 2s)\norm{\cH_w^1}{p(s)}^2  \\
		\leq
		\alpha \int_0^s \left[ \norm{\cL_v^2}{v(t)}^2 + \norm{\cL_i^2}{\rd_t i(t)}^2 + \norm{\cL_i^2}{i(t)}^2 + \norm{\cL_w^2}{ w(t)}^2 + \norm{\cH_w^1}{p(t)}^2\right] \rd t.
	\end{multline*}
	Now, setting $T_1 = \frac{3}{4}$, it follows from the integral form of Gr\"{o}nwall's inequality \cite[Appx. B.2]{Evans:PDE:2010} that $(v(s),i(s),w(s)) = 0$ for all $s\in[0,T_1]$. 
	Repeating the same arguments for intervals $[T_1,2T_1]$, $[2T_1,3T_1]$, $\dots$, we deduce $(v(t),i(t),w(t)) = 0$ for all $t\in[0,T]$, and hence, $(\tilde{v},\tilde{i},\tilde{w}) = (\hat{v},\hat{i},\hat{w})$ for all $t\in[0,T]$, which is a contradiction and completes the proof of uniqueness.  
	\qquad
\end{proof}	

\begin{proposition}[Regularity of weak solutions] \label{prp:RegularityWeakSolution}
	Suppose that the assumptions of Theorem \ref{th:WeakExistenceUniqueness} hold, namely, $g \in L^2(0,T;\cL_i^2)$, $v_0 \in \cL_v^2$, $i_0 \in \cL_i^2$, $i'_0 \in \cL_i^2$, $w_0 \in \cH_w^1$, and $w'_0 \in \cL_w^2$. 
	Then the $\Omega$-periodic weak solution $(v,i,w)$ of the initial value problem \eqref{eq:Voltage}--\eqref{eq:InitialValues} satisfies 
	\begin{align}  \label{eq:WeakProperties1}
		\esssup_{t\in [0,T]}\left( \norm{\cL_v^2}{v(t)}^2 \right) + \norm{L^2(0,T;\cL_v^2)}{\rd_t v}^2 &\leq \kappa_v, \\
		\esssup_{t\in [0,T]}\left(\norm{\cL_i^2}{\rd_t i(t)}^2 + \norm{\cL_i^2}{i(t)}^2 \right) 
		+\norm{L^2(0,T; \cL_i^2)}{\rd_t^2 i}^2 &\leq \kappa_i, \nonumber \\
		\esssup_{t\in[0,T]} \left(\norm{\cL_w^2}{\rd_t w(t)}^2  + \norm{\cH_w^1}{w(t)}^2 \right) 
		+ \norm{L^2(0,T;{\cH^1_w}^{\!\!\ast})}{\rd_t^2 w}^2 &\leq \kappa_w,\nonumber  
	\end{align}
	\begin{alignat}{3} \label{eq:WeakProperties2}
		v &\in H^1(0,T;\cL_v^2) \cap C^2([0,T];\cL_v^2), &&\\
		%	\rd_t v &\in L^2(0,T,\cL_v^2), \nonumber\\
		i &\in H^2(0,T;\cL_i^2)  \cap C^{1,\frac{1}{2}}([0,T];\cL_i^2), &\quad
		\rd_t i &\in H^1(0,T;\cL_i^2)  \cap C^{0,\frac{1}{2}}([0,T];\cL_i^2), \nonumber\\
		%	\rd_t^2 i &\in L^2(0,T,\cL_i^2), \nonumber\\
		w &\in H^1(0,T;\cL_w^2)   \cap C^0([0,T];\cH_w^1), &\quad
		\rd_t w &\in C^0([0,T];\cL_w^2), \nonumber
	\end{alignat}
	where $\kappa_v$, $\kappa_i$, and $\kappa_w$ are given by \eqref{eq:KappaV}--\eqref{eq:KappaW}.
	Moreover, if $g \in C^0([0,T];\cL_i^2)$, then
	\begin{equation} \label{eq:WeakProperties3}
		v \in  C^3([0,T];\cL_v^2), \quad i \in C^2([0,T];\cL_i^2), \quad \rd_t i \in C^1([0,T];\cL_i^2),
	\end{equation}
	and if $g \in C^1([0,T];\cL_i^2)$, then
	\begin{equation} \label{eq:WeakProperties4}
		v \in  C^4([0,T];\cL_v^2), \quad i \in C^3([0,T];\cL_i^2), \quad \rd_t i \in C^2([0,T];\cL_i^2).
	\end{equation}
\end{proposition}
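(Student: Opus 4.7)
The plan is to deduce all regularity assertions by combining the \textit{a priori} estimates from Proposition \ref{prp:EnergyEstimates} with classical lower semicontinuity and embedding arguments, and then to bootstrap continuity in time from equations \eqref{eq:Voltage}--\eqref{eq:Wave} themselves.

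First I would establish \eqref{eq:WeakProperties1}. By Proposition \ref{prp:EnergyEstimates}, the sequences $\{v^{(m)}\}$, $\{i^{(m)}\}$, $\{\rd_t i^{(m)}\}$, $\{w^{(m)}\}$, $\{\rd_t w^{(m)}\}$ are also bounded in the corresponding $L^{\infty}(0,T;\cdot)$ spaces, so by Banach-Alaoglu a subsequence of each converges in the weak-$*$ topology of $L^{\infty}(0,T;\cdot)$. The limits must agree with those identified in the proof of Theorem \ref{th:WeakExistenceUniqueness}, and the bounds \eqref{eq:WeakProperties1} on $\esssup$ and on $\norm{L^2}{\,\cdot\,}^2$ then follow from weak-$*$ and weak lower semicontinuity, applied term by term with the constants of \eqref{eq:KappaV}--\eqref{eq:KappaW}. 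Since $\cL_v^2$ and $\cL_i^2$ are self-dual, the bounds automatically upgrade the membership of $\rd_t v$ to $L^2(0,T;\cL_v^2)$ and of $\rd_t^2 i$ to $L^2(0,T;\cL_i^2)$.

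Next I would read off the Sobolev and H\"older memberships in \eqref{eq:WeakProperties2}. The inclusions $v\in H^1(0,T;\cL_v^2)$, $i\in H^2(0,T;\cL_i^2)$, $\rd_t i\in H^1(0,T;\cL_i^2)$, $w\in H^1(0,T;\cL_w^2)$ are direct from \eqref{eq:WeakProperties1}. The continuous embedding $H^1(0,T;\cU)\hookrightarrow C^{0,\frac{1}{2}}([0,T];\cU)$ then gives $\rd_t i\in C^{0,\frac{1}{2}}([0,T];\cL_i^2)$ and, by integration, $i\in C^{1,\frac{1}{2}}([0,T];\cL_i^2)$. For the wave component, the damped wave equation \eqref{eq:Wave} with $w_0\in\cH_w^1$, $w_0'\in\cL_w^2$, and right-hand side $\nu^2\Lambda^2 \rM J_8 f(v)\in L^{\infty}(0,T;\cL_w^2)$ (bounded since $f$ is bounded) falls in the regime where the standard energy identity for second-order hyperbolic equations applies; using the Galerkin basis $\cB_w$ from Lemma \ref{lem:DualOrthogonalBasis} to derive the identity on a dense set and passing to the limit yields $w\in C^0([0,T];\cH_w^1)$ and $\rd_t w\in C^0([0,T];\cL_w^2)$, in the spirit of \cite[Thm.\ 7.2.5]{Evans:PDE:2010}.

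To obtain $v\in C^2([0,T];\cL_v^2)$ and the higher regularity in \eqref{eq:WeakProperties3}--\eqref{eq:WeakProperties4} I would bootstrap from the equations. Since $i\in C^{1,\frac{1}{2}}$ and $v$ already lies in $C^{0,\frac{1}{2}}([0,T];\cL_v^2)$ (being in $H^1$), equation \eqref{eq:Voltage} expresses $\rd_t v$ pointwise in time as a continuous function of $v$ and $i$, hence $v\in C^1([0,T];\cL_v^2)$; differentiating once more and using the already-known continuity of $\rd_t v$ and $\rd_t i$ yields $v\in C^2([0,T];\cL_v^2)$. If in addition $g\in C^0([0,T];\cL_i^2)$, then \eqref{eq:Current} exhibits $\rd_t^2 i$ as a continuous function of $w$, $\rd_t i$, $i$, $v$, and $g$, giving $i\in C^2$, $\rd_t i\in C^1$, and, by differentiating \eqref{eq:Voltage} once more, $v\in C^3$. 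One further round of the same bootstrap under $g\in C^1$ yields \eqref{eq:WeakProperties4}.

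The main obstacle is the continuity claim $w\in C^0([0,T];\cH_w^1)$ with $\rd_t w\in C^0([0,T];\cL_w^2)$, because the general existence theory for second-order equations only produces $w\in C^0([0,T];\cL_w^2)$ and $\rd_t w\in C^0([0,T];{\cH^1_w}^{\!\ast})$ from the \textit{a priori} bounds alone. The remedy is the dual-orthogonal basis of Lemma \ref{lem:DualOrthogonalBasis}, which lets us multiply \eqref{eq:WaveApprx} by $\rd_t c_{w_k}^{(m)}$ and recover the full energy identity at the Galerkin level, and then pass to the limit using the strong convergence in $L^2(0,T;\cL_v^2)$ from \eqref{eq:StrongConvergence} and \eqref{eq:NonlinearityLimit}; weak-$*$ convergence in $L^{\infty}(0,T;\cH_w^1)$ combined with the continuity of the energy in $t$ then forces strong continuity in $\cH_w^1$, closing the argument.
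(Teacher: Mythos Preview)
Your proposal is correct and follows the same overall architecture as the paper: pass the Galerkin estimates to the limit by weak/weak-$*$ lower semicontinuity to get \eqref{eq:WeakProperties1}, read off the $H^k$ memberships, use the one-dimensional Sobolev embedding $H^1(0,T;\cU)\hookrightarrow C^{0,\frac12}([0,T];\cU)$ for the H\"older regularity of $i$ and $\rd_t i$, and bootstrap time-continuity of $v$ and the higher regularity \eqref{eq:WeakProperties3}--\eqref{eq:WeakProperties4} directly from \eqref{eq:Voltage} and \eqref{eq:Current}.

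The only point where your route differs from the paper is the continuity $w\in C^0([0,T];\cH_w^1)$, $\rd_t w\in C^0([0,T];\cL_w^2)$. You propose to derive the full energy identity at the Galerkin level (using the dual-orthogonal basis of Lemma \ref{lem:DualOrthogonalBasis}) and then combine weak-$*$ convergence in $L^\infty(0,T;\cH_w^1)$ with continuity of the energy to upgrade to strong continuity, \`a la \cite[Thm.~7.2.5]{Evans:PDE:2010}. The paper instead observes that, once $v\in C^2([0,T];\cL_v^2)$ (hence $f(v)\in C^2([0,T];\cL_v^\infty)$), equation \eqref{eq:Wave} gives $\rd_t^2 w + Aw \in L^2(0,T;\cL_w^2)$ for $A=-\tfrac{3}{2}\nu^2\Delta+I$, and then invokes \cite[Lemma II.4.1]{Temam:InfiniteDimensional:1997} to conclude. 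These are the same mathematics at different levels of packaging: Temam's lemma is precisely the abstract form of the energy-identity argument you sketch, so the paper's proof is shorter but yours is more self-contained.
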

\begin{proof}
	First, recall that $\cL_v^2={\cL^2_v}^{\ast}$ and $\cL_i^2={\cL^2_i}^{\ast}$. 
	Assertion \eqref{eq:WeakProperties1} follows immediately from \eqref{eq:BoundVoltage}--\eqref{eq:BoundWave} by setting $m=m_k$ and passing to the limits through \eqref{eq:StrongConvergence} and \eqref{eq:WeakConvergence}.
	The inclusions in $H^1$ and $H^2$ %, and $L^{\infty}$ 
	in assertion \eqref{eq:WeakProperties2} are immediate from \eqref{eq:WeakProperties1}. 
	The Sobolev embedding theorems \cite[Th. 6.6-1]{Ciarlet:FunctionalAnalysis:2013} applied to Banach space-valued functions on $[0,T] \subset \bbR$ imply that	
	$v \in C^{0,\frac{1}{2}}([0,T];\cL_v^2)$, $i \in C^{1,\frac{1}{2}}([0,T];\cL_i^2)$, and $\rd_t i \in C^{0,\frac{1}{2}}([0,T];\cL_i^2)$, which further imply by \eqref{eq:Voltage} that $v \in C^2([0,T];\cL_v^2)$.
	
	Consider the time-independent self-adjoint linear operator 
	$A:=(-\frac{3}{2} \nu^2 \Delta + I):\cH_w^1 {\rightarrow \cH_w^1}^{\!\!\ast}$. 
	Note that $f(v) \in C^2([0,T];\cL_v^{\infty})$, since $f$ is a bounded smooth function and $v \in C^2([0,T];\cL_v^2)$. 
	Then, it follows from \eqref{eq:Wave} and \eqref{eq:WeakProperties1} that 
	$\rd_t^2 w + A w \in L^2(0,T;\cL_w^2)$.
	Therefore, by \cite[Lemma II.4.1]{Temam:InfiniteDimensional:1997} we have 
	$w \in C^0([0,T];\cH_w^1)$ and $\rd_t w \in C^0([0,T];\cL_w^2)$, which completes the proof of  \eqref{eq:WeakProperties2}. 
	Assertions \eqref{eq:WeakProperties3} and \eqref{eq:WeakProperties4} are now immediate from \eqref{eq:Voltage}, \eqref{eq:Current}, and \eqref{eq:WeakProperties2}.
	\qquad
\end{proof}

\begin{theorem}[Existence and uniqueness of strong solutions] \label{th:StrongExistenceUniqueness}
	Suppose that 
	$g \in L^2(0,T;\cL_i^2)$, $v_0 \in \cL_v^2$, $i_0 \in \cL_i^2$, $i'_0 \in \cL_i^2$, $w_0 \in \cH_w^2$, and $w'_0 \in \cH_w^1$. 
	Then there exists a unique $\Omega$-periodic strong solution $(v,i,w)$ of the initial value problem \eqref{eq:Voltage}--\eqref{eq:InitialValues}.  
\end{theorem}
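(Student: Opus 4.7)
The strategy is to sharpen the Galerkin scheme of Theorem \ref{th:WeakExistenceUniqueness} with additional \emph{a priori} estimates that exploit the stronger initial regularity $w_0 \in \cH_w^2$ and $w'_0 \in \cH_w^1$. Since $\cH_w^2 \subset \cH_w^1$ and $\cH_w^1 \subset \cL_w^2$, the hypotheses of Theorem \ref{th:WeakExistenceUniqueness} are already satisfied and there exists a unique weak solution $(v,i,w)$; it remains to promote the regularity of $w$ to $w \in L^2(0,T;\cH_w^2)$, $\rd_t w \in L^2(0,T;\cH_w^1)$, and $\rd_t^2 w \in L^2(0,T;\cL_w^2)$. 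The dual orthogonal basis $\cB_w$ furnished by Lemma \ref{lem:DualOrthogonalBasis} consists of eigenfunctions of $A := -\Delta + I$, so Galerkin truncations of functions in $\cH_w^2$ converge in $\cH_w^2$; in particular the initial values in \eqref{eq:ApprxInitial} converge to $w_0$ in $\cH_w^2$ and to $w'_0$ in $\cH_w^1$.

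The central new estimate is obtained by testing \eqref{eq:WaveApprx} against $A \rd_t w^{(m)}$, which is admissible because $A \rd_t w^{(m)} = \sum_{k=1}^m \lambda_k \rd_t c_{w_k}^{(m)} h_w^{(k)}$ lies in $\mathrm{span}\{ h_w^{(1)}, \dots, h_w^{(m)}\}$, where $\lambda_k$ is the eigenvalue of $A$ associated with $h_w^{(k)}$. Invoking the Green's identities \eqref{eq:DeltaFormulas} converts the left-hand side, up to sign, into
\[
\tfrac{1}{2} \rd_t \bigl[ \norm{\cH_w^1}{\rd_t w^{(m)}}^2 + \tfrac{3}{2}\nu^2 \bigl( \norm{\cH_w^2}{w^{(m)}}^2 - \norm{\cL_w^2}{w^{(m)}}^2 \bigr) + \nu^2 \norm{\cH_w^1}{\Lambda w^{(m)}}^2 \bigr] + 2\nu \Lambda_{\min} \norm{\cH_w^1}{\rd_t w^{(m)}}^2,
\]
mirroring the structure of the weak-solution estimate in Proposition \ref{prp:EnergyEstimates}. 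The source term $\nu^2 \inner{\cL_w^2}{\Lambda^2 \rM J_8 f(v^{(m)})}{A \rd_t w^{(m)}}$, however, cannot be estimated directly by Cauchy-Schwarz, since the factor $\norm{\cL_w^2}{A \rd_t w^{(m)}}$ is equivalent to $\norm{\cH_w^2}{\rd_t w^{(m)}}$, a quantity not controlled by the left-hand side.

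To bypass this, I would integrate in time over $[0,t]$ and perform a temporal integration by parts on the source term, producing a boundary contribution $\nu^2 \inner{\cL_w^2}{\Lambda^2 \rM J_8 f(v^{(m)}(t))}{A w^{(m)}(t)}$ and an integral $-\nu^2 \int_0^t \inner{\cL_w^2}{\Lambda^2 \rM J_8 \partial_v f(v^{(m)}) \rd_t v^{(m)}}{A w^{(m)}} \rd s$; the Jacobian $\partial_v f$ is uniformly bounded by \eqref{eq:FireingFuncDerivative}, and $\norm{\cL_w^2}{A u} \leq C \norm{\cH_w^2}{u}$ by \eqref{eq:DeltaFormulas}. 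Cauchy-Schwarz and Young's inequality, reserving a small coefficient in front of $\norm{\cH_w^2}{w^{(m)}}^2$ for absorption, combined with the uniform bound $\norm{L^2(0,T;\cL_v^2)}{\rd_t v^{(m)}}^2 \leq \kappa_v$ from Proposition \ref{prp:EnergyEstimates}, bring the inequality into the form required for the integral Gr\"{o}nwall lemma. This yields a uniform-in-$m$ estimate
\[
\sup_{t\in[0,T]} \bigl( \norm{\cH_w^1}{\rd_t w^{(m)}(t)}^2 + \norm{\cH_w^2}{w^{(m)}(t)}^2 \bigr) \leq \tilde{\kappa}_w,
\]
with $\tilde{\kappa}_w$ depending only on the parameters of the model, the final time $T$, and the data. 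Reading $\rd_t^2 w^{(m)}$ algebraically off \eqref{eq:WaveApprx} and invoking this bound together with Proposition \ref{prp:EnergyEstimates} then furnishes $\norm{L^2(0,T;\cL_w^2)}{\rd_t^2 w^{(m)}}$ bounded uniformly in $m$.

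Extracting a weakly convergent subsequence as in Theorem \ref{th:WeakExistenceUniqueness}---so that $w^{(m_k)} \rightharpoonup w$ in $L^2(0,T;\cH_w^2)$, $\rd_t w^{(m_k)} \rightharpoonup \rd_t w$ in $L^2(0,T;\cH_w^1)$, and $\rd_t^2 w^{(m_k)} \rightharpoonup \rd_t^2 w$ in $L^2(0,T;\cL_w^2)$---delivers the claimed regularity of $w$; identifying $w$ with the weak solution already constructed upgrades \eqref{eq:WeakWave} to an equality in $L^2(0,T;\cL_w^2)$, i.e.~\eqref{eq:Wave} holds strongly. The required inclusions for $v$ and $i$ are already contained in Proposition \ref{prp:RegularityWeakSolution}, since ${\cL^2_v}^{\ast} = \cL_v^2$ and ${\cL^2_i}^{\ast} = \cL_i^2$. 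Uniqueness is immediate, as every strong solution is a weak solution and Theorem \ref{th:WeakExistenceUniqueness} applies. The principal obstacle is the source term $\nu^2 \Lambda^2 \rM J_8 f(v^{(m)})$: because $v^{(m)}$ enjoys no spatial regularity beyond $\cL_v^2$, the composition $f(v^{(m)})$ is not a priori in $\cH_w^1$, and the temporal integration-by-parts trick is essential to close the $\cH_w^2$ estimate without invoking such unavailable spatial smoothness of $v$.
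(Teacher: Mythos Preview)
Your argument is correct, but it follows a different route from the paper's own proof. The paper proceeds in two separate steps: first it tests \eqref{eq:WaveApprx} against $A w^{(m)}$ (i.e.\ multiplies by $\lambda_k c_{w_k}^{(m)}$) to obtain a purely \emph{algebraic} inequality
\[
\norm{\cH_w^2}{w^{(m)}}^2 \le \beta\bigl(\norm{\cL_w^2}{\rd_t^2 w^{(m)}}^2 + \norm{\cL_w^2}{\rd_t w^{(m)}}^2 + \norm{\cH_w^1}{w^{(m)}}^2 + |\Omega|\rF_{\rE}^2\trace(\Lambda^4\rM^2)\bigr);
\]
second, it \emph{differentiates} \eqref{eq:WaveApprx} in $t$ and tests the resulting equation for $\dot{w}^{(m)}:=\rd_t w^{(m)}$ against $\rd_t\dot{w}^{(m)}$, which is just the weak-energy estimate of Proposition~\ref{prp:EnergyEstimates} applied to $\dot{w}^{(m)}$ with source $\rd_t f(v^{(m)})$. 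The latter is controlled via $\sup|\partial_{v_{\rE}} f_{\rE}|\le \rF_{\rE}/(2\sqrt{2}\,\sigma_{\rE})$ and the $L^2(0,T;\cL_v^2)$ bound on $\rd_t v^{(m)}$ from \eqref{eq:BoundVoltage}; the initial value $\norm{\cL_w^2}{\rd_t^2 w^{(m)}(0)}$ is read off the equation using $w_0\in\cH_w^2$, $w_0'\in\cH_w^1$. Combining the two steps gives \eqref{eq:BoundWaveH2Hat}.

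Your single-shot approach---test against $A\rd_t w^{(m)}$ and integrate by parts in time on the source---is the dual manoeuvre and is in fact the device the paper itself employs later, in the proofs of Proposition~\ref{prp:SemigroupC0Strong} and Theorem~\ref{th:AbsorbingSetD2}. The paper's two-step version avoids the pointwise-in-$t$ boundary term that you must absorb with a small-$\varepsilon$ Young inequality, which makes the constants slightly cleaner; your version has the advantage of yielding the $\cH_w^2$ and $\cH_w^1$ bounds for $w$ and $\rd_t w$ in one differential inequality. Both close on exactly the same ingredient, namely the control of $\rd_t f(v^{(m)})$ in $L^2(0,T;\cL_v^2)$ through the bounded Jacobian $\partial_v f$ and Proposition~\ref{prp:EnergyEstimates}.
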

\begin{proof}
	Uniqueness follows immediately from Theorem \ref{th:WeakExistenceUniqueness} since every strong solution of 
	\eqref{eq:Voltage}--\eqref{eq:InitialValues}
	is also a weak solution. 
	Moreover, Proposition \ref{prp:RegularityWeakSolution} implies that the weak solutions $v \in H^1(0,T;\cL_v^2)$ and $i \in H^2(0,T;\cL_i^2)$ are indeed strong solutions as given in Definition \ref{def:StrongSolution}.
	It remains to prove the regularity required for $w$ by Definition \ref{def:StrongSolution}. 
	
	Consider \eqref{eq:WaveApprx} with the approximation \eqref{eq:WaveExpansion}, 
	let $\cB_w = \seq{h_w^{(k)}}_{k=1}^{\infty}$ be the orthogonal basis of $\cH_w^1$ consisting of the eigenfunctions of $A:=-\Delta +I$ %):\cH_w^1 \rightarrow {\cH_w^1}^{\!\! \ast}$ 
	as given by Lemma \ref{lem:DualOrthogonalBasis}, and let $\lambda_k$ denote the eigenvalue corresponding to the eigenfunction $h_w^{(k)}$. Multiplying \eqref{eq:WaveApprx} by $\lambda_k c_{w_k}^{(m)}$ and summing over $k=1, \dots, m$ yields
	\begin{multline*}
		\inner{\cL_w^2}{\rd_t^2 w^{(m)}}{A w^{(m)}} + 2 \nu \inner{\cL_w^2}{\Lambda \rd_t w^{(m)}}{A w^{(m)}} + \tfrac{3}{2} \nu^2  \inner{\cL_{\partial w}^2}{\dx w^{(m)}}{A \dx w^{(m)}}\\
		+ \nu^2 \inner{\cL_w^2}{\Lambda^2 w^{(m)}}{A w^{(m)}} 	
		- \nu^2 \inner{\cL_w^2}{\Lambda^2 \rM J_8 f(v^{(m)} )}{A w^{(m)}} =0. 
	\end{multline*}
	Now, Young's inequality implies that, for every $\varepsilon_1, \dots, \varepsilon_4>0$,
	\begin{align*}
		-\inner{\cL_w^2}{\rd_t^2 w^{(m)}}{A w^{(m)}} &\leq \varepsilon_1 \norm{\cL_w^2}{A w^{(m)}}^2 + \frac{1}{4\varepsilon_1} \norm{\cL_w^2}{\rd_t^2 w^{(m)}}^2,\\
		-\inner{\cL_w^2}{\Lambda \rd_t w^{(m)}}{A w^{(m)}} &\leq \varepsilon_2 \norm{\cL_w^2}{A w^{(m)}}^2 + \frac{1}{4\varepsilon_2} \norm{\cL_w^2}{\Lambda \rd_t w^{(m)}}^2, \\
		-\inner{\cL_w^2}{\Lambda^2 w^{(m)}}{A w^{(m)}} &\leq \varepsilon_3 \norm{\cL_w^2}{A w^{(m)}}^2 + \frac{1}{4\varepsilon_3} \norm{\cL_w^2}{\Lambda^2 w^{(m)}}^2, \\
		\inner{\cL_w^2}{\Lambda^2 \rM J_8 f(v^{(m)} )}{A w^{(m)}} &\leq 
		\varepsilon_4 \norm{\cL_w^2}{A w^{(m)}}^2 + \frac{1}{4\varepsilon_4}\norm{\cL_w^2}{\Lambda^2 \rM J_8 f(v^{(m)} )}^2  \\
		&\leq  \varepsilon_4 \norm{\cL_w^2}{A w^{(m)}}^2 +\frac{1}{4\varepsilon_4} |\Omega| \rF_{\rE}^2 \trace (\Lambda^4 \rM^2).
	\end{align*}	
	Therefore, using \eqref{eq:DeltaFormulas},
	\begin{align*}
		\tfrac{3}{2} \nu^2 \norm{\cH_w^2}{w^{(m)}}^2 & \leq (\varepsilon_1 + 2\nu \varepsilon_2+ \nu^2 \varepsilon_3+ \nu^2 \varepsilon_4) \left( \norm{\cH_w^2}{w^{(m)}}^2 + \norm{\cL_{\partial w}^2}{\dx w^{(m)}}^2 \right)\\
		&\quad + \tfrac{3}{2} \nu^2 \norm{\cL_w^2}{w^{(m)}}^2 +  \frac{1}{4\varepsilon_1} \norm{\cL_w^2}{\rd_t^2 w^{(m)}}^2 +\frac{\nu}{2\varepsilon_2} \norm{\cL_w^2}{\Lambda \rd_t w^{(m)}}^2 \\
		&\quad + \frac{\nu^2}{4\varepsilon_3} \norm{\cL_w^2}{\Lambda^2 w^{(m)}}^2+ \frac{\nu^2}{4\varepsilon_4} |\Omega| \rF_{\rE}^2 \trace (\Lambda^4 \rM^2). 
	\end{align*}
	
	Next, set $\varepsilon_1=\frac{\nu^2}{8}$, $\varepsilon_2=\frac{\nu}{16}$, $\varepsilon_3 = \frac{1}{8}$, and $\varepsilon_4=\frac{1}{8}$, and note that, for some constant $\beta>0$,
	\begin{equation} \label{eq:BoundH2}
		\norm{\cH_w^2}{w^{(m)}}^2 \leq \beta \left( \norm{\cL_w^2}{\rd_t^2 w^{(m)}}^2 + \norm{\cL_w^2}{\rd_t w^{(m)}}^2 + \norm{\cH_w^1}{w^{(m)}}^2  + |\Omega| \rF_{\rE}^2 \trace (\Lambda^4 \rM^2) \right).
	\end{equation}
	Bounds on $\norm{\cL_w^2}{\rd_t w^{(m)}}$ and $\norm{\cH_w^1}{w^{(m)}}$ are given by the energy estimate \eqref{eq:BoundWave}. 
	To establish bounds on $\norm{\cL_w^2}{\rd_t^2 w^{(m)}}$ and $\norm{\cH_w^1}{\rd_t w^{(m)}}$, consider \eqref{eq:WaveApprx} with the initial values given in \eqref{eq:ApprxInitial}. Differentiating \eqref{eq:WaveApprx} with respect to $t$, multiplying the result by $\rd_t^2 c_{w_k}^{(m)}$, and summing over $k=1,\dots m$, yields 
	\begin{multline*}
		\inner{\cL_w^2}{\rd_t^2 \dot{w}^{(m)}}{\rd_t \dot{w}^{(m)}} + 2 \nu \inner{\cL_w^2}{\Lambda \rd_t \dot{w}^{(m)}}{\rd_t \dot{w}^{(m)}} + \tfrac{3}{2} \nu^2  \inner{\cL_{\partial w}^2}{\dx \dot{w}^{(m)}}{\rd_t \dx \dot{w}^{(m)}}  \\
		+ \nu^2 \inner{\cL_w^2}{\Lambda^2 \dot{w}^{(m)}}{\rd_t \dot{w}^{(m)}} 	- \nu^2 \inner{\cL_w^2}{\Lambda^2 \rM J_8 \,\rd_t f(v^{(m)} )}{\rd_t \dot{w}^{(m)}} =0, 
	\end{multline*}	
	where $\dot{w}:= \rd_t w$ and
	$\rd_t f_{\rE}( v_{\rE}^{(m)} ) = \partial_{v_{\rE}} f_{\rE}(v_{\rE}^{(m)}) \, \rd_t v_{\rE}^{(m)}$.  
	Now, \eqref{eq:FireingFuncDerivative} with $\rX = \rE$ gives
	\begin{align} \label{eq:BoundOnDiffFiringFunc}
		\norm{\cL_w^2}{\Lambda^2 \rM J_8 \, \rd_t f(v^{(m)} )}^2  
		&=\trace (\Lambda^4 \rM^2)  \int_{\Omega} \big |\rd_t f_{\rE}(v_{\rE}^{(m)}) \big |^2 \rd x \\
		&\leq \trace (\Lambda^4 \rM^2) \frac{\rF_{\rE}^2}{8 \sigma_{\rE}^2} \int_{\Omega} \big |\rd_t v_{\rE}^{(m)} \big |^2 \rd x
		\leq \trace (\Lambda^4 \rM^2) \frac{\rF_{\rE}^2}{8 \sigma_{\rE}^2} \norm{\cL_v^2}{\rd_t v^{(m)}}^2. \nonumber
	\end{align}
	Using similar arguments as in the proof of Proposition \ref{prp:EnergyEstimates}, it follows from the above inequality and Young's inequality that, for every $\varepsilon >0$,
	\begin{multline*}
		\rd_t \left[ \norm{\cL_w^2}{\rd_t \dot{w}^{(m)}}^2  + \tfrac{3}{2} \nu^2  \norm{\cL_{\partial w}^2}{\dx \dot{w}^{(m)}}^2 + \nu^2 \norm{\cL_w^2}{\Lambda \dot{w}^{(m)}}^2 \right]	
		+ 2\nu(2 \Lambda_{\min} - \varepsilon \nu) \norm{\cL_w^2}{\rd_t \dot{w}^{(m)}}^2   \\ 
		\leq \frac{\nu^2}{2\varepsilon} \frac{\rF_{\rE}^2}{8 \sigma_{\rE}^2}\trace (\Lambda^4 \rM^2) \norm{\cL_v^2}{\rd_t v^{(m)}}^2, 
	\end{multline*}	
	where $\Lambda_{\min}:= \min \{\Lambda_{\rE \rE}, \Lambda_{\rE \rI}\}$ is the smallest eigenvalue of $\Lambda$.
	Next, setting $\varepsilon = \frac{2}{\nu}\Lambda_{\min}$, replacing $\dot{w}= \rd_t w$, and using Gr\"{o}nwall's inequality yields
	\begin{align} \label{eq:1214151119}
		\norm{\cL_w^2}{\rd_t^2 w^{(m)}(t)}^2  &+ \tfrac{3}{2} \nu^2  \norm{\cL_{\partial w}^2}{\rd_t \dx w^{(m)}(t)}^2 + \nu^2 \norm{\cL_w^2}{\Lambda \rd_t w^{(m)}(t)}^2 \\
		& \leq \left( \norm{\cL_w^2}{\rd_t^2 w^{(m)}}^2  + \tfrac{3}{2} \nu^2  \norm{\cL_{\partial w}^2}{\rd_t \dx w^{(m)}}^2 + \nu^2 \norm{\cL_w^2}{\Lambda \rd_t w^{(m)}}^2 \right)  \! \Big|_{t=0} \nonumber\\
		&\quad  +  \tfrac{1}{32}\frac{\nu^3}{\Lambda_{\min} \sigma^2} \rF_{\rE}^2 \trace (\Lambda^4 \rM^2) \norm{L^2(0,T;\cL_v^2)}{\rd_t v^{(m)}}^2. \nonumber
	\end{align}
	
	Finally, it follows from \eqref{eq:WaveApprx} and \eqref{eq:ApprxInitial} that, for some $\alpha_1>0$,  
	\begin{equation*}
		\norm{\cL_w^2}{\rd_t^2 w^{(m)}}^2\Big|_{t=0} \leq \alpha_1 \left( \norm{\cH_w^1}{w'_0}^2 + \norm{\cH_w^2}{w_0}^2 + 
		\nu^2 |\Omega| \rF_{\rE}^2 \trace (\Lambda^4 \rM^2) \right).
	\end{equation*}
	Now, using the energy estimate \eqref{eq:BoundVoltage} and the above inequality in \eqref{eq:1214151119} it follows that 
	\begin{equation*}
		\norm{\cL_w^2}{\rd_t^2 w^{(m)}(t)}^2  + \norm{\cH_w^1}{\rd_t w^{(m)}(t)}^2 
		\leq \alpha_2 \left( \norm{\cH_w^1}{w_0'}^2  + \norm{\cH_w^2}{w_0}^2 + 
		(|\Omega| + \kappa_v) \rF_{\rE}^2  \right)
	\end{equation*}
	for some $\alpha_2 > 0$ and all $t\in[0,T]$. Since this inequality and \eqref{eq:BoundH2} hold for all $t\in[0,T]$, it follows that
	\begin{align} \label{eq:BoundWaveH2Hat}
		\sup_{t\in[0,T]} \left(\norm{\cL_w^2}{\rd_t^2 w^{(m)}(t)}^2 + \norm{\cH_w^1}{\rd_t w^{(m)}(t)}^2  + \norm{\cH_w^2}{w^{(m)}(t)}^2 \right) \leq \hat{\beta}_w,
	\end{align}
	where
	\begin{align*}
		\hat{\beta}_w := \alpha \left(
		\norm{\cH_w^1}{w_0'}^2  + \norm{\cH_w^2}{w_0}^2 + (|\Omega| + \kappa_v) \rF_{\rE}^2 \right)
	\end{align*}
	for some $\alpha >0$. Now, using the above estimate and passing to the limits, the result follows by similar arguments as in the proof of Theorem \ref{th:WeakExistenceUniqueness}.
	\qquad
\end{proof}

\begin{proposition} [Regularity of strong solutions] \label{prp:RegularityStrongSolution}
	Suppose that the assumptions of Theorem \ref{th:StrongExistenceUniqueness} hold, namely, $g \in L^2(0,T;\cL_i^2)$, $v_0 \in \cL_v^2$, $i_0 \in \cL_i^2$, $i'_0 \in \cL_i^2$, $w_0 \in \cH_w^2$, and $w'_0 \in \cH_w^1$. Then, in addition to the properties of the weak solution given in Proposition \ref{prp:RegularityWeakSolution}, the $\Omega$-periodic strong solution $(v,i,w)$ of the initial value problem \eqref{eq:Voltage}--\eqref{eq:InitialValues} satisfies 
	\begin{align}  \label{eq:StrongProperties1}
		\esssup_{t\in[0,T]} \left(\norm{\cL_w^2}{\rd_t^2 w(t)}^2 + \norm{\cH_w^1}{\rd_t w(t)}^2  + \norm{\cH_w^2}{w(t)}^2 \right) + \norm{L^2(0,T;{\cH^1_w}^{\!\!\ast})}{\rd_t^3 w}^2 \leq \beta_w,  
	\end{align}
	\begin{align} \label{eq:StrongProperties2}
		w &\in H^2(0,T;\cL_w^2) \cap H^1(0,T;\cH_w^1)  \cap C^{1,\frac{1}{2}}([0,T];\cL_w^2) \cap C^{0,\frac{1}{2}}([0,T];\cH_w^1)\\
		&\hspace{5.23cm }  \cap C^{0}([0,T];\cH_w^2) \cap C^0([0,T];C_{\rm per}^{0,\lambda}(\overline{\Omega}, \bbR^2)),   \nonumber\\
		\rd_t w &\in H^1(0,T;\cL_w^2) \cap C^{0,\frac{1}{2}}([0,T];\cL_w^2) \cap C^{0}([0,T];\cH_w^1),   \nonumber\\
		\rd_t^2 w &\in C^{0}([0,T];\cL_w^2), \nonumber
	\end{align}
	for all $\lambda \in (0,1)$ and some $\beta_w >0$.
\end{proposition}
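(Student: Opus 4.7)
The plan is to recover the essential supremum bound in \eqref{eq:StrongProperties1} directly from the Galerkin estimate \eqref{eq:BoundWaveH2Hat}, establish the remaining ${\cH^1_w}^{\!\!\ast}$-bound on $\rd_t^3 w$ by a duality argument of the type used in Proposition \ref{prp:EnergyEstimates} after differentiating \eqref{eq:Wave} in time, and finally extract \eqref{eq:StrongProperties2} from Sobolev embeddings for Banach-space valued functions together with the operator lemma \cite[Lemma II.4.1]{Temam:InfiniteDimensional:1997}. The setup follows closely the blueprint of Proposition \ref{prp:RegularityWeakSolution}, but is applied at one higher order of regularity.

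First, I would fix the Galerkin basis and approximations as in Theorem \ref{th:StrongExistenceUniqueness}. The bound \eqref{eq:BoundWaveH2Hat} already controls $\rd_t^2 w^{(m)}$ in $\cL_w^2$, $\rd_t w^{(m)}$ in $\cH_w^1$, and $w^{(m)}$ in $\cH_w^2$ uniformly in $m$. Weak-$\ast$ compactness together with the lower semicontinuity of the norms, applied exactly as in the passage from \eqref{eq:WeakConvergence} to \eqref{eq:WeakProperties1}, transfers these bounds to the limit $(v,i,w)$ and yields the $\esssup$ part of \eqref{eq:StrongProperties1}.

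For the bound on $\rd_t^3 w$, I would differentiate \eqref{eq:WaveApprx} once in time to obtain the identity
\begin{equation*}
    \inner{\cL_w^2}{\rd_t^3 w^{(m)}}{h_w^{(k)}} + 2 \nu \inner{\cL_w^2}{\Lambda \rd_t^2 w^{(m)}}{h_w^{(k)}} + \tfrac{3}{2} \nu^2 \inner{\cL_{\partial w}^2}{\dx \rd_t w^{(m)}}{\dx h_w^{(k)}} + \nu^2 \inner{\cL_w^2}{\Lambda^2 \rd_t w^{(m)}}{h_w^{(k)}} = \nu^2 \inner{\cL_w^2}{\Lambda^2 \rM J_8 \rd_t f(v^{(m)})}{h_w^{(k)}},
\end{equation*}
and then test against an arbitrary $\bar h \in \cH_w^1$ with $\norm{\cH_w^1}{\bar h}\le 1$, following verbatim the decomposition $\bar h = h + h^\perp$ used in the proof of Proposition \ref{prp:EnergyEstimates}. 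The Cauchy--Schwarz inequality gives
\begin{equation*}
    |\pair{\cH_w^1}{\rd_t^3 w^{(m)}}{\bar h}| \le \alpha\Big( \norm{\cL_w^2}{\rd_t^2 w^{(m)}} + \norm{\cH_w^1}{\rd_t w^{(m)}} + \norm{\cL_w^2}{\Lambda^2 \rM J_8 \,\rd_t f(v^{(m)})}\Big),
\end{equation*}
and the last term is controlled via \eqref{eq:BoundOnDiffFiringFunc} by $\norm{\cL_v^2}{\rd_t v^{(m)}}$. Squaring, integrating over $[0,T]$, and inserting \eqref{eq:BoundVoltage} and \eqref{eq:BoundWaveH2Hat} gives a uniform bound on $\norm{L^2(0,T;{\cH^1_w}^{\!\!\ast})}{\rd_t^3 w^{(m)}}$; passing to the limit completes \eqref{eq:StrongProperties1}.

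Finally, for \eqref{eq:StrongProperties2}, the $H^2$ and $H^1$ inclusions are immediate from \eqref{eq:StrongProperties1}. The Hölder continuity statements $w \in C^{1,\frac12}([0,T];\cL_w^2) \cap C^{0,\frac12}([0,T];\cH_w^1)$ and $\rd_t w \in C^{0,\frac12}([0,T];\cL_w^2)$ follow from the Sobolev embedding of $H^1(0,T;\cU) \hookrightarrow C^{0,\frac12}([0,T];\cU)$ applied with $\cU = \cL_w^2$ and $\cU = \cH_w^1$, exactly as in the proof of Proposition \ref{prp:RegularityWeakSolution}. To obtain $w\in C^0([0,T];\cH_w^2)$, $\rd_t w \in C^0([0,T];\cH_w^1)$, and $\rd_t^2 w \in C^0([0,T];\cL_w^2)$, I would apply \cite[Lemma II.4.1]{Temam:InfiniteDimensional:1997} to $A := -\tfrac{3}{2}\nu^2 \Delta + I$: since $\rd_t^2 w + A\, w$ and $\rd_t^3 w + A\,\rd_t w$ both lie in $L^2(0,T;\cL_w^2)$ by \eqref{eq:Wave}, its time derivative, and \eqref{eq:StrongProperties1}, the lemma produces the required continuity. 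The final inclusion $w \in C^0([0,T];C_{\rm per}^{0,\lambda}(\overline{\Omega};\bbR^2))$ for every $\lambda \in (0,1)$ is obtained by composing the previous continuity with the Morrey--Sobolev embedding $H^2(\Omega) \hookrightarrow C^{0,\lambda}(\overline{\Omega})$, valid since $\Omega \subset \bbR^2$.

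The main obstacle I expect is the $\rd_t^3 w$ estimate: it is the only genuinely new piece of information not already available from \eqref{eq:BoundWaveH2Hat}, and it requires enough regularity of $\rd_t v$ to make sense of $\rd_t f(v)$ in $\cL_v^2$. Fortunately, $\rd_t v \in L^2(0,T;\cL_v^2)$ is guaranteed by \eqref{eq:WeakProperties1}, and the Lipschitz bound on $f$ derived in \eqref{eq:BoundOnDiffFiringFunc} converts this into the desired control; once this single estimate is in place, the rest of \eqref{eq:StrongProperties2} is a catalogue of standard embeddings.
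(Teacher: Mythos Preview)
Your proposal is essentially the same as the paper's proof, and the strategy is correct. There is, however, one small misstep in the last paragraph that is worth flagging.

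Temam's Lemma II.4.1, with $A:\cH_w^1\to{\cH_w^1}^{\!\!\ast}$ and $\rd_t^2 w + Aw\in L^2(0,T;\cL_w^2)$, only yields $w\in C^0([0,T];\cH_w^1)$ and $\rd_t w\in C^0([0,T];\cL_w^2)$; it does \emph{not} upgrade $w$ to $C^0([0,T];\cH_w^2)$. Applying the lemma to the time-differentiated equation (i.e.\ to $\dot w=\rd_t w$, using $\rd_t^3 w + A\rd_t w\in L^2(0,T;\cL_w^2)$) correctly gives $\rd_t w\in C^0([0,T];\cH_w^1)$ and $\rd_t^2 w\in C^0([0,T];\cL_w^2)$, so two of your three continuity claims are fine. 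But to reach $w\in C^0([0,T];\cH_w^2)$ you cannot invoke the lemma a second time in the shifted scale $(\cH_w^2,\cH_w^1)$, because that would require $\rd_t^2 w + Aw\in L^2(0,T;\cH_w^1)$, which in turn needs $f(v)\in L^2(0,T;\cH_w^1)$ --- spatial regularity of $v$ that you do not have.

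The paper closes this gap differently: once $\rd_t^2 w\in C^0([0,T];\cL_w^2)$ is established, it reads the regularity of $\Delta w$ directly off the equation \eqref{eq:Wave}. Since $\rd_t^2 w$, $\rd_t w$, $w$, and $f(v)$ are all $C^0([0,T];\cL_w^2)$ (or better), \eqref{eq:Wave} forces $\Delta w\in C^0([0,T];\cL_w^2)$, hence $w\in C^0([0,T];\cH_w^2)$. This is a one-line fix to your argument; everything else is on target.
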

\begin{proof}
	Differentiate \eqref{eq:WaveApprx} with respect to $t$ and denote $\dot{w}:= \rd_t w$. Use \eqref{eq:BoundOnDiffFiringFunc} and follow the same steps used to prove \eqref{eq:BoundWave} in Proposition \ref{prp:EnergyEstimates} to show $\norm{L^2(0,T;{\cH^1_w}^{\!\!\ast})}{\rd_t^2 \dot{w}^{(m)}}^2 \leq \tilde{\beta}_w$ for every positive integer $m$, all $t\in[0,T]$, and some $\tilde{\beta}_w>0$ proportional to $\hat{\beta}_w$ in \eqref{eq:BoundWaveH2Hat}. 
	Replacing $\dot{w}= \rd_t w$, adding the result to \eqref{eq:BoundWaveH2Hat}, and passing to the limits establishes \eqref{eq:StrongProperties1} for some $\beta_w>0$  proportional to $\hat{\beta}_w$.
	
	The inclusions in $H^1$ and $H^2$ in assertion \eqref{eq:StrongProperties2} follow immediately from \eqref{eq:StrongProperties1}. 
	The inclusions in the H\"{o}lder spaces $C^{0, \frac{1}{2}}$ and $C^{1, \frac{1}{2}}$ are implied by the Sobolev embedding theorems \cite[Th. 6.6-1]{Ciarlet:FunctionalAnalysis:2013} applied to Banach space-valued functions on $[0,T] \subset \bbR$. 
	
	To show $\rd_t w \in C^{0}([0,T];\cH_w^1)$ and $\rd_t^2 w \in C^{0}([0,T];\cL_w^2)$, consider the time-independent self-adjoint linear operator $A:=(-\frac{3}{2} \nu^2 \Delta + I):\cH_w^1 {\rightarrow \cH_w^1}^{\!\!\ast}$. 
	Differentiate \eqref{eq:Wave} with respect to $t$ and denote $\dot{w}:= \rd_t w$.
	Note that $\rd_t f(v) \in C^1([0,T];\cL_v^{\infty})$ since $\partial_v f$ is a bounded smooth function and $\rd_t v \in C^1([0,T];\cL_v^2)$, given by Proposition \ref{prp:RegularityWeakSolution}. 
	Then, it follows from \eqref{eq:Wave} and \eqref{eq:StrongProperties1} that 
	$\rd_t^2 \dot{w} + A \dot{w} \in L^2(0,T;\cL_w^2)$.
	Therefore, by \cite[Lemma II.4.1]{Temam:InfiniteDimensional:1997} we have 
	$\dot{w} \in C^0([0,T];\cH_w^1)$ and $\rd_t \dot{w} \in C^0([0,T];\cL_w^2)$. 
	
	Next, noting that $f(v) \in C^2([0,T];\cL_v^{\infty})$, $ w\in C^{1,\frac{1}{2}}([0,T];\cL_w^2)$, 
	$\rd_t w \in C^{0,\frac{1}{2}}([0,T];\cL_w^2)$, and $\rd_t^2 w \in C^{0}([0,T];\cL_w^2)$, it follows from
	\eqref{eq:Wave} that $(-\Delta + I) w \in C^{0}([0,T];\cL_w^2)$, and hence, $w \in C^{0}([0,T];\cH_w^2)$. 
	Moreover, using the Sobolev embedding theorems applied to $\Omega$-periodic functions in $\bbR^2$, this further implies that $w \in C^0([0,T];C_{\rm per}^{0,\lambda}(\overline{\Omega}, \bbR^2))$.
	\qquad
\end{proof}

Other than the regularity properties given in Propositions \ref{prp:RegularityWeakSolution} and \ref{prp:RegularityStrongSolution}, boundedness of weak and strong solutions associated with bounded input functions $g$ can also be established. 
We defer this result to Section \ref{sec:BiophysicalPhaseSpaces}, as a corollary of Proposition \ref{prp:WaveNonnegativity}.

In the remainder of the paper, as suggested in \cite[Sec. 11.1.2]{Robinson:InfiniteDimensional:2001}, we give formal arguments for some of the proofs, in the sense that we take the inner product of \eqref{eq:Wave} with functions that belong to $\cL_w^2$, instead of functions belonging to $\cH_w^1$ as required for the test functions $h_w$ in \eqref{eq:WeakWave}. 
However, the proofs can be made rigorous using the Galerkin approximation technique based on the dual orthogonal basis of $\cH_w^1 \Subset \cL_w^2$ and then passing to the limits, as in the proofs of Theorems \ref{th:WeakExistenceUniqueness} and \ref{th:StrongExistenceUniqueness}. 
See the discussion and results in \cite[Sec. 11.1.2]{Robinson:InfiniteDimensional:2001} for further details.

\section{Semidynamical Systems and Biophysical Plausibility of the Evolution} \label{sec:SemidynamicalSystems}
In this section, we establish a semidynamical system framework for the initial-value problem presented in Section \ref{sec:ExistenceUniqueness}. 
Assume $g \in L^2(0,\infty;\cL_i^2)$ and let $u(t):=(v(t),i(t),\rd_t i(t), w(t),\rd_t w(t))$ denote a solution of \eqref{eq:Voltage}--\eqref{eq:Wave} with the initial value $u_0:=u(0)=(v_0, i_0, i'_0, w_0, w'_0)$. 
Recall the  Definitions \ref{def:WeakSolution} and \ref{def:StrongSolution} and the results of Theorems \ref{th:WeakExistenceUniqueness} and \ref{th:StrongExistenceUniqueness} to note that the Hilbert spaces 
\begin{align} \label{eq:PhaseSpaceLarge}
	\cU_{\rw} &:= \cL_v^2 \times \cL_i^2 \times \cL_{i} \times \cH_w^1 \times \cL_w^2, \\
	\cU_{\rs} &:= \cL_v^2 \times \cL_i^2 \times \cL_{i} \times \cH_w^2 \times \cH_w^1, \nonumber
\end{align} 
construct, respectively, the phase spaces associated with the weak and strong solutions.
Now, for every $t\in[0,\infty)$, define the mappings 
\begin{alignat*}{3}
	S_{\rw}(t)&:\cU_{\rw} \rightarrow \cU_{\rw}, &\quad  S_{\rw}(t) u_0 &:= u(t), \\
	S_{\rs}(t)&:\cU_{\rs} \rightarrow \cU_{\rs}, &\quad  S_{\rs}(t) u_0 &:= u(t).
\end{alignat*}
The existence and uniqueness of solutions given by Theorems \ref{th:WeakExistenceUniqueness} and
\ref{th:StrongExistenceUniqueness} along with the time-continuity of solutions given by Propositions
\ref{prp:RegularityWeakSolution} and \ref{prp:RegularityStrongSolution} imply that the above mappings are well-defined for all $t\in[0,\infty)$. 
Then, $\seq{S_{\rw}(t)}_{t \in [0,\infty)}$ and $\seq{S_{\rs}(t)}_{t \in [0,\infty)}$ form semigroups of operators which give the weak and strong solutions of \eqref{eq:Model}, respectively. 
The following propositions show that these semigroups are continuous, which also ensures that the initial-value problems of finding weak and strong solutions for \eqref{eq:Model} are well-posed.
\begin{proposition}[Continuity of the semigroup $\{S_{\rw}\}$] \label{prp:SemigroupC0Weak}
	The semigroup $\seq{S_{\rw}(t)}_{t \in [0,\infty)}$ of weak solution operators is continuous for all $g \in L^2(0,\infty;\cL_i^2)$.
\end{proposition}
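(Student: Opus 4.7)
The plan is to verify joint continuity of the evaluation map $(t, u_0) \mapsto S_{\rw}(t) u_0$ from $[0,\infty) \times \cU_{\rw}$ into $\cU_{\rw}$. By the triangle inequality this reduces to establishing (i) continuity of $t \mapsto S_{\rw}(t) u_0$ for each fixed $u_0 \in \cU_{\rw}$, and (ii) Lipschitz continuity of $S_{\rw}(t): \cU_{\rw} \to \cU_{\rw}$ on bounded subsets of $\cU_{\rw}$, uniformly on compact time intervals. Property (i) is immediate from the regularity assertions \eqref{eq:WeakProperties2} of Proposition \ref{prp:RegularityWeakSolution}: the listed inclusions into $C^0$-spaces directly yield $S_{\rw}(\cdot) u_0 \in C^0([0,T]; \cU_{\rw})$ for every $T > 0$.

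For property (ii), I would fix $T > 0$ and $R > 0$, take $u_0^{(1)}, u_0^{(2)} \in \cU_{\rw}$ with $\norm{\cU_{\rw}}{u_0^{(k)}} \leq R$, and denote by $u^{(k)} = (v^{(k)}, i^{(k)}, \rd_t i^{(k)}, w^{(k)}, \rd_t w^{(k)})$ the corresponding weak solutions on $[0,T]$. Proposition \ref{prp:EnergyEstimates} and the estimates \eqref{eq:WeakProperties1} supply a priori bounds on $u^{(k)}$ depending only on $R$, $T$, the model parameters, and $\norm{L^2(0,T;\cL_i^2)}{g}$. The difference $\tilde{u} = u^{(1)} - u^{(2)}$ satisfies, in the weak sense, the system obtained by subtracting \eqref{eq:Voltage}--\eqref{eq:Wave}, in which the nonlinearities appear as $f(v^{(1)}) - f(v^{(2)})$, controlled by the Lipschitz bound \eqref{eq:FireingFuncDerivative} on $f$, and as the bilinear differences $J_2 (v^{(1)} i^{(1)\rT} - v^{(2)} i^{(2)\rT}) \Psi J_4$ and $J_3 (v^{(1)} i^{(1)\rT} - v^{(2)} i^{(2)\rT}) \Psi J_5$, controlled by the a priori bounds on $v^{(k)}$ and $i^{(k)}$ exactly as in the uniqueness argument of Theorem \ref{th:WeakExistenceUniqueness}.

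Proceeding as in that uniqueness proof, I would test the $\tilde{v}$-equation against $\tilde{v}$, the $\tilde{i}$-equation against $\rd_t \tilde{i}$, and the $\tilde{w}$-equation against the integrated function $q(t) = \int_t^s \tilde{w}(r)\, \rd r$ (with the auxiliary $p(t) = \int_0^t \tilde{w}(r)\, \rd r$). Summing the resulting inequalities and applying Young's inequality yields, for some sufficiently small $T_1 > 0$ and positive constants $C_1, C_2$ depending on $T$, $R$, $g$, and the model parameters,
\[
\norm{\cU_{\rw}}{\tilde{u}(s)}^2 \leq C_1 \norm{\cU_{\rw}}{\tilde{u}(0)}^2 + C_2 \int_0^s \norm{\cU_{\rw}}{\tilde{u}(t)}^2 \rd t, \quad s \in [0, T_1].
\]
Gr\"{o}nwall's inequality on $[0,T_1]$ and iteration on the consecutive intervals $[T_1, 2T_1], [2T_1, 3T_1], \ldots$ covering $[0,T]$ then deliver $\sup_{t \in [0,T]} \norm{\cU_{\rw}}{\tilde{u}(t)} \leq C \norm{\cU_{\rw}}{\tilde{u}(0)}$, which is the desired Lipschitz bound; combining it with (i) gives joint continuity on $[0,T] \times \cU_{\rw}$ for every $T > 0$.

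The main obstacle is the $\tilde{w}$-equation: since $\rd_t^2 \tilde{w}$ only lies in $L^2(0,T; {\cH^1_w}^{\!\!\ast})$, direct testing against $\rd_t \tilde{w}$ is inadmissible. The integrated test function $q$ circumvents this (via $\rd_t q = -\tilde{w}$ and $q(t) = p(s) - p(t)$) but it produces $\norm{\cH_w^1}{p(s)}^2$ on the left-hand side with a coefficient of the form $\tfrac{3}{4} - s$, which is what forces the iteration on short subintervals. An equivalent rigorous route, consistent with the formal/Galerkin convention adopted at the end of Section \ref{sec:ExistenceUniqueness}, is to derive the estimate first at the Galerkin level---where testing against $\rd_t \tilde{w}^{(m)}$ is admissible---and to pass to the limit using the uniform bounds of Proposition \ref{prp:EnergyEstimates} to handle the bilinear and nonlinear terms.
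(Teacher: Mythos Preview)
Your proposal is correct, and in fact you explicitly mention (as your ``equivalent rigorous route'') the approach the paper actually adopts: the paper tests the $\tilde{w}$-equation \emph{directly} against $\rd_t \tilde{w}$, invoking the formal/Galerkin convention stated at the end of Section~\ref{sec:ExistenceUniqueness}, and obtains in one stroke the differential inequality leading to $\norm{\cU_{\rw}}{u(t)}^2 \leq \beta e^{\alpha T}\norm{\cU_{\rw}}{u_0}^2$ via Gr\"{o}nwall. Your primary route---borrowing the integrated test function $q$ from the uniqueness proof---is more directly admissible at the weak level but, as you correctly diagnose, generates the $(\tfrac{3}{4}-s)$ coefficient and forces iteration over short subintervals; the paper simply sidesteps this complication by working formally, trading a slightly heavier justification (deferred to Galerkin) for a cleaner estimate with no subinterval patching.
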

\begin{proof}
	Continuity of the semigroup with respect to $t$ follows immediately from the continuity of the weak solutions given in Proposition \ref{prp:RegularityWeakSolution}. 
	It remains to prove continuous dependence of the solution on the initial values. 
	Let $\tilde{u}_0$ and $\hat{u}_0$ be any two initial values in $\cU_{\rw}$ that give the solutions $\tilde{u}(t) = S_{\rw}(t)\tilde{u}_0$ and $\hat{u}(t)=S_{\rw}(t)\hat{u}_0$ for all $t\in [0,T]$, $T>0$. 
	Let $u(t) := \tilde{u}(t) -\hat{u}(t)$ be the weak solution with the initial value $u_0:=\tilde{u}_0 - \hat{u}_0$.
	Now, consider \eqref{eq:Voltage}--\eqref{eq:Wave} satisfied by $\tilde{u}$ and $\hat{u}$, and take the inner product of \eqref{eq:Voltage}--\eqref{eq:Wave} in each set with $v$, $\rd_t i$, and $\rd_t w$, respectively. Subtracting the resulting two sets of equations yields
	\begin{align}
		\inner{\cL_v^2}{\Phi \rd_t v}{v} + \inner{\cL_v^2}{v}{v} - \inner{\cL_v^2}{J_1 i}{v} \label{eq:ContinuityVoltageS1} \hspace{6.85cm}\\
		+ \inner{\cL_v^2}{ J_2 (\tilde{v} \tilde{i}^{\rT} - \hat{v} \hat{i}^{\rT}) \Psi J_4 + J_3 (\tilde{v} \tilde{i}^{\rT} -\hat{v} \hat{i}^{\rT}) \Psi J_5}{v} &= 0,\nonumber  			\\
		\inner{\cL_i^2}{\rd_t^2i}{\rd_t i} + 2 \inner{\cL_i^2}{\Gamma \rd_t i}{\rd_t i} + \inner{\cL_i^2}{\Gamma^2 i}{\rd_t i} - e \inner{\cL_i^2}{\Upsilon \Gamma J_6 w}{\rd_t i} \label{eq:ContinuityCurrentS1} \hspace{2.45 cm} \\	 
		-	e \inner{\cL_i^2}{\Upsilon \Gamma \rN J_7 (f( \tilde{v}) - f( \hat{v}) )}{\rd_t i} &= 0,  \nonumber \\
		\inner{\cL_w^2}{\rd_t^2 w}{\rd_t w} + 2 \nu \inner{\cL_w^2}{\Lambda \rd_t w}{\rd_t w} + \tfrac{3}{2} \nu^2  \inner{\cL_{\partial w}^2}{\dx w}{\rd_t \dx w} + \nu^2 \inner{\cL_w^2}{\Lambda^2 w}{\rd_t w} \label{eq:ContinuityWaveS1} \hspace{0cm}\\ 
		- \nu^2 \inner{\cL_w^2}{\Lambda^2 \rM J_8 ( f(\tilde{v}) - f(\hat{v}) )}{\rd_t w} &=0. \nonumber 
	\end{align}
	
	As in the proof of uniqueness given in Theorem \ref{th:WeakExistenceUniqueness},
	\begin{align} \label{eq:ContinuityMajorization}
		-\inner{\cL_v^2}{ J_2 (\tilde{v} \tilde{i}^{\rT} - \hat{v} \hat{i}^{\rT}) \Psi J_4}{v} 
		& \leq \sqrt{2 \kappa_{\tilde{i}}} \; \norm{2}{\Psi} \norm{\cL_v^2}{v}^2 + \norm{\cL_v^2}{v}^2 + \tfrac{1}{2}\kappa_{\hat{v}} \norm{2}{\Psi}^2 \norm{\cL_i^2}{i}^2,\\
		-\inner{\cL_v^2}{ J_3 (\tilde{v} \tilde{i}^{\rT} - \hat{v} \hat{i}^{\rT}) \Psi J_5}{v} 
		& \leq \sqrt{2 \kappa_{\tilde{i}}} \; \norm{2}{\Psi} \norm{\cL_v^2}{v}^2 + \norm{\cL_v^2}{v}^2 + \tfrac{1}{2}\kappa_{\hat{v}} \norm{2}{\Psi}^2 \norm{\cL_i^2}{i}^2, \nonumber\\
		e \inner{\cL_i^2}{\Upsilon \Gamma \rN J_7 (f( \tilde{v}) - f( \hat{v}) )}{\rd_t i}
		&\leq \norm{\cL_i^2}{\rd_t i}^2 + \tfrac{1}{32}e^2 \norm{2}{\Upsilon \Gamma \rN J_7}^2  \max \left \{
		\frac{\rF_{\rE}^2}{\sigma_{\rE}^2} , \frac{\rF_{\rI}^2}{\sigma_{\rI}^2} \right \} \norm{\cL_v^2}{v}^2, \nonumber\\
		\nu^2 \inner{\cL_w^2}{\Lambda^2 \rM J_8 ( f(\tilde{v}) - f(\hat{v}) )}{\rd_t w} 
		&\leq \nu^2 \norm{\cL_w^2}{\rd_t w}^2 + \tfrac{1}{32}\nu^2 \frac{\rF_{\rE}^2}{\sigma_{\rE}^2} \,  \trace(\Lambda^4 \rM^2 ) \norm{\cL_v^2}{v}^2, \nonumber\\
		\inner{\cL_v^2}{J_1 i}{v} 
		&\leq \norm{\cL_v^2}{v}^2 + \tfrac{1}{2} \norm{\cL_i^2}{i}^2, \nonumber\\
		e \inner{\cL_i^2}{\Upsilon \Gamma J_6 w}{\rd_t i} 
		&\leq  \norm{\cL_i^2}{\rd_t i}^2 +  \tfrac{1}{4} e^2 \norm{2}{\Upsilon \Gamma J_6}^2 \norm{\cL_w^2}{w}^2, \nonumber
	\end{align} 
	where $\kappa_{\hat{v}}$ and $\kappa_{\tilde{i}}$ are in the form of \eqref{eq:KappaV} and \eqref{eq:KappaI}. Now, substituting the above inequalities into \eqref{eq:ContinuityVoltageS1}--\eqref{eq:ContinuityWaveS1}, adding the resulting inequalities together, and using Gr\"{o}nwall's inequality yield, for some $\alpha, \beta >0$,
	\begin{equation} \label{eq:ContinuityS1}
		\norm{\cU_{\rw}}{u(t)}^2 \leq \beta e^{\alpha T} \norm{\cU_{\rw}}{u_0}^2 \quad \text{for all } t\in[0,T],
	\end{equation}
	%	where,
	%	\begin{equation*}
	%	Q(u) = \norm{\cL_v^2}{v}^2 + \norm{\cL_i^2}{\rd_t i}^2 + \norm{\cL_i^2}{i}^2 + \norm{\cL_w^2}{\rd_t w}^2 + \norm{\cH_w^1}{w}^2. 
	%	%\norm{\cL_v^2}{v_0}^2 + \norm{\cL_i^2}{i'_0}^2 + \norm{\cL_i^2}{i_0}^2 + \norm{\cL_w^2}{w'_0}^2 + \norm{\cH_w^1}{w_0}^2,
	%	\end{equation*}
	which completes the proof. 
	\qquad
\end{proof}

\begin{proposition} [Continuity of the semigroup $\{S_{\rs}\}$]\label{prp:SemigroupC0Strong}
	The semigroup $\seq{S_{\rs}(t)}_{t \in [0,\infty)}$ of strong solution operators is continuous for all $g \in L^2(0,\infty;\cL_i^2)$.
\end{proposition}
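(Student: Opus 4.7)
The plan is to follow the blueprint of Proposition \ref{prp:SemigroupC0Weak}, working now in the stronger norm of $\cU_{\rs}$. Continuity of $S_{\rs}(t)$ in $t$ is already supplied by Proposition \ref{prp:RegularityStrongSolution}, so the substantive content is continuous dependence on initial data. Fix $T > 0$, let $\tilde u_0, \hat u_0 \in \cU_{\rs}$ with corresponding strong solutions $\tilde u(t) = S_{\rs}(t)\tilde u_0$ and $\hat u(t) = S_{\rs}(t)\hat u_0$, and set $u(t) := \tilde u(t) - \hat u(t)$. Since every strong solution is a weak solution, Proposition \ref{prp:SemigroupC0Weak} already yields
\[ \norm{\cU_{\rw}}{u(t)}^2 \leq \beta e^{\alpha T} \norm{\cU_{\rw}}{u_0}^2 \leq \beta e^{\alpha T} \norm{\cU_{\rs}}{u_0}^2, \quad t \in [0,T]. \]
It therefore remains to upgrade the control on the $w$-components from $\cH_w^1 \times \cL_w^2$ to $\cH_w^2 \times \cH_w^1$.

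For the $\cH_w^2$-bound on $w$, I would reproduce, at the level of the difference, the argument that led to \eqref{eq:BoundH2} in the proof of Theorem \ref{th:StrongExistenceUniqueness}. Using the dual orthogonal basis of Lemma \ref{lem:DualOrthogonalBasis}, multiply the Galerkin approximation of the wave equation satisfied by $w$ by $\lambda_k c_{w_k}^{(m)}$ and sum over $k$; this is formally equivalent to testing against $A w = (-\Delta + I) w$, and together with the identities \eqref{eq:DeltaFormulas} and repeated use of Young's inequality produces
\[ \norm{\cH_w^2}{w^{(m)}}^2 \leq C \bigl( \norm{\cL_w^2}{\rd_t^2 w^{(m)}}^2 + \norm{\cL_w^2}{\rd_t w^{(m)}}^2 + \norm{\cH_w^1}{w^{(m)}}^2 + \norm{\cL_w^2}{f(\tilde v^{(m)}) - f(\hat v^{(m)})}^2 \bigr). \]
The last term is bounded by $C \norm{\cL_v^2}{v^{(m)}}^2$ through the uniform bound on $\partial_v f$ in \eqref{eq:FireingFuncDerivative}, and hence by $\norm{\cU_{\rs}}{u_0}^2$ via the weak-continuity estimate.

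To obtain the $\cH_w^1$-bound on $\rd_t w$, I would differentiate the wave equation for the difference once in time and repeat the energy estimate that produced \eqref{eq:1214151119} in the proof of Theorem \ref{th:StrongExistenceUniqueness}. The initial value $\rd_t^2 w(0)$ is recovered in $\cL_w^2$ directly from the wave equation at $t = 0$, using $w_0 \in \cH_w^2$, $w'_0 \in \cH_w^1$, and boundedness of $f$. The new source term splits as
\[ \rd_t[f(\tilde v) - f(\hat v)] = \partial_v f(\tilde v)\, \rd_t v + \bigl[\partial_v f(\tilde v) - \partial_v f(\hat v)\bigr]\, \rd_t \hat v, \]
whose first summand is bounded in $\cL_w^2$ by $\norm{\cL_v^2}{\rd_t v}$ via uniform boundedness of $\partial_v f$, while the second summand requires Lipschitz continuity of $\partial_v f$ combined with a uniform $L^\infty_{x,t}$-bound on $\rd_t \hat v$ on bounded subsets of $\cU_{\rs}$. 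Gr\"onwall's inequality then delivers the desired estimate; adding it to the $\cH_w^2$-bound above and to the weak-continuity estimate completes the proof.

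The main obstacle is precisely the cross term $\bigl[\partial_v f(\tilde v) - \partial_v f(\hat v)\bigr] \rd_t \hat v$: naively it requires an $L^\infty$-bound on $\rd_t \hat v$ that does not follow automatically from $\hat u \in \cU_{\rs}$. The available remedy is to view \eqref{eq:Voltage} as an ODE pointwise in $x$ driven by $w \in C^0([0,T]; \cH_w^2) \hookrightarrow C^0([0,T]; L^\infty(\Omega))$ in two space dimensions, which provides uniform $L^\infty_{x,t}$-control on $\rd_t \hat v$ on bounded subsets of $\cU_{\rs}$ once essentially bounded initial data and input $g$ are assumed; equivalently, the argument can be restricted to the bounded absorbing sets introduced in the subsequent sections. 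All estimates should be performed on Galerkin approximants built from the basis of Lemma \ref{lem:DualOrthogonalBasis} and then passed to the limit as in Theorems \ref{th:WeakExistenceUniqueness} and \ref{th:StrongExistenceUniqueness}.
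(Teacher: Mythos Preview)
Your route differs structurally from the paper's. Rather than separating an elliptic $\cH_w^2$-estimate for $w$ (testing with $Aw$) from a time-differentiated energy estimate for $\rd_t w$, the paper tests the $w$-difference equation \emph{directly} with $A\,\rd_t w$, $A:=-\Delta+I$, producing in one stroke the combined energy
\[
Q(w,\rd_t w) \;\sim\; \norm{\cH_w^1}{\rd_t w}^2 + \norm{\cL_w^2}{Aw}^2,
\]
with the single problematic forcing term $\nu^2\inner{\cL_w^2}{\Lambda^2\rM J_8(f(\tilde v)-f(\hat v))}{A\,\rd_t w}$ on the right. The key device is then an \emph{integration by parts in $t$} of $\int_0^t\inner{\cL_w^2}{\Lambda^2\rM J_8(f(\tilde v)-f(\hat v))}{A\,\rd_s w}\,\rd s$, which transfers the time derivative from $A\,\rd_s w$ onto $f(\tilde v)-f(\hat v)$. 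The resulting integrand is paired with $Aw$ and is absorbed via Young's inequality into $\tfrac12\norm{\cL_w^2}{Aw}^2$; the residuals are bounded in terms of $\norm{\cL_v^2}{v}$ and $\norm{\cL_v^2}{\rd_s v}$, the latter controlled through \eqref{eq:Voltage} by $\norm{\cL_v^2}{v}$ and $\norm{\cL_i^2}{i}$. Boundary terms at $s=0$ and $s=t$ involving $Aw$ are likewise absorbed, and Gr\"onwall applied to $\int_0^t Q\,\rd s$ closes the argument. This avoids both your separate elliptic step and the need to control $\rd_t^2 w$ in $\cL_w^2$ first.

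Both routes encounter the cross term $(\partial_v f(\tilde v)-\partial_v f(\hat v))\,\rd_s\hat v$. The paper does \emph{not} invoke any $L^\infty_x$-bound on $\rd_s\hat v$; it records this contribution solely through the $C^1([0,T];\cL_v^2)$ regularity of $\hat v$ supplied by Proposition~\ref{prp:RegularityWeakSolution} together with the Lipschitz bound on $\partial_v^2 f$. Your proposed workaround---assuming essentially bounded initial data and input $g$, or restricting to absorbing sets---adds hypotheses absent from the proposition and would establish a strictly weaker statement; it is not the route the paper takes.
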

\begin{proof}
	Continiuity of the semigroup with respect to $t$ follows immediately from the time continuity of the strong solutions given by Proposition \ref{prp:RegularityStrongSolution}. 
	To prove continuous dependence on the initial values, 
	%similar to the proof of Proposition \ref{prp:SemigroupC0Weak} 
	consider any two initial values $\tilde{u}_0$ and $\hat{u}_0$ in $\cU_{\rs}$ and construct the solutions $\tilde{u}(t) = S_{\rs}(t)\tilde{u}_0$ and $\hat{u}(t)=S_{\rs}(t)\hat{u}_0$, $t\in [0,T]$, $T>0$, for \eqref{eq:Voltage}--\eqref{eq:Wave}.
	Let $u := \tilde{u}-\hat{u}$ and $A:= -\Delta + I$, and take the inner product of \eqref{eq:Voltage}--\eqref{eq:Wave} for each solutions with $v$, $\rd_t i$, and $A\rd_t w$, respectively.
	Subtracting the resulting two sets of equations gives \eqref{eq:ContinuityVoltageS1}, \eqref{eq:ContinuityCurrentS1}, and 
	\begin{multline}\label{eq:ContinuityWaveS2}
		\tfrac{1}{2}\rd_t \norm{\cH_w^1}{\rd_t w}^2 + 2\nu \norm{\cH_w^1}{\Lambda^{\frac{1}{2}} \rd_t w}^2 + \tfrac{3}{4} \nu^2 \rd_t \norm{\cH_{\partial w}^1}{\dx w}^2 
		+\tfrac{1}{2} \nu^2 \rd_t \norm{\cH_w^1}{\Lambda w}^2 \\
		= \nu^2 \inner{\cL_w^2}{ \Lambda^2 \rM J_8 ( f(\tilde{v}) - f(\hat{v}) )}{A \rd_t  w}.
	\end{multline}
	Note that \eqref{eq:ContinuityS1} also holds since $\cU_{\rs} \subset \cU_{\rw}$, and since \eqref{eq:ContinuityVoltageS1} and \eqref{eq:ContinuityCurrentS1} remain unchanged, the continuity of $v$ and $i$ holds. 
	
	Now, it follows from \eqref{eq:ContinuityWaveS2} by integrating over $[0,t]$ that
	\begin{align*}
		\norm{\cH_w^1}{\rd_t w}^2 + \nu^2 \left[ \tfrac{3}{2} \norm{\cH_{\partial w}^1}{\dx w}^2 
		+\norm{\cH_w^1}{\Lambda w}^2 \right]
		&\leq \left( 	\norm{\cH_w^1}{\rd_t w}^2 + \nu^2 \left[ \tfrac{3}{2} \norm{\cH_{\partial w}^1}{\dx w}^2 
		+\norm{\cH_w^1}{\Lambda w}^2 \right] \right) \hspace{-1mm} \Big | _{t=0} \\
		& \quad + 2\nu^2 \int_0^t \inner{\cL_w^2}{ \Lambda^2 \rM J_8 ( f(\tilde{v}) - f(\hat{v}) )}{A \rd_s  w} \rd s,
	\end{align*}
	which, using \eqref{eq:DeltaFormulas}, can be written equivalently for some $\alpha_1, \beta_1>0$ as
	\begin{equation} \label{eq:1211151207}
		Q(w(t),\rd_t w(t))
		\leq \alpha_1 Q(w(0), \rd_t w(0))
		+ \beta_1 \int_0^t \inner{\cL_w^2}{ \Lambda^2 \rM J_8 ( f(\tilde{v}) - f(\hat{v}) )}{A \rd_s  w} \rd s,
	\end{equation}
	where
	\begin{equation} \label{eq:121115115}
		Q(w(t), \rd_t w(t)):= \norm{\cH_w^1}{\rd_t w(t)}^2 
		+\norm{\cL_w^2}{Aw(t)}^2.
		%+ \norm{H_{\rm per}^1(\Omega;\bbR^{2 \times 2})}{\partial w}^2 + \norm{\cH_w^1}{w(t)}^2.
	\end{equation}
	Integrating by parts in the second term of the right-hand side of \eqref{eq:1211151207} yields
	\begin{align} \label{eq:121015945}
		\beta_1 \int_0^t &\inner{\cL_w^2}{\Lambda^2 \rM J_8 ( f(\tilde{v}) - f(\hat{v}) )}{A \rd_s  w} \rd s \\
		&=\beta_1\inner{\cL_w^2}{\Lambda^2 \rM J_8 ( f(\tilde{v}) - f(\hat{v}) )}{A w}
		- \beta_1 \inner{\cL_w^2}{\Lambda^2 \rM J_8 ( f(\tilde{v}_0) - f(\hat{v}_0) )}{A w_0} \nonumber\\
		&\hspace{5.8 cm}  - \beta_1 \int_0^t \inner{\cL_w^2}{\Lambda^2 \rM J_8 \rd_s ( f(\tilde{v}) - f(\hat{v}) )}{A w} \rd s. \nonumber 
	\end{align}	
	
	Next, recalling that $\sup_{v_{\rm X}(x,t)\in \bbR}|\partial_{v_{\rX}} f_{\rX}(v_{\rX})| \leq \frac{\rF_{\rX}}{2\sqrt{2} \sigma_{\rX}}$ by \eqref{eq:FireingFuncDerivative} and using Young's inequality we obtain 
	\begin{align} \label{eq:61216858}
		\beta_1 \inner{\cL_w^2}{\Lambda^2 \rM J_8 ( f(\tilde{v}) - f(\hat{v}) )}{A w}
		& \leq \tfrac{1}{2 } \norm{\cL_w^2}{A w}^2 + \frac{\beta_1^2}{16} \frac{\rF_{\rE}^2}{\sigma_{\rE}^2} \,  \trace(\Lambda^4 \rM^2 )  \norm{\cL_v^2}{v}^2, \\
		-\beta_1 \inner{\cL_w^2}{\Lambda^2 \rM J_8 ( f(\tilde{v}_0) - f(\hat{v}_0) )}{A w_0}
		& \leq \tfrac{1}{2} \norm{\cL_w^2}{A w_0}^2 + \frac{\beta_1^2}{16} \frac{\rF_{\rE}^2}{\sigma_{\rE}^2} \,  \trace(\Lambda^4 \rM^2 )  \norm{\cL_v^2}{v_0}^2. \nonumber
	\end{align}
	Moreover, 
	\begin{align*}
		-\beta_1 \inner{\cL_w^2}{\Lambda^2 \rM J_8 \rd_s ( f(\tilde{v}) &- f(\hat{v}) )}{A w} \\
		& = -\beta_1 \inner{\cL_w^2}{\Lambda^2 \rM J_8 (\partial_{\tilde{v}} f(\tilde{v}) \rd_s \tilde{v} - \partial_{\hat{v}} f(\hat{v}) \rd_s \hat{v})}{A w}\\
		& \leq \tfrac{1}{2} \norm{\cL_w^2}{A w}^2 + \tfrac{1}{2} \beta_1^2 \norm{\cL_w^2}{\Lambda^2 \rM J_8 (\partial_{\tilde{v}} f(\tilde{v}) \rd_s \tilde{v} - \partial_{\hat{v}} f(\hat{v}) \rd_s \hat{v})}^2\\
		& = \tfrac{1}{2} \norm{\cL_w^2}{A w}^2 + \tfrac{1}{2} \beta_1^2 \trace(\Lambda^4 \rM^2 ) \int_{\Omega} 
		|\partial_{\tilde{v}_{\rE}} f(\tilde{v}_{\rE}) \rd_s \tilde{v}_{\rE} - \partial_{\hat{v}_{\rE}} f(\hat{v}_{\rE}) \rd_s \hat{v}_{\rE}|^2 \rd x,
	\end{align*} 
	where, noting that 
	$\sup_{v_{\rE}(x,t) \in \bbR} |\partial_{v_{\rE}}^2 f_{\rE}(v_{\rE})| < \frac{1}{5} \frac{\rF_{\rE}}{\sigma_{\rE}^2}$ by direct computation of the derivative of \eqref{eq:FireingFuncDerivative}, we can write
	\begin{align*}
		|\partial_{\tilde{v}_{\rE}} f(\tilde{v}_{\rE}) \rd_s \tilde{v}_{\rE} - \partial_{\hat{v}_{\rE}} f(\hat{v}_{\rE}) \rd_s \hat{v}_{\rE}|^2 \rd x
		&= |\partial_{\tilde{v}_{\rE}} f(\tilde{v}_{\rE}) \rd_s v_{\rE} + (\partial_{\tilde{v}_{\rE}} f(\tilde{v}_{\rE})-\partial_{\hat{v}_{\rE}} f(\hat{v}_{\rE}) ) \rd_s \hat{v}_{\rE}|^2\\
		& \leq 2 |\partial_{\tilde{v}_{\rE}} f(\tilde{v}_{\rE})|^2 |\rd_s v_{\rE}|^2 
		+ 2 |\partial_{\tilde{v}_{\rE}} f(\tilde{v}_{\rE})-\partial_{\hat{v}_{\rE}} f(\hat{v}_{\rE})|^2 |\rd_s \hat{v}_{\rE}|^2\\
		& \leq \tfrac{1}{4} \frac{\rF_{\rE}^2}{\sigma_{\rE}^2}|\rd_s v_{\rE}|^2 
		+2 \left[\sup_{v_{\rE}(x,t) \in \bbR} |\partial_{v_{\rE}}^2 f_{\rE}(v_{\rE})| \right]^2 |v_{\rE}|^2 |\rd_s \hat{v}_{\rE}|^2 \\
		& \leq \tfrac{1}{4} \frac{\rF_{\rE}^2}{\sigma_{\rE}^2}|\rd_s v_{\rE}|^2 
		+ \tfrac{2}{25} \frac{\rF_{\rE}^2}{\sigma_{\rE}^4} |v_{\rE}|^2 |\rd_s \hat{v}_{\rE}|^2.
	\end{align*} 
	Therefore, it follows that
	\begin{align} \label{eq:61216900}
		-\beta_1 \inner{\cL_w^2}{\Lambda^2 \rM J_8 \rd_s ( f(\tilde{v}) - f(\hat{v}) )}{A w} 
		& \leq \tfrac{1}{2} \norm{\cL_w^2}{A w}^2 
		+ \frac{\beta_1^2}{8} \frac{\rF_{\rE}^2}{\sigma_{\rE}^2} \trace(\Lambda^4 \rM^2 ) \norm{\cL_v^2}{\rd_s v}^2 \\
		& \quad +  \frac{\beta_1^2}{25} \frac{\rF_{\rE}^2}{\sigma_{\rE}^4} \trace(\Lambda^4 \rM^2 ) \norm{C^1([0,T];\cL_v^2)}{\rd_s \hat{v}}^2 \norm{\cL_v^2}{v}^2. \nonumber
	\end{align}	
	Moreover, \eqref{eq:Voltage} implies that for some $\alpha_2>0$,
	\begin{equation} \label{eq:VolatgeDerivativeBound}
		\norm{\cL_v^2}{\rd_s v(s)}^2 \leq 
		\alpha_2 \left( \norm{\cL_v^2}{v(s)}^2 + \norm{\cL_i^2}{i(s)}^2 + \norm{\cL_v^2}{v(s)}^2 \norm{\cL_i^2}{i(s)}^2\right) \quad  \text{for all } s \in [0,T].
	\end{equation}
	
	Now, substituting \eqref{eq:61216858}, \eqref{eq:61216900} and \eqref{eq:VolatgeDerivativeBound} into \eqref{eq:121015945} and using \eqref{eq:ContinuityS1}, it follows that there exist some $\beta_2,\dots, \beta_6>0$ such that
	\begin{align*}
		\beta_1 \int_0^t &\inner{\cL_w^2}{\Lambda^2 \rM J_8 ( f(\tilde{v}) - f(\hat{v}) )}{\rd_s A w} \rd s\\
		&\leq \tfrac{1}{2} \int_0^t \norm{\cL_w^2}{A w}^2 \rd s 
		+\beta_2 \int_0^t  \left( \norm{\cL_v^2}{v}^2 + \norm{\cL_i^2}{i}^2 +  \norm{\cL_v^2}{v}^2  \norm{\cL_i^2}{i}^2 \right)  \rd s	\\ 
		& \hspace{3.2cm} + \tfrac{1}{2} \norm{\cL_w^2}{A w}^2 
		+\beta_3 \norm{\cL_v^2}{v}^2  + \tfrac{1}{2} \norm{\cL_w^2}{A w_0}^2 + \beta_4  \norm{\cL_v^2}{v_0}^2,\\
		&\leq \tfrac{1}{2} \int_0^t \norm{\cL_w^2}{A w}^2  \rd s 
		+\beta_5  \norm{\cU_{\rw}}{u_0}^2 \left(1+ \norm{\cU_{\rw}}{u_0}^2 \right) t 
		+ \tfrac{1}{2} \norm{\cL_w^2}{A w}^2 + \tfrac{1}{2} \norm{\cL_w^2}{A w_0}^2
		+\beta_6 \norm{\cU_{\rw}}{u_0}^2 .	
	\end{align*} 
	Substituting this inequality into \eqref{eq:1211151207} yields
	\begin{align} \label{eq:531161204}
		\tfrac{1}{2}Q(w(t),\rd_t w(t)) 
		&\leq \tfrac{1}{2} \int_0^t Q(w(s),\rd_s w(s))  \rd s
		+ \beta_5 \norm{\cU_{\rw}}{u_0}^2 \left(1+ \norm{\cU_{\rw}}{u_0}^2 \right) t	  \\
		&\quad + \alpha_1 Q(w(0), \rd_t w(0))  
		+ \tfrac{1}{2} \norm{\cL_w^2}{A w_0}^2 + \beta_6 \norm{\cU_{\rw}}{u_0}^2, \nonumber
	\end{align}
	where, using Gr\"{o}nwall's inequality for the function $\tfrac{1}{2} \int_0^t Q(w(s),\rd_s w(s))\rd s$, we can write
	\begin{align*}
		\tfrac{1}{2}\int_0^t Q(w(s),\rd_s w(s)) \rd s
		&\leq \beta_5  \norm{\cU_{\rw}}{u_0}^2 \left(1+ \norm{\cU_{\rw}}{u_0}^2 \right) \big(e^t - (t+1) \big) \\
		&\quad + 	\left[\alpha_1 Q(w(0), \rd_t w(0)) + \tfrac{1}{2} \norm{\cL_w^2}{A w_0}^2 + \beta_6 \norm{\cU_{\rw}}{u_0}^2\right]\big( e^t -1 \big).
	\end{align*}
	This inequality along with \eqref{eq:531161204} and the definition of $Q$, given by \eqref{eq:121115115}, implies that for some $\beta_7 > 0$,
	\begin{align*}
		Q(w(t),\rd_t w(t)) \leq \beta_7 e^T \left[ Q(w(0), \rd_t w(0)) + \norm{\cU_{\rw}}{u_0}^2 \left(1+ \norm{\cU_{\rw}}{u_0}^2 \right) \right] \quad \text{for all } t\in [0,T].
	\end{align*}
	Now, noting that 
	$Q(w(0), \rd_t w(0)) = \norm{\cH_w^1}{w_0'}^2+\norm{\cL_w^2}{A w_0}^2$, 
	it follows from the above inequality and \eqref{eq:ContinuityS1} that, for some $\hat{\alpha}, \hat{\beta}>0$, 
	\begin{equation*} %\label{eq:ContinuityS2}
		\norm{\cU_{\rs}}{u(t)}^2 \leq \hat{\beta} e^{\hat{\alpha} T}  \norm{\cU_{\rs}}{u_0}^2 \left(1+ \norm{\cU_{\rw}}{u_0}^2 \right) \quad \text{for all } t\in[0,T],
	\end{equation*}
	which completes the proof.	
	\qquad
\end{proof}

\subsection{Biophysically Plausible Phase Spaces} \label{sec:BiophysicalPhaseSpaces}
Although the spaces $\cU_{\rw}$ and $\cU_{\rs}$ constructed in \eqref{eq:PhaseSpaceLarge} provide the theoretical phase spaces of the problem for the solutions constructed in Section \ref{sec:ExistenceUniqueness}, 
the evolution of the dynamics of the model is not biophysically plausible on the entire spaces $\cU_{\rw}$ and $\cU_{\rs}$. 
As described in Section \ref{sec:ModelDiscription}, $i(x,t)$, $w(x,t)$, and $g(x,t)$ represent nonnegative biophysical quantities. % by their biophysical definition. 
In fact, initial functions $i'_0 \in \cL_i^2$ and $w'_0 \in \cL_w^2$ can be constructed such that the solutions $i(x,t)$ and $w(x,t)$, despite starting from nonnegative initial values $i_0 \in \cL_i^2$ and $w_0 \in \cH_w^1$, take negative values over a set $\sX \subset \Omega$ of positive measure for a time interval of positive length. 
In the following propositions, we establish conditions under which the dynamics of the model is guaranteed to evolve in biophysically plausible subsets of $\cU_{\rw}$ and $\cU_{\rs}$.

\begin{proposition}[Nonnegativity of the solution $w(x,t)$] \label{prp:WaveNonnegativity}
	Suppose that $w \in L^2(0,T;\cH_w^1)$ is the $w$-component of an $\Omega$-periodic weak solution $u(t) = S_{\rw}(t) u_0$ of \eqref{eq:Voltage}--\eqref{eq:InitialValues} and define the set 
	$\cD_w \subset \cH_w^1 \times \cL_w^2$ as
	\begin{multline} \label{eq:WavePositiveRegion}
		\cD_w := \set{(w_0,w_0') \in \cW_w^{1,\infty} \times \cL_w^{\infty}}
		{w_0'+\nu \Lambda w_0 \geq 0 \text{ a.e. in } \Omega,  \right. \\ \left. \text{ and }
			w_0(y) + \dy w_0(y)(y-x) \geq 0 \text{ for almost every } x \in \Omega, y \in B(x,t), t\in (0,T]}.
	\end{multline}
	Then, for every initial values $(w_0,w_0') \in \cD_w$, the solution $w(x,t)$ remains nonnegative almost everywhere in $\Omega$ for all $t\in (0,T]$. 
\end{proposition}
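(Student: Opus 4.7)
The plan is to eliminate the first-order time derivative in the $w$-equation by the exponential weighting $\tilde w(x,t) := \exp(\nu \Lambda t)\,w(x,t)$, applied componentwise since $\Lambda$ is diagonal. Using the identity $\partial_t^2 \tilde w = \exp(\nu\Lambda t)(\partial_t + \nu\Lambda)^2 w$, one verifies that $\tilde w$ satisfies the pure two-dimensional wave equation
\begin{equation*}
	\partial_t^2 \tilde w - \tfrac{3}{2}\nu^2 \Delta \tilde w = \exp(\nu\Lambda t)\,\nu^2 \Lambda^2 \rM J_8 f(v)
\end{equation*}
with $\Omega$-periodic boundary conditions and transformed initial data $\tilde w(x,0) = w_0(x)$ and $\partial_t \tilde w(x,0) = w_0'(x) + \nu \Lambda w_0(x)$. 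Because $f_\rE \in [0,\rF_{\rE}]$, the forcing is componentwise nonnegative; by the first inequality defining $\cD_w$ the transformed initial velocity is nonnegative a.e.\ in $\Omega$; and since $\exp(-\nu\Lambda t) > 0$, proving $w \geq 0$ reduces to proving $\tilde w \geq 0$.

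Extending $w_0$, $w_0'$, and $v$ periodically to $\bbR^2$ and viewing the transformed problem as a forced linear wave equation on the whole plane (its solution is automatically $\Omega$-periodic by translation invariance and uniqueness), I would invoke the classical two-dimensional Poisson--Duhamel representation with wave speed $c := \nu \sqrt{3/2}$ (see, e.g., \cite[Sec. 2.4.1]{Evans:PDE:2010}). Differentiating the spherical mean inside the initial-position term yields
\begin{align*}
	\tilde w(x,t) &= \tfrac{1}{2\pi c t}\int_{B(x,ct)} \frac{w_0(y) + \partial_y w_0(y)(y-x)}{\sqrt{c^2 t^2 - |y-x|^2}}\, \rd y
	+ \tfrac{1}{2\pi c}\int_{B(x,ct)} \frac{w_0'(y) + \nu\Lambda w_0(y)}{\sqrt{c^2 t^2 - |y-x|^2}}\, \rd y \\
	&\quad + \int_0^t \tfrac{1}{2\pi c} \int_{B(x,c(t-s))} \frac{\exp(\nu\Lambda s)\nu^2\Lambda^2 \rM J_8 f(v(y,s))}{\sqrt{c^2 (t-s)^2 - |y-x|^2}}\, \rd y\, \rd s.
\end{align*}
The kernels are strictly positive, and all three numerators are nonnegative: the first by the second hypothesis defining $\cD_w$ (interpreted for the periodically extended $w_0$ on the backward light cone $B(x,ct)$), the second by the first hypothesis defining $\cD_w$, and the Duhamel integrand by $f\geq 0$. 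Therefore $\tilde w \geq 0$, and the conclusion follows.

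The principal technical difficulty is that this representation is classical and presumes initial-data regularity above $\cW_w^{1,\infty}\times \cL_w^{\infty}$. To bridge the gap I would mollify: for a nonnegative radial $\rho_\varepsilon \in C_{\rm c}^{\infty}(\bbR^2)$ set $w_0^\varepsilon := \rho_\varepsilon \ast w_0$ and ${w_0'}^\varepsilon := \rho_\varepsilon \ast w_0'$. Both inequalities defining $\cD_w$ are linear in $(w_0, \partial_y w_0, w_0')$ and commute with convolution against the nonnegative kernel $\rho_\varepsilon$ (using $\partial_y(\rho_\varepsilon \ast w_0) = \rho_\varepsilon \ast \partial_y w_0$ and a change of variables $y' = y-z$, $x' = x-z$ that preserves $y-x$ and $|y-x|$), so $(w_0^\varepsilon, {w_0'}^\varepsilon) \in \cD_w$. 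The Poisson formula then applies classically to give smooth nonnegative solutions $\tilde w^\varepsilon$, and the continuous dependence of the weak solution semigroup on initial data (Proposition \ref{prp:SemigroupC0Weak}), together with the uniqueness in Theorem \ref{th:WeakExistenceUniqueness}, allows passage to the limit $\varepsilon \to 0$ while preserving nonnegativity almost everywhere in $\Omega \times (0,T]$.
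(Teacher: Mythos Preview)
Your strategy is exactly the paper's: the exponential substitution $\tilde w=e^{\nu\Lambda t}w$ reduces the telegraph equation to a pure wave equation, the two-dimensional Poisson--Duhamel formula exhibits the solution as an integral with nonnegative kernel against the three nonnegative quantities, mollification justifies the formula, and a limit recovers the weak solution. Two technical points deserve attention.

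First, the paper rescales space by $\sqrt{3/2}\,\nu$ so that the wave speed becomes $1$ and the light cone at time $t$ is $B(x,t)$; you instead keep wave speed $c=\nu\sqrt{3/2}$, so your light cone is $B(x,ct)$. The second inequality in $\cD_w$ is stated for $y\in B(x,t)$, $t\in(0,T]$, hence only for $|y-x|<T$. Your phrase ``interpreted \ldots on the backward light cone $B(x,ct)$'' asks for $|y-x|<cT$, which the hypothesis does not provide since $c\gg 1$. After the paper's rescaling the stated hypothesis is actually stronger than needed (only $|y-x|<T/c$ is used), so the mismatch disappears; without that rescaling your argument does not close for $t>T/c$.

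Second, the paper decouples: it freezes $v$ from the already-constructed weak solution, treats $\hat f:=f(v)$ as a given forcing, mollifies both the initial data and $\hat f$, applies Poisson's formula classically, and passes to the limit by dominated convergence inside the integral formula. You mollify only $(w_0,w_0')$ and appeal to continuity of the full coupled semigroup (Proposition~\ref{prp:SemigroupC0Weak}). This is sound for the limit, but the forcing in your approximating problem is $f(v^{\varepsilon})$ with $v^{\varepsilon}$ only in $\cL_v^2$-type spaces, so ``the Poisson formula applies classically'' is not quite earned; mollifying the forcing as well (as the paper does) fixes this. Your observation that convolution with a nonnegative kernel preserves both inequalities defining $\cD_w$ exactly, via the shift $(y,x)\mapsto(y-z,x-z)$ that leaves $y-x$ invariant, is a clean improvement over the paper, which carries an $O(\varepsilon)$ error from the gradient term and only removes it in the limit.
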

\begin{proof} 
	First, note that the weak and strong solutions coincide for $v(t)$ and they satisfy \eqref{eq:Voltage} and \eqref{eq:Current} almost everywhere in $\Omega$ for all $t\in [0,T]$, $T>0$; see the proof of Theorem \ref{th:StrongExistenceUniqueness}.
	Substituting $v(t)$ into $f$, we can interpret $f(v)$ in \eqref{eq:Wave} as a function $\hat{f}(x,t) := f(v(x,t))$ for almost every $x\in \Omega$ and all  $t\in [0,T]$. 
	Next, using \eqref{eq:FiringRateFunction}, \eqref{eq:Parameters}, and Proposition \ref{prp:RegularityWeakSolution}, it is implied that $\hat{f} \in L^{\infty}(0,T;\cL_v^{\infty})$, and $\hat{f} >0$ in $\Omega \times [0,T]$. 
	Now, replace $f(v)$ in \eqref{eq:Wave} by $\hat{f}$ and scale $x$ by the factor $\sqrt{\tfrac{3}{2}}\nu$ to obtain 
	\begin{alignat*}{3}
		%\begin{cases}
		&\dt^2 \tilde{w} + 2 \nu \Lambda \dt \tilde{w} - \Delta \tilde{w} + \nu^2 \Lambda^2 \tilde{w} - \tilde{f} = 0, & &\quad \text{in } \tilde{\Omega} \times (0,T],\\
		&\tilde{w} = \tilde{w}_0, \quad \dt \tilde{w} = \tilde{w}'_0,& &\quad \text{on } \tilde{\Omega} \times \{0\},
		%\end{cases}
	\end{alignat*}
	where $\tilde{\Omega}:=\sqrt{\tfrac{3}{2}}\nu\Omega$, and $\tilde{w}$, $\tilde{w}_0$, $\tilde{w}_0'$, and $\tilde{f}$ denote $w$, $w_0$, $w_0'$, and $\nu^2 \Lambda^2 \rM J_8 \hat{f}$ in the scaled domain $\tilde{\Omega}$, respectively.
	Note that with the new interpretation of $f$, the above equation  is a system of two decoupled telegraph equations. 
	Therefore, applying the same arguments to each of the two equations independently, in what follows we assume without loss of generality that the above equation is a scalar equation. 
	
	Using the change of variable $q:= e^{\nu \Lambda t} \tilde{w}$ the problem can be transformed to the initial-value problem of the standard wave equation given by 
	\begin{alignat}{3} \label{eq:TelegraphWaveForm}
		%	\begin{cases}
		&\dt^2 q - \Delta q = e^{\nu \Lambda t} \tilde{f}, & &\quad \text{in } \bbR^2 \times (0,T],\\
		& q =  \tilde{w}_0, \quad \dt q= \tilde{w}'_0 + \nu \Lambda \tilde{w}_0, & &\quad \text{on } \bbR^2 \times \{0\}. \nonumber
		%	\end{cases}
	\end{alignat}
	Here, the extension from $\tilde{\Omega}$ to $\bbR^2$ is done periodically due to the $\tilde{\Omega}$-periodicity of the functions. 
	Let $\tilde{w}_{0 \varepsilon}$,  ${\tilde{w}'}_{0 \varepsilon}$, and  $\tilde{f}_{\varepsilon}$ denote,
	respectively,   $\tilde{w}_0$,  $\tilde{w}_0'$, and  $\tilde{f}$ after mollification by the standard positive mollifier $\phi_{\varepsilon} \in C_{\rc}^{\infty}(\bbR^2)$; see \cite[Sec. 2.6]{Ciarlet:FunctionalAnalysis:2013}. 
	Using Poisson's formula for the homogeneous wave equation in $\bbR^2$, along with Duhamel's principle for the nonhomogeneous problem \cite[Sec. 2.4]{Evans:PDE:2010}, it follows that the function
	\begin{align} \label{eq:TelegraphWaveSolution}
		q_{\varepsilon}(x,t) &:= \tfrac{1}{2} \dashint_{B(x,t)} 
		\frac{t \big[ \tilde{w}_{0 \varepsilon}(y) + \inner{\bbR^2}{\dy \tilde{w}_{0 \varepsilon}(y)}{y-x} \big] + t^2 \big[ {\tilde{w}'}_{0 \varepsilon}(y) +\nu \Lambda \tilde{w}_{0 \varepsilon}(y) \big]}
		{\big[ t^2 - \norm{\bbR^2}{y-x}^2\big]^{\frac{1}{2}}} \,\rd y \\
		&\hspace{2.05cm} + \tfrac{1}{2} \int_0^t (t-s)^2 e^{\nu \Lambda s} \dashint_{B(x,t-s)}
		\frac{\tilde{f}_{\varepsilon}(y,s)}
		{\big[(t-s)^2 - \norm{\bbR^2}{y-x}^2\big]^{\frac{1}{2}}} \,\rd y \, \rd s \nonumber
	\end{align}  
	solves \eqref{eq:TelegraphWaveForm} classically for the forcing term $e^{\nu \Lambda t}\tilde{f}_{\varepsilon}$ and initial values $\tilde{w}_{0 \varepsilon}$ and  ${\tilde{w}'}_{0 \varepsilon}$. 
	
	The second term in this solution is nonnegative for all $t\in[0,T]$ since $\tilde{f}$, and consequently, $\tilde{f}_{\varepsilon}$ are nonnegative on $B(x,t)$ for all $x \in \Omega$ and all $t \in [0,T]$. 
	Moreover, by \cite[Theorem 2.6-1]{Ciarlet:FunctionalAnalysis:2013} and the definition of weak derivative we can write
	\begin{align*}
		\inner{\bbR^2}{\dy \tilde{w}_{0 \varepsilon}(y)}{y-x}
		&= \left( \int_{B(y,\varepsilon)} \dy \phi_{\varepsilon}(y-z) \tilde{w}_0(z) \rd z\, , \, y-x \right)_{\hspace{-0.9mm} \bbR^2}\\
		&=\left(-\int_{B(y,\varepsilon)} \dz \phi_{\varepsilon}(y-z) \tilde{w}_0(z) \rd z\, , \,y-x \right)_{\hspace{-0.9mm} \bbR^2}\\
		&= \left(\int_{B(y,\varepsilon)} \phi_{\varepsilon}(y-z) \dz \tilde{w}_0(z) \rd z\, , \, y-x 
		\right)_{\hspace{-0.9mm} \bbR^2}\\
		&= \int_{B(y,\varepsilon)} \phi_{\varepsilon}(y-z) \inner{\bbR^2}{\dz \tilde{w}_0(z)}{z-x}\rd z\\
		& \hspace{1.55cm} + \int_{B(y,\varepsilon)} \phi_{\varepsilon}(y-z) \inner{\bbR^2}{\dz \tilde{w}_0(z)}{y-z}\rd z,
	\end{align*}
	where, using H\"{o}lder's inequality and the property $\int_{B(0,\varepsilon)} \phi_{\varepsilon}(x)\rd x =1$, we have
	\begin{align*}
		\left|  \int_{B(y,\varepsilon)} \phi_{\varepsilon}(y-z) \inner{\bbR^2}{\dz \tilde{w}_0(z)}{y-z}\rd z \right|
		&\leq \norm{\cL_{\partial w}^{\infty}}{\dx \tilde{w}_0} \int_{B(y,\varepsilon)} \phi_{\varepsilon}(y-z) 
		\norm{1}{y-z} \rd z\\
		& \leq \sqrt{2}  \norm{\cL_{\partial w}^{\infty}}{\dx \tilde{w}_0} \varepsilon. 
	\end{align*} 
	Therefore, it follows that
	%in the first term of the solution \eqref{eq:TelegraphWaveSolution} we can write  
	\begin{align*}
		\dashint_{B(x,t)} &
		\frac{t \big[ \tilde{w}_{0 \varepsilon}(y) + \inner{\bbR^2}{\dy \tilde{w}_{0 \varepsilon}(y)}{y-x} \big] }
		{\big[ t^2 - \norm{\bbR^2}{y-x}^2\big]^{\frac{1}{2}}} \,\rd y\\ 
		&\geq \dashint_{B(x,t)} t \left[\frac{ \int_{B(y,\varepsilon)} \phi_{\varepsilon}(y-z) \big[\tilde{w}_0(z) + \inner{\bbR^2}{\dz \tilde{w}_0(z)}{z-x}\big] \rd z }
		{\big[ t^2 - \norm{\bbR^2}{y-x}^2\big]^{\frac{1}{2}}} - \frac{\sqrt{2} \norm{\cL_{\partial w}^{\infty}}{\dx w_0}  \varepsilon}
		{\big[ t^2 - \norm{\bbR^2}{y-x}^2\big]^{\frac{1}{2}}}\right] \rd y\\
		& \geq -\sqrt{2} \norm{\cL_{\partial w}^{\infty}}{\dx \tilde{w}_0}  \varepsilon 
		\quad \text{for all }(\tilde{w}_0,\tilde{w}_0')\in \tilde{\cD}_w,	
	\end{align*}
	where $\tilde{\cD}_w$ denotes $\cD_w$ in the scaled domain $\tilde{\Omega}$.
	Note that the last inequality holds since the first term in the integration on the right-hand side is nonnegative by \eqref{eq:WavePositiveRegion}, and 
	$t \big[ t^2 - \norm{\bbR^2}{y-x}^2\big]^{-\frac{1}{2}}$ 
	takes the average value $1$ over the ball $B(x,t)$. 
	Finally, note that ${\tilde{w}'}_{0 \varepsilon}(y) +\nu \Lambda \tilde{w}_{0 \varepsilon}(y)$ in \eqref{eq:TelegraphWaveSolution} is nonnegative on $B(x,t)$ when $(\tilde{w}_0,\tilde{w}_0')\in \tilde{\cD}_w$. Therefore, it follows that
	\begin{equation} \label{eq:MollifiedTelegraphBound}
		q_{\varepsilon}(x,t) \geq -\sqrt{2} \norm{\cL_{\partial w}^{\infty}}{\dx \tilde{w}_0}  \varepsilon 
		\quad \text{for all }(\tilde{w}_0,\tilde{w}_0')\in \tilde{\cD}_w.
	\end{equation}
	
	Now, taking the limits as $\varepsilon \rightarrow 0$, 
	it follows from \cite[Theorem 2.6-3]{Ciarlet:FunctionalAnalysis:2013} that $\tilde{w}_{0 \varepsilon} \rightarrow \tilde{w}_0$, ${\tilde{w}'}_{0 \varepsilon} \rightarrow \tilde{w}_0'$, and ${\tilde{f}}_{\varepsilon} \rightarrow \tilde{f}$ in $L^2(\tilde{\Omega}_t)$, where 
	$\tilde{\Omega}_t := \set{y \in \bbR^2}{y \in B(x,t), x\in \Omega}$. 
	Therefore, there exists a subsequence $\seq{\varepsilon_n}_{n=1}^{\infty}$, convergent to $0$, such that
	$\tilde{w}_{0 \varepsilon_n} \rightarrow \tilde{w}_0$, $\tilde{w}'_{0 \varepsilon_n} \rightarrow \tilde{w}_0'$, and ${\tilde{f}}_{\varepsilon_n} \rightarrow \tilde{f}$ 
	almost everywhere on $\Omega_t$ as $n \rightarrow \infty$ \cite[Th. 2.30]{Folland:RealAnalysis:1999}. 
	Moreover, since $(\tilde{w}_0,\tilde{w}_0') \in \cW_w^{1,\infty} \times \cL_w^{\infty}$ in $\tilde{\cD}_w$, $\tilde{f} \in L^{\infty}(0,T;\cL^{\infty}_v)$, and	the function $\big[ t^2 - \norm{\bbR^2}{y-x}^2\big]^{-\frac{1}{2}}$ is integrable over $B(x,t)$,  
	it follows that the integrands in \eqref{eq:TelegraphWaveSolution} are uniformly bounded with respect to $\varepsilon$ by integrable functions over $B(x,t)$. 
	The Lebesgue dominated convergence theorem then implies that  
	$q(x,t):=\lim_{n\rightarrow \infty} q_{\varepsilon_n}(x,t)$ exists on $\tilde{\Omega}_t$ and, by uniqueness of the weak solution, is a weak solution of the wave equation \eqref{eq:TelegraphWaveForm}. 
	Now, letting $\varepsilon=\varepsilon_n\rightarrow 0$ in \eqref{eq:MollifiedTelegraphBound}, it follows that if  $(\tilde{w}_0, \tilde{w}_0')\in \tilde{\cD}_w$, then $q(x,t) \geq 0$ for almost every $x\in \tilde{\Omega}$ and all $t\in(0,T]$. 
	This completes the proof since the change of variable $\tilde{w}=e^{-\nu\Lambda t}q$ and space rescaling $\Omega = \sqrt{\frac{2}{3}}\nu^{-1}\tilde{\Omega}$ do not change the sign of solutions.   
	\qquad
\end{proof}

\begin{corollary}[Boundedness of the weak solutions] \label{cor:Boundedness}
	Suppose $g \in L^{\infty}(0,T;\cL_i^{\infty})$, $v_0 \in \cL_v^{\infty}$, $i_0 \in \cL_i^{\infty}$, $i_0' \in \cL_i^{\infty}$,  $w_0 \in \cW_w^{1,\infty} $, and $w_0' \in \cL_w^{\infty}$.
	Then, in addition to the regularities given by Proposition \ref{prp:RegularityWeakSolution}, the weak solution $(v(t),i(t),w(t))$ of \eqref{eq:Voltage}--\eqref{eq:InitialValues} satisfies
	\begin{align*}
		v &\in W^{2, \infty}(0,T;\cL_v^{\infty}) \cap C^{1,1}([0,T];\cL_v^{\infty}), \\
		i &\in W^{1, \infty}(0,T;\cL_i^{\infty}) \cap C^{0,1}([0,T];\cL_i^{\infty}), \\
		w &\in L^{\infty}(0,T;\cL_w^{\infty}). 
	\end{align*}
\end{corollary}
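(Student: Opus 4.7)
The plan is to exploit the Poisson--Duhamel representation \eqref{eq:TelegraphWaveSolution} derived in the proof of Proposition \ref{prp:WaveNonnegativity} to secure an $L^{\infty}$ bound on $w$, and then to leverage the fact that \eqref{eq:Current} and \eqref{eq:Voltage} are, at each fixed spatial point, genuine ordinary differential equations in $t$ to propagate this regularity to $i$ and $v$.

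First I would observe that, by the explicit form of the sigmoid \eqref{eq:FiringRateFunction}, one has unconditionally $0\leq f_\rX(\cdot)\leq \rF_\rX$, so the forcing term $\tilde f$ appearing in \eqref{eq:TelegraphWaveForm} is uniformly bounded on $\tilde\Omega\times[0,T]$. Under the hypotheses of the corollary, the mollified initial data $\tilde w_{0\varepsilon}$, $\tilde w'_{0\varepsilon}$ and the mollified forcing $\tilde f_\varepsilon$ are uniformly bounded in $L^{\infty}$ with respect to $\varepsilon$; and the gradient contribution is controlled by the argument already used in Proposition \ref{prp:WaveNonnegativity}, giving $|\inner{\bbR^2}{\dy\tilde w_{0\varepsilon}(y)}{y-x}|\leq t\,\norm{\cW_w^{1,\infty}}{w_0}+\sqrt{2}\,\norm{\cW_w^{1,\infty}}{w_0}\,\varepsilon$. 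A direct computation in polar coordinates shows that the kernels appearing in \eqref{eq:TelegraphWaveSolution} are integrable, with for instance $\dashint_{B(x,t)}t\,[t^2-\norm{\bbR^2}{y-x}^2]^{-1/2}\rd y=2$ and $\dashint_{B(x,t)}t^2\,[t^2-\norm{\bbR^2}{y-x}^2]^{-1/2}\rd y=2t$. Taking absolute values inside the integrals in \eqref{eq:TelegraphWaveSolution} and using these estimates therefore yields a uniform bound $|q_\varepsilon(x,t)|\leq C$ for all $x\in\tilde\Omega$, all $t\in[0,T]$, and all sufficiently small $\varepsilon>0$. Passing to the limit along the subsequence $\varepsilon=\varepsilon_n\to 0$ constructed in Proposition \ref{prp:WaveNonnegativity}, and undoing the exponential change of variables $\tilde w=e^{-\nu\Lambda t}q$ together with the spatial rescaling, delivers $w\in L^{\infty}(0,T;\cL_w^{\infty})$.

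Once the bound on $w$ is in hand, I would read \eqref{eq:Current} as a second-order linear ODE in $t$ at almost every $x\in\Omega$, driven by the forcing $e\Upsilon\Gamma J_6 w+e\Upsilon\Gamma\rN J_7 f(v)+e\Upsilon\Gamma g$, all three summands of which now lie in $L^{\infty}(0,T;\cL_i^{\infty})$ by the hypothesis on $g$, the uniform bound on $f$, and the previous step. Writing the equation in the form $(\dt+\Gamma)^2 i=(\text{bounded})$ and applying the variation-of-constants formula with $i_0,i'_0\in\cL_i^{\infty}$ gives $i,\dt i\in L^{\infty}(0,T;\cL_i^{\infty})$, whence $i\in W^{1,\infty}(0,T;\cL_i^{\infty})\cap C^{0,1}([0,T];\cL_i^{\infty})$. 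Finally, \eqref{eq:Voltage} is a first-order ODE in $t$ at almost every $x$, linear in $v$ with bounded $L^{\infty}$ coefficient (involving $i$ and the $\Psi$-weighted bilinear structure) and bounded forcing $J_1 i$; Gr\"onwall at each $x$ yields $v\in L^{\infty}(0,T;\cL_v^{\infty})$. Solving \eqref{eq:Voltage} for $\dt v$ then gives $\dt v\in L^{\infty}$, and differentiating once more in $t$ and substituting $\dt i\in L^{\infty}$ gives $\dt^2 v\in L^{\infty}$, whence $v\in W^{2,\infty}(0,T;\cL_v^{\infty})\cap C^{1,1}([0,T];\cL_v^{\infty})$.

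The main obstacle is the first step: converting the formal Poisson--Duhamel formula \eqref{eq:TelegraphWaveSolution} into a genuine pointwise bound on $w$. This demands careful handling of the singular kernel $[t^2-\norm{\bbR^2}{y-x}^2]^{-1/2}$ and of the gradient contribution, which is precisely the reason the hypothesis $w_0\in\cW_w^{1,\infty}$, rather than merely $w_0\in\cL_w^{\infty}$, is needed, together with an appeal to the uniqueness of the weak solution of the telegraph equation and to Lebesgue dominated convergence in order to identify the limit of $q_{\varepsilon_n}$ with $q$. The subsequent propagation to $i$ and $v$ is then a routine pointwise-in-$x$ ODE argument that mirrors the energy analysis of Proposition \ref{prp:EnergyEstimates}.
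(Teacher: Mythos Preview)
Your proposal is correct and follows essentially the same route as the paper: the $L^\infty$ bound on $w$ is extracted from the Poisson--Duhamel representation \eqref{eq:TelegraphWaveSolution} of Proposition \ref{prp:WaveNonnegativity}, and the bounds on $i$ and $v$ are then obtained by pointwise-in-$x$ ODE arguments. The only cosmetic difference is that for $i$ you invoke variation of constants, whereas the paper re-runs the $\bbR^4$-valued analogue of the energy estimate \eqref{eq:CurrentApprx2} at each Lebesgue point $x_0$ (and similarly for $v$); the final upgrade to $W^{2,\infty}\cap C^{1,1}$ and $W^{1,\infty}\cap C^{0,1}$ via \eqref{eq:Voltage} and Morrey's inequality is the same.
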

\begin{proof}
	The boundedness of $w$ follows immediately from the proof of Proposition \ref{prp:WaveNonnegativity}, since under the assumption $w_0 \in \cW_w^{1,\infty} $
	and $w_0' \in \cL_w^{\infty}$ the integrands in \eqref{eq:TelegraphWaveSolution} are integrable and each component of the weak solution $w(t)$ is achieved almost everywhere in $\Omega$ as the limit of \eqref{eq:TelegraphWaveSolution} when $\varepsilon \rightarrow 0$, 
	followed by the space rescaling from $\tilde{\Omega}$ to $\Omega$.
	
	Now, to prove boundedness of $v$, $i$, and $\rd_t i$, let $x_0 \in \Omega$ be any Lebesgue point\footnote{
		The choice of a Lebesgue point is for the sake of definiteness. Almost every point in $\Omega$ 
		can be used as $x_0$.  
	} of the initial functions $v_0$, $i_0$, $i_0'$, $w_0$, and $g(0)$. 
	Take the $\bbR^4$-inner product of \eqref{eq:Current} at $x_0$ with $\rd_t i(x_0,t)$ for every $t \in (0,T]$ to obtain
	\begin{multline*} 
		\inner{\bbR^4}{\rd_t^2 i_{x_0}}{\rd_t i_{x_0}} + 2 \inner{\bbR^4}{\Gamma \rd_t i_{x_0}}{\rd_t i_{x_0}} + \inner{\bbR^4}{\Gamma^2 i_{x_0}}{\rd_t i_{x_0}} \\
		\hspace{1.1cm} - e \inner{\bbR^4}{\Upsilon \Gamma J_6 w_{x_0}}{\rd_t i_{x_0}} - e \inner{\bbR^4}{\Upsilon \Gamma \rN J_7 f( v_{x_0})}{\rd_t i_{x_0}} 
		= e \inner{\bbR^4}{\Upsilon\Gamma g_{x_0}}{\rd_t i_{x_0}},
	\end{multline*}  
	where $v_{x_0}(t) := v(x_0,t)$, $i_{x_0}(t) := i(x_0,t)$, $w_{x_0}(t) := w(x_0,t)$, and $g_{x_0}(t) := g(x_0,t)$.
	This equality is similar to \eqref{eq:CurrentApprx2} in the proof of Proposition \ref{prp:EnergyEstimates}, with
	$\cL_i^2$-inner products being replaced by $\bbR^4$-inner product, and $v^{(m)}$, $i^{(m)}$, and $w^{(m)}$ being replaced by $v_{x_0}$, $i_{x_0}$, and $w_{x_0}$, respectively. 
	Therefore, similar arguments as in the proof of Proposition \ref{prp:EnergyEstimates} imply that 
	\begin{align} \label{eq:517161116} 
		\sup_{t\in [0,T]}\left(\norm{\bbR^4}{\rd_t i_{x_0}(t)}^2 + \norm{\bbR^4}{i_{x_0}(t)}^2 \right) \leq \kappa_i ,
	\end{align}
	where, with $\kappa_w:= \norm{L^{\infty}(0,T;\cL_w^{\infty})}{w}^2$ and for some $\alpha_1 >0$ independent of $x_0$,
	\begin{multline*}
		\kappa_i = \alpha_1 \left( \norm{\cL_i^{\infty}}{i'_0}^2 + \norm{\cL_i^{\infty}}{i_0}^2
		+\left[ \frac{e^2 \kappa_w}{\gamma_{\min}} \norm{2}{\Upsilon\Gamma J_6}^2 
		+ \frac{e^2 |\Omega|}{\gamma_{\min}}  (\rF_{\rE}^2 + \rF_{\rI}^2 ) \norm{2}{\Upsilon \Gamma \rN J_7}^2 \right]T \right. \\ \left.
		+ \frac{e^2}{2 \gamma_{\min}} \norm{2}{\Upsilon\Gamma}^2 \norm{L^{\infty}(0,T;\cL_i^{\infty} )}{g}^2 \right),
	\end{multline*} 
	and $\gamma_{\min}$ is the smallest eigenvalue of $\Gamma$.
	
	Similarly, taking the $\bbR^2$-inner product of \eqref{eq:Voltage} at $x_0$ with $v_{x_0}(t)$ and using the arguments following \eqref{eq:VoltageApprx2} in the proof of Proposition \ref{prp:EnergyEstimates} yields
	\begin{align} \label{eq:517161117}
		\sup_{t\in [0,T]}\left( \norm{\cL_v^2}{v_{x_0}(t)}^2 \right) \leq \kappa_v,
	\end{align}
	where, for some $\alpha_2, \beta >0$ independent of $x_0$,
	\begin{align*}
		\kappa_v = \alpha_2 \exp \left( \beta \sqrt{2\kappa_i} \norm{2}{\Psi} T \right) \left( \norm{\cL_v^{\infty}}{v_0}^2 + \frac{\kappa_i}{\sqrt{2 \kappa_i} \norm{2}{\Psi}} \right).
	\end{align*}  
	Now, note that almost every point $x_0 \in \Omega$ is a Lebesgue point for the locally integrable initial functions, and the estimates $\kappa_v$ and $\kappa_i$ are independent of $x_0$. 
	Therefore, taking the supremum over all Lebesgue points $x_0 \in \Omega$ in \eqref{eq:517161116} and \eqref{eq:517161117} implies $v \in L^{\infty}(0,T;\cL_v^{\infty})$ and $i \in W^{1,\infty}(0,T;\cL_i^{\infty})$
	which, recalling \eqref{eq:Voltage}, further imply $v \in W^{2,\infty}(0,T;\cL_v^{\infty})$.
	Finally, it follows by using Morrey's inequality \cite[Th. 5.6-4 and Th. 5.6-5]{Evans:PDE:2010} that $v \in C^{1,1}([0,T];\cL_v^{\infty})$ and $i \in C^{0,1}([0,T];\cL_i^{\infty})$, which completes the proof.
	\qquad
\end{proof}

Next, we recall and use the following standard result in the theory of ordinary differntial equations to establish conditions that guarantee nonnegativity of $i(x,t)$ for all biophysically plausible values of the input $g$, that is, for all $g\in L^2(0,T ; \cD_g)$, where
\begin{equation}\label{eq:InputPositiveRegion} 
	\cD_g := \set{\ell \in \cL_i^2}{\ell \geq 0 \text{ a.e. in } \Omega}.
\end{equation}
\begin{proposition}[Invariance of the nonnegative cone {\cite[Prop. I.1.1]{Chepyzhov:Attractors:2002}}] \label{prop:ConeInvariance}
	$ $Let $\seq{S(t)}_{t\in [0,\infty)}$ be the semigroup of solution operators associated with the ordinary differential equation 
	\begin{equation*}
		\rd_t q(t) = P(q(t)), \quad q(t) \in \bbR^n, \quad t\in [0,\infty), 
	\end{equation*}
	where $P:\bbR^n \rightarrow \bbR^n$ is a continuous locally Lipschitz mapping. 
	Then the nonnegative cone $\bbR_{+}^n$ is invariant for $\seq{S(t)}_{t\in [0,\infty)}$
	if and only if $P(q)$ is quasipositive, that is, for every $j \in \{1,\dots,n\}$,
	\begin{equation*}
		P_j(q_1,\dots, q_n) \geq 0 \text{ whenevr } q_j =0 \text{ and } q_k \geq 0 \text{ for all } k\neq j.
	\end{equation*}
\end{proposition}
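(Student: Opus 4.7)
The plan is to prove necessity by a direct differentiation argument and sufficiency by a perturbation-and-passage-to-the-limit argument that exploits the strict inequality obtainable after a small additive perturbation of the vector field.

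For necessity, I would suppose $\bbR_+^n$ is invariant and pick any $q \in \bbR_+^n$ with $q_j = 0$. Let $q(t) := S(t)q$. Then by invariance $q_j(t) \geq 0$ for all $t \geq 0$, while $q_j(0) = 0$. Hence $\rd_t q_j(0) = \lim_{t \downarrow 0}\tfrac{q_j(t) - q_j(0)}{t} \geq 0$, and since $q$ solves the ODE this limit equals $P_j(q)$. Thus $P_j(q) \geq 0$, which is exactly quasipositivity.

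For sufficiency, the direct attempt—taking $t^* := \inf\{t>0 : q(t) \notin \bbR_+^n\}$, noting that some $q_j(t^*) = 0$ with the remaining components nonnegative, and invoking $P_j(q(t^*)) \geq 0$—yields only $\rd_t q_j(t^*) \geq 0$, which is not quite enough to preclude the solution from re-entering the complement. To obtain a strict sign, I would introduce the perturbed system $\rd_t q^{(\varepsilon)}(t) = P(q^{(\varepsilon)}(t)) + \varepsilon \mathbf{1}$, where $\mathbf{1} := (1,\dots,1)^{\rT}$, with initial condition $q^{(\varepsilon)}(0) = q_0 + \varepsilon \mathbf{1}$ for a given $q_0 \in \bbR_+^n$ and small $\varepsilon>0$. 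By the standard existence theory this has a unique local solution, which I claim stays in the open positive cone on its interval of existence: if $t^*_{\varepsilon} := \inf\{t>0 : q_j^{(\varepsilon)}(t) = 0 \text{ for some } j\}$ were finite, then at $t^*_{\varepsilon}$ we would have $q_j^{(\varepsilon)}(t^*_{\varepsilon}) = 0$, all other components nonnegative, and quasipositivity would give $\rd_t q_j^{(\varepsilon)}(t^*_{\varepsilon}) = P_j(q^{(\varepsilon)}(t^*_{\varepsilon})) + \varepsilon \geq \varepsilon > 0$, contradicting the fact that $q_j^{(\varepsilon)}(t) > 0$ for $t < t^*_{\varepsilon}$ and $q_j^{(\varepsilon)}(t^*_{\varepsilon}) = 0$, which forces the one-sided derivative to be nonpositive.

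The remaining step is to pass to the limit $\varepsilon \to 0$. By continuous dependence of ODE solutions on the vector field and the initial data (a consequence of the local Lipschitz assumption on $P$, via Grönwall's inequality), $q^{(\varepsilon)}(t) \to q(t) := S(t)q_0$ uniformly on compact subintervals of the common existence interval; since each $q^{(\varepsilon)}(t) \in (0,\infty)^n$, the limit $q(t)$ lies in $\overline{(0,\infty)^n} = \bbR_+^n$. Hence $S(t)q_0 \in \bbR_+^n$ for all $q_0 \in \bbR_+^n$ and $t \geq 0$ on which the solution exists, which is the desired invariance. The main obstacle is not any single calculation but rather identifying that the naive boundary argument fails and that the $\varepsilon \mathbf{1}$ perturbation is the standard device that converts quasipositivity into the strict sign needed to rule out boundary crossings, after which continuous dependence closes the argument cleanly.
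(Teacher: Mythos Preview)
Your proof is correct. The necessity argument by one-sided differentiation at $t=0$ is standard and valid, and your sufficiency argument via the $\varepsilon\mathbf{1}$-perturbation is exactly the classical device: the strict positivity of the perturbed derivative at a first hitting time yields the contradiction, and continuous dependence (Gr\"onwall) passes the nonnegativity to the limit. One minor point worth stating explicitly is that at $t^*_{\varepsilon}$ the derivative exists as a two-sided derivative (the solution is $C^1$), so the left-hand difference quotient being nonpositive genuinely contradicts $\rd_t q_j^{(\varepsilon)}(t^*_{\varepsilon})>0$; you have this implicitly but it is the crux.

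As for comparison with the paper: the paper does not prove this proposition at all. It is stated as a recalled standard result with a citation to \cite[Prop.~I.1.1]{Chepyzhov:Attractors:2002}, and is then used as a black box in the proof of Proposition~\ref{prp:CurrentNonnegativity}. So there is no in-paper argument to compare against; your proof supplies what the paper omits, and the approach you take is the standard one found in the cited reference and elsewhere.
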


\begin{proposition}[Positively invariant region for the solution $i(x,t)$] \label{prp:CurrentNonnegativity}
	Suppose $g\in L^2(0,T ; \cD_g)$ and let $u(t) = S_{\rw}(t) u_0$ be an $\Omega$-periodic weak solution  of \eqref{eq:Voltage}--\eqref{eq:InitialValues}. Suppose the $w$-component of the weak solution, $w(x,t)$, is nonnegative for almost every $x \in \Omega$ and all $t\in[0,T]$, $T>0$, 
	and define the set 
	\begin{equation}\label{eq:CurrentPositiveRegion} 
		\cD_i := \set{(\ell, \ell') \in \cL_i^2 \times \cL_i^2}{\ell \geq 0 \text{ and }\ell' + \Gamma \ell \geq 0 \text{ a.e. in } \Omega}.
	\end{equation}
	Then, for every $(i_0, i'_0) \in \cD_i$, we have $(i(t), \rd_t i(t)) \in \cD_i$ almost everywhere in $\Omega$ for all $t \in [0,T]$.
	An identical result holds for strong solutions $u(t) = S_{\rs}(t) u_0$ of \eqref{eq:Voltage}--\eqref{eq:InitialValues} with nonnegative $w$-component.
	%	The set $\sD := \sD_v \times \sD_i \times \sD_{i'} \times \sD_w \times \sD_{w'}$
	%	\begin{alignat}{2} \label{eq:InvariantSets}
	%	\sD_v &= \bbR^2, & &\\
	%	\sD_i &= \set{i \in \bbR^4}{i \geq 0}, &\quad
	%	\sD_{i'} &= \set{ i' \in \bbR^4}{ i'+ \Gamma i \geq 0, \; i \in \sD_i}, \nonumber\\
	%	\sD_w &= \set{w \in \bbR^2}{w \geq 0}, &\quad 
	%	\sD_{w'} &= \bbR^2. \nonumber	
	%	\end{alignat}
\end{proposition}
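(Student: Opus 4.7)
My plan is to reduce the second-order in time equation \eqref{eq:Current} to an equivalent first-order linear system in which the cone $\cD_i$ corresponds to the standard nonnegative cone, and then conclude nonnegativity via Duhamel's formula (or, equivalently, by a pointwise application of Proposition \ref{prop:ConeInvariance}).

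First, I would introduce the auxiliary variable $p(x,t) := \dt i(x,t) + \Gamma i(x,t)$. A direct computation shows that \eqref{eq:Current} is equivalent to the first-order linear system
\begin{align*}
    \dt i &= -\Gamma i + p, \\
    \dt p &= -\Gamma p + R(x,t),
\end{align*}
with $R(x,t) := e\Upsilon\Gamma \bigl[\rN J_7 f\bigl(v(x,t)\bigr) + J_6 w(x,t) + g(x,t)\bigr]$. In the new variables, the set $\cD_i$ in \eqref{eq:CurrentPositiveRegion} is precisely $\{(i,p)\in \cL_i^2\times\cL_i^2 : i\geq 0,\ p\geq 0 \text{ a.e.\ in } \Omega\}$. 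Under the hypotheses, $R(x,t)\geq 0$ almost everywhere: by Table \ref{tb:Parameters}, $\Upsilon$, $\Gamma$, and $\rN$ are diagonal with strictly positive entries and $J_6, J_7$ have nonnegative entries; the firing-rate function $f$ in \eqref{eq:FiringRateFunction} is strictly positive; and $w,g \geq 0$ by assumption.

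Next, I would exploit the fact that because $\Gamma$ is diagonal with strictly positive entries, the matrix exponential $e^{-\Gamma t}$ is diagonal with strictly positive entries. Applying the variation of constants formula to the $p$-equation at each spatial point yields
\[
    p(x,t) = e^{-\Gamma t} p(x,0) + \int_0^t e^{-\Gamma(t-s)} R(x,s)\, \rd s,
\]
with $p(x,0) = i_0'(x) + \Gamma i_0(x) \geq 0$ from $(i_0,i_0')\in \cD_i$, so $p(x,t)\geq 0$ for a.e.\ $x\in\Omega$ and every $t\in[0,T]$. Substituting into the $i$-equation gives
\[
    i(x,t) = e^{-\Gamma t} i_0(x) + \int_0^t e^{-\Gamma(t-s)} p(x,s)\, \rd s \geq 0,
\]
and since $p = \dt i + \Gamma i$, this is exactly the membership $(i(t), \dt i(t))\in \cD_i$. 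Equivalently, the quasipositivity hypothesis of Proposition \ref{prop:ConeInvariance} is transparent: whenever a component of $i$ vanishes and the remaining components of $(i,p)$ are nonnegative, the corresponding component of $\dt i$ equals that component of $p\geq 0$; and when a component of $p$ vanishes, the corresponding component of $\dt p$ equals the corresponding component of $R\geq 0$.

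The only delicate point is that this argument is pointwise in $x$, whereas the weak solution is defined in an $L^2$-framework. To legitimize the reduction, I would argue as in the proof of Corollary \ref{cor:Boundedness}: since \eqref{eq:Current} contains no spatial derivatives, testing against approximations of Dirac masses centered at Lebesgue points $x_0\in\Omega$ and passing to the limit recovers \eqref{eq:Current} as a classical second-order ODE in $t$ parameterized by $x_0$, valid at almost every $x_0$. The same Duhamel representation applies verbatim to strong solutions $u(t)=S_{\rs}(t)u_0$, since every strong solution is a weak solution. I expect the main obstacle to be precisely this pointwise reinterpretation of the PDE solution; once it is in place, the sign analysis reduces to the transparent observation that both $e^{-\Gamma t}$ and the forcing $R$ are entrywise nonnegative.
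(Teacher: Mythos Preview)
Your proposal is correct and follows essentially the same approach as the paper: the paper also introduces $b := \rd_t i + \Gamma i$, reduces \eqref{eq:Current} to the same first-order system, works pointwise at Lebesgue points $x_0 \in \Omega$ (as in the proof of Corollary \ref{cor:Boundedness}), and then invokes the quasipositivity criterion of Proposition \ref{prop:ConeInvariance}. Your explicit Duhamel representation is an equivalent and slightly more direct route to the same conclusion, and you correctly identify the pointwise reinterpretation at Lebesgue points as the only delicate step.
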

\begin{proof}
	Let $b := \rd_t i + \Gamma i$ and rewrite \eqref{eq:Current} as the first-order system of equations
	\begin{align} \label{eq:721161241}
		\rd_t i &=  - \Gamma i + b,\\
		\rd_t b &=  - \Gamma b + e \Upsilon \Gamma J_6 w + e \Upsilon \Gamma \rN J_7 f(v) + e \Upsilon\Gamma g. \nonumber
	\end{align}	 
	Let $x_0 \in \Omega$ be a Lebesgue point of the initial functions $v_0$, $i_0$, $i_0'$, $w_0$, and $g(0)$, and define $v_{x_0}(t)$, $i_{x_0}(t)$, $w_{x_0}(t)$, and $g_{x_0}(t)$ as given in the proof of Corollary \ref{cor:Boundedness}. 
	Accordingly, let $b_{x_0}(t) := b(x_0,t) = \rd_t i_{x_0}(t) + \Gamma i_{x_0}(t)$. 
	
	Now, \eqref{eq:721161241} implies that the function $q_{x_0}:=(i_{x_0},b_{x_0})$ satisfies the ordinary differential equation $\rd_t q_{x_0}(t) = P(q_{x_0}(t))$, $t\in[0,T]$, where the mapping $P : \bbR^8 \rightarrow \bbR^8$ given by
	\begin{equation*}
		P(q_{x_0}) = P(i_{x_0},b_{x_0}):= (- \Gamma i_{x_0} + b_{x_0}, - \Gamma b_{x_0} + e \Upsilon \Gamma J_6 w_{x_0} + e \Upsilon \Gamma \rN J_7 f(v_{x_0}) + e \Upsilon\Gamma g_{x_0} )
	\end{equation*}
	is Lipschitz continuous.
	Moreover, note that by assumption we have $w_{x_0} \geq 0$ and $g_{x_0} \geq 0$ which, along with the definitions of 
	$f$, $\Upsilon$, $\Gamma$, $\rN$, $J_6$, and $J_7$ given by \eqref{eq:FiringRateFunction} and \eqref{eq:Parameters}, implies 
	$e \Upsilon \Gamma J_6 w_{x_0}(t) \geq 0$, $e \Upsilon \Gamma \rN J_7 f(v_{x_0}(t)) \geq 0$,  and $e \Upsilon\Gamma g_{x_0}(t) \geq 0$ for all $t \in [0,T]$. 
	Therefore, it follows that $P$ is quasipositive, and hence, by Proposition \ref{prop:ConeInvariance} we have $q_{x_0}(t) \geq 0$ for all $t \in [0,T]$.
	This completes the proof since $x_0$ is an arbitrary Lebesgue point of the initial functions and almost every points in $\Omega$ is a Lebesgue point for these functions.\footnote{
		Note that there are fairly standard results in the literature that ensure the positivity of a
		$C^1(\overline{\Omega} \times [0,T]; \bbR^m)$ function as it evolves in time, provided its time-derivative satisfies certain conditions on the boundary of the positive cone;
		see for example \cite[Lemma 6]{Kufner:NoDEA:1996} and \cite{Chueh:IndianaMathJ:1977}.
		The proofs of these results are relatively geometrical and usually use continuity of the functions and the compactness of $\overline{\Omega}$. 
		However, these proofs are by no means applicable to functions in $C^1([0,T]; \cL^2({\Omega};\bbR^m))$.
		In fact, functions in $C^1([0,T]; \cL^2({\Omega};\bbR^m))$ are allowed to \emph{leak} through the boundary of the positive cone on sets of measure zero at every $t \in [0,T]$.
		Since any subinterval of $[0,T]$ is uncountable, it is not guaranteed that the uncountable union of such leakage sets remains having measure zero over a subinterval.
		In the proof of Proposition \ref{prp:CurrentNonnegativity}, we use the additional property that the functions are governed by a system of ODE's. 
		Therefore, for all $t \in (0,T]$, the Banach-space valued function $i(t)$ is defined at the same almost every 
		points in $\Omega$ as they are defined initially at $t=0$.
		In other words, the leakage set remains unchanged for all $t \in (0,T]$.
	}  
	\qquad
\end{proof}

\begin{remark}[Biophysically plausible set of initial values]
	Propositions \ref{prp:WaveNonnegativity} and \ref{prp:CurrentNonnegativity} ensure that if $g\in L^2(0,\infty ; \cD_g)$, where $\cD_g$ is given by \eqref{eq:InputPositiveRegion}, and the initial values lie in the set
	\begin{equation} \label{eq:PhysiologicalSet}
		\cD_{\rm Bio} := \cL_v^2 \times \cD_i \times \cD_w,
	\end{equation}
	where $\cD_w$ and $\cD_i$ are given by \eqref{eq:WavePositiveRegion} and \eqref{eq:CurrentPositiveRegion},
	respectively, then $i(x,t)$ and $w(x,t)$ always remain nonnegative at almost every point in $\Omega$ as they
	evolve in time. 
	However, it should be noted that this does not imply that the set $\cD_{\rm Bio} \subset \cU_{\rw}$ is positively invariant, since Proposition \ref{prp:WaveNonnegativity} does not imply positive invariance of the set $\cD_w$. 
	Therefore, $\cD_{\rm Bio}$ cannot serve as a phase space for the semidynamical system framework of the problem.
\end{remark}

In the analysis of next sections, nonnegativity of the solution $i(x,t)$ is essential. 
Moreover, it would be of no practical value if we analyze the dynamics of the model out of the biophysical regions of the phase space.  
Therefore, we define 
\begin{align} \label{eq:PlausibleInitialValues}
	\cD_{\rw} &:= \set{u_0 \in \cU_{\rw}}{i(t) \geq 0, w(t) \geq 0 \text{ a.e. in } \Omega \text{ for all } t\in [0,\infty), u(t)=S_{\rw}(t)u_0},\\
	\cD_{\rs} &:= \set{u_0 \in \cU_{\rs}}{i(t) \geq 0, w(t) \geq 0 \text{ a.e. in } \Omega \text{ for all } t\in [0,\infty), u(t)=S_{\rs}(t)u_0}, \nonumber
\end{align}
as the maximal closed subsets of $\cU_{\rw}$ and $\cU_{\rs}$ for the initial values of the weak and strong solutions, respectively, such that $i$ and $w$ initiated from the points in these sets evolve nonnegatively over time. 
Note that $\cD_{\rw}$ and $\cD_{\rs}$ are nonempty since
$\cD_{\rm Bio} \subset \cD_{\rw}$ and  
$\cD_{\rm Bio} \cap \cU_{\rs} \subset \cD_{\rs}$ when $g \in L^2(0,\infty, \cD_g)$.
Moreover, 
$\cD_{\rw}$ and $\cD_{\rs}$ are closed sets since $\seq{S_{\rw}(t)}_{t \in [0,\infty)}$ and $ \seq{S_{\rs}(t)}_{t \in [0,\infty)}$ are continuous semigroups, as given by Propositions \ref{prp:SemigroupC0Weak} and \ref{prp:SemigroupC0Strong}. 
Moreover, it follows immediately from the definitions given by \eqref{eq:PlausibleInitialValues} that $\cD_{\rw}$ and $\cD_{\rs}$ are positively invariant sets. 
Therefore, endowed with the metric induced by the norm in $\cU_{\rw}$ and $\cU_{\rs}$, the sets $\cD_{\rw}$ and $\cD_{\rs}$ form positively invariant complete metric spaces and can be considered as biophysically plausible phase spaces of the model, based on which, we construct the semidynamical systems 
\begin{equation*}
	\Big(\cD_{\rw}, \seq{S_{\rw}(t)}_{t \in [0,\infty)} \Big), \quad 
	\Big(\cD_{\rs}, \seq{S_{\rs}(t)}_{t \in [0,\infty)} \Big),
\end{equation*}
associated with the weak and strong solutions of \eqref{eq:Voltage}--\eqref{eq:InitialValues}, respectively, and investigate their global dynamics in the remainder of the paper.
%restricted to the sets $\cD_{\rw} \subset \cU_{\rw}$  and $\cD_{\rs} \subset \cU_{\rs}$.  

%and $\sD_v$, $\sD_i$, $\sD_{i'}$, $\sD_w$, and $\sD_{w'}$, given by \ref{eq:InvariantSets}, construct the positively invariant set $\sD$ established in Proposition \ref{prp:InvariantSets}. 
%Suppose $g \in L^2(0,T;\cL_i^2)$ and $(v_0, i_0, i'_0, w_0, w'_0) \in \sD$. Then, as proved in Theorem \ref{th:StrongExistenceUniqueness} and Corollary \ref{cor:StrongProperties}, the existence and uniqueness of a solution, continuous in time and continuously dependent on initial values, implies the existence of a semigroup of solution operators $\seq{S(t)}_{t \in [0,\infty)}$ such that,
%\begin{equation} \label{eq:Semigroup}
%S(t): (v_0, i_0, i'_0, w_0, w'_0) \in \cD_{\rw} \mapsto (v(t),i(t),\rd_t i(t), w(t),\rd_t w(t)) \in \cD_{\rw}, \quad t \in [0,\infty).
%\end{equation}

\section{Existence of Absorbing Sets} \label{sec:AbsorbingSets}
In this section, we prove the existence of bounded absorbing sets for the semigroups 
$\seq{S_{\rw}(t)}_{t \in [0,\infty)}$ and $\seq{S_{\rs}(t)}_{t \in [0,\infty)}$ acting on $\cD_{\rw}$ and $\cD_{\rs}$, respectively. 
%$\seq{S_{\rw}(t)\big|_{\cD_{\rw}}}_{t \in [0,\infty)}$ and $\seq{S_{\rs}(t)\big|_{\cD_{\rs}}}_{t \in [0,\infty)}$,
%using results from \cite[Ch. IV]{Temam:InfiniteDimensional:1997}. 
First, we recall the following definition of an absorbing set for an operator semigroup.
\begin{definition}[Absorbing set {\cite[Def. II.2.3]{Chepyzhov:Attractors:2002}}]
	$ $ A set $\sB_0$ in a complete metric space $\cD$ is called an \emph{absorbing set} for the semigroup $\seq{S(t):\cD \rightarrow \cD}_{t \in [0,\infty)}$ if for every bounded set $\sB \in \cD$ there exists $t_0(\sB) \in (0,\infty)$ such that $S(t) \sB \subset \sB_0$ for all $t \geq t_0(\sB)$.
\end{definition}

\begin{theorem}[Existence of absorbing sets in $\cD_{\rw}$] \label{th:AbsorbingSetD1}
	Assume that $g \in L^{\infty}(0,\infty; \cD_g)$ and there exists $\theta > 2\gamma_{\min}^{-3}$ such that 
	\begin{enumerate}
		\item $\tfrac{4}{3} \theta e^2 \Upsilon_{\rE \rE}^2 \gamma_{\max} (\nu \Lambda_{\rE \rE})^{-3} < 1$, \vspace{0.8mm}
		\item $\tfrac{4}{3} \theta e^2 \Upsilon_{\rE \rI}^2 \gamma_{\max} (\nu \Lambda_{\rE \rI})^{-3} < 1$, \vspace{0.8mm}	
	\end{enumerate}
	where $\gamma_{\min}:=\min\{\gamma_{\rE\rE}, \gamma_{\rE\rI}, \gamma_{\rI\rE}, \gamma_{\rI\rI} \}$ and $\gamma_{\max}:= \max\{\gamma_{\rE\rE}, \gamma_{\rE\rI}, \gamma_{\rI\rE}, \gamma_{\rI\rI} \}$ are the smallest and largest eigenvalues of $\Gamma$, respectively.
	Then 
	the semigroup $\seq{S_{\rw}(t):\cD_{\rw} \rightarrow \cD_{\rw}}_{t \in [0,\infty)}$ associated with the weak solutions of  \eqref{eq:Voltage}--\eqref{eq:InitialValues} has a bounded absorbing set $\sB_{\rw}$. 
	Specifically, consider the functions $Q_{\rw}^{-}:\cD_{\rw} \rightarrow [0, \infty)$ and $Q_{\rw}^{+}:\cD_{\rw} \rightarrow [0, \infty)$ defined by
	\begin{align} \label{eq:QLimitsD1}
		Q_{\rw}^{-}(u) &:=\norm{\cL_v^2}{\Phi^{\frac{1}{2}} v}^2 +\theta \norm{\cL_i^2}{\rd_t i + \tfrac{3}{2} \Gamma i}^2 
		+ \tfrac{1}{4}\theta  \norm{\cL_i^2}{\Gamma i}^2  + \norm{\cL_w^2}{\rd_t w + \tfrac{3}{2}\nu \Lambda w}^2\\
		&\hspace{7.7 cm} + \tfrac{1}{4} \nu^2  \min \{6 , \Lambda_{\min}^{2}\}\norm{\cH_w^1}{w}^2, \nonumber\\
		Q_{\rw}^{+}(u) &:=\norm{\cL_v^2}{\Phi^{\frac{1}{2}} v}^2 +\theta \norm{\cL_i^2}{\rd_t i + \tfrac{3}{2} \Gamma i}^2 
		+ \tfrac{1}{4} \theta \norm{\cL_i^2}{\Gamma i}^2  + \norm{\cL_w^2}{\rd_t w + \tfrac{3}{2}\nu \Lambda w}^2 \nonumber\\
		&\hspace{7.6 cm} + \tfrac{1}{4} \nu^2  \max \{6 ,  \Lambda_{\max}^{2}\} \norm{\cH_w^1}{ w}^2, \nonumber
	\end{align}	
	and a scalar $\varepsilon$ such that
	\begin{equation} \label{eq:EpsilonRange}
		\max \left\{\tfrac{4}{3} \theta e^2 \Upsilon_{\rE \rE}^2  \gamma_{\max} (\nu \Lambda_{\rE \rE})^{-3},  \tfrac{4}{3} \theta e^2 \Upsilon_{\rE \rI}^2  \gamma_{\max} (\nu \Lambda_{\rE \rI})^{-3} \right\} < 2 \gamma_{\max} \varepsilon < 1.
	\end{equation}
	Let $\tau_{\max} :=\max\{\tau_{\rE}, \tau_{\rI} \}$ denote the largest eigenvalue of $\Phi$, and 
	$\Lambda_{\min} :=\min\{\Lambda_{\rE\rE}, \Lambda_{\rE\rI} \}$ and $\Lambda_{\max}:=\max\{\Lambda_{\rE\rE}, \Lambda_{\rE\rI} \}$ denote the smallest and largest eigenvalues of $\Lambda$, respectively.
	Let $\rho_{\rw}^2 :=\frac{\beta_{\rw}}{\alpha_{\rw}}$, where
	\begin{align} 
		\alpha_{\rw} &:= \min \Big\{ 
		\tfrac{2}{3} \tau_{\max}^{-1},
		\left(\tfrac{1}{2}\gamma_{\max}^{-1} - \varepsilon \right) \gamma_{\min}^2,
		3 \theta^{-1} \left(\theta \gamma_{\min} - 2\gamma_{\min}^{-2} \right),
		\tfrac{1}{2} \nu \Lambda_{\min}, \label{eq:DecayRateD1}\\
		&\hspace{5.82cm} 3 \nu \Lambda_{\max}^{-2} \min \{ \Lambda_{\rE\rE}^3 - \tfrac{2}{3} \tfrac{\theta e^2}{\nu^3 \varepsilon} \Upsilon_{\rE\rE}^2 , \Lambda_{\rE\rI}^3 - \tfrac{2}{3} \tfrac{\theta e^2}{\nu^3 \varepsilon} \Upsilon_{\rE\rI}^2 \}
		\Big\},  \nonumber\\
		\beta_{\rw} &:= \frac{4\theta e^2}{\gamma_{\max}^{-1} - 2\varepsilon} \left[ |\Omega| (\rF_{\rE}^2 
		+ \rF_{\rI}^2 ) \norm{2}{\Upsilon \rN J_7}^2 + \norm{2}{\Upsilon}^2 \norm{L^{\infty}(0,\infty; \cL_i^2)}{g}^2 \right]
		+2 \nu^3  |\Omega| \rF_{\rE}^2 \trace (\Lambda^3 \rM^2). \label{eq:UltimateBoundBetaD1}
	\end{align} 
	Then, for all $\rho > \rho_{\rw}$, the bounded sets $\sB_{\rw}:=\set{u \in \cD_{\rw}}{Q_{\rw}^{-}(u) \leq \rho^2}$ are absorbing in $\cU_{\rw}$. 
	Moreover, for every bounded set $\sB \subset \cD_{\rw}$ there exists $R > 0$ such that $Q_{\rw}^{+}(u_0) \leq R^2$ for all $u_0 \in \sB$, and $S(t) \sB \subset \sB_{\rw}$ for all $t \geq t_{\rw}(\sB)$, where
	\begin{equation} \label{eq:UltimateTimeD1}
		t_{\rw}(\sB) = t_{\rw}(R) := \max \left\{ 0, \frac{1}{\alpha_{\rw}} \log \frac{R^2}{\rho^2 - \rho_{\rw}^2} \right\}.
	\end{equation}
\end{theorem}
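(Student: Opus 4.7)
\medskip

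\noindent\textbf{Proof proposal.}
The plan is to interpret $Q_{\rw}^{-}$ as a Lyapunov-type functional along trajectories in $\cD_{\rw}$ and to derive a differential inequality of the form
\begin{equation*}
\tfrac{\rd}{\rd t} Q_{\rw}^{-}(u(t)) + \alpha_{\rw}\, Q_{\rw}^{+}(u(t)) \le \beta_{\rw},
\end{equation*}
from which the claimed absorbing property follows by Gr\"onwall. Since $Q_{\rw}^{-} \le Q_{\rw}^{+}$ pointwise, this yields $\rd_t Q_{\rw}^{-} + \alpha_{\rw} Q_{\rw}^{-} \le \beta_{\rw}$, hence $Q_{\rw}^{-}(u(t)) \le e^{-\alpha_{\rw} t} Q_{\rw}^{-}(u_0) + \beta_{\rw}/\alpha_{\rw}$. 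Bounding $Q_{\rw}^{-}(u_0) \le R^2$ for $u_0 \in \sB$ and solving $e^{-\alpha_{\rw} t} R^2 + \rho_{\rw}^{2} \le \rho^{2}$ produces \eqref{eq:UltimateTimeD1}. To compute $\rd_t Q_{\rw}^{-}$, I would test \eqref{eq:Voltage} against $v$, test \eqref{eq:Current} against the shifted multiplier $\rd_t i + \tfrac{3}{2}\Gamma i$, and test \eqref{eq:Wave} against $\rd_t w + \tfrac{3}{2}\nu\Lambda w$. The shifted multipliers are the standard device for converting the critically damped second-order operators into coercive quadratic forms whose sum recovers precisely the components of $Q_{\rw}^{-}$ plus a strictly positive dissipation.

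The critical structural input is the definition of $\cD_{\rw}$: along weak solutions starting in $\cD_{\rw}$ one has $i(t) \ge 0$ and $w(t) \ge 0$ a.e.\ in $\Omega$ for all $t\ge 0$. This turns the bilinear voltage term into a friendly one, because for $i \ge 0$,
\begin{equation*}
\inner{\cL_v^2}{J_2 v i^{\rT}\Psi J_4 + J_3 v i^{\rT}\Psi J_5}{v} = \int_{\Omega}\bigl(v_\rE^{2}\, i^{\rT}\Psi J_4 + v_\rI^{2}\, i^{\rT}\Psi J_5 \bigr)\rd x \;\ge\; 0,
\end{equation*}
so the $v$-test automatically produces the coercive term $\norm{\cL_v^2}{\Phi^{1/2} v}^{2}$ controlling $v$, with the coupling to $i$ entering only through $-\inner{\cL_v^2}{J_1 i}{v}$, which is absorbed via Young's inequality against $\norm{\cL_i^2}{\Gamma i}^{2}$. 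For the $i$-test, the shifted multiplier yields the dissipation $(\theta\gamma_{\min} - 2\gamma_{\min}^{-2})\norm{}{\rd_t i + \tfrac{3}{2}\Gamma i}^{2}$ and $\tfrac{3}{4}\theta\norm{}{\Gamma^{3/2} i}^{2}$ up to cross terms; the condition $\theta > 2\gamma_{\min}^{-3}$ is precisely what keeps the first coefficient positive. The forcing is split into $w$, $f(v)$, and $g$, where the uniform bounds $f_\rX \le \rF_\rX$ give the contribution $|\Omega|(\rF_\rE^{2}+\rF_\rI^{2})\norm{2}{\Upsilon\rN J_7}^{2}$ to $\beta_{\rw}$ after Young, and $\norm{L^\infty(0,\infty;\cL_i^{2})}{g}^{2}$ contributes via a similar Young splitting with parameter $\varepsilon$. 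For the $w$-test, integration by parts and the critically damped telegraph structure produce the coercive quadratic form $\tfrac{1}{4}\nu^{2}\min\{6,\Lambda_{\min}^{2}\}\norm{\cH_w^{1}}{w}^{2}$ together with $\tfrac{1}{2}\nu\Lambda_{\min}\norm{\cL_w^{2}}{\rd_t w + \tfrac{3}{2}\nu\Lambda w}^{2}$, and the driving term $\nu^{2}\Lambda^{2}\rM J_8 f(v)$ contributes the last summand of $\beta_{\rw}$.

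The step that will require the most care is the bookkeeping that matches the Young inequality parameters to the thresholds in hypotheses (i)--(ii) and in \eqref{eq:EpsilonRange}. Specifically, the cross coupling between the $i$- and $w$-equations through the term $e\Upsilon\Gamma J_6 w$ must be controlled by splitting
\begin{equation*}
e\inner{\cL_i^{2}}{\Upsilon\Gamma J_6 w}{\rd_t i + \tfrac{3}{2}\Gamma i} \;\le\; \varepsilon\gamma_{\max}\norm{}{\rd_t i + \tfrac{3}{2}\Gamma i}^{2} + \tfrac{e^{2}}{4\varepsilon\gamma_{\max}}\norm{\cL_i^{2}}{\Upsilon\Gamma J_6 w}^{2},
\end{equation*}
and the $w$-piece on the right must then be absorbed into the $\nu\Lambda_{\min}\norm{\cH_w^{1}}{w}^{2}$ dissipation produced by the $w$-test, row by row for the two components $\Upsilon_{\rE\rE},\Upsilon_{\rE\rI}$. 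The two conditions in the hypothesis, together with the upper bound $2\gamma_{\max}\varepsilon < 1$, are exactly what allow this absorption to leave a strictly positive residual dissipation in each component of $w$, giving the last member of the minimum in \eqref{eq:DecayRateD1}. After collecting all remaining Young residuals and identifying the coercive lower bound of the combined dissipation with $\alpha_{\rw}\, Q_{\rw}^{+}$, I obtain \eqref{eq:UltimateBoundBetaD1}. The Galerkin-based regularizations of Section~\ref{sec:ExistenceUniqueness} justify the formal use of these multipliers, as noted at the end of that section.
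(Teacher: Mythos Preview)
Your proposal is correct and follows essentially the same approach as the paper: test \eqref{eq:Voltage} against $v$, use $i\ge 0$ in $\cD_{\rw}$ to discard the sign-indefinite bilinear term, test \eqref{eq:Current} and \eqref{eq:Wave} against the shifted multipliers $b=\rd_t i+\tfrac{3}{2}\Gamma i$ and $q=\rd_t w+\tfrac{3}{2}\nu\Lambda w$, weight the $i$-estimate by $\theta$, absorb all cross terms via Young with the parameter choices dictated by \eqref{eq:EpsilonRange} and hypotheses (i)--(ii), and conclude by Gr\"onwall.

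One small correction of presentation: the paper does not differentiate $Q_{\rw}^{-}$ directly. Testing against $q$ naturally produces the intermediate energy
\[
Q_{\rw}(u)=\norm{\cL_v^2}{\Phi^{1/2}v}^2+\theta\norm{\cL_i^2}{b}^2+\tfrac{1}{4}\theta\norm{\cL_i^2}{\Gamma i}^2+\norm{\cL_w^2}{q}^2+\tfrac{3}{2}\nu^2\norm{\cL_{\partial w}^2}{\dx w}^2+\tfrac{1}{4}\nu^2\norm{\cL_w^2}{\Lambda w}^2,
\]
with $Q_{\rw}^{-}\le Q_{\rw}\le Q_{\rw}^{+}$, and it is for this $Q_{\rw}$ that one obtains $\rd_t Q_{\rw}\le -\alpha_{\rw}Q_{\rw}+\beta_{\rw}$; the sandwich then gives \eqref{eq:UltimateTimeD1} with $Q_{\rw}^{+}(u_0)\le R^2$ (not $Q_{\rw}^{-}$) on the initial data. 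Also, the coefficient $\theta\gamma_{\min}-2\gamma_{\min}^{-2}$ multiplies $\norm{\cL_i^2}{\Gamma i}^2$, not $\norm{\cL_i^2}{b}^2$; the latter carries $(\tfrac{1}{2}\gamma_{\max}^{-1}-\varepsilon)$. These are bookkeeping slips only---the strategy is identical.
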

\begin{proof}
	First, taking the inner product of ($\ref{eq:Voltage}$) with $v$ yields
	\begin{equation*} 
		\tfrac{1}{2} \rd_t \norm{\cL_v^2}{\Phi^{\frac{1}{2}} v}^2 + \norm{\cL_v^2}{v}^2 - \inner{\cL_v^2}{J_1 i}{v}
		+ \int_{\Omega} \left( v_1^2 i^{\rT} \Psi J_4 + v_2^2 i^{\rT} \Psi J_5 \right) \rd x = 0. 
	\end{equation*}	 
	The integral term in this equation is nonnegative in $\cD_{\rw}$ for all $t \in [0,\infty)$; see \eqref{eq:Parameters} and \eqref{eq:PlausibleInitialValues}. 
	Therefore, dropping the integral term and using Young's inequality yields, for every $\varepsilon_1 >0$,
	\begin{align} \label{eq:VoltageLyapunov} 
		\rd_t \norm{\cL_v^2}{\Phi^{\frac{1}{2}} v}^2 &\leq -2(1-\varepsilon_1) \norm{\cL_v^2}{v}^2 + \frac{1}{\varepsilon_1} \norm{\cL_i^2}{i}^2 \\
		&\leq -2(1-\varepsilon_1) \tau_{\max}^{-1} \norm{\cL_v^2}{\Phi^{\frac{1}{2}} v}^2 + \frac{1}{\varepsilon_1 \gamma_{\min}^2} \norm{\cL_i^2}{\Gamma i}^2. \nonumber
	\end{align}
	
	Next, let $b := \rd_t i + \frac{3}{2} \Gamma i$ and rewrite \eqref{eq:Current} as
	\begin{equation*}
		\rd_t b + \tfrac{1}{2} \Gamma b + \tfrac{1}{4} \Gamma^2 i 
		- e \Upsilon \Gamma J_6 w - e \Upsilon \Gamma \rN J_7 f(v) = e \Upsilon\Gamma g.
	\end{equation*}
	Taking the inner product of the above equality with $b$ yields
	\begin{multline*}
		\tfrac{1}{2}\rd_t \norm{\cL_i^2}{b}^2 + \tfrac{1}{2} \inner{\cL_i^2}{\Gamma b}{b} + \tfrac{1}{8}\rd_t \norm{\cL_i^2}{\Gamma i}^2 + \tfrac{3}{8} \norm{\cL_i^2}{\Gamma^{\frac{3}{2}} i}^2 \\- e \inner{\cL_i^2}{\Upsilon \Gamma J_6 w}{b} - e \inner{\cL_i^2}{\Upsilon \Gamma \rN J_7 f(v)}{b}
		= e \inner{\cL_i^2}{\Upsilon\Gamma g}{b}.
	\end{multline*}	
	Note that	
	\begin{align*}
		\inner{\cL_i^2}{\Gamma b}{b} &\geq \gamma_{\max}^{-1} \norm{\cL_i^2}{\Gamma b}^2, \\
		\norm{\cL_i^2}{\Gamma^{\frac{3}{2}} i}^2 &\geq \gamma_{\min} \norm{\cL_i^2}{\Gamma i}^2,
	\end{align*}	
	and, using similar arguments as in the proof of Proposition \ref{prp:EnergyEstimates}, it follows that for every $\varepsilon_2, \varepsilon_3, \varepsilon_4>0$,
	\begin{align*}
		e \inner{\cL_i^2}{\Upsilon \Gamma J_6 w}{b} & \leq \varepsilon_2 \norm{\cL_i^2}{\Gamma b}^2 
		+ \frac{e^2}{4 \varepsilon_2} \norm{\cL_i^2}{\Upsilon J_6 w}^2 \\
		e \inner{\cL_i^2}{\Upsilon \Gamma \rN J_7 f(v)}{b} & \leq  \varepsilon_3 \norm{\cL_i^2}{\Gamma b}^2
		+ \frac{e^2 |\Omega|}{4 \varepsilon_3}  (\rF_{\rE}^2 + \rF_{\rI}^2 ) \norm{2}{\Upsilon \rN J_7}^2,\\
		e \inner{\cL_i^2}{\Upsilon\Gamma g}{b} 
		& \leq \varepsilon_4 \norm{\cL_i^2}{\Gamma b}^2 + \frac{e^2}{4 \varepsilon_4} \norm{2}{\Upsilon}^2 \norm{\cL_i^2}{g}^2.
	\end{align*}
	Therefore,
	\begin{multline} \label{eq:CurrentLyapunov}
		\rd_t \left[ \norm{\cL_i^2}{b}^2 + \tfrac{1}{4} \norm{\cL_i^2}{\Gamma i}^2 \right] 
		\leq - \left(\gamma_{\max}^{-1} - 2(\varepsilon_2 + \varepsilon_3 + \varepsilon_4) \right) \norm{\cL_i^2}{\Gamma b}^2 - \tfrac{3}{4}\gamma_{\min} \norm{\cL_i^2}{\Gamma i}^2 \\
		+ \frac{e^2}{2 \varepsilon_2} \norm{\cL_i^2}{\Upsilon J_6 w}^2 + \frac{e^2}{2 \varepsilon_3} |\Omega| (\rF_{\rE}^2 + \rF_{\rI}^2 ) \norm{2}{\Upsilon \rN J_7}^2 + \frac{e^2}{2 \varepsilon_4} \norm{2}{\Upsilon}^2 \norm{\cL_i^2}{g}^2.
	\end{multline} 
	
	Next, let $q := \rd_t w + \frac{3}{2}\nu \Lambda w$ and rewrite \eqref{eq:Wave} as
	\begin{equation} \label{eq:WaveChangeOfVariable}
		\rd_t q + \tfrac{1}{2}\nu \Lambda q - \tfrac{3}{2} \nu^2 \Delta w + \tfrac{1}{4} \nu^2 \Lambda^2 w 
		- \nu^2 \Lambda^2 \rM J_8 f(v) = 0.
	\end{equation}
	Taking the inner product of this equality with $q$ yields
	\begin{multline*}
		\tfrac{1}{2}\rd_t \norm{\cL_w^2}{q}^2 + \tfrac{1}{2}\nu \norm{\cL_w^2}{\Lambda^{\frac{1}{2}} q}^2 + \tfrac{3}{4} \nu^2 \rd_t \norm{\cL_{\partial w}^2}{\dx w}^2 + \tfrac{9}{4} \nu^3 \norm{\cL_{\partial w}^2}{\Lambda^{\frac{1}{2}} \dx w}^2 +\tfrac{1}{8} \nu^2 \rd_t \norm{\cL_w^2}{\Lambda w}^2 \\
		+\tfrac{3}{8} \nu^3 \norm{\cL_w^2}{\Lambda^{\frac{3}{2}}w}^2 - \nu^2 \inner{\cL_w^2}{ \Lambda^2 \rM J_8 f(v)}{q} = 0.
	\end{multline*}	
	Using similar arguments as in the proof of Proposition \ref{prp:EnergyEstimates} we can write, for every $\varepsilon_5>0$,
	\begin{equation*}
		\inner{\cL_w^2}{\Lambda^2 \rM J_8 f(v^{(m)} )}{q} \leq
		\varepsilon_5 \norm{\cL_w^2}{\Lambda^{\frac{1}{2}} q}^2 +\frac{1}{4\varepsilon_5} |\Omega| \rF_{\rE}^2 \trace (\Lambda^3 \rM^2),
	\end{equation*}
	and hence, it follows that
	\begin{align} \label{eq:WaveLyapunovD1}
		\rd_t \Big[\norm{\cL_w^2}{q}^2 &+ \tfrac{3}{2} \nu^2 \norm{\cL_{\partial w}^2}{\dx w}^2 +\tfrac{1}{4} \nu^2 \norm{\cL_w^2}{\Lambda w}^2 \Big] \\
		&\leq  - \nu( 1 -2\nu \varepsilon_5) \norm{\cL_w^2}{\Lambda^{\frac{1}{2}}q}^2
		-3\nu \Big(\tfrac{3}{2} \nu^2 \norm{\cL_{\partial w}^2}{\Lambda^{\frac{1}{2}}\dx w}^2 +\tfrac{1}{4} \nu^2 \norm{\cL_w^2}{\Lambda^{\frac{3}{2}} w}^2  \Big) \nonumber\\
		&\quad +\frac{\nu^2}{2\varepsilon_5} |\Omega| \rF_{\rE}^2 \trace (\Lambda^3 \rM^2). \nonumber
	\end{align}
	
	Now, set $\varepsilon_1 = \tfrac{2}{3}$ in \eqref{eq:VoltageLyapunov}, set $\varepsilon_3 = \varepsilon_4 = \tfrac{1}{8}(\gamma_{\max}^{-1} - 2\varepsilon)$ in \eqref{eq:CurrentLyapunov} with $\varepsilon := \varepsilon_2$, and set $\varepsilon_5 = \tfrac{1}{4\nu}$ in \eqref{eq:WaveLyapunovD1}. 
	Then, multiplying \eqref{eq:CurrentLyapunov} by $\theta>0$ and adding the result to \eqref{eq:VoltageLyapunov} and \eqref{eq:WaveLyapunovD1} yields
	%	\begin{align*}
	%	\rd_t Q
	%	&\leq -2(1-\varepsilon_1) \norm{\cL_v^2}{v}^2 
	%	- \left(\gamma_{\max}^{-1} - 2(\varepsilon_2 + \varepsilon_3 + \varepsilon_4) \right) \norm{\cL_i^2}{\Gamma b}\\ 
	%	& \quad - \left( \tfrac{3}{4}\gamma_{\min} + \frac{1}{2\varepsilon_1 \gamma_{\min}^2} \right) \norm{\cL_i^2}{\Gamma i}^2 
	%	- \nu( 1 -2\nu \varepsilon_5) \norm{\cL_w^2}{\Lambda^{\frac{1}{2}}q}^2 \\
	%	& \quad -3\nu \left( \tfrac{3}{2} \nu^2 \norm{L_{\rm per}^2(\Omega;\bbR^{2 \times 2})}{\Lambda^{\frac{1}{2}}\partial w}^2 +\inner{\cL_w^2}{\left[ \tfrac{1}{4} \nu^2 \Lambda^3 - \frac{e^2}{6 \nu \varepsilon_2} J_6^{\rT}\Upsilon^2 J_6 \right] w} { w} \right) + \beta_{\rw}^2,  
	%	\end{align*} 
	%	where,
	%	\begin{align*}
	%	Q(t) &= \norm{\cL_v^2}{\Phi^{\frac{1}{2}} v}^2 + \norm{\cL_i^2}{b}^2 
	%		+ \tfrac{1}{4} \norm{\cL_i^2}{\Gamma i}^2
	%		+ \norm{\cL_w^2}{q}^2 + \tfrac{3}{2} \nu^2 \norm{L_{\rm per}^2(\Omega;\bbR^{2 \times 2})}{\partial w}^2 +\tfrac{1}{4} \nu^2 \norm{\cL_w^2}{\Lambda w}^2 \label{eq:LyapunovD1}\\
	%	\beta_{\rw}^2 &= \frac{e^2}{2 \varepsilon_3} |\Omega| (\rF_{\rE}^2 
	%		+ \rF_{\rI}^2 ) \norm{2}{\Upsilon \rN J_7}^2 
	%		+ \frac{e^2}{2 \varepsilon_4} \norm{2}{\Upsilon}^2 \norm{\cL_i^2}{g}^2
	%		+\frac{\nu^2}{2\varepsilon_5} |\Omega| \rF_{\rE}^2 \trace (\Lambda^3 \rM^2).
	%	\end{align*}
	\begin{align*}
		\rd_t Q_{\rw}
		&\leq -\tfrac{2}{3}  \tau_{\max}^{-1} \norm{\cL_v^2}{\Phi^{\frac{1}{2}} v}^2 
		- \theta \left(\tfrac{1}{2}\gamma_{\max}^{-1} - \varepsilon \right) \norm{\cL_i^2}{\Gamma b}^2 
		- \tfrac{3}{4} \left(\theta \gamma_{\min} - 2 \gamma_{\min}^{-2} \right) \norm{\cL_i^2}{\Gamma i}^2
		- \tfrac{1}{2} \nu \norm{\cL_w^2}{\Lambda^{\frac{1}{2}}q}^2 \\
		& \quad -3\nu \left( \tfrac{3}{2} \nu^2 \norm{\cL_{\partial w}^2}{\Lambda^{\frac{1}{2}}\dx w}^2 
		+\tfrac{1}{4} \nu^2 \inner{\cL_w^2}{\left[ \Lambda^3 - \tfrac{2}{3} \frac{\theta e^2}{\nu^3 \varepsilon} J_6^{\rT}\Upsilon^2 J_6 \right] w} { w} \right) + \beta_{\rw},  
	\end{align*} 
	where $\beta_{\rw}$ is given by \eqref{eq:UltimateBoundBetaD1} and
	\begin{equation} \label{eq:LyapunovD1}
		Q_{\rw}(u) = \norm{\cL_v^2}{\Phi^{\frac{1}{2}} v}^2 +\theta  \norm{\cL_i^2}{b}^2 
		+ \tfrac{1}{4} \theta \norm{\cL_i^2}{\Gamma i}^2
		+ \norm{\cL_w^2}{q}^2 + \tfrac{3}{2} \nu^2 \norm{\cL_{\partial w}^2}{\dx w}^2 +\tfrac{1}{4} \nu^2 \norm{\cL_w^2}{\Lambda w}^2. 
		%	\hat{\beta_{\rw}} &= \frac{4e^2 \theta}{\gamma_{\max}^{-1} - 2\varepsilon} \left[ |\Omega| (\rF_{\rE}^2 
		%	+ \rF_{\rI}^2 ) \norm{2}{\Upsilon \rN J_7}^2 	+ \norm{2}{\Upsilon}^2 \norm{\cL_i^2}{g}^2 \right]
		%	+2 \nu^3  |\Omega| \rF_{\rE}^2 \trace (\Lambda^3 \rM^2), \nonumber
	\end{equation}
	Note that for $\theta > 2\gamma_{\min}^{-3}$ we have $\theta \gamma_{\min} - 2\gamma_{\min}^{-2} >0$ and for the range of values of $\varepsilon$ given by \eqref{eq:EpsilonRange} we have $\tfrac{1}{2}\gamma_{\max}^{-1} - \varepsilon >0$.
	Moreover, Assumptions (i) and (ii) along with \eqref{eq:EpsilonRange} ensure that $\Lambda^3 - \tfrac{2}{3} \frac{\theta e^2}{\nu^3 \varepsilon} J_6^{\rT}\Upsilon^2 J_6 >0$.
	Therefore, with the decay rate $\alpha_{\rw}$ given by \eqref{eq:DecayRateD1}, 
	\begin{equation} \label{eq:LyapunovInequalityD1}
		\rd_t Q_{\rw}(u) \leq -\alpha_{\rw} Q_{\rw}(u) + \beta_{\rw},
	\end{equation}
	and hence, using Gr\"{o}nwall's inequality \cite[Sec. III.1.1.3.]{Temam:InfiniteDimensional:1997},
	\begin{equation} \label{eq:LyapunovBoundD1}
		Q_{\rw}^{-}(u(t)) \leq Q_{\rw}^{+}(u(0)) e^{-\alpha_{\rw} t} + \rho_0^2 \left( 1 - e^{-\alpha_{\rw} t} \right),
	\end{equation}
	where $Q_{\rw}^{-}$ and $Q_{\rw}^{+}$ are given in \eqref{eq:QLimitsD1} and 
	$\limsup_{t \rightarrow \infty} Q_{\rw}^{-}(u(t)) \leq \rho_0^2 := \frac{\beta_{\rw}}{\alpha_{\rw}}$.
	Now, since the mapping 
	\begin{equation} \label{eq:IsomorphismD1}
		(v,i,i',w,w') \mapsto ( \Phi^{\frac{1}{2}}v, \tfrac{1}{2} \theta^{\frac{1}{2}} \Gamma i, \theta^{\frac{1}{2}}[i'+\tfrac{3}{2}\Gamma i], \tfrac{1}{2} \nu [ \max \{6 , \Lambda_{\max}^{2}\} ]^{\frac{1}{2}} w, w'+\tfrac{3}{2}\nu \Lambda w )
	\end{equation}
	is a linear isomorphism over $\cU_{\rw}$, for every bounded set $\sB \subset \cD_{\rw}$ there exists $R > 0$ such that $Q_{\rw}^{+}(u_0) \leq R^2$ for all $u_0 \in \sB$. 
	Hence, it is immediate from \eqref{eq:LyapunovBoundD1} that $S_{\rw}(t) \sB \subset \sB_{\rw}$ for all $t \geq t_{\rw}(\sB)$, where $t_{\rw}(\sB)$ is given by \eqref{eq:UltimateTimeD1}. 
	\qquad
\end{proof}

\begin{theorem}[Existence of absorbing sets in $\cD_{\rs}$] \label{th:AbsorbingSetD2}
	Suppose the assumptions of Theorem \ref{th:AbsorbingSetD1} hold, namely, assume $g \in L^{\infty}(0,\infty; \cD_g)$ and there exists $\theta > 2\gamma_{\min}^{-3}$ such that the biophysical parameters of the model satisfy
	\begin{enumerate}
		\item $\tfrac{4}{3} \theta e^2 \Upsilon_{\rE \rE}^2 \gamma_{\max} (\nu \Lambda_{\rE \rE})^{-3} < 1$, \vspace{0.8mm}
		\item $\tfrac{4}{3} \theta e^2 \Upsilon_{\rE \rI}^2 \gamma_{\max} (\nu \Lambda_{\rE \rI})^{-3} < 1$, \vspace{0.8mm}	
	\end{enumerate}
	where $\gamma_{\min}$ and $\gamma_{\max}$ are the smallest and largest eigenvalues of $\Gamma$, respectively.
	Then 
	the semigroup $\seq{S_{\rs}(t):\cD_{\rs} \rightarrow \cD_{\rs}}_{t \in [0,\infty)}$ associated with the strong solutions of  \eqref{eq:Voltage}--\eqref{eq:InitialValues} has a bounded absorbing set $\sB_{\rs}$. 
	Specifically, consider the function $Q_{\rs}^{-}:\cD_{\rs} \rightarrow [0, \infty)$ defined by
	\begin{multline} \label{eq:QMinusLimitsD2}
		Q_{\rs}^{-}(u) :=\norm{\cL_v^2}{\Phi^{\frac{1}{2}} v}^2 +\theta  \norm{\cL_i^2}{\rd_t i + \tfrac{3}{2} \Gamma i}^2 
		+ \tfrac{1}{4} \theta \norm{\cL_i^2}{\Gamma i}^2 + \norm{\cH_w^1}{\rd_t w + \tfrac{3}{2}\nu \Lambda w}^2  \\
		+ \tfrac{1}{8} \nu^2  \min \{6, \Lambda_{\min}^{2} \}\norm{\cL_w^2}{(-\Delta + I) w}^2, 
	\end{multline}	
	and denote by $\Lambda_{\min}$ and $\Lambda_{\max}$ the smallest and largest eigenvalues of $\Lambda$, respectively, and by $\tau_{\max}$ the largest eigenvalue of $\Phi$.
	Let $\rho_{\rs}^2 :=\frac{2\beta_{\rs}}{\alpha_{\rs}}$ with
	\begin{align} 
		\alpha_{\rs} &:= \min \Big\{ 
		\tfrac{2}{3} \tau_{\max}^{-1},
		\left(\tfrac{1}{2}\gamma_{\max}^{-1} - \varepsilon \right) \gamma_{\min}^2,
		3 \theta^{-1} \left(\theta \gamma_{\min} - 2\gamma_{\min}^{-2} \right),
		\nu \Lambda_{\min}, \label{eq:DecayRateD2}\\
		&\hspace{4.2cm} 3 \nu \Lambda_{\max}^{-2} \min \{ \Lambda_{\rE\rE}^3 - \tfrac{2}{3} \tfrac{\theta e^2}{\nu^3 \varepsilon} \Upsilon_{\rE\rE}^2 , \Lambda_{\rE\rI}^3 - \tfrac{2}{3} \tfrac{\theta e^2}{\nu^3 \varepsilon} \Upsilon_{\rE\rI}^2 \}
		\Big\},  \nonumber\\
		\beta_{\rs} &:= \frac{4 \theta e^2 }{\gamma_{\max}^{-1} - 2\varepsilon} \left[ |\Omega| (\rF_{\rE}^2 
		+ \rF_{\rI}^2 ) \norm{2}{\Upsilon \rN J_7}^2 	+ \norm{2}{\Upsilon}^2 \norm{L^{\infty}(0,\infty;\cL_i^2)}{g}^2 \right] \label{eq:UltimateBoundBetaD2}\\	
		&\hspace{2.02cm} +2\nu^2 \left[ \frac{1}{32\varepsilon_1}\frac{ \rF_{\rE}^2}{ \sigma_{\rE}^2} \trace (\Lambda^4 \rM^2)
		\eta \rho_{\rw}^2 (1+\rho_{\rw}^2)
		+\tfrac{1}{4} |\Omega| \rF_{\rE}^2 \trace (\Lambda^4 \rM^2) \left(\frac{1}{\varepsilon_1} + \frac{\alpha_{\rs}}{\varepsilon_2} \right)  \right], \nonumber
	\end{align}
	where $\eta$ is a positive constant, $\rho_{\rw}^2 := \frac{\beta_{\rw}}{\alpha_{\rw}}$ is the same constant given in Theorem \ref{th:AbsorbingSetD1},  the scalar $\varepsilon$ takes values within the same range given by \eqref{eq:EpsilonRange}, and 
	\begin{equation} \label{eq:62516903}
		\varepsilon_1 := \tfrac{1}{32} \alpha_{\rs} \min \{6, \Lambda_{\min}^{2} \} \big(1 + \norm{2}{\tfrac{3}{2}\nu\Lambda - \alpha I}^2 \big)^{-1}, \quad 
		\varepsilon_2 := \tfrac{1}{16} \min \{6, \Lambda_{\min}^{2}\}.
	\end{equation}
	Then, for all $\rho > \rho_{\rs}$, the bounded sets $\sB_{\rs}:=\set{u \in \cD_{\rs}}{Q_{\rs}^{-}(u) \leq \rho^2}$ are absorbing in $\cD_{\rs}$. 
\end{theorem}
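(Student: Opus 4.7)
The plan is to adapt the Lyapunov argument used in Theorem \ref{th:AbsorbingSetD1}, keeping the voltage and current parts untouched and promoting the wave part from $\cL_w^2 \times \cH_w^1$ to $\cH_w^1 \times \cH_w^2$. Since $\cD_{\rs} \subset \cD_{\rw}$ and the first three terms of $Q_{\rs}^{-}$ coincide with the first three terms of $Q_{\rw}^{-}$, all of the voltage--current estimates derived in Theorem \ref{th:AbsorbingSetD1} (in particular \eqref{eq:VoltageLyapunov} and \eqref{eq:CurrentLyapunov}, with the same choices of $\varepsilon_1,\ldots,\varepsilon_4$) can be imported verbatim. Moreover, for every bounded $\sB \subset \cD_{\rs}$ the trajectory enters $\sB_{\rw}$ after the finite time $t_{\rw}(\sB)$ given by \eqref{eq:UltimateTimeD1}, after which $\norm{\cL_v^2}{v(t)}^2$, $\norm{\cL_i^2}{i(t)}^2$, and $\norm{\cL_i^2}{\rd_t i(t)}^2$ are all controlled by a constant multiple of $\rho_{\rw}^2$. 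This uniform bound on the lower-order norms of $v$ will be the crucial input for closing the higher-order estimate.

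To obtain the new wave estimate I will keep the change of variable $q := \dt w + \tfrac{3}{2}\nu \Lambda w$, so that \eqref{eq:WaveChangeOfVariable} still holds, and take the $\cL_w^2$-inner product of \eqref{eq:WaveChangeOfVariable} with $A q = (-\Delta + I)q$ rather than with $q$ as was done in Theorem \ref{th:AbsorbingSetD1}. Because $\Lambda$ is a constant diagonal matrix it commutes with $-\Delta$, and the identities \eqref{eq:DeltaFormulas} then give
\begin{align*}
\inner{\cL_w^2}{\dt q}{Aq} &= \tfrac{1}{2}\rd_t \norm{\cH_w^1}{q}^2, \\
\tfrac{1}{2}\nu\inner{\cL_w^2}{\Lambda q}{Aq} &\geq \tfrac{1}{2}\nu \Lambda_{\min}\norm{\cH_w^1}{q}^2,
\end{align*}
while substituting $\dt w = q - \tfrac{3}{2}\nu\Lambda w$ into the terms $-\tfrac{3}{2}\nu^2\inner{\cL_w^2}{\Delta w}{Aq}$ and $\tfrac{1}{4}\nu^2\inner{\cL_w^2}{\Lambda^2 w}{Aq}$ produces $\rd_t$ of $\norm{\cL_w^2}{(-\Delta+I)w}^2$ (equivalently $\norm{\cH_w^2}{w}^2 + \norm{\cL_{\partial w}^2}{\dx w}^2$) together with nonnegative coercive terms scaled by factors of $\nu$ and $\Lambda$. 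Choosing the prefactor $\tfrac{1}{8}\nu^2\min\{6,\Lambda_{\min}^2\}$ of $\norm{\cL_w^2}{(-\Delta+I)w}^2$ in $Q_{\rs}^{-}$ exactly matches the coercivity produced by these computations.

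The main obstacle is the nonlinear forcing term $\nu^2\inner{\cL_w^2}{\Lambda^2 \rM J_8 f(v)}{Aq}$, since a naive Young estimate would require control of $\norm{\cL_w^2}{Aq}^2$, which is not available from $Q_{\rs}^{-}$. I will split this term according to the decomposition $Aq = A\dt w + \tfrac{3}{2}\nu A\Lambda w$. The $A\Lambda w$ piece is directly bounded, using Young's inequality with the parameter $\varepsilon_2$ of \eqref{eq:62516903}, by a small multiple of $\norm{\cL_w^2}{Aw}^2$ (absorbable by the coercive part of $Q_{\rs}^{-}$) plus a constant multiple of $|\Omega|\rF_{\rE}^2\trace(\Lambda^4\rM^2)$, which reproduces the second summand in the bracket of \eqref{eq:UltimateBoundBetaD2}. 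The $A\dt w$ piece is handled by time integration by parts, exactly in the spirit of \eqref{eq:121015945}: one rewrites $\inner{\cL_w^2}{\Lambda^2\rM J_8 f(v)}{A\dt w} = \rd_t\inner{\cL_w^2}{\Lambda^2\rM J_8 f(v)}{Aw} - \inner{\cL_w^2}{\Lambda^2\rM J_8\, \dt f(v)}{Aw}$. The total-derivative term is absorbed into a modified Lyapunov functional (producing the factor $2$ in $\rho_{\rs}^2 = 2\beta_{\rs}/\alpha_{\rs}$ and the $\alpha_{\rs}/\varepsilon_2$ contribution), while the residual is controlled via $\dt f(v) = \partial_v f(v)\,\dt v$, the uniform bound $|\partial_{v_{\rE}} f_{\rE}| \leq \rF_{\rE}/(2\sqrt{2}\sigma_{\rE})$ from \eqref{eq:FireingFuncDerivative}, and the inequality \eqref{eq:VolatgeDerivativeBound} together with the asymptotic bounds $\norm{\cL_v^2}{v}^2 + \norm{\cL_i^2}{i}^2 + \norm{\cL_v^2}{v}^2\norm{\cL_i^2}{i}^2 \lesssim \rho_{\rw}^2(1+\rho_{\rw}^2)$ valid for $t \geq t_{\rw}(\sB)$. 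This is the origin of the $\eta\rho_{\rw}^2(1+\rho_{\rw}^2)$ term in $\beta_{\rs}$ and fixes the role of the constant $\eta$ and of the small parameter $\varepsilon_1$ in \eqref{eq:62516903}.

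Combining the imported voltage--current estimate (weighted by $\theta$) with the new wave estimate just derived yields a differential inequality of the form $\rd_t Q_{\rs}(u) \leq -\alpha_{\rs} Q_{\rs}(u) + \beta_{\rs}$ valid for $t \geq t_{\rw}(\sB)$, where the auxiliary functional $Q_{\rs}$ differs from $Q_{\rs}^{-}$ by the bounded boundary term produced by the integration by parts and satisfies $\tfrac{1}{2}Q_{\rs}^{-}(u) \leq Q_{\rs}(u)$. The constants $\alpha_{\rs}$ and $\beta_{\rs}$ in \eqref{eq:DecayRateD2}--\eqref{eq:UltimateBoundBetaD2} are precisely what emerges from the coercivity constants (using Assumptions (i)--(ii) and the admissible range \eqref{eq:EpsilonRange} for $\varepsilon$ to guarantee positivity of $\Lambda^3 - \tfrac{2}{3}\tfrac{\theta e^2}{\nu^3\varepsilon}J_6^{\rT}\Upsilon^2 J_6$) and the forcing constants quantified above. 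Applying Gr\"onwall's inequality as in the proof of Theorem \ref{th:AbsorbingSetD1} gives $\limsup_{t\to\infty} Q_{\rs}^{-}(u(t)) \leq 2\beta_{\rs}/\alpha_{\rs} = \rho_{\rs}^2$, which, together with the fact that the mapping analogous to \eqref{eq:IsomorphismD1} is a linear isomorphism of $\cU_{\rs}$, shows that $\sB_{\rs} = \{u\in\cD_{\rs} : Q_{\rs}^{-}(u) \leq \rho^2\}$ is absorbing for every $\rho > \rho_{\rs}$.
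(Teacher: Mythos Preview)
Your plan is correct and matches the paper's approach in all essentials: test \eqref{eq:WaveChangeOfVariable} against $Aq$ with $A=-\Delta+I$, import the voltage--current inequalities \eqref{eq:VoltageLyapunov}--\eqref{eq:CurrentLyapunov} from Theorem~\ref{th:AbsorbingSetD1} unchanged, and handle the obstruction $\nu^2\inner{\cL_w^2}{\Lambda^2\rM J_8 f(v)}{Aq}$ by splitting $q=\rd_t w+\tfrac32\nu\Lambda w$, integrating the $A\rd_t w$ part by parts in time, and closing with \eqref{eq:VolatgeDerivativeBound} together with the asymptotic $\cU_{\rw}$-bounds from Theorem~\ref{th:AbsorbingSetD1}.

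The only organizational difference is the order of Gr\"onwall and integration by parts. You absorb the total-derivative term into a modified Lyapunov functional and then apply Gr\"onwall once on $[t_{\rw}(\sB),\infty)$. The paper instead applies Gr\"onwall first with the bad term still present, obtaining the integral form \eqref{eq:LyapunovBoundD2}; it then integrates $\int_0^t\inner{\cL_w^2}{\Lambda^2\rM J_8 f(v)}{Aq}\,e^{\alpha_{\rs}(s-t)}\rd s$ by parts against the exponential weight (which is why the factor $(\tfrac32\nu\Lambda-\alpha_{\rs}I)$, and hence the $\varepsilon_1$ of \eqref{eq:62516903}, carries $1+\|\tfrac32\nu\Lambda-\alpha_{\rs}I\|_2^2$), and finally applies a second Gr\"onwall to the resulting integral inequality \eqref{eq:LyapunovInequalityD2}. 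The paper's ordering has the minor advantage that it starts at $t=0$, so the transient on $[0,t_{\rw}(\sB)]$ is automatically packaged into the decaying constant $\kappa(\sB)$; in your version you must separately invoke the finite-time strong regularity (Proposition~\ref{prp:RegularityStrongSolution}) to bound $Q_{\rs}^{-}(u(t_{\rw}(\sB)))$ uniformly over $u_0\in\sB$. Either route yields the stated $\alpha_{\rs},\beta_{\rs}$ and the absorbing sets $\sB_{\rs}$.
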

\begin{proof}
	Let $A:= -\Delta +I$ and take the inner product of \eqref{eq:WaveChangeOfVariable} with $A q$ to obtain
	\begin{multline*}
		\tfrac{1}{2}\rd_t \norm{\cH_w^1}{q}^2 + \tfrac{1}{2}\nu \norm{\cH_w^1}{\Lambda^{\frac{1}{2}} q}^2 + \tfrac{3}{4} \nu^2 \rd_t \norm{\cH_{\partial w}^1}{\dx w}^2 + \tfrac{9}{4} \nu^3 \norm{\cH_{\partial w}^1}{\Lambda^{\frac{1}{2}} \dx w}^2 +\tfrac{1}{8} \nu^2 \rd_t \norm{\cH_w^1}{\Lambda w}^2\\
		+\tfrac{3}{8} \nu^3 \norm{\cH_w^1}{\Lambda^{\frac{3}{2}} w}^2 - \nu^2 \inner{\cL_w^2}{ \Lambda^2 \rM J_8 f(v)}{A q} = 0.
	\end{multline*}	
	This equality, along with the inequalities \eqref{eq:VoltageLyapunov} and \eqref{eq:CurrentLyapunov} derived in the proof of Theorem \ref{th:AbsorbingSetD1} and the same values of $\varepsilon_1, \dots, \varepsilon_4$ therein, implies that 
	\begin{align*}
		\rd_t Q_{\rs}
		&\leq -\tfrac{2}{3} \tau_{\max} \norm{\cL_v^2}{\Phi^{\frac{1}{2}}v}^2 
		- \theta \left(\tfrac{1}{2}\gamma_{\max}^{-1} - \varepsilon \right) \norm{\cL_i^2}{\Gamma b}^2 
		- \tfrac{3}{4} \left(\theta \gamma_{\min} - 2\gamma_{\min}^{-2} \right) \norm{\cL_i^2}{\Gamma i}^2
		- \nu \norm{\cH_w^1}{\Lambda^{\frac{1}{2}}q}^2 \\
		& \quad -3\nu \left( \tfrac{3}{2} \nu^2 \norm{\cH_{\partial w}^1}{\Lambda^{\frac{1}{2}}\dx w}^2 
		+\tfrac{1}{4} \nu^2 \inner{\cH_w^1}{\left[ \Lambda^3 - \tfrac{2}{3} \frac{\theta e^2}{\nu^3 \varepsilon} J_6^{\rT}\Upsilon^2 J_6 \right] w} { w} \right) \\
		& \quad + 2 \nu^2 \inner{\cL_w^2}{ \Lambda^2 \rM J_8 f(v)}{A q}
		+ \beta,  
	\end{align*} 
	where
	\begin{align*}
		Q_{\rs}(u) &:= \norm{\cL_v^2}{\Phi^{\frac{1}{2}} v}^2 +\theta  \norm{\cL_i^2}{b}^2 
		+ \tfrac{1}{4} \theta \norm{\cL_i^2}{\Gamma i}^2
		+ \norm{\cH_w^1}{q}^2 + \tfrac{3}{2} \nu^2 \norm{\cH_{\partial w}^1}{\dx w}^2 +\tfrac{1}{4} \nu^2 \norm{\cH_w^1}{\Lambda w}^2, \label{eq:LyapunovD2}\\
		\beta &:= \frac{4\theta e^2 }{\gamma_{\max}^{-1} - 2\varepsilon} \left[ |\Omega| (\rF_{\rE}^2 
		+ \rF_{\rI}^2 ) \norm{2}{\Upsilon \rN J_7}^2 	+ \norm{2}{\Upsilon}^2 \norm{L^{\infty}(0,\infty;\cL_i^2)}{g}^2 \right], \nonumber
	\end{align*}
	and $\varepsilon$ takes values within the range given by \eqref{eq:EpsilonRange}.
	Now, using similar arguments as in the proof of Theorem \ref{th:AbsorbingSetD1}, 
	it follows from Assumptions (i) and (ii) with $\theta > 2\gamma_{\min}^{-3}$ that
	\begin{equation} %\label{eq:LyapunovInequalityD1}
		\rd_t Q_{\rs}(u) \leq -\alpha_{\rs} Q_{\rs}(u) + 2 \nu^2 \inner{\cL_w^2}{ \Lambda^2 \rM J_8 f(v)}{A q} + \beta,
	\end{equation}
	where the decay rate $\alpha_{\rs}$ is given by \eqref{eq:DecayRateD2}.
	Then, Gr\"{o}nwall's inequality \cite[Sec. III.1.1.3.]{Temam:InfiniteDimensional:1997} implies
	\begin{equation} \label{eq:LyapunovBoundD2}
		Q_{\rs}(u(t)) \leq Q_{\rs}(u(0)) e^{-\alpha_{\rs} t} 
		+ 2 \nu^2 \int_0^t \inner{\cL_w^2}{ \Lambda^2 \rM J_8 f(v)}{A q} e^{\alpha_{\rs}(s-t)} \rd s
		+ \frac{\beta}{\alpha_{\rs}} \left( 1 - e^{-\alpha_{\rs} t} \right).
	\end{equation}
	
	Replacing $q := \rd_t w + \frac{3}{2}\nu \Lambda w$ in the integral term in the above inequality and integrating by parts yields
	\begin{align*}
		\int_0^t & \inner{\cL_w^2}{\Lambda^2 \rM J_8 f(v)}{A q} e^{\alpha_{\rs}(s-t)}\rd s \\
		&=  - \int_0^t \inner{\cL_w^2}{\Lambda^2 \rM J_8 \rd_s f(v)}{A w} e^{\alpha_{\rs}(s-t)}\rd s 
		+ \int_0^t \inner{\cL_w^2}{\Lambda^2 \rM J_8 f(v)}{(\tfrac{3}{2}\nu\Lambda - \alpha_{\rs} I) A w} e^{\alpha_{\rs}(s-t)}\rd s\\
		& \hspace{0.42cm} +\inner{\cL_w^2}{\Lambda^2 \rM J_8 f(v)}{A w} - \inner{\cL_w^2}{\Lambda^2 \rM J_8 f(v_0)}{A w_0} e^{-\alpha_{\rs} t}.
	\end{align*}
	Next, noting that $\rd_s f(v) = \dv f(v) \rd_s v$ and $\sup_{v_{\rE}(x,t)\in \bbR}|\partial_{v_{\rE}} f_{\rE}(v_{\rE})| \leq \frac{\rF_{\rE}}{2\sqrt{2} \sigma_{\rE}}$ by \eqref{eq:FireingFuncDerivative}, it follows that for every $\varepsilon_1, \varepsilon_2 >0$,
	\begin{align*}
		\int_0^t & \inner{\cL_w^2}{\Lambda^2 \rM J_8 f(v)}{A q} e^{\alpha_{\rs}(s-t)}\rd s \\
		&\leq 
		\varepsilon_1 \big(1 + \norm{2}{\tfrac{3}{2}\nu\Lambda - \alpha_{\rs} I}^2 \big) \int_0^t \norm{\cL_w^2}{A w}^2 e^{\alpha_{\rs}(s-t)} \rd s 
		+\frac{1}{32\varepsilon_1}\frac{\rF_{\rE}^2}{\sigma_{\rE}^2 } \trace (\Lambda^4 \rM^2) \int_0^t  \norm{\cL_v^2}{\rd_s v}^2 e^{\alpha_{\rs}(s-t)} \rd s	\\ 
		&\hspace{0.42cm} 
		+ \varepsilon_2 \norm{\cL_w^2}{A w}^2 
		+\tfrac{1}{4} |\Omega| \rF_{\rE}^2 \trace (\Lambda^4 \rM^2) \left(\frac{1}{\alpha_{\rs} \varepsilon_1 }+ \frac{1}{\varepsilon_2} \right) - \inner{\cL_w^2}{\Lambda^2 \rM J_8 f(v_0)}{A w_0} e^{-\alpha_{\rs} t}.
	\end{align*}   
	Moreover, it follows from Theorem \ref{th:AbsorbingSetD1} that for every bounded set $\sB \subset \cD_{\rs}$ there exists a time $t_{\rw}(\sB)$, given by \eqref{eq:UltimateTimeD1}, and positive constant $\eta_1$ and $\eta_2$ such that $\norm{\cL_v^2}{v(t)}^2 \leq \eta_1 \rho_{\rw}^2$ and $\norm{\cL_i^2}{i(t)}^2 \leq \eta_1 \rho_{\rw}^2$ for all $t \geq t_{\rw}(\sB)$. 
	Therefore, using the estimate \eqref{eq:VolatgeDerivativeBound} we can write
	%	\begin{equation*} %\label{eq:BoundDtV}
	%	\norm{\cL_v^2}{\rd_t v(t)}^2 \leq 
	%	2 \Phi_{\min}^{-2} \left[ \norm{\cL_v^2}{v(t)}^2 + \big(1 + \norm{2}{\Psi}^2 \big) \norm{\cL_i^2}{i(t)}^2 \right] \quad  \text{for all } t \in [0,\infty).
	%	\end{equation*}
	\begin{align} \label{eq:IntDVBound}
		\int_0^t  \norm{\cL_v^2}{\rd_s v}^2 e^{\alpha_{\rs}(s-t)} \rd s
		&\leq \int_0^{t_{\rw}(\sB)}  \norm{\cL_v^2}{\rd_s v}^2 e^{\alpha_{\rs}(s-t)} \rd s + \frac{1 }{\alpha_{\rs}}\eta \rho_{\rw}^2(1+ \rho_{\rw}^2) \\
		&\leq \kappa_0(\sB) e^{-\alpha_{\rs} t} + \frac{1}{\alpha_{\rs}}\eta \rho_{\rw}^2(1+ \rho_{\rw}^2), \nonumber
	\end{align}
	where $\eta$ is a positive constant and, for some $\alpha >0$, 
	\begin{equation*}
		\kappa_0(\sB) :=\alpha \int_0^{t_{\rw}(\sB)}  \left( \norm{\cL_v^2}{v(s)}^2 + \norm{\cL_i^2}{i(s)}^2 + \norm{\cL_v^2}{v(s)}^2 \norm{\cL_i^2}{i(s)}^2\right)  e^{\alpha_{\rs} s}  \rd s < \infty.
	\end{equation*}
	%	Let
	%	\begin{multline*} %\label{eq:LyapunovWaveTilde}
	%	Q_{-}(u) := \norm{\cL_v^2}{\Phi^{\frac{1}{2}} v}^2 +\theta  \norm{\cL_i^2}{b}^2 
	%	+ \tfrac{1}{4} \theta \norm{\cL_i^2}{\Gamma i}^2 + \norm{\cH_w^1}{q}^2 \\
	%	+ \tfrac{1}{8} \nu^2  \min \{6, \Lambda_{\min}^{2} \}  \left[\norm{H_{\rm per}^1(\Omega;\bbR^{2 \times 2})}{\partial w}^2 + \norm{\cH_w^1}{w}^2\right] = \eqref{eq:QMinusLimitsD2}),
	%	\end{multline*}
	
	Now, using the above estimate for the integral term in \eqref{eq:LyapunovBoundD2}, with $\varepsilon_1$ and $\varepsilon_2$ given by \eqref{eq:62516903}, yields
	\begin{equation} \label{eq:LyapunovInequalityD2}
		Q_{\rs}^{-}(u) e^{\alpha_{\rs} t} \leq \tfrac{1}{2} \alpha_{\rs} \int_0^t Q_{\rs}^{-}(u) e^{\alpha_{\rs} s} \rd s + \kappa (\sB) + \frac{\beta_{\rs}}{\alpha_{\rs}} e^{\alpha_{\rs} t} ,
	\end{equation} 
	where 
	$\beta_{\rs} :=\beta + 2\nu^2 \left[ \frac{1}{32\varepsilon_1}\frac{ \rF_{\rE}^2}{ \sigma_{\rE}^2} \trace (\Lambda^4 \rM^2)	\eta \rho_{\rw}^2(1+ \rho_{\rw}^2)	+\tfrac{1}{4} |\Omega| \rF_{\rE}^2 \trace (\Lambda^4 \rM^2) \big(\tfrac{1}{\varepsilon_1} + \tfrac{\alpha_{\rs}}{\varepsilon_2} \big)  \right]$ as given in \eqref{eq:UltimateBoundBetaD2},
	$Q_{\rs}^{-}(u)$ is given in \eqref{eq:QMinusLimitsD2}, and
	\begin{align*}
		\kappa (\sB) &:= Q_{\rs}^{+}(u(0)) + 2 \nu^2 \left[\frac{1}{32\varepsilon_1}\frac{ \rF_{\rE}^2}{ \sigma_{\rE}^2} \trace (\Lambda^4 \rM^2)
		\kappa_0(\sB)  - \inner{\cL_w^2}{\Lambda^2 \rM J_8 f(v_0)}{A w_0} \right] - \frac{\beta}{\alpha_{\rs}}, \\
		Q_{\rs}^{+}(u) &:= \norm{\cL_v^2}{\Phi^{\frac{1}{2}} v}^2 +\theta  \norm{\cL_i^2}{b}^2 
		+ \tfrac{1}{4} \theta \norm{\cL_i^2}{\Gamma i}^2 + \norm{\cH_w^1}{q}^2 + \tfrac{1}{4} \nu^2  \max \{6, \Lambda_{\max}^{2} \}\norm{\cL_w^2}{A w}^2.
	\end{align*}	
	Next, using Gr\"{o}nwall's inequality for the function $\int_0^t Q_{\rs}^{-}(u) e^{\alpha_{\rs} s} \rd s$ in \eqref{eq:LyapunovInequalityD2} gives  
	\begin{equation*} 
		\int_0^t Q_{\rs}^{-}(u) e^{\alpha_{\rs} s} \rd s \leq \frac{1}{\tfrac{1}{2}\alpha_{\rs}} \left[ \kappa(\sB) \left( e^{\frac{1}{2}\alpha_{\rs} t} -1 \right) 
		+ \frac{\beta_{\rs}}{\alpha_{\rs}} \left( e^{\alpha_{\rs} t} - e^{\frac{1}{2}\alpha_{\rs} t} \right) \right],
	\end{equation*}
	which, along with \eqref{eq:LyapunovInequalityD2} implies
	\begin{equation} \label{eq:625161102}
		Q_{\rs}^{-}(u) \leq \kappa(\sB) e^{-\frac{1}{2}\alpha_{\rs} t} + \rho_{\rs}^2 \left( 1 - \tfrac{1}{2} e^{-\frac{1}{2}\alpha_{\rs} t} \right),
	\end{equation}
	where $\limsup_{t \rightarrow \infty} Q_{\rs}^{-}(u(t)) \leq \rho_{\rs}^2 :=  \frac{2\beta_{\rs}}{\alpha_{\rs}}$. 
	
	Finally, considering the linear isomorphism \eqref{eq:IsomorphismD1} over $\cU_{\rs}$, it follows that for every bounded set $\sB \subset \cD_{\rs}$ there exists $R > 0$ such that $\kappa(\sB) \leq R^2$ for all $u_0 \in \sB$. 
	Therefore, \eqref{eq:625161102} implies that $S_{\rs}(t) \sB \subset \sB_{\rs}$ for all $t \geq t_{\rs}(\sB)$ and some $t_{\rs}(\sB)>0$, which completes the proof. 
	\qquad
\end{proof}

Note that an estimate similar to \eqref{eq:UltimateTimeD1} given in Theorem \ref{th:AbsorbingSetD1} can be also obtained for $t_{\rs}(\sB)$ in the proof of Theorem \ref{th:AbsorbingSetD2}. 
However, this would be of limited practical value since the bound \eqref{eq:IntDVBound} is very conservative for times $t \ll t_{\rw}(\sB)$.  

\begin{remark}[Conditions on parameter sets] \label{rem:ConditionsParameterSpace}
	For the range of values given in Table \ref{tb:Parameters}, the maximum value that the left-hand side of the inequalities in Assumptions (i) and (ii) of Theorems \ref{th:AbsorbingSetD1} and \ref{th:AbsorbingSetD2} may take is $39.4083\, \theta$, 
	which is achieved when 
	$\Upsilon_{\rE \rE} = 2$, $\Upsilon_{\rE \rI} = 2$, $\Lambda_{\rE \rE} = 0.1$, $\Lambda_{\rE \rI} = 0.1$, 
	$\nu = 100$, and $\gamma_{\max} = 1000$. Assumptions (i) and (ii) then require that $\theta < \frac{1}{39.4083} = 0.0254$. 
	Moreover, Theorems \ref{th:AbsorbingSetD1} and \ref{th:AbsorbingSetD2} allow for $\theta > 2\gamma_{\min}^{-3} \geq 0.002$, in accordance with Table \ref{tb:Parameters}. 
	This implies that---for the entire range of values that the biophysical parameters of the model may take---the conditions imposed by Theorems \ref{th:AbsorbingSetD1} and \ref{th:AbsorbingSetD2} are satisfied at least for any $0.002 < \theta < 0.0254$,
	and the model \eqref{eq:Model} possesses bounded absorbing sets as given by these theorems.  
\end{remark}

\section{Existence and Nonexistence of a Global Attractor} \label{sec:Attractor}
In this section, we investigate the problem of existence of a global attractor for the semigroups $\seq{S_{\rw}(t):\cD_{\rw} \rightarrow \cD_{\rw}}_{t \in [0,\infty)}$ and $\seq{S_{\rs}(t):\cD_{\rs} \rightarrow \cD_{\rs}}_{t \in [0,\infty)}$ of solution operators of \eqref{eq:Voltage}--\eqref{eq:InitialValues}. 
First, we recall the definition of a global attractor, and a widely used theorem for establishing the existence of a global attractor.  
See \cite[Ch.1]{Hale:AsymptoticBehavior:1988} for the motivation behind this definition, and \cite[Ch.3]{Hale:AsymptoticBehavior:1988} for further results.

\begin{definition}[Attracting set {\cite[Def. II.2.4]{Chepyzhov:Attractors:2002}}]
	$ $ A set $\sP$ in a complete metric space $\cD$ is called an \emph{attracting set} for a semigroup $\seq{S(t)}_{t \in [0,\infty)}$ acting in $\cD$ if for every bounded set $\sB \in \cD$, $\dist_{\cD}(S(t) \sB, \sP) \rightarrow 0$ as $t \rightarrow \infty$. Here,  
	$\dist_{\cD}(\sG, \sH) := \sup_{g \in \sG} \inf_{h \in \sH} \norm{\cD}{g-h}$ is the Hausdorff distance between the two sets $\sG, \sH \subset \cD$.
\end{definition}

\begin{definition}[Global attractor {\cite[Def. II.3.1]{Chepyzhov:Attractors:2002}}]
	$ $ A bounded set $\sA$ in a complete metric space $\cD$ is called a \emph{global attractor} for a semigroup $\seq{S(t)}_{t \in [0,\infty)}$ acting in $\cD$ if it satisfies the following conditions:
	\begin{enumerate}
		\item $\sA$ is compact in $\cD$.
		\item $\sA$ is an attracting set for $\seq{S(t)}_{t \in [0,\infty)}$.
		\item $\sA$ is strictly invariant with respect to $\seq{S(t)}_{t \in [0,\infty)}$, that is,
		$S(t)\sA = \sA$ for all $t\in[0,\infty)$.
	\end{enumerate}
\end{definition}	

\begin{definition}[Asymptotic compactness {\cite[Def. II.2.5]{Chepyzhov:Attractors:2002}}]
	$ $ The semigroup $\seq{S(t)}_{t \in [0,\infty)}$ acting in a complete metric space $\cD$ is called \emph{asymptotically compact} if it possesses a compact attracting set $\sK \Subset \cD$.
\end{definition}

\begin{theorem} [Global Attractor {\cite[Th. II.3.1]{Chepyzhov:Attractors:2002}}] \label{th:AttractorGeneral}
	Let $\seq{S(t)}_{t \in [0,\infty)}$ be an asymptotically compact continuous semigroup in a complete metric space $\cD$, possessing a compact attracting set $\sK \Subset \cD$. Then $\seq{S(t)}_{t \in [0,\infty)}$ has a global attractor $\sA \subset \sK$ given by $\sA = \omega(\sK)$, where $\omega (\sK)$ is the $\omega$-limit set of $\sK$. 
\end{theorem}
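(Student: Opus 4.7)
The plan is to realize the candidate attractor as the $\omega$-limit set of the compact attracting set,
\[
\omega(\sK) := \bigcap_{\tau \geq 0} \overline{\bigcup_{t \geq \tau} S(t) \sK},
\]
and then to verify in turn the three defining properties of a global attractor. Throughout I will rely on the sequential characterization that $y \in \omega(\sK)$ if and only if there exist sequences $t_n \to \infty$ and $x_n \in \sK$ with $S(t_n) x_n \to y$, which is immediate from the definition of $\omega(\sK)$.

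First I would show that $\omega(\sK)$ is nonempty, compact, and contained in $\sK$. Nonemptiness is obtained by picking any $x \in \sK$ and any $t_n \to \infty$: by the attracting property, $\dist(S(t_n) x, \sK) \to 0$, and compactness of $\sK$ then furnishes a convergent subsequence whose limit lies in $\omega(\sK)$. The same attracting property, combined with closedness of $\sK$, forces $\omega(\sK) \subset \sK$, so $\omega(\sK)$ is a closed subset of a compact set and is itself compact. For strict invariance $S(t)\omega(\sK) = \omega(\sK)$, the inclusion $S(t)\omega(\sK) \subset \omega(\sK)$ follows from continuity of $S(t)$, since if $y = \lim S(t_n) x_n$ then $S(t) y = \lim S(t + t_n) x_n \in \omega(\sK)$. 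The reverse inclusion $\omega(\sK) \subset S(t)\omega(\sK)$ is obtained by examining $S(t_n - t) x_n$ for large $n$: the attracting property gives points $k_n \in \sK$ with $d(S(t_n - t) x_n, k_n) \to 0$, compactness of $\sK$ extracts $k_n \to z \in \sK$ along a subsequence, and continuity of $S(t)$ yields $S(t) z = y$; membership $z \in \omega(\sK)$ then follows because $z$ is the limit of $S(t_n - t) x_n$ with $t_n - t \to \infty$ and $x_n \in \sK$.

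The main substantive step, and the one I expect to be the principal obstacle, is the attraction property $\dist(S(t)\sB, \omega(\sK)) \to 0$ for every bounded $\sB \subset \cD$. I would argue by contradiction: if there exist $\varepsilon > 0$, $t_n \to \infty$, and $x_n \in \sB$ with $\dist(S(t_n) x_n, \omega(\sK)) \geq \varepsilon$, then since $\sK$ attracts $\sB$ there are $k_n \in \sK$ with $d(S(t_n) x_n, k_n) \to 0$, so passing to a subsequence via compactness of $\sK$ gives $S(t_n) x_n \to k$ for some $k \in \sK$. The subtle point is to upgrade $k \in \sK$ to $k \in \omega(\sK)$, because the natural representation $k = \lim S(t_n) x_n$ has $x_n$ in $\sB$ rather than $\sK$. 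I would handle this by introducing an intermediate time $s_n \to \infty$ with $t_n - s_n \to \infty$, applying the attracting property at time $t_n - s_n$ to approximate $u_n := S(t_n - s_n) x_n$ by some $\tilde{k}_n \in \sK$ with $d(u_n, \tilde{k}_n) \to 0$, and then exploiting asymptotic compactness of the sequence $S(s_n) \tilde{k}_n$ (which now has $\tilde{k}_n \in \sK$) to extract a point of $\omega(\sK)$; the delicate piece is to control $d(S(s_n) u_n, S(s_n) \tilde{k}_n)$ uniformly in $n$ to conclude that this point coincides with $k$, contradicting $\dist(S(t_n) x_n, \omega(\sK)) \geq \varepsilon$. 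This uniform continuous-dependence control over unbounded time intervals is the heart of the matter, and is precisely where one must exploit the compactness of $\sK$ together with the continuity of each $S(t)$ on bounded time intervals.
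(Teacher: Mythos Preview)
The paper does not supply its own proof of this theorem; it is quoted verbatim as a background result from \cite[Th.~II.3.1]{Chepyzhov:Attractors:2002} and used as a black box. So there is nothing to compare against, and the question reduces to whether your sketch is sound.

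Your treatment of nonemptiness, compactness, the inclusion $\omega(\sK)\subset\sK$, and strict invariance is correct and standard. The difficulty you flag in the attraction step, however, is self-inflicted. By splitting $t_n=s_n+(t_n-s_n)$ with \emph{both} pieces tending to infinity, you are forced to control $d\big(S(s_n)u_n,\,S(s_n)\tilde k_n\big)$ for unbounded $s_n$, which cannot be done from continuity of $S(t)$ at each fixed $t$ and compactness of $\sK$ alone. Your own invariance argument already shows the way out: there you kept the backward time $t$ \emph{fixed} and used continuity of the single map $S(t)$. Do the same here. For each fixed integer $m$, the attraction of $\sK$ and its compactness give, along a subsequence, $S(t_n-m)x_n\to k^{(m)}\in\sK$, and continuity of $S(m)$ then yields $y=\lim S(t_n)x_n=S(m)k^{(m)}$. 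Since this holds for every $m$, you have $y=\lim_{m\to\infty}S(m)k^{(m)}$ with $k^{(m)}\in\sK$, hence $y\in\omega(\sK)$, contradicting $\dist(S(t_n)x_n,\omega(\sK))\geq\varepsilon$. Equivalently, one first shows $\omega(\sK)$ attracts $\sK$, then transfers attraction to arbitrary bounded $\sB$ via the uniform continuity of $S(\tau)$ near the compact set $\sK$ for a \emph{fixed} $\tau$. Either route avoids any appeal to continuous dependence over unbounded time intervals, which is the pitfall in your outline.
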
 

\subsection{Challenges in Establishing a Global Attractor} \label{sec:ChallengesAttractor}
In this section, we discuss some of the standard approaches available in the literature for establishing a global attractor based on Theorem \ref{th:AttractorGeneral}, and identify reasons that make these approaches rather unpromising for the model \eqref{eq:Voltage}--\eqref{eq:Wave}.

Continuity of $\seq{S_{\rw}(t)}_{t \in [0,\infty)}$ and $\seq{S_{\rs}(t)}_{t \in [0,\infty)}$, as required by Theorem \ref{th:AttractorGeneral}, is established in Propositions \ref{prp:SemigroupC0Weak} and \ref{prp:SemigroupC0Strong}, respectively.
To prove asymptotic compactness of a semigroup $\seq{S(t)}_{t \in [0,\infty)}$ acting in $\cD$,
%as the key property required by the above theorem to ensure existence of a global attractor, 
a general approach is to first show that the semigroup possesses a bounded absorbing set and then show that the semigroup is $\kappa$-contracting, meaning that 
$\lim_{t \rightarrow \infty} \kappa (S(t) \sB) =0$ for any bounded set $\sB \in \cD$, 
where $\kappa$ denotes the Kuratowski measure of noncompactness; see \cite{Ma:IndianaMathJ:2002, You:DynamicPDE:2007} and \cite[Ch.~3]{Hale:AsymptoticBehavior:1988}. 
An effective way to establish the later property is through a decomposition $S(t) = S_1(t) + S_2(t)$ such that for every bounded set $\sB \in \cD$ the component $S_1(t) \sB$ converges uniformly to $0$ as $t \rightarrow 0$, 
%that means $\sup_{u_0 \in \sB} \norm{\cH^1}{S_1(t)u_0} \rightarrow 0 \quad \text{as } t\rightarrow 0$
and the component $S_2(t) \sB$ is $\kappa$-contractive or is precompact in $\cD$ for large $t$
%that means $\bigcup_{t \geq t_0} S_2(t) \sB$ is relatively compact in $\cD$  
\cite{Sell:EvolutionaryEquations:2002, Temam:InfiniteDimensional:1997}.

As the first step towards proving the asymptotic compactness property stated above,  
existence of bounded absorbing sets for  $\seq{S_{\rw}(t)}_{t \in [0,\infty)}$ and $\seq{S_{\rs}(t)}_{t \in [0,\infty)}$ is established in Theorems \ref{th:AbsorbingSetD1} and \ref{th:AbsorbingSetD2}, respectively. 
However, it turns out that the $\kappa$-contracting property is hard to achieve for the model \eqref{eq:Voltage}--\eqref{eq:Wave} with parameter values in the range given in Table \ref{tb:Parameters}, due to 
the lack of space-dissipative terms in the ordinary differential equations \eqref{eq:Voltage} and \eqref{eq:Current},
the nature of nonlinear couplings in \eqref{eq:Voltage} and \eqref{eq:Current}, 
and the range of values of the biophysical parameters of the model.

The uniform compactness of the component $S_2(t)$ in the decomposition approach stated above is usually verified by establishing energy estimates in more regular function spaces and then deducing compactness from compact embedding theorems. 
This approach, although successfully used in \cite{Marion:SIMA:1989} to prove existence of a global attractor for a coupled ODE-PDE reaction-diffusion system, is not very promising here. 
In \cite{Marion:SIMA:1989}, the ODE subsystem is linear and the energy estimates in a higher regular space are achieved by taking space-derivatives of the ODE's and constructing energy functionals for the resulting equations.
As seen in the proof of Theorem \ref{th:AbsorbingSetD1}, the nonnegativity of $i(x,t)$
%, as implied by its biophysical definition and imposed mathematically by restricting the semigroups to act only on $\cD_{\rw}$ and $\cD_{\rs}$, 
is a key property that permits elimination of the sign-indefinite quadratic term in the energy equation of \eqref{eq:Voltage}, which results in the energy variation inequality \eqref{eq:VoltageLyapunov}.
This nonnegativity property, however, is not preserved in the derivative or any other variations of $i(x,t)$, leaving some sign-indefinite quadratic terms in the analysis. 
Moreover, it can be observed from the range of parameter values given in Table \ref{tb:Parameters} that the sign-indefinite nonlinear terms that would appear in the energy equations of any variations of \eqref{eq:Voltage} and \eqref{eq:Current} have significantly larger coefficients than the sign-definite dissipative terms. 
This makes it challenging to balance the terms in the energy functional to absorb the nondissipative terms into dissipative ones. 
Finally, the nonlinear terms appearing in \eqref{eq:Voltage} and \eqref{eq:Current} do not satisfy the usual assumptions, e.g., as in \cite{Efendiev:arXiv:2011}, that enable shaping the energy functional to eliminate the nondissipative terms that would otherwise appear in the equations.

Some other techniques are available in the literature to avoid energy estimations in higher regular spaces. 
In \cite{Ma:IndianaMathJ:2002}, for instance, the notion of $\omega$-limit compactness is used to develop necessary and sufficient conditions for existence of a global attractor. 
This is accomplished by decomposing the phase space into two spaces, one of which being finite-dimensional, and then showing that for every bounded set $\sB \subset \cD$ the canonical projection of $S(t) \sB$ onto the finite-dimensional space is bounded,
and the canonical projection on the complement space remains arbitrarily small for sufficiently large $t \geq t_0$, for some $t_0 = t_0(\sB)>0$. 
These decomposition techniques, however, rely on the spectral decomposition of the space-acting operators to construct the desired phase space decomposition. Such operators do not exist in the ODE subsystems \eqref{eq:Voltage} and \eqref{eq:Current} in our problem.

\subsection{Nonexistence of a Global Attractor} \label{sec:NonexistenceOfAttractor}
As discussed in Section \ref{sec:ChallengesAttractor}, establishing a global attractor for \eqref{eq:Voltage}--\eqref{eq:Wave} is a challenging problem. 
In fact, in this section we show that there exit sets of parameter values, leading to physiologically reasonable behavior in the model, for which the semigroups $\seq{S_{\rw}(t)}_{t \in [0,\infty)}$ and $\seq{S_{\rs}(t)}_{t \in [0,\infty)}$ do not possess a global attractor.

We first use \cite[Prop. 4.7]{Efendiev:arXiv:2011} to prove Theorem \ref{th:Noncompactness} below, which gives sufficient conditions for noncompactness of the equilibrium sets of \eqref{eq:Voltage}--\eqref{eq:Wave} in $\cU_{\rw}$ and $\cU_{\rs}$.
However, before embarking on the technical details of this theorem,
we motivate the main idea using the following intuitive discussion.

Assume that the ODE components \eqref{eq:Voltage} and \eqref{eq:Current} are decoupled from the PDE 
component \eqref{eq:Wave} by freezing $w(x,t)$ in space and time in \eqref{eq:Current}.
In this case, \eqref{eq:Voltage} and \eqref{eq:Current} can be viewed \emph{pointwise} as an uncountable set of dynamical systems governed by ODE's that are enumerated by points $x \in \Omega$.
To distinguish this pointwise view, 
let $(v_x(t), i_x(t))$ denote the solution of the dynamical system located at $x \in \Omega$,
in contrast with $(v(x,t), i(x,t))$ that denotes the solution of the decoupled ODE's \eqref{eq:Voltage} and \eqref{eq:Current} defined over $\Omega$.   
Note that the pointwise-defined dynamical systems are fully decoupled from each other, that means the solutions $(v_x(t), i_x(t))$ 
and $(v_y(t), i_y(t))$ evolve totally independently in time for every $x \neq y \in \Omega$. 

Now, assume further that the decoupled ODE system \eqref{eq:Voltage} and \eqref{eq:Current} possesses more than one equilibrium, two of which denoted by $(v_{\re}, i_{\re})$ and $(v_0, i_0)$.
Then, all pointwise defined dynamical systems correspondingly possess more than one equilibrium, in particular,  $({v_{x}}_{\re}, {i_x}_{\re}) = (v_{\re}(x), i_{\re}(x))$ and $({v_x}_0, {i_x}_0) = (v_0(x), i_0(x))$ 
for the system located at $x$.
This implies that the solutions $(v_x(t), i_x(t))$ can converge independently to different values at different points
$x \in \Omega$.
Therefore, when composed together, they form a solution $(v(x,t), i(x,t))$ for the decoupled ODE system \eqref{eq:Voltage} and \eqref{eq:Current}, which can possibly develop drastic discontinuities over $\Omega$ as it evolves in time.
Note that such discontinuities in the solutions can occur even though the initial values are smooth.
Moreover, it follows in particular that the ODE system \eqref{eq:Voltage} and \eqref{eq:Current} possesses
an uncountable discrete equilibrium set. 
In fact, any function composed arbitrarily of either values $({v_{x}}_{\re}, {i_x}_{\re})$ and $({v_x}_0, {i_x}_0)$ at each point $x \in \Omega$ would be an equilibrium.

The idea of Theorem \ref{th:Noncompactness} is to prove that the space-smoothing effect of the coupling with the PDE component \eqref{eq:Wave} is not sufficiently strong to rule out the discontinuities of the above nature in 
$(v, i)$ and, in particular, having a noncompact equilibrium set.
Define the mappings   
\begin{align} \label{eq:PvPiDefinition}
	P_v(v,i) &:= v - J_1 i + J_2 v i^{\rT} \Psi J_4 + J_3 v i^{\rT} \Psi J_5, \\
	P_i(v,i) &:= (e \Upsilon )^{-1} \Gamma i - \rN J_7 f(v) - g, \nonumber
\end{align}
and let 
$(v_{\re}, i_{\re}, w_{\re})$ be an equilibrium of \eqref{eq:Voltage}--\eqref{eq:Wave}, that is 
$P_v(v_{\re},i_{\re}) = 0$ and $P_i(v_{\re}, i_{\re}) = J_6 w_{\re}$.
Assume there exists $(v_0, i_0) \neq (v_{\re}, i_{\re})$ such that $P_v(v_0,i_0) = 0$ and $P_i(v_0, i_0) = P_i(v_{\re}, i_{\re})$.
In this case,  $(v_{\re}, i_{\re})$ and $(v_0, i_0)$ are both equilibrium of the system \eqref{eq:Voltage} and \eqref{eq:Current} if we assume it is decoupled from \eqref{eq:Wave} by freezing $w$ at $w = w_{\re}$. 
Therefore, motivated by the discussion above, we can construct a new equilibrium $(\bar{v}, \bar{i})$ 
for this decoupled system by letting $(\bar{v}, \bar{i}) = (v_0, i_0)$ over an arbitrary set $\Omega_0$,
and $(\bar{v}, \bar{i}) = (v_{\re}, i_{\re})$ over the complement set $\Omega_{\re}$.
This construction is illustrated in Figure \ref{fig:Noncompactness}.

Since $w$ is not actually frozen at $w = w_{\re}$, the function $(\bar{v}, \bar{i})$ is not necessarily a component of a new equilibrium of the coupled system
\eqref{eq:Voltage}--\eqref{eq:Wave}.
However, if it is ensured that $w$ remains close to $w_{\re}$, then we can expect that there
exists a new equilibrium $(v^{\ast}, i^{\ast}, w^{\ast})$ of \eqref{eq:Voltage}--\eqref{eq:Wave} whose
component $(v^{\ast}, i^{\ast})$ is close to $(\bar{v}, \bar{i})$.
Since the $w$ component of an equilibrium of \eqref{eq:Voltage}--\eqref{eq:Wave} 
is continuous over $\overline{\Omega}$, we may postulate that, 
provided the sets $\Omega_0$ are sufficiently small, updating $(v_{\re}, i_{\re})$ by $(\bar{v}, \bar{i})$
in the equilibrium equations would not largely deviate the $w$ component from $w_{\re}$ and the above expectation is satisfied.   
This postulation is indeed true and it is proved in Theorem \ref{th:Noncompactness} that under certain conditions 
a new equilibrium $(v^{\ast}, i^{\ast}, w^{\ast})$ exists such that $(v^{\ast}, i^{\ast})$ are arbitrarily close to
$(\bar{v}, \bar{i})$ provided $\Omega_0$ is sufficiently small. 
The proof is relatively involved and constitutes the core part of the proof of Theorem \ref{th:Noncompactness}.
It highly relies on the $\cL_w^{\infty}$-boundedness of the space-acting operator $A^{-1}$ that appears in the equilibrium equations, and on Assumption (iv) of Theorem \ref{th:Noncompactness}. 
Figure \ref{fig:Noncompactness} gives and illustration of the component $(v^{\ast}, i^{\ast})$ lying uniformly closer than $\varepsilon$ to $(\bar{v}, \bar{i})$.   

Finally, the noncompactness of the equilibrium set of \eqref{eq:Voltage}--\eqref{eq:Wave} 
follows if we show that the existence of equilibria $(v^{\ast}, i^{\ast}, w^{\ast})$ is uniform with respect to the
shape of the sets $\Omega_0$, that is, as long as only the size of $\Omega_0$ is smaller than a uniform bound. 
In this case, we take $\varepsilon$ small enough such that the distance between $(v_{\re}(x), i_{\re}(x))$ and 
$(v_0(x), i_0(x))$ is larger than $3 \varepsilon$. 
Then, for any two sufficiently small sets $\tilde{\Omega}_0$ and $\hat{\Omega}_0$ we can construct new 
equilibria as discussed above, having components closer than $\varepsilon$ to their associated estimates $(\bar{v}, \bar{i})$. 
It can be observed from Figure \ref{fig:Noncompactness} that the associated components $(v^{\ast}, i^{\ast})$
of these two equilibria would certainly be at a distance larger than $\varepsilon$ from each other at least on the difference of the two sets $\tilde{\Omega}_0$ and $\hat{\Omega}_0$.
Therefore, since this construction is independent of the shape of the sets $\tilde{\Omega}_0$ and $\hat{\Omega}_0$
and we have uncountably different choices for these sets, it follows that we can construct an uncountable 
set of disjoint equilibria. 
This implies the noncompactness of the equilibrium set of \eqref{eq:Voltage}--\eqref{eq:Wave} .
Theorem \ref{th:Noncompactness} below gives rigorous arguments for the above discussion.   

\begin{figure}
	\centering
	\includegraphics[width=0.85\linewidth]{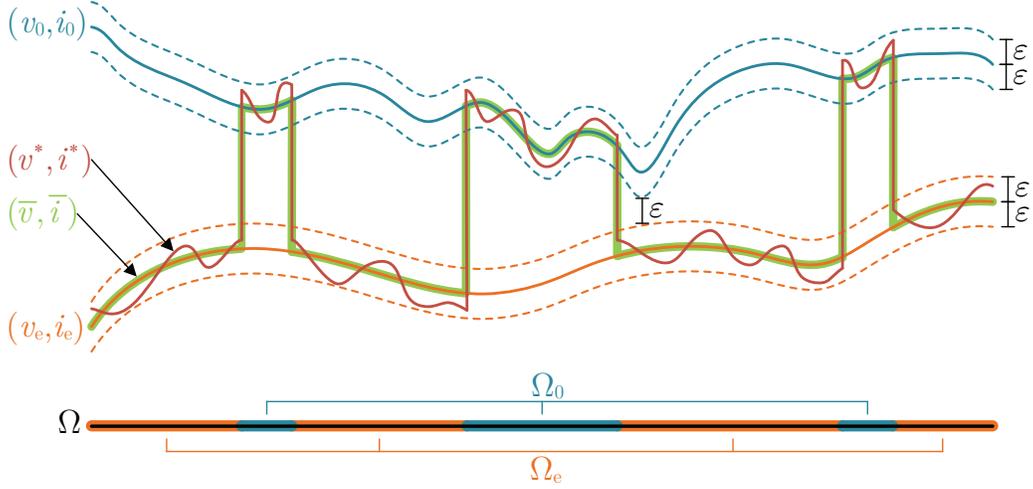}
	\caption{Illustrative construction of new equilibria as given by Theorem \ref{th:Noncompactness}.
		To avoid unnecessary complexities in the graphs, only one representative curve out of the six curves in the
		$(v,i)$ components of the solutions is shown.   }
	\label{fig:Noncompactness}
\end{figure}

\begin{theorem}[Noncompactness of equilibrium sets] \label{th:Noncompactness}
	Suppose $g$ is bounded and constant in time, that is, $g(x,t) = g(x)$ for all $(x,t) \in \Omega \times [0,\infty)$ and $g \in \cL_i^{\infty}$.
	Let $u_{\re} :=(v_{\re}, i_{\re}, 0, w_{\re}, 0)$ % \in  \cU_{\rs}$  
	be an equilibrium of \eqref{eq:Voltage}--\eqref{eq:Wave}  such that
	$v_{\re} \in \cL_v^{\infty}$, $i_{\re} \in \cL_i^{\infty}$, and $w_{\re} \in \cH_w^2$. % \cap \cL_w^{\infty}$.
	Define the mapping $P = (P_v, P_i) : \cL_v^{\infty} \times \cL_i^{\infty} \rightarrow \cL_v^{\infty} \times \cL_i^{\infty}$ as in \eqref{eq:PvPiDefinition}		
	and let $A:= -\frac{3}{2} \Delta + \Lambda^2 I$. 
	Assume that the following conditions hold: 
	\begin{enumerate}
		\item $\Lambda_{\rE \rE}$ and $\Lambda_{\rE \rI}$ take the same values, that is, 
		$\Lambda = \Lambda_{\rE \rE} I_{2 \times 2} = \Lambda_{\rE \rI} I_{2 \times 2}$.
		\item There exists $(v_0,i_0) \in \cL_v^{\infty} \times \cL_i^{\infty}$ such that 
		\begin{equation*}
			\essinf_{x \in \Omega} \norm{\infty}{(v_{\re}(x), i_{\re}(x)) - (v_0(x), i_0(x))} >0
		\end{equation*}
		and
		\begin{equation} \label{eq:v0i0}
			P_v(v_0,i_0) = 0, \quad
			P_i(v_0, i_0) = P_i(v_{\re}, i_{\re}).
		\end{equation}
		\item  $\partial_{(v,i)} P(v_{\re},i_{\re})$ and $\partial_{(v,i)} P(v_0,i_0)$ are nonsingular almost everywhere in $\Omega$.
		%	for every 
		%	$c=(c_v, c_i) \in \cL_v^{\infty} \times \cL_i^{\infty}$ the system of equations
		%	\begin{equation} \label{eq:Pv0Pi0ExistenceUniquness}
		%	\partial_{(v,i)} P_v(v_0, i_0) \psi = c_v, \quad
		%	\partial_{(v,i)} P_i(v_0, i_0, g) \psi = c_i,
		%	\end{equation}
		%	has a unique solution $\psi \in \cL_v^{\infty} \times \cL_i^{\infty}$.
		\item There exists $\alpha > 0$ such that, for every $b = (b_v,b_i) \in \cL_v^{\infty} \times \cL_i^{\infty}$, the system of equations
		\begin{align} \label{eq:HyperbolicityEquilibrium}
			\partial_{(v,i)} P_v(v_{\re}, i_{\re}) \phi &= b_v,\\
			\partial_{(v,i)} P_i(v_{\re}, i_{\re}) \phi - J_6 A^{-1} \Lambda^2 \rM J_8 \dv f(v_{\re}) \phi_v &= b_i, \nonumber
		\end{align}
		has a unique solution $\phi = (\phi_v, \phi_i) \in \cL_v^{\infty} \times \cL_i^{\infty}$ that satisfies
		\begin{equation} \label{eq:BoundednessOfInverse}
			\norm{\cL_v^{\infty} \times \cL_i^{\infty}}{\phi} \leq \alpha \norm{\cL_v^{\infty} \times \cL_i^{\infty}}{b}.
		\end{equation}
	\end{enumerate}
	Then, for a measurable partition $\Omega = \Omega_{\re} \cup \Omega_0$ and 
	\begin{equation} \label{eq:vBariBar}
		\bar{v} := v_{\re} \chi_{\Omega_{\re}}  + v_0 \chi_{\Omega_0}, \quad \quad
		\bar{i} := i_{\re} \chi_{\Omega_{\re}}  + i_0 \chi_{\Omega_0},
	\end{equation}
	the following assertions hold: 
	\begin{enumerate}
		\renewcommand\theenumi{\Roman{enumi}}
		\item For every $\varepsilon>0$ there exists $\delta > 0$ and an equilibrium 
		$u^{\ast}:=(v^{\ast}, i^{\ast}, 0, w^{\ast}, 0)$ % \in \cU_{\rs}$ 
		of \eqref{eq:Voltage}--\eqref{eq:Wave} such that
		\begin{equation*}
			\norm{\cL_v^{\infty} \times \cL_i^{\infty}}{(v^{\ast}, i^{\ast}) - (\bar{v}, \bar{i})} \leq \varepsilon, \quad
			\text{whenever } |\Omega_0| \leq \delta.
		\end{equation*}  
		\item The equilibrium sets of \eqref{eq:Voltage}--\eqref{eq:Wave} are noncompact in $\cU_{\rs}$ and $\cU_{\rw}$.
	\end{enumerate}
\end{theorem}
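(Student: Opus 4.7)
The plan is to eliminate $w$ from the equilibrium equations and recast the problem as a single nonlinear equation on the space $\cL_v^{\infty} \times \cL_i^{\infty}$. Setting all time derivatives to zero in \eqref{eq:Voltage}--\eqref{eq:Wave}, the wave equation becomes $A w = \Lambda^2 \rM J_8 f(v)$, where $A := -\tfrac{3}{2}\Delta + \Lambda^2 I$. Under Assumption (i), $A$ is a uniformly elliptic, positive, periodic operator that is a scalar multiple of the identity in block form, and is therefore boundedly invertible on each standard space. Substituting $w = A^{-1} \Lambda^2 \rM J_8 f(v)$ into the remaining equilibrium relations reduces the problem to
\begin{equation*}
\Pi(v,i) := \big( P_v(v,i),\; P_i(v,i) - J_6 A^{-1} \Lambda^2 \rM J_8 f(v) \big) = 0.
\end{equation*}
By hypothesis $\Pi(v_{\re}, i_{\re}) = 0$, whereas \eqref{eq:v0i0} only asserts that the \emph{local} part $P$ vanishes at $(v_0, i_0)$.

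For assertion (I), I take $(\bar v, \bar i)$ as the natural approximate zero of $\Pi$. Since $P_v(\bar v, \bar i) = 0$ pointwise and $P_i(\bar v, \bar i) = P_i(v_{\re}, i_{\re}) = J_6 w_{\re}$ pointwise, a direct calculation gives
\begin{equation*}
\Pi(\bar v, \bar i) = \big( 0,\; J_6 A^{-1} \Lambda^2 \rM J_8 \big[ f(v_{\re}) - f(v_0) \big] \chi_{\Omega_0} \big).
\end{equation*}
The right-hand side is $A^{-1}$ applied to a uniformly bounded function supported on $\Omega_0$. Because the periodic Green's function of $A$ on $\Omega \subset \bbR^2$ has only a logarithmic singularity and hence is integrable, there exists a modulus of continuity $\omega$ with $\omega(0^+) = 0$ such that the norm of $\Pi(\bar v, \bar i)$ in $\cL_v^{\infty} \times \cL_i^{\infty}$ is bounded by $C\, \omega(|\Omega_0|)$, with $C$ depending only on the parameters and on $\norm{\cL_v^{\infty}}{v_{\re}}$, $\norm{\cL_v^{\infty}}{v_0}$.

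To upgrade this approximate zero to an exact one, I apply a Newton-type argument centered at $(\bar v, \bar i)$. The Fr\'echet derivative is
\begin{equation*}
L_{(\bar v, \bar i)} \phi = \big( \partial_{(v,i)} P_v(\bar v, \bar i)\phi,\; \partial_{(v,i)} P_i(\bar v, \bar i)\phi - J_6 A^{-1} \Lambda^2 \rM J_8 \dv f(\bar v)\phi_v \big),
\end{equation*}
whose pointwise multiplication part coincides with $\partial_{(v,i)} P(v_{\re}, i_{\re})(x)$ on $\Omega_{\re}$ and with $\partial_{(v,i)} P(v_0, i_0)(x)$ on $\Omega_0$, hence is uniformly nonsingular a.e.\ by Assumption (iii) and boundedly invertible as a multiplication operator on $\cL_v^{\infty} \times \cL_i^{\infty}$; the remaining term is a bounded multiplication composed with the smoothing operator $A^{-1}$, which maps $\cL^{\infty}$ compactly into $C^{0,\alpha}$ in two space dimensions. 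The difference $L_{(\bar v, \bar i)} - L_{(v_{\re}, i_{\re})}$ is confined to $\Omega_0$, since both the local multiplier and the factor $\dv f(\bar v) - \dv f(v_{\re})$ vanish off $\Omega_0$. Combining the hyperbolicity provided by Assumption (iv) at $(v_{\re}, i_{\re})$---which supplies bounded invertibility of $L_{(v_{\re}, i_{\re})}$ with bound $\alpha$---with a Fredholm/continuity perturbation argument of the type formalized in \cite[Prop.~4.7]{Efendiev:arXiv:2011}, I expect to obtain a bound $\|L_{(\bar v, \bar i)}^{-1}\| \leq \alpha'$ uniform over partitions with $|\Omega_0| \leq \delta_0$. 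This uniformity is the principal technical obstacle, since the perturbation $L_{(\bar v, \bar i)} - L_{(v_{\re}, i_{\re})}$ is not small in operator norm and a naive Neumann-series argument fails; the proof must exploit the support structure on $\Omega_0$ together with the smoothing of $A^{-1}$. Granted such a bound, the Newton--Kantorovich theorem applied to $\phi \mapsto \phi - L_{(\bar v, \bar i)}^{-1} \Pi(\bar v + \phi_v, \bar i + \phi_i)$ on the closed ball of radius $\varepsilon$ in $\cL_v^{\infty} \times \cL_i^{\infty}$ yields, for $|\Omega_0|$ sufficiently small, a unique fixed point $(v^{\ast}, i^{\ast})$ with $\norm{\cL_v^{\infty} \times \cL_i^{\infty}}{(v^{\ast} - \bar v, i^{\ast} - \bar i)} \leq \varepsilon$. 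Setting $w^{\ast} := A^{-1} \Lambda^2 \rM J_8 f(v^{\ast})$, which lies in $\cH_w^2$ by elliptic regularity, completes the construction of the equilibrium $u^{\ast} = (v^{\ast}, i^{\ast}, 0, w^{\ast}, 0)$.

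For assertion (II), I exploit the freedom in choosing $\Omega_0$. Fix $\varepsilon := \tfrac{1}{3} \essinf_{x \in \Omega} \norm{\infty}{(v_{\re}(x), i_{\re}(x)) - (v_0(x), i_0(x))}$, which is positive by Assumption (ii), and let $\delta$ be the corresponding threshold from (I). For any two measurable sets $\tilde\Omega_0, \hat\Omega_0 \subset \Omega$ with $0 < |\tilde\Omega_0|, |\hat\Omega_0| \leq \delta$, the associated equilibria $(\tilde v^{\ast}, \tilde i^{\ast})$ and $(\hat v^{\ast}, \hat i^{\ast})$ differ by at least $\varepsilon$ at almost every point of the symmetric difference $\tilde\Omega_0 \bigtriangleup \hat\Omega_0$, by the triangle inequality applied to the construction, so
\begin{equation*}
\norm{\cL_v^2 \times \cL_i^2}{(\tilde v^{\ast} - \hat v^{\ast}, \tilde i^{\ast} - \hat i^{\ast})} \geq \varepsilon\, |\tilde\Omega_0 \bigtriangleup \hat\Omega_0|^{1/2}.
\end{equation*}
Parameterizing $\Omega_0$ over an uncountable family of translates of a fixed ball of volume $\delta/2$ within $\Omega$---whose pairwise symmetric differences have volume uniformly bounded below by a positive constant---yields an uncountable family of distinct equilibria with pairwise distances in $\cU_{\rw}$ (and hence in $\cU_{\rs}$) uniformly bounded below. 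This family has no convergent subsequence, so the equilibrium sets of \eqref{eq:Voltage}--\eqref{eq:Wave} are noncompact in both $\cU_{\rw}$ and $\cU_{\rs}$.
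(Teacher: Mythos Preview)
Your overall plan matches the paper's strategy: reduce to $\Pi(v,i)=0$, show $\Pi(\bar v,\bar i)$ is small when $|\Omega_0|$ is small, run an implicit-function/Newton argument at $(\bar v,\bar i)$, then vary $\Omega_0$ to produce a discrete family of equilibria. The place where your proposal remains a sketch is precisely the step you flag as the principal obstacle---the uniform bound on $L_{(\bar v,\bar i)}^{-1}$---and here you defer to a reference without supplying the mechanism. The paper carries this out explicitly, and the argument is not a standard Fredholm perturbation; it is a tailored iteration that separates the local and nonlocal parts of the operator.

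Concretely, to solve $L_{(\bar v,\bar i)}\phi=b$ the paper starts with $\phi^{(0)}$ solving $L_{(v_{\re},i_{\re})}\phi^{(0)}=b$ (Assumption~(iv)), then corrects the \emph{pointwise} multiplication part on $\Omega_0$ by solving
\[
\partial_{(v,i)}P(v_0,i_0)\,\phi_{\rr}^{(1)}=\bigl[\partial_{(v,i)}P(v_{\re},i_{\re})-\partial_{(v,i)}P(v_0,i_0)\bigr]\phi^{(0)}\chi_{\Omega_0},
\]
which is purely algebraic by Assumption~(iii) and hence bounded in $\cL^{\infty}$ independently of $|\Omega_0|$. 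After this correction the residual lives entirely in the \emph{nonlocal} term and has the form $J_6A^{-1}\Lambda^2\rM J_8\bigl[\,\cdots\,\bigr]\chi_{\Omega_0}$, i.e.\ $A^{-1}$ acting on a bounded function supported on $\Omega_0$. Now $A^{-1}:\cL_w^2\to\cH_w^2\hookrightarrow\cL_w^{\infty}$ is bounded in two dimensions, and the $\cL^2$ norm of a function supported on $\Omega_0$ is at most $|\Omega_0|^{1/2}$ times its $\cL^{\infty}$ norm; this yields a residual of size $\beta|\Omega_0|^{1/2}\norm{\cL_v^{\infty}\times\cL_i^{\infty}}{b}$. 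Iterating gives a geometric series with ratio $\kappa=\beta|\Omega_0|^{1/2}<1$ once $|\Omega_0|$ is below a threshold $\bar\delta$ that is \emph{independent of the shape of $\Omega_0$}. Uniqueness is handled separately: Assumption~(i) makes $A$ scalar, so elementary row operations reduce \eqref{eq:HyperbolicityBar} to a single equation $(I-K)\phi_{v_{\rE}}=h$ with $K$ compact and self-adjoint, and the Fredholm alternative applies. The key idea your sketch does not articulate is this \emph{splitting}: the $O(1)$ pointwise discrepancy on $\Omega_0$ is absorbed exactly via Assumption~(iii), and only the nonlocal remainder---which genuinely gains $|\Omega_0|^{1/2}$---is iterated.

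One minor point on Assertion~(II): the family of \emph{all} translates of a fixed ball does not have pairwise symmetric differences uniformly bounded below (nearby translates nearly coincide). You need to restrict to a subfamily---e.g.\ translates along a discrete lattice---giving an infinite uniformly separated set in $\cL_v^2\times\cL_i^2$, which already suffices for noncompactness.
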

\begin{proof}
	The proof is organized in three steps. 
	
	\emph{Step 1.}
	We show that there exists $\bar{\alpha}>0$ such that, for every $b = (b_v,b_i) \in \cL_v^{\infty} \times \cL_i^{\infty}$, the system of equations 
	\begin{align} \label{eq:HyperbolicityBar}
		\partial_{(v,i)} P_v(\bar{v}, \bar{i}) \phi &= b_v,\\
		\partial_{(v,i)} P_i(\bar{v}, \bar{i}) \phi - J_6 A^{-1} \Lambda^2 \rM J_8 \dv f(\bar{v}) \phi_v &= b_i,
		\nonumber
	\end{align}
	has a unique solution $\phi \in \cL_v^{\infty} \times \cL_i^{\infty}$ 
	that satisfies 
	$\norm{\cL_v^{\infty} \times \cL_i^{\infty}}{\phi} \leq \bar{\alpha} \norm{\cL_v^{\infty} \times \cL_i^{\infty}}{b}$.
	This provides the required conditions of the implicit function theorem that is used in Step 2 to prove the existence of the equilibrium $u^{\ast}$.
	The proof proceeds by iteratively constructing a solution by starting from the solution of \eqref{eq:HyperbolicityEquilibrium} and applying certain corrections at each iteration. 
	
	Let $\phi^{(0)} = (\phi_v^{(0)}, \phi_i^{(0)})$ be the solution of \eqref{eq:HyperbolicityEquilibrium} for a given $b \in \cL_v^{\infty} \times \cL_i^{\infty}$
	and construct an approximate solution for \eqref{eq:HyperbolicityBar} of the form
	$\phi^{(1)} := \phi^{(0)} + \phi_{\rr}^{(1)}$, where $\phi_{\rr}^{(1)} = (\phi^{(1)}_{\rr_v}, \phi^{(1)}_{\rr_i} )$ is the unique solution of
	\begin{align} \label{eq:Phir}
		\partial_{(v,i)} P(v_0, i_0) \phi_{\rr}^{(1)} = \left( \partial_{(v,i)} P(v_{\re}, i_{\re}) - \partial_{(v,i)} P(v_0, i_0) \right) \phi^{(0)} \chi_{\Omega_0}. 
		%	\partial_{(v,i)} P_v(v_0, i_0) \phi_{\rr}^{(1)} &= \left( \partial_{(v,i)} P_v(v_{\re}, i_{\re}) - \partial_{(v,i)} P_v(v_0, i_0) \right) \phi^{(0)} \chi_{\Omega_0} ,\\
		%	\partial_{(v,i)} P_i(v_0, i_0) \phi_{\rr}^{(1)} & = \left(\partial_{(v,i)} P_i(v_{\re}, i_{\re}) -
		%	\partial_{(v,i)} P_i(v_0, i_0) \right) \phi^{(0)} \chi_{\Omega_0}. \nonumber
	\end{align}   
	Note that by Assumption (iii) the unique solution $\phi_{\rr}^{(1)}$ exists and belongs to $\cL_v^{\infty} \times \cL_i^{\infty}$.
	The approximate solution $\phi^{(1)}$ solves
	\begin{align*} %\label{eq:HyperbolicityBar}
		\partial_{(v,i)} P_v(\bar{v}, \bar{i}) \phi^{(1)} &= b_v,\\
		\partial_{(v,i)} P_i(\bar{v}, \bar{i}) \phi^{(1)} - J_6 A^{-1} \Lambda^2 \rM J_8 \dv f(\bar{v}) \phi_v^{(1)} &=
		b_i + b_{\rr_i}^{(1)}, \nonumber
	\end{align*}
	where $	b_{\rr}^{(1)} = (0, b_{\rr_i}^{(1)})$, with
	\begin{equation} \label{eq:br}
		b_{\rr_i}^{(1)} := J_6 A^{-1} \Lambda^2 \rM J_8 \left[ \left( \dv f(v_{\re}) - \dv f(v_0) \right) \phi_v^{(0)} - \dv f(v_0)
		\phi^{(1)}_{\rr_v} \right] \chi_{\Omega_0}, 
	\end{equation}
	is the remainder resulting from the approximation error in $\phi^{(1)}$.
	
	Now, note that by Assumption (iv) there exist $\alpha_0 := \alpha>0$ such that
	\begin{equation} \label{eq:32716951}
		\norm{\cL_v^{\infty} \times \cL_i^{\infty}}{\phi^{(0)}} \leq \alpha_0 \norm{\cL_v^{\infty} \times \cL_i^{\infty}}{b}.
	\end{equation}
	Moreover, since by Assumption (ii) we have $(v_0,i_0) \in \cL_v^{\infty} \times \cL_i^{\infty}$,  	
	it is immediate from the definition of $P_v$ and $P_i$, given by \eqref{eq:PvPiDefinition}, that 
	$\partial_{(v,i)} P(v_0, i_0)$ is bounded. 
	This, along with Assumption (iii) and \eqref{eq:32716951}, implies that the solution $\phi_{\rr}^{(1)}$ of \eqref{eq:Phir} satisfies
	\begin{equation} \label{eq:32716950}
		\norm{\cL_v^{\infty} \times \cL_i^{\infty}}{\phi_{\rr}^{(1)}} 
		\leq \zeta_1 \norm{\cL_v^{\infty} \times \cL_i^{\infty}}{\phi^{(0)}}
		\leq \alpha_1 \norm{\cL_v^{\infty} \times \cL_i^{\infty}}{b}
	\end{equation}
	for some $\zeta_1, \alpha_1>0$.
	
	Next, note that since $A^{-1}: \cL_w^2 \rightarrow \cH_w^2$ is a bounded operator and $f$ is smooth,
	the definition of $b_{\rr_i}^{(1)}$, given by \eqref{eq:br}, implies that $b_{\rr_i}^{(1)} \in H_{\rm per}^2(\Omega;\bbR^4)$.  
	Moreover, it further implies by the Sobolev embedding theorems \cite[Th. 6.6-1]{Ciarlet:FunctionalAnalysis:2013} that $b_{\rr_i}^{(1)} \in C_{\rm per}^{0,\lambda}(\overline{\Omega}, \bbR^4)$ for all $\lambda \in (0,1)$ and, in particular, 
	$\norm{\cL_i^{\infty}}{b_{\rr_i}^{(1)}} \leq \zeta_2 \norm{H_{\rm per}^2(\Omega;\bbR^4)}{b_{\rr_i}^{(1)}}$
	for some $\zeta_2>0$.
	Therefore, using \eqref{eq:32716951} and \eqref{eq:32716950},
	there exist $\zeta_3, \zeta_4, \zeta_5, \beta_1 >0$ such that
	\begin{align} \label{eq:32416624}
		\norm{\cL_v^{\infty} \times \cL_i^{\infty}}{b_{\rr}^{(1)}} 
		&\leq \zeta_2 \norm{H_{\rm per}^2(\Omega;\bbR^4)}{b_{\rr_i}^{(1)}}   
		\leq \zeta_3 \left( \norm{\cL_v^2}{\phi_v^{(0)}} + \norm{\cL_v^2}{\phi^{(1)}_{\rr_v}} \right) \\
		&\leq \zeta_4 \norm{\cL_v^2 \times \cL_i^2}{\phi^{(0)}}    
		\leq \zeta_5 |\Omega_0|^{\frac{1}{2}} \norm{\cL_v^{\infty} \times \cL_i^{\infty}}{\phi^{(0)}} \nonumber\\
		&\leq \beta_1 |\Omega_0|^{\frac{1}{2}} \norm{\cL_v^{\infty} \times \cL_i^{\infty}}{b}.\nonumber
	\end{align}
	
	Now, for $m=2,3,\dots$, let $\phi^{(m)} := \phi^{(m-1)} + \phi_{\rr}^{(m)}$, where $\phi_{\rr}^{(m)}$ is the unique solution of
	\begin{equation*}
		\partial_{(v,i)} P(v_0, i_0) \phi_{\rr}^{(m)} = - b_{\rr}^{(m-1)} \chi_{\Omega_0}. 
	\end{equation*}
	It follows immediately that, for some $\eta>0$,
	\begin{equation} \label{eq:32716948}
		\norm{\cL_v^{\infty} \times \cL_i^{\infty}}{\phi_{\rr}^{(m)}} 
		\leq \eta \norm{\cL_v^{\infty} \times \cL_i^{\infty}}{b_{\rr}^{(m-1)}}, \quad m=2,3,\dots.
	\end{equation}
	Moreover, $\phi_{\rr}^{(m)}$ solves the system of equations 
	\begin{align*} %\label{eq:HyperbolicityBar}
		\partial_{(v,i)} P_v(\bar{v}, \bar{i}) \phi^{(m)} &= b_v,\\
		\partial_{(v,i)} P_i(\bar{v}, \bar{i}) \phi^{(m)} - J_6 A^{-1} \Lambda^2 \rM J_8 \dv f(\bar{v}) \phi_v^{(m)} &=
		b_i + b_{\rr_i}^{(m)}, \nonumber
	\end{align*}
	where
	\begin{equation*} %\label{eq:brM}
		b_{\rr_i}^{(m)} := - J_6 A^{-1} \Lambda^2 \rM J_8 \dv f(v_0) \phi^{(m)}_{\rr_v} \chi_{\Omega_0}, \quad m=2,3,\dots.
	\end{equation*}
	Using the Sobolev embedding theorems and \eqref{eq:32716948}, 
	the remainder $b_{\rr}^{(m)} = (0, b_{\rr_i}^{(m)})$ 
	satisfies, for some $\zeta_6, \zeta_7, \zeta_8, \beta>0$,
	\begin{align*} %\label{eq:32416624}
		\norm{\cL_v^{\infty} \times \cL_i^{\infty}}{b_{\rr}^{(m)}} 
		&\leq \zeta_6 \norm{H_{\rm per}^2(\Omega;\bbR^4)}{b_{\rr_i}^{(m)}}   
		\leq \zeta_7 \norm{\cL_v^2 \times \cL_i^2}{\phi^{(m)}_{\rr}} 
		\leq \zeta_8  |\Omega_0|^{\frac{1}{2}} \norm{\cL_v^{\infty} \times \cL_i^{\infty}}{\phi^{(m)}_{\rr}} \\
		&\leq \beta |\Omega_0|^{\frac{1}{2}} \norm{\cL_v^{\infty} \times \cL_i^{\infty}}{b_{\rr}^{(m-1)}}, \quad m=2,3,\dots, \nonumber
	\end{align*}
	which, letting $\kappa:= \beta |\Omega_0|^{\frac{1}{2}}$ and recalling \eqref{eq:32416624}, implies
	\begin{equation} \label{eq:427171054}
		\norm{\cL_v^{\infty} \times \cL_i^{\infty}}{b_{\rr}^{(m)}}  \leq \beta_1 |\Omega_0|^{\frac{1}{2}} \kappa^{(m-1)} \norm{\cL_v^{\infty} \times \cL_i^{\infty}}{b}  \quad m=2,3,\dots. 
	\end{equation}
	
	Now, let $|\Omega_0|< \bar{\delta}$, $\bar{\delta}>0$, and choose $\bar{\delta}$ such that $\kappa <1$. 
	Note that $\beta$, and consequently, the choice of $\bar{\delta}$ and the value of $\kappa$ do not depend on $b$ and the specific form of the partition $\Omega = \Omega_{\re} \cup \Omega_0$. 
	Therefore, it follows that $\norm{\cL_v^{\infty} \times \cL_i^{\infty}}{b_{\rr}^{(m)}} \rightarrow 0$ as $m \rightarrow \infty$, and hence, $\phi^{(m)}$ converges to a solution $\phi$ for \eqref{eq:HyperbolicityBar} when $|\Omega_0|< \bar{\delta}$.	
	Moreover, \eqref{eq:32716951}--\eqref{eq:427171054} imply 
	\begin{align*}
		\norm{\cL_v^{\infty} \times \cL_i^{\infty}}{\phi^{(m)}} 
		&\leq \norm{\cL_v^{\infty} \times \cL_i^{\infty}}{\phi^{(0)}}
		+ \norm{\cL_v^{\infty} \times \cL_i^{\infty}}{\phi_{\rr}^{(1)}}
		+ \sum_{l=2}^m \norm{\cL_v^{\infty} \times \cL_i^{\infty}}{\phi_{\rr}^{(l)}}\\
		& \leq \left[ \alpha_0 + \alpha_1 + \eta \beta_1 |\Omega_0|^{\frac{1}{2}}  \sum_{l=2}^m  \kappa^{(l-2)}  \right] \norm{\cL_v^{\infty} \times \cL_i^{\infty}}{b}, 
	\end{align*}
	and hence, taking the limit as $m \rightarrow \infty$, there exists $\bar{\alpha}>0$, independent of the form of the partition, such that 
	\begin{equation} \label{eq:327161027}
		\norm{\cL_v^{\infty} \times \cL_i^{\infty}}{\phi}	\leq \bar{\alpha} \norm{\cL_v^{\infty} \times \cL_i^{\infty}}{b}.
	\end{equation}
	
	To prove the solution constructed above for \eqref{eq:HyperbolicityBar} is unique, first note that by Assumption (i) the operator $A$ becomes a scalar operator given by
	$A = (-\frac{3}{2} \Delta + \Lambda_{\rE \rE}^2I)$. 
	Then, considering the structure of the matrix parameters given by \eqref{eq:Parameters} and reinspecting the expanded form \eqref{eq:Model}, 
	the system of equations \eqref{eq:HyperbolicityBar} can be transformed to a system composed of five algebraic equations and one partial differential equation by pre-multiplying the second equation in \eqref{eq:HyperbolicityBar} by the elementary matrix
	\begin{equation*}
		\left[ 
		\begin{array}{c|c}
			\hspace{-0.35cm}
			\begin{array}{c c}
				1 & 0\\
				- \frac{M_{\rE \rI}}{M_{\rE \rE}}& 1 \vspace{0.1cm}
			\end{array} & 0_{2 \times 2} \\  \hline
			0_{2 \times 2} & I_{2 \times 2}
		\end{array} \right].
	\end{equation*}
	This follows from the fact that the scalar operator $(-\frac{3}{2} \Delta + \Lambda_{\rE \rE}^2 I)^{-1}$ acts only on one of the unknowns, namely, $\phi_{v_{\rE}}$. 
	Now, since $\partial_{(v,i)}P(\bar{v}, \bar{i})$ is nonsingular by Assumption (iii), the five unknowns 
	$\phi_i=(\phi_{i_{\rE \rE}}, \phi_{i_{\rE \rI}}, \phi_{i_{\rI \rE}}, \phi_{i_{\rI \rI}})$ 
	and $\phi_{v_{\rI}}$ can be uniquely determined in terms of $\phi_{v_{\rE}}$ by elementary algebraic operations. 
	Consequently, \eqref{eq:HyperbolicityBar} is reduced to a scalar partial differential equation of the form
	\begin{equation*}
		p(\bar{v}, \bar{i}) \phi_{v_{\rE}} - (-\tfrac{3}{2} \Delta + \Lambda_{\rE \rE}^2 I)^{-1} \Lambda_{\rE \rE}^2 \rM_{\rE \rE} \partial_{v_{\rE}} f(\bar{v}_{\rE}) \phi_{v_{\rE}} = \hat{h}, 
	\end{equation*}
	where $\hat{h} \in L_{\rm per}^{\infty}(\Omega, \bbR)$ is given by the same elementary operations on $b$
	and $p(\bar{v}, \bar{i})$ is nonzero almost everywhere in $\Omega$, since elementary operations do not disrupt the nonsingularity of $\partial_{(v,i)}P(\bar{v}, \bar{i})$. 
	
	Next, dividing by $p(\bar{v}, \bar{i})$, the above equation can be written as
	\begin{equation} \label{eq:CompactForm}
		(I - K) \phi_{v_{\rE}} = h,
	\end{equation}
	where 
	$K:= p(\bar{v}, \bar{i})^{-1} \Lambda_{\rE \rE}^2 \rM_{\rE \rE} \partial_{v_{\rE}} f(\bar{v}_{\rE}) (-\tfrac{3}{2} \Delta + \Lambda_{\rE \rE}^2 I)^{-1} $
	and $h := p(\bar{v}, \bar{i})^{-1} \hat{h}$.
	The operator 
	$K: L_{\rm per}^2(\Omega, \bbR) \rightarrow  L_{\rm per}^2(\Omega, \bbR)$ is linear, self-adjoint, and compact by the Rellich-Kondrachov compact embedding theorems\cite[Th. 6.6-3]{Ciarlet:FunctionalAnalysis:2013}. 
	The existence of solutions of \eqref{eq:HyperbolicityBar} proved above guaranteers the existence of a solution $\phi_{v_{\rE}} \in L_{\rm per}^{\infty}(\Omega, \bbR)$ 
	for every $h \in L_{\rm per}^{\infty}(\Omega, \bbR)$, which implies, $L_{\rm per}^{\infty}(\Omega, \bbR) \subset \range(I - K)$. 
	However,   
	$\range(I - K) = \kernel(I - K^{\ast})^{\perp} =  \kernel(I - K)^{\perp}$ 
	by the Fredholm alternative \cite[Th. 5, Appx. D]{Evans:PDE:2010}, and hence,
	$L_{\rm per}^{\infty}(\Omega, \bbR) \cap \kernel(I - K) = \{0\}$. 
	This proves the uniqueness of bounded solutions of \eqref{eq:CompactForm}, and consequently, the uniqueness of solutions of \eqref{eq:HyperbolicityBar} for every $b = (b_v,b_i) \in \cL_v^{\infty} \times \cL_i^{\infty}$.      
	
	\emph{Step 2.}
	We prove Assertion (I) using the implicit function theorem. 
	Note that since $u_{\re}:=(v_{\re}, i_{\re}, 0, w_{\re}, 0)$ % \in \cU_{\rs}$ 
	is an equilibrium  of \eqref{eq:Voltage}--\eqref{eq:Wave}, we have
	\begin{equation} \label{eq:EquilibriumEquations}
		P_v(v_{\re},i_{\re}) = 0, \quad 
		P_i(v_{\re}, i_{\re}) = J_6 w_{\re} ,\quad
		w_{\re} = A^{-1} \Lambda^2 \rM J_8 f(v_{\re}).
	\end{equation}
	We seek an equilibrium point $u^{\ast}:=(v^{\ast}, i^{\ast}, 0, w^{\ast}, 0)$ % \in \cU_{\rs}$
	such that 
	\begin{equation*}
		v^{\ast} = \bar{v} + \phi_v, \quad
		i^{\ast} = \bar{i} + \phi_i, 
	\end{equation*}
	where $\phi := (\phi_v, \phi_i) \in \cL_v^{\infty} \times \cL_i^{\infty}$ is a small corrector function
	that satisfies 
	\begin{equation} \label{eq:EquilibriumStar}
		P_v(v^{\ast},i^{\ast}) = 0, \quad 
		P_i(v^{\ast}, i^{\ast}) = J_6 w^{\ast} ,\quad
		w^{\ast} = A^{-1} \Lambda^2 \rM J_8 f(v^{\ast}).
	\end{equation}
	Note that \eqref{eq:v0i0}, \eqref{eq:vBariBar}, and \eqref{eq:EquilibriumEquations} imply
	\begin{equation*}
		P_v(\bar{v},\bar{i}) = 0, \quad P_i(\bar{v}, \bar{i}) = J_6 w_{\re}, \quad v_{\re} = \bar{v}-(v_0-v_{\re})\chi_{\Omega_0}.
	\end{equation*}
	Therefore, the system of equations \eqref{eq:EquilibriumStar} is equivalent to
	\begin{align} \label{eq:EquilibriumStarEquation}
		P_v(\bar{v} + \phi_v, \bar{i} + \phi_i) - P_v(\bar{v}, \bar{i}) &= 0,\\
		P_i(\bar{v} + \phi_v, \bar{i} + \phi_i) - P_i(\bar{v}, \bar{i}) 
		&= J_6 A^{-1} \Lambda^2 \rM J_8 \big(f(\bar{v} + \phi_v) - f(\bar{v}-(v_0-v_{\re})\chi_{\Omega_0})\big), \nonumber
	\end{align}
	which, by the implicit function theorem \cite[Th. 7.13-1]{Ciarlet:FunctionalAnalysis:2013}, has a unique solution $\phi \in \cL_v^{\infty} \times \cL_i^{\infty}$ 
	since \eqref{eq:HyperbolicityBar} has a unique solution in $\cL_v^{\infty} \times \cL_i^{\infty}$ for every $b \in \cL_v^{\infty} \times \cL_i^{\infty}$, as proved in Step 1.
	Moreover, it is immediate from the definition of the Fr\'{e}chet derivative of the mappings $P_i$ and $P_v$ that the solution of  \eqref{eq:EquilibriumStarEquation} is arbitrarily close to the solution of \eqref{eq:HyperbolicityBar} with
	\begin{equation*}
		b := (0, J_6 A^{-1} \Lambda^2 \rM J_8 \dv f(\bar{v}) (v_0-v_{\re})) \chi_{\Omega_0},
	\end{equation*}
	provided these solutions are sufficiently small. 
	This is ensured by \eqref{eq:327161027} for small $|\Omega_0|$, since 
	$\norm{\cL_v^{\infty} \times \cL_i^{\infty}}{b} \leq \zeta |\Omega_0|^{\frac{1}{2}}$ for some $\zeta >0$. 
	Therefore, it follows that Assertion (I) holds for some $\delta = \delta(\varepsilon) \leq \bar{\delta}$. 
	
	\emph{Step 3.} We prove Assertion (II) using the fact that $\delta = \delta(\varepsilon) > 0$
	in Assertion (I) is independent of the specific form of the partition $\Omega = \Omega_{\re} \cup \Omega_0$. 
	Figure \ref{fig:Noncompactness} can be used to visualize the arguments of the proof.
	
	Let
	\begin{equation} \label{eq:101161038}
		\varepsilon :=\tfrac{1}{3} \essinf_{x \in \Omega} \norm{\infty}{(v_{\re}(x), i_{\re}(x)) - (v_0(x), i_0(x))} >0
	\end{equation}
	in Assertion (I), and let $\delta = \delta(\varepsilon) > 0$ be the corresponding bound on the size of the partitions that satisfies the result of Assertion (I). Note that $\varepsilon > 0$ by Assumption (ii).
	Moreover, let $\sM(\Omega)$ denote the set of all measurable subsets of $\Omega$ and define
	\begin{equation*}
		\sP_\delta(\Omega):=\set{(\Omega_{\re}, \Omega_0) \in \sM(\Omega) \times \sM(\Omega)}{ \Omega_{\re}= \Omega \setminus \Omega_0,  |\Omega_0|\leq \delta}.
	\end{equation*}
	Let $\Theta_\delta(\Omega) \subset \sP_\delta(\Omega)$ such that for every 
	$\tilde{\theta} = (\tilde{\Omega}_{\re}, \tilde{\Omega}_0) \in \Theta_\delta(\Omega)$ and
	$\hat{\theta} = (\hat{\Omega}_{\re}, \hat{\Omega}_0) \in \Theta_\delta(\Omega)$ we have $|\tilde{\Omega}_0 \bigtriangleup \hat{\Omega}_0| > \frac{1}{2}\delta$. 
	Note that $\Theta_\delta(\Omega)$ is an uncountable set that can be viewed as an index set enumerating all measurable partitions $\Omega = \Omega_{\re} \cup \Omega_0$, $|\Omega_0| \leq \delta$, which are distinct in the sense of measure by a factor of at least $\frac{1}{2}\delta$.
	
	Now, it follows from Assertion (I) that, for every $\tilde{\theta} \neq \hat{\theta} \in \Theta_\delta(\Omega)$, there exist 
	equilibria $u_{\tilde{\theta}}:=(v_{\tilde{\theta}}, i_{\tilde{\theta}}, 0, w_{\tilde{\theta}}, 0)$ 
	and $u_{\hat{\theta}}:=(v_{\hat{\theta}}, i_{\hat{\theta}}, 0, w_{\hat{\theta}}, 0)$ such that
	\begin{align*}
		\esssup_{x \in (\tilde{\Omega}_{\re} \cap  \hat{\Omega}_0)} \norm{\infty}{(v_{\hat{\theta}}(x), i_{\hat{\theta}}(x)) - (v_0(x), i_0(x)) } &\leq \varepsilon, \\	
		\esssup_{x \in (\tilde{\Omega}_{0} \cap \hat{\Omega}_{\re})} \norm{\infty}{(v_{\hat{\theta}}(x), i_{\hat{\theta}}(x)) - (v_{\re}(x), i_{\re}(x)) } &\leq \varepsilon, \\
		\esssup_{x \in (\tilde{\Omega}_{\re} \cap  \hat{\Omega}_0)} \norm{\infty}{(v_{\tilde{\theta}}(x), i_{\tilde{\theta}}(x)) - (v_{\re}(x), i_\re)) } &\leq \varepsilon,\\
		\esssup_{x \in (\tilde{\Omega}_{0} \cap \hat{\Omega}_{\re})} \norm{\infty}{(v_{\tilde{\theta}}(x), i_{\tilde{\theta}}(x)) - (v_0(x), i_0)) } &\leq \varepsilon.
	\end{align*}    
	Therefore, noting that $\tilde{\Omega}_0 \bigtriangleup \hat{\Omega}_0 = (\tilde{\Omega}_{0} \cap \hat{\Omega}_{\re}) \cup (\tilde{\Omega}_{\re} \cap  \hat{\Omega}_0)$ and recalling the definition of $\varepsilon$ given by \eqref{eq:101161038},
	\begin{equation*}
		%	\norm{\cL_v^{\infty} \times \cL_i^{\infty}}{(v_{\tilde{\theta}}, i_{\tilde{\theta}}) - (v_{\hat{\theta}}, i_{\hat{\theta}})} \geq \varepsilon,
		\esssup_{x \in (\tilde{\Omega}_0 \bigtriangleup \hat{\Omega}_0)} \norm{\infty} {(v_{\tilde{\theta}}(x), i_{\tilde{\theta}}(x)) - (v_{\hat{\theta}}(x), i_{\hat{\theta}}(x))} \geq \varepsilon,
	\end{equation*}
	which further implies
	\begin{equation*}
		\norm{\cL_v^2 \times \cL_i^2} {(v_{\tilde{\theta}}, i_{\tilde{\theta}}) - (v_{\hat{\theta}}, i_{\hat{\theta}})} \geq | \tilde{\Omega}_0 \bigtriangleup \hat{\Omega}_0|^{\frac{1}{2}} 
		\esssup_{x \in (\tilde{\Omega}_0 \bigtriangleup \hat{\Omega}_0)} \norm{\infty} {(v_{\tilde{\theta}}(x), i_{\tilde{\theta}}(x)) - (v_{\hat{\theta}}(x), i_{\hat{\theta}}(x))} 
		> (\tfrac{1}{2} \delta)^{\frac{1}{2}}\varepsilon.
	\end{equation*} 
	Since $\tilde{\theta}$ and $\hat{\theta}$ are arbitrary, it follows that the set 
	$\cE:= \seq{u_{\theta}}_{\theta \in \Theta_\delta(\Omega)}$ % \subset \cU_{\rs}$, 
	composed of the equilibria $u_{\theta}$ constructed as above is an uncountable discrete subset of the equilibrium sets of \eqref{eq:Voltage}--\eqref{eq:Wave} in $\cU_{\rs}$ and $\cU_{\rw}$. This completes the proof. 
	\qquad 
\end{proof}

\begin{remark}[Alternative assumptions for Theorem \ref{th:Noncompactness}] \label{rem:AlternativeAssumptions}
	According to the proof of Theorem \ref{th:Noncompactness}, some of the assumptions of this theorem can be relaxed or replaced by alternative assumptions as follows: 
	\begin{itemize}
		\item Assumption (i) is used to prove the uniqueness of solutions of \eqref{eq:HyperbolicityBar}. 
		Without this assumption, the operator $A$ is not a scalar operator and \eqref{eq:HyperbolicityBar} cannot be reduced to a scalar partial differential equation using elementary algebraic operations. 
		The operator $K$ representing the system of PDE's in this case would not be self-adjoint, and hence, application of the Fredholm alternative would not immediately imply uniqueness of the solutions. 
		However, an alternative assumption to Assumption (i) can be made on the adjoint of the operator $K$, so that the uniqueness of the solutions of \eqref{eq:HyperbolicityBar} is still ensured using the Fredholm alternative. 
		We avoid this complication since the fiber decay scale constants $\Lambda_{\rE \rE}$ and $\Lambda_{\rE \rI}$ are always assumed to be equal in the practical applications of the model \cite{Bojak:PhysRev:2005}.
		\item In Assumption (ii), it suffices to have $\essinf_{x \in \sX} \norm{\infty}{(v_{\re}(x), i_{\re}(x) - (v_0(x), i_0(x))} >0$, where $\sX$ is any measurable subset of $\Omega$ with positive measure. 
		Correspondingly, it suffices that the nonsingularity in Assumption (iii) holds almost everywhere on an open subset $\sY \supset \sX$ of $\Omega$. 
		In this case, the proof is modified by restricting $\sP_\delta(\Omega)$ to its subset consisting of partitions with $\Omega_0 \subset \sX$. 
		The index set $\Theta_\delta(\Omega)$ remains uncountable, and the noncompactness result of the theorem holds with no change. 
	\end{itemize}
\end{remark}

\begin{table}[t]
	\vspace{0.4cm}
	\caption{ A set of biophysically plausible parameter values for the model \eqref{eq:Model} for which Theorem \ref{th:Noncompactness} implies nonexistence of a global attractor \cite[Table VI, Col. 2]{Bojak:PhysRev:2005}. 
		The parameters $\bar{g}_{\rE \rE}$, $\bar{g}_{\rE \rI}$, $\bar{g}_{\rI \rE}$, and $\bar{g}_{\rI \rI}$ are, respectively, the mean values of the physiologically shaped random inputs $g_{\rE \rE}$, $g_{\rE \rI}$, $g_{\rI \rE}$, and $g_{\rI \rI}$ used in \cite{Bojak:PhysRev:2005}.	
	} \label{tb:NoncompactnessParameters}
	\begin{center}\footnotesize
		\renewcommand{\arraystretch}{1.3}
		\begin{tabular}{|{l}*{8}{c}|}\hline
			\rowcolor{LightMaroon}
			\textbf{Parameter} & 
			$\tau_{\rE}$ & $\tau_{\rI}$ & $\rV_{\rE \rE}$ & $\rV_{\rE \rI}$  
			& $\rV_{\rI \rE}$ & $\rV_{\rI \rI}$ & $\gamma_{\rE \rE}$ & $\gamma_{\rE \rI}$ \\
			\textbf{Value} & $11.787${\tiny$\times 10^{-3}$} & $138.25${\tiny $\times 10^{-3}$} & $61.264$ & $51.703$ & $-7.127$ & $-12.679$ & $816.04$ & $261.29$\\
			\rowcolor{LightMaroon}
			\textbf{Parameter} &
			$\gamma_{\rI \rE}$	& $\gamma_{\rI \rI}$ & 
			$\Upsilon_{\rE \rE}$ & $\Upsilon_{\rE \rI}$ & $\Upsilon_{\rI \rE}$ & 
			$\Upsilon_{\rI \rI}$ & $\rN_{\rE \rE}$ & $\rN_{\rE \rI}$ \\
			\textbf{Value} & $219.09$ & $40.575$ & $0.92695$ & $1.3012$ & $0.19053$ & $0.94921$ & $3893.0$ & $3326.8$\\		
			\rowcolor{LightMaroon}	
			\textbf{Parameter} & 
			$\rN_{\rI \rE}$ & $\rN_{\rI \rI}$ &			
			$\nu$ & $\Lambda_{\rE \rE}, \Lambda_{\rE \rI}$ & $\rM_{\rE \rE}$ & $\rM_{\rE \rI}$ & $\rF_{{\rE}}$ & $\rF_{{\rI}}$  \\
			\textbf{Value} & $839.39$ & $682.41$ & $101.78$ & $0.96545$ & $4013.5$ & $1544.3$ & $266.44$ & $300.65$\\
			\rowcolor{LightMaroon}
			\textbf{Parameter} & 
			$\mu_{\rE}$ & $\mu_{\rI}$ & $\sigma_{\rE}$ & $\sigma_{\rI}$ & $\bar{g}_{\rE \rE}$ & $\bar{g}_{\rE \rI}$
			& $\bar{g}_{\rI \rE}$ & $\bar{g}_{\rI \rI}$\\
			\textbf{Value} & $30.628$ & $19.383$ & $5.6536$ & $3.3140$ & $83.190$ & $6407.5$ & $0$ & $0$\\
			\hline
		\end{tabular}
	\end{center}
\end{table}

\begin{remark} [Nonexistence of a Global Attractor] \label{rem:PlausibilityOfEquilibrium}
	Suppose that the assumptions of Theorem \ref{th:Noncompactness} hold for an input $g$ and an equilibrium $u_{\re}$ that further satisfy $i_{\re}, w_{\re} >0$ almost everywhere in $\Omega$ and $g \in \cD_g$, where $\cD_g$ is given by \eqref{eq:InputPositiveRegion}.
	Note that $u_{\re}$ then belongs to $\cD_{\rs}$.
	Then, the equation $P_i(v_{\re}, i_{\re}) = J_6 w_{\re}$ in the equilibrium equations \eqref{eq:EquilibriumEquations} implies that $P_i(v_{\re}, i_{\re}) \geq 0$, and hence, $P_i(v_0, i_0) \geq 0$ in \eqref{eq:v0i0}. 
	Therefore, it follows from the definition of $P_i$ given by \eqref{eq:PvPiDefinition} that every solution $i_0$ of \eqref{eq:v0i0} is positive almost everywhere in $\Omega$.
	Then, by definition of $(\bar{v}, \bar{i})$, given by \eqref{eq:vBariBar}, all equilibria $u^{\ast}$ constructed by Assertion (I) of Theorem \ref{th:Noncompactness} satisfy $i^{\ast} >0$ almost everywhere in $\Omega$ when $\delta$ is sufficiently small. 
	Also, the equilibrium equations $w_{\re} = A^{-1} \Lambda^2 \rM J_8 f(v_{\re})$ and $w^{\ast} = A^{-1} \Lambda^2 \rM J_8 f(v^{\ast})$ imply that
	\begin{equation*}
		\norm{\cL_w^{\infty}}{w^{\ast} - w_{\re}} \leq \beta_1 \norm{\cH_w^2}{w^{\ast} - w_{\re}}
		\leq \beta |\Omega|^{\frac{1}{2}} \norm{\cL_v^{\infty}}{v^{\ast} - v_{\re}}
	\end{equation*} 
	for some $\beta>0$, and hence, $w^{\ast} >0$ almost everywhere in $\Omega$, when $\delta$ is sufficiently small.
	Therefore, Assertion (II) of Theorem \ref{th:Noncompactness} ensures existence of a biophysically plausible noncompact set of equilibria $\cE \subset \cD_{\rs} \subset \cD_{\rw}$. 
	This, in particular, implies that in the case where the assumptions of Theorem \ref{th:Noncompactness} are satisfied for  some $u_{\re}$ and $g$ as given above, the semigroups $\seq{S_{\rw}(t):\cD_{\rw} \rightarrow \cD_{\rw} }_{t \in [0,\infty)}$ and $\seq{S_{\rs}(t):\cD_{\rs} \rightarrow \cD_{\rs}}_{t \in [0,\infty)}$ are not asymptotically compact, and hence, 
	they do not possess a global attractor.  
\end{remark}

The assumptions of Theorem \ref{th:Noncompactness} are relatively straightforward to check for the space-homogeneous equilibria of \eqref{eq:Voltage}--\eqref{eq:Wave}. 
Consider the set of values given in Table \ref{tb:NoncompactnessParameters} for the parameters of the model, which are suggested in \cite[Table VI, col. 2]{Bojak:PhysRev:2005} as a set of parameter values leading to physiologically reasonable behavior of the model. 
The parameters $\bar{g}_{\rE \rE}$, $\bar{g}_{\rE \rI}$, $\bar{g}_{\rI \rE}$, and $\bar{g}_{\rI \rI}$ are the mean values of the physiologically shaped random signals used in \cite{Bojak:PhysRev:2005} as the subcortical inputs $g_{\rE \rE}$, $g_{\rE \rI}$, $g_{\rI \rE}$, and $g_{\rI \rI}$, respectively. 
Here, we set $g(t, x) = (\bar{g}_{\rE \rE}, \bar{g}_{\rE \rI}, \bar{g}_{\rI \rE}, \bar{g}_{\rI \rI})$ for all $x$ and $t$, and check the assumptions of Theorem \ref{th:Noncompactness} for a space-homogeneous equilibrium of \eqref{eq:Voltage}--\eqref{eq:Wave}.

Assumption (i) holds with $\Lambda_{\rE \rE} = \Lambda_{\rE \rE} = 0.96545$, as given in Table \ref{tb:NoncompactnessParameters}.
Solving the equations
$P_v(v_{\re},i_{\re}) = 0$, 
$P_i(v_{\re}, i_{\re}) = J_6 w_{\re}$ and 
$w_{\re} = \rM J_8 f(v_{\re})$, a space-homogeneous equilibrium is calculated as
\begin{equation*}
	v_{\re}=(1.9629, 6.5150), \;\, i_{\re}=(5.2552, 100.2372, 2.4493, 53.5665), \:\, w_{\re} = (821.7136, 316.1760).
\end{equation*}
Note that the numbers given here should actually be regarded as constant functions over $\Omega$.
Assumption (ii) then holds by finding a solution $(v_0,i_0) \neq (v_{\re},i_{\re})$ for \eqref{eq:v0i0} as
\begin{equation*}
	v_0 = (10.9417, 7.7148), \quad i_0 = (25.9005, 177.5837, 4.0757, 89.1352).
\end{equation*}
Assumption (iii) also holds with the following nonsingular matrix-valued functions
\begin{align*}
	\partial_{(v,i)} P(v_{\re},i_{\re}) &= \left[
	\begin{array}{cc|cccc} 
		1.4294 & 0 & -0.9680 & 0 & 1.2754 & 0\\ 
		0 & 7.1635 & 0 & -0.8740 & 0 & 1.5138\\ \hline
		-199.2222 & 0 & 323.8625 & 0 & 0 & 0\\ 
		-170.2472 & 0 & 0 & 73.8727 & 0 & 0\\ 
		0 & -440.3409 & 0 & 0 & 423.0237 & 0\\ 
		0 & -357.9898 & 0 & 0 & 0 & 15.7254
	\end{array} \right],\\
	\partial_{(v,i)} P(v_0,i_0) &= \left[
	\begin{array}{cc|cccc} 
		1.9946 & 0 & -0.8214 & 0 & 2.5352 & 0\\ 
		0 & 11.4648 & 0 & -0.8508 & 0 & 1.6085\\ \hline 
		-1858.395 & 0 & 323.8625 & 0 & 0 & 0\\ 
		-1588.109 & 0 & 0 & 73.8727 & 0 & 0\\ 
		0 & -730.7260 & 0 & 0 & 423.0237 & 0\\ 
		0 & -594.0680 & 0 & 0 & 0 & 15.7254
	\end{array}\right].
\end{align*}

To check Assumption (iv), note that for every 
%$b = (b_{v_{\rE}}, b_{v_{\rI}}, b_{i_{\rE \rE}}, b_{i_{\rE \rI}}, b_{i_{\rI \rE}}, b_{i_{\rI \rI}})
%\in \cL_v^{\infty} \times \cL_i^{\infty}$
$b = (b_v, b_i) \in \cL_v^{\infty} \times \cL_i^{\infty}$, elementary algebraic operations reduce \eqref{eq:HyperbolicityEquilibrium} to 
\begin{alignat}{3} \label{eq:41416656}
	\phi_{v_{\rE}} &= 0.6287 \phi {i_{\rE \rE}} + h_{v_{\rE}}, &\quad
	\phi_{v_{\rI}} &= 0.0521 \phi {i_{\rE \rE}} + h_{v_{\rI}}, && \\ \nonumber
	\phi_{i_{\rE \rI}} &= 2.4834 \phi {i_{\rE \rE}} + h_{i_{\rE \rI}}, &\quad
	\phi_{i_{\rI \rE}} &= 0.0543 \phi {i_{\rE \rE}} + h_{i_{\rI \rE}}, &\quad
	\phi_{i_{\rI \rI}} &= 1.1870 \phi {i_{\rE \rE}} + h_{i_{\rI \rI}}, 
\end{alignat}
and the scalar partial differential equation
\begin{equation} \label{eq:41416649}
	(I-D) \phi_{i_{\rE \rE}} = h_{i_{\rE \rE}},\quad  D:=0.6060  (-\tfrac{3}{2} \Delta + 0.96545^2 I)^{-1},
\end{equation}  
where $h = (h_v, h_i) \in \cL_v^{\infty} \times \cL_i^{\infty}$ is the result of the same algebraic operations on $b$.
Now, note that since $-\Delta$ is a nonnegative operator in $ H_{\rm per}^2(\Omega; \bbR)$,
it follows from the spectral theory of bounded linear self-adjoint operators \cite[Appx. D.6]{Evans:PDE:2010} that the spectrum of the operator $(I-D):L_{\rm per}^2(\Omega; \bbR) \rightarrow L_{\rm per}^2(\Omega; \bbR)$ lies entirely above  $1 - 0.6060  \times 0.96545^{-2} = 0.3498 >0$. 
Therefore, the partial differential equation \eqref{eq:41416649} has a unique solution
$\phi_{i_{\rE \rE}} \in L_{\rm per}^2(\Omega; \bbR)$
for every $h_{i_{\rE \rE}} \in L_{\rm per}^2(\Omega; \bbR) \supset L_{\rm per}^{\infty}(\Omega; \bbR)$, and hence, it follows from \eqref{eq:41416656} that \eqref{eq:HyperbolicityEquilibrium} has a unique solution 
$\phi = (\phi_v, \phi_i) \in \cL_v^{\infty} \times \cL_i^{\infty}$ for every $b \in \cL_v^{\infty} \times \cL_i^{\infty}$. 

It remains to check \eqref{eq:BoundednessOfInverse}.
Using the spectral theory of bounded linear self-adjoint operators and Cauchy-Schwarz inequality we can write
\begin{align*}
	\norm{L_{\rm per}^2(\Omega; \bbR)}{\phi_{i_{\rE \rE}}}^2 
	&\leq \tfrac{1}{0.3498}\inner{L_{\rm per}^2(\Omega; \bbR)}{(I-D) \phi_{i_{\rE \rE}}}{\phi_{i_{\rE \rE}}}
	= \tfrac{1}{0.3498}\inner{L_{\rm per}^2(\Omega; \bbR)}{h_{i_{\rE \rE}}}{\phi_{i_{\rE \rE}}}\\
	&\leq \tfrac{1}{0.3498} \norm{L_{\rm per}^2(\Omega; \bbR)}{h_{i_{\rE \rE}}}  \norm{L_{\rm per}^2(\Omega; \bbR)}{\phi_{i_{\rE \rE}}}.
\end{align*}
Therefore, there exists $\alpha_1 = \frac{1}{0.3498} >0 $ such that
\begin{equation*}
	\norm{L_{\rm per}^2(\Omega; \bbR)}{\phi_{i_{\rE \rE}}} \leq \alpha_1 \norm{L_{\rm per}^2(\Omega; \bbR)}{h_{i_{\rE \rE}}}.
\end{equation*}
Now, using \eqref{eq:41416649} and the Sobolev embedding theorems we can write, for some $\alpha_2, \alpha_3 >0$,
\begin{align*}
	\norm{L_{\rm per}^{\infty}(\Omega; \bbR)}{\phi_{i_{\rE \rE}}} &\leq 
	\norm{L_{\rm per}^{\infty}(\Omega; \bbR)}{h_{i_{\rE \rE}}} + \norm{L_{\rm per}^{\infty}(\Omega; \bbR)}{D \phi_{i_{\rE \rE}}}
	\leq \norm{L_{\rm per}^{\infty}(\Omega; \bbR)}{h_{i_{\rE \rE}}} + \alpha_2 \norm{H_{\rm per}^2(\Omega; \bbR)}{D \phi_{i_{\rE \rE}}} \\
	&\leq \norm{L_{\rm per}^{\infty}(\Omega; \bbR)}{h_{i_{\rE \rE}}} + \alpha_3 \norm{L_{\rm per}^2(\Omega; \bbR)}{ \phi_{i_{\rE \rE}}}
	\leq \norm{L_{\rm per}^{\infty}(\Omega; \bbR)}{h_{i_{\rE \rE}}} + \alpha_1 \alpha_3 \norm{L_{\rm per}^2(\Omega; \bbR)}{ h_{i_{\rE \rE}}} \\
	&\leq (1 + \alpha_1 \alpha_3 |\Omega|^{\frac{1}{2}}) \norm{L_{\rm per}^{\infty}(\Omega; \bbR)}{h_{i_{\rE \rE}}},
\end{align*}
which, along with the algebraic equalities \eqref{eq:41416656}, implies \eqref{eq:BoundednessOfInverse}. 
Hence, Assumption (iv) holds. 

It is now implied by Theorem \ref{th:Noncompactness} that the equilibrium sets of \eqref{eq:Voltage}--\eqref{eq:Wave}  are noncompact in $\cU_{\rs}$ and $\cU_{\rw}$.    
Moreover, it follows immediately from the equilibrium equations \eqref{eq:EquilibriumEquations} and the definition of $P_i$ given by \eqref{eq:PvPiDefinition} that, in general, all space-homogeneous equilibria $i_{\re}$ and $w_{\re}$ are positive and, in particular, belong to $\cD_{\rm Bio} \cap \cD_{\rs}$. 
Therefore, by Remark \ref{rem:PlausibilityOfEquilibrium}, the semigroups 
$\seq{S_{\rw}(t):\cD_{\rw} \rightarrow \cD_{\rw}}_{t \in [0,\infty)}$ and 
$\seq{S_{\rs}(t):\cD_{\rs} \rightarrow \cD_{\rs}}_{t \in [0,\infty)}$ associated with \eqref{eq:Voltage}--\eqref{eq:Wave} with parameter values given by Table \ref{tb:NoncompactnessParameters} do not possess a global attractor.

It can be shown by similar calculations as above that the assumptions of Theorem \ref{th:Noncompactness} are  satisfied by space-homogeneous equilibria of the model for $3$ other sets of parameter values out the $24$ sets available in \cite[Tables V and VI]{Bojak:PhysRev:2005}, namely, the sets given in \cite[Tables V, col. 2]{Bojak:PhysRev:2005} and \cite[Tables VI, col. 10 and col. 12]{Bojak:PhysRev:2005}.
Moreover, it is likely that these assumptions or their possible alternatives suggested in Remark \ref{rem:AlternativeAssumptions} would also hold for other sets of parameter values if we consider equilibria $u_{\re}$ and inputs $g$ that are not homogeneous over $\Omega$. 
Checking the assumptions of Theorem \ref{th:Noncompactness} in this case is, however, not straightforward.

\section{Discussion and Conclusion} \label{sec:Conclusion}
In this paper, we developed basic analytical results to establish a global attractor theory for the mean field model of the electroencephalogram proposed by Liley \emph{et al.}, 2002. 
We showed the boundary-initial value problem associated with the model is well-posed in the weak and strong sense,
and established sufficient conditions for the nonnegativity of the $i(x,t)$ and $w(x,t)$ components of the solution over the entire time horizon. 
Moreover, we proved existence of bounded absorbing sets for semigroups of weak and strong solutions, and discussed challenges towards proving the asymptotic compactness property for these semigroups.
Finally, we showed that the equilibrium sets of the model are noncompact for some physiologically reasonable sets of parameter values which, in particular, implies nonexistence of a global attractor.

The conditions developed in this paper for ensuring nonnegativity of the solution components $i(x,t)$ and $w(x,t)$ over the entire infinite time horizon can be useful in computational analysis of the model. 
Without using such mathematical analysis, it is impossible to ensure that the solutions computed numerically over a finite time horizon are biophysically plausible since, evidently, nonnegativity might occur for time intervals beyond the finite time horizon of numerical computations. 
This fact has been overlooked in most of the available computational analysis of the model.
%, where the solution components $i(x,t)$ and $w(x,t)$ are usually not even demonstrated and analyzed since they are not of actual physiological interest for analyzing the model, as the information corresponding to the EEG appears in the $v(x,t)$ component.
However, in these computational studies, the initial values are usually set equal to a numerically computed space-homogeneous equilibrium of the model, or equal to zero when no equilibrium is found numerically. 
In both cases, the preset initial values satisfy the sufficient conditions developed in Section \ref{sec:BiophysicalPhaseSpaces} of this paper for biophysical plausibility of the solutions. 
It is perhaps an intractable problem to specify a set of biophysical initial values for a model of the EEG; however, analyzing a more diverse set of reasonable initial values satisfying the sufficient conditions developed in Section \ref{sec:BiophysicalPhaseSpaces} can be beneficial in observing different behaviors of the model.  

Existence of bounded absorbing sets is a desirable global property for a model of electrical activity in the neocortex.
As stated in Remark \ref{rem:ConditionsParameterSpace}, the EEG model investigated in this paper possesses this global property for its entire range of parameter values given in Table \ref{tb:Parameters}.
Moreover, this property holds independently of the parameters of the firing rate functions, number of intracortical and corticocortical connections, mean Nernst potentials, and membrane time constants, as observed in Assumptions (i) and (ii) of Theorems \ref{th:AbsorbingSetD1} and \ref{th:AbsorbingSetD2}.

The lack of space-dissipative terms in the ODE components \eqref{eq:Voltage} and \eqref{eq:Current} of the model is one of the major sources of difficulties towards establishing a global attractor. 
Indeed, as discussed in Section \ref{sec:NonexistenceOfAttractor}, the $v(x,t)$ and $i(x,t)$ components of the solution can evolve discontinuously in space despite continuous evolution of the $w(x,t)$ component. 
Other than disrupting the asymptotic compactness property of the semigroups of solution operators, these space irregularities can predict sharp transitions in the  $v(x,t)$ and $i(x,t)$ components of the solution, which can potentially be problematic in numerical computation of the solutions. 

Slight modifications to the model that result in the presence of additional space-dissipative terms in the ODE's
can improve the regularity of the solutions and can be of particular advantage in numerical computations. 
The fact that part of the equations of the model appears as ODE's is partially due to the simplifying assumption
of \emph{instantaneous} conduction through short-range fibers.  
Removal of such simplifying assumptions, or  considering a singularly perturbed version of \eqref{eq:Voltage} and \eqref{eq:Current} by artificially including additional diffusion terms $\varepsilon \Delta$, 
with sufficiently small $\varepsilon$, can be considered as potential modifications.  
Any such modifications should, however, maintain the neurophysiological plausibility of the model.

The regularization made by appropriate modifications on the model    
may result in the possibility of establishing the asymptotic compactness property. 
However, the analysis in Section \ref{sec:NonexistenceOfAttractor} suggests that the resulting compact attractor would be of very high dimension.
Based on this observation, we speculate that the noncompactness of the attracting sets shown in this paper can provide an explanation for the possibility of having a rich variety of behaviors for this model, 
part of which already shown by computational analysis in the literature; see for example
\cite{Bojak:Neurocomputing:2004, Bojak:Neurocomputing:2007, Frascoli:ProcSPIE:2008, Frascoli:PhysicaD:2011, Dafilis:Chaos:2001, Dafilis:Chaos:2013, Dafilis:JMN:2015, VanVeen:EPJST:2014, VanVeen:PhysRevLett:2006}.
Such diversity of complicated behaviors is indeed what would be expected from a model of the neocortex, 
the part of the brain that is presumed responsible for the extremely complicated perceptual and cognitive functionality of the brain.

%possible modifications of the model

\section*{Acknowledgment}
The authors would like to thank Professor Andrzej \'{S}wi\k{e}ch from the School of Mathematics at Georgia Institute of Technology for his helpful suggestions with some of the proofs appearing in this paper.

%\begin{figure}[t]
%	\centering
%	\includegraphics[width=1\linewidth]{AbsorbingSetAssumptions}
%	\caption[Checking Assumptions of Absorbing Sets]{Verification of Assumptions ($i$) and ($ii$) in Theorem \ref{th:AbsorbingSetD1}, with $\theta = 2 \gamma_{\min}^{-3}$.  }
%	\label{fig:AbsorbingSetAssumptions}
%\end{figure}

\bibliographystyle{siam}
\bibliography{References}

\begin{thebibliography}{10}

\bibitem{Bear:Neuroscience:2016}
{\sc M.~F. Bear, B.~W. Connors, and M.~A. Paradiso}, {\em Neuroscience:
  Exploring the Brain}, Wolters Kluwer, Philadelphia, PA, 4th~ed., 2016.

\bibitem{Bojak:Frontiers:2013}
{\sc I.~Bojak, H.~C. Day, and D.~T.~J. Liley}, {\em Ketamine, propofol and the
  {EEG}: a neural field analysis of {HCN1}-mediated interactions}, Frontiers in
  Computational Neuroscience, 7 (2013).

\bibitem{Bojak:PhysRev:2005}
{\sc I.~Bojak and D.~T.~J. Liley}, {\em Modeling the effects of anesthesia on
  the electroencephalogram}, Phys. Rev. E, 71 (2005), p.~041902.

\bibitem{Bojak:Neurocomputing:2007}
\leavevmode\vrule height 2pt depth -1.6pt width 23pt, {\em Self-organized 40 hz
  synchronization in a physiological theory of {EEG}}, Neurocomputing, 70
  (2007), pp.~2085--2090.

\bibitem{Bojak:Neurocomputing:2004}
{\sc I.~Bojak, D.~T.~J. Liley, P.~Cadusch, and K.~Cheng}, {\em
  Electrorhythmogenesis and anaesthesia in a physiological mean field theory},
  Neurocomputing, 58--60 (2004), pp.~1197--1202.

\bibitem{Bojak:Frontiers:2015}
{\sc I.~Bojak, Z.~V. Stoyanov, and D.~Liley}, {\em Emergence of spatially
  heterogeneous burst suppression in a neural field model of electrocortical
  activity}, Frontiers in Computational Neuroscience, 9 (2015).

\bibitem{Chepyzhov:Attractors:2002}
{\sc V.~V. Chepyzhov and M.~I. Vishik}, {\em Attractors for Equations of
  Mathematical Physics}, American Mathematical Society, Providence, Rhode
  Island, 2002.

\bibitem{Chueh:IndianaMathJ:1977}
{\sc K.~Chueh, C.~Conley, and J.~Smoller}, {\em Positively invariant regions
  for systems of nonlinear diffusion equations}, Indiana Univ. Math. J., 26
  (1977), pp.~373--392.

\bibitem{Ciarlet:FunctionalAnalysis:2013}
{\sc P.~G. Ciarlet}, {\em Linear and Nonlinear Functional Analysis with
  Applications}, Society for Industrial and Applied Mathematics, Philadelphia,
  PA, 2013.

\bibitem{Coombes:NeuralFields:2014}
{\sc S.~Coombes, P.~beim Graben, R.~Potthast, and J.~Wright}, {\em Neural
  Fields: Theory and Applications}, Springer-Verlag, New York, NY, 2014.

\bibitem{Dafilis:Chaos:2013}
{\sc M.~P. Dafilis, F.~Frascoli, P.~J. Cadusch, and D.~T.~J. Liley}, {\em Four
  dimensional chaos and intermittency in a mesoscopic model of the
  electroencephalogram}, Chaos, 23 (2013).

\bibitem{Dafilis:JMN:2015}
\leavevmode\vrule height 2pt depth -1.6pt width 23pt, {\em Extensive
  four-dimensional chaos in a mesoscopic model of the electroencephalogram},
  The Journal of Mathematical Neuroscience, 5 (2015).

\bibitem{Dafilis:Chaos:2001}
{\sc M.~P. Dafilis, D.~T.~J. Liley, and P.~J. Cadusch}, {\em Robust chaos in a
  model of the electroencephalogram: Implications for brain dynamics}, Chaos,
  11 (2001), pp.~474--478.

\bibitem{Efendiev:arXiv:2011}
{\sc M.~Efendiev and S.~Zelik}, {\em Global attractor and stabilization for a
  coupled {PDE-ODE} system}, arXiv:1110.1837,  (2011).

\bibitem{Ermentrout:MathematicalNeuroscience:2010}
{\sc G.~B. Ermentrout and D.~H. Terman}, {\em Mathematical Foundations of
  Neuroscience}, Springer-Verlag, New York, NY, 2010.

\bibitem{Evans:PDE:2010}
{\sc L.~C. Evans}, {\em Partial Differential Equations}, American Mathematical
  Society, Providence, RI, 2nd~ed., 2010.

\bibitem{Fitzhugh:Biophysical:1961}
{\sc R.~FitzHugh}, {\em Impulses and physiological states in theoretical models
  of nerve membrane}, Biophysical Journal, 1 (1961), pp.~445--466.

\bibitem{Folland:RealAnalysis:1999}
{\sc G.~B. Folland}, {\em Real Analysis: Modern Techniques and Their
  Applications}, John Wiley {\&} Sons, Inc., New York, NY, 2nd~ed., 1999.

\bibitem{Foster:Frontiers:2008}
{\sc B.~L. Foster, I.~Bojak, and D.~T.~J. Liley}, {\em Population based models
  of cortical drug response: insights from anaesthesia}, Cognitive
  Neurodynamics, 2 (2008), pp.~283--296.

\bibitem{Frascoli:ProcSPIE:2008}
{\sc F.~Frascoli, M.~P. Dafilis, L.~van Veen, I.~Bojak, and D.~T.~J. Liley},
  {\em Dynamical complexity in a mean-field model of human {EEG}}, Proc. SPIE,
  7270 (2008), pp.~72700V--72700V--10.

\bibitem{Frascoli:PhysicaD:2011}
{\sc F.~Frascoli, L.~{van Veen}, I.~Bojak, and D.~T.~J. Liley}, {\em
  Metabifurcation analysis of a mean field model of the cortex}, Physica D:
  Nonlinear Phenomena, 240 (2011), pp.~949--962.

\bibitem{Gerstner:SpikingNeuron:2002}
{\sc W.~Gerstner and W.~M. Kistler}, {\em Spiking Neuron Models: Single
  Neurons, Populations, Plasticity}, Cambridge University Press, New York, NY,
  2002.

\bibitem{Green:JCS:2014}
{\sc K.~R. Green and L.~van Veen}, {\em Open-source tools for dynamical
  analysis of {L}iley's mean-field cortex model}, Journal of Computational
  Science, 5 (2014), pp.~507--516.

\bibitem{Guckenheimer:AMS:1989}
{\sc J.~Guckenheimer}, {\em Review: Roger temam, infinite-dimensional dynamical
  systems in mechanics and physics}, Bull. Amer. Math. Soc. (N.S.), 21 (1989),
  pp.~196--198.

\bibitem{Hale:AsymptoticBehavior:1988}
{\sc J.~K. Hale}, {\em Asymptotic Behavior of Dissipative Systems}, American
  Mathematical Society, Providence, RI, 1988.

\bibitem{HodgkinHuxley:JPhysiology:1952}
{\sc A.~L. Hodgkin and A.~F. Huxley}, {\em A quantitative description of
  membrane current and its application to conduction and excitation in nerve},
  The Journal of Physiology, 117 (1952), pp.~500--544.

\bibitem{Horton:PTRSL:2005}
{\sc J.~C. Horton and D.~L. Adams}, {\em The cortical column: a structure
  without a function}, Phil. Trans. R. Soc. B, 360 (2005), pp.~837--862.

\bibitem{Kandel:NeuralScience:2013}
{\sc E.~R. Kandel, J.~H. Schwartz, T.~M. Jessell, S.~A. Siegelbaum, and A.~J.
  Hudspeth}, {\em Principles of Neural Science}, McGraw-Hill, New York, NY,
  5th~ed., 2013.

\bibitem{Keener:MathematicalPhysiology:2009}
{\sc J.~Keener and J.~Sneyd}, {\em Mathematical Physiology}, Springer-Verlag,
  New York, NY, 2nd~ed., 2009.

\bibitem{Koch:BiophysicsComputation:2004}
{\sc C.~Koch}, {\em Biophysics of Computation: Information Processing in Single
  Neurons}, Oxford University Press, New York, NY, 2004.

\bibitem{Kramer:JRCInterface:2005}
{\sc M.~A. Kramer, H.~E. Kirsch, and A.~J. Szeri}, {\em Pathological pattern
  formation and cortical propagation of epileptic seizures}, Journal of the
  Royal Society Interface, 2 (2005), pp.~113--127.

\bibitem{Kramer:JCN:2006}
{\sc M.~A. Kramer, A.~J. Szeri, J.~W. Sleigh, and H.~E. Kirsch}, {\em
  Mechanisms of seizure propagation in a cortical model}, Journal of
  Computational Neuroscience, 22 (2006), pp.~63--80.

\bibitem{Kramer:PNAS:2012}
{\sc M.~A. Kramer, W.~Truccolo, U.~T. Eden, K.~Q. Lepage, L.~R. Hochberg, E.~N.
  Eskandar, J.~R. Madsen, J.~W. Lee, A.~Maheshwari, E.~Halgren, C.~J. Chu, and
  S.~S. Cash}, {\em Human seizures self-terminate across spatial scales via a
  critical transition}, Proceedings of the National Academy of Sciences, 109
  (2012), pp.~21116--21121.

\bibitem{Kufner:NoDEA:1996}
{\sc K.~H.~W. K\"{u}fner}, {\em Invariant regions for quasilinear
  reaction-diffusion systems and applications to a two population model},
  Nonlinear Differential Equations and Applications NoDEA, 3 (1996),
  pp.~421--444.

\bibitem{Liley:ClinicNeurophys:2005}
{\sc D.~T.~J. Liley and I.~Bojak}, {\em Understanding the transition to seizure
  by modeling the epileptiform activity of general anesthetic agents}, Journal
  of Clinical Neurophysiology, 22 (2005), pp.~300--313.

\bibitem{Liley:Network:2002}
{\sc D.~T.~J. Liley, P.~J. Cadusch, and M.~P. Dafilis}, {\em A spatially
  continuous mean field theory of electrocortical dynamics}, Network: Comput.
  Neural Syst., 13 (2002), pp.~67--113.

\bibitem{Liley:Frontiers:2013}
{\sc D.~T.~J. Liley and M.~Walsh}, {\em The mesoscopic modelling of burst
  suppression during anaesthesia}, Frontiers in Computational Neuroscience, 7
  (2013).

\bibitem{Ma:IndianaMathJ:2002}
{\sc Q.~Ma, S.~Wang, and C.~Zhong}, {\em Necessary and sufficient conditions
  for the existence of global attractors for semigroups and applications},
  Indiana Univ. Math. J., 51 (2002), pp.~1541--1559.

\bibitem{Marion:SIMA:1989}
{\sc M.~Marion}, {\em Finite-dimensional attractors associated with partly
  dissipative reaction-diffusion systems}, SIAM Journal on Mathematical
  Analysis, 20 (1989), pp.~816--844.

\bibitem{Moran:Frontiers:2013}
{\sc R.~J. Moran, D.~A. Pinotsis, and K.~J. Friston}, {\em Neural masses and
  fields in dynamic causal modelling}, Frontiers in Computational Neuroscience,
  7 (2013).

\bibitem{Mountcastle:Brain:1997}
{\sc V.~B. Mountcastle}, {\em The columnar organization of the neocortex},
  Brain, 120 (1997), pp.~701--722.

\bibitem{Nagumo:IRE:1962}
{\sc J.~Nagumo, S.~Arimoto, and S.~Yoshizawa}, {\em An active pulse
  transmission line simulating nerve axon}, Proceedings of the IRE, 50 (1962),
  pp.~2061--2070.

\bibitem{Pinotsis:Frontiers:2014}
{\sc D.~A. Pinotsis, P.~Robinson, P.~Graben, and K.~J. Friston}, {\em Neural
  masses and fields: Modelling the dynamics of brain activity}, Frontiers in
  Computational Neuroscience, 8 (2014).

\bibitem{Robinson:InfiniteDimensional:2001}
{\sc J.~C. Robinson}, {\em Infinite-Dimensional Dynamical Systems: An
  Introduction to Dissipative Parabolic PDEs and the Theory of Global
  Attractors}, Cambridge University Press, Cambridge, UK, 2001.

\bibitem{Robinson:Neuropsychopharmacology:2003}
{\sc P.~A. Robinson, C.~J. Rennie, D.~L. Rowe, S.~C. O'Connor, J.~J. Wright,
  E.~Gordon, and R.~W. Whitehouse}, {\em Neurophysical modeling of brain
  dynamics}, Neuropsychopharmacology, 28 (2003), pp.~S74--S79.

\bibitem{SanzLeon:NeuroImage:2015}
{\sc P.~Sanz-Leon, S.~A. Knock, A.~Spiegler, and V.~K. Jirsa}, {\em
  Mathematical framework for large-scale brain network modeling in the virtual
  brain}, NeuroImage, 111 (2015), pp.~385--430.

\bibitem{Sell:EvolutionaryEquations:2002}
{\sc G.~R. Sell and Y.~You}, {\em Dynamics of Evolutionary Equations},
  Springer-Verlag, New York, NY, 2002.

\bibitem{Sporns:Nature:2014}
{\sc O.~Sporns}, {\em Contributions and challenges for network models in
  cognitive neuroscience}, Nature Neuroscience, 17 (2014), pp.~652--660.

\bibitem{SteynRoss:ProBiophysics:2004}
{\sc M.~L. Steyn-Ross, D.~A. Steyn-Ross, and J.~W. Sleigh}, {\em Modelling
  general anaesthesia as a first-order phase transition in the cortex},
  Progress in Biophysics and Molecular Biology, 85 (2004), pp.~369--385.

\bibitem{Taylor:PDEI:2011}
{\sc M.~E. Taylor}, {\em Partial Differential Equations I}, Springer-Verlag,
  New York, NY, 2nd~ed., 2011.

\bibitem{Temam:InfiniteDimensional:1997}
{\sc R.~Temam}, {\em Infinite-Dimensional Dynamical Systems in Mechanics and
  Physics}, Springer-Verlag, New York, NY, 1997.

\bibitem{VanVeen:EPJST:2014}
{\sc L.~van Veen and K.~R. Green}, {\em Periodic solutions to a mean-field
  model for electrocortical activity}, The European Physical Journal Special
  Topics, 223 (2014), pp.~2979--2988.

\bibitem{VanVeen:PhysRevLett:2006}
{\sc L.~van Veen and D.~T.~J. Liley}, {\em Chaos via {Shilnikov's} saddle-node
  bifurcation in a theory of the electroencephalogram}, Phys. Rev. Lett., 97
  (2006), p.~208101.

\bibitem{Wilson:Biophysical:1972}
{\sc H.~R. Wilson and J.~D. Cowan}, {\em Excitatory and inhibitory interactions
  in localized populations of model neurons}, Biophysical Journal, 12 (1972),
  pp.~1--24.

\bibitem{Wilson:PLOS:2012}
{\sc M.~T. Wilson, P.~A. Robinson, B.~O'Neill, and D.~A. Steyn-Ross}, {\em
  Complementarity of spike- and rate-based dynamics of neural systems}, PLoS
  Comput Biol, 8 (2012).

\bibitem{You:DynamicPDE:2007}
{\sc Y.~You}, {\em Global dynamics of the {B}russelator equations}, Dynamics of
  Partial Differential Equations, 4 (2007), pp.~167--196.

\end{thebibliography}
%\begin{thebibliography}{1}
%\end{thebibliography}

\end{document}